\documentclass[colorinlistoftodos,article,11pt,reqno,final]{article}
\usepackage{articlemacro}
\usepackage{tabularx}
\usepackage{etoc}

\definecolor{mycolor}{HTML}{F5BA73}

\usepackage[inline]{showlabels}

\newcommand{\RigSH}{\textup{RigSH}}
\newcommand{\RigH}{\textup{RigH}}
\newcommand{\rig}{\textup{rig}}

\newcommand{\Rig}{\textup{Rig}}

\newcommand{\Pro}{\textup{Pro}}
\newcommand{\Cond}{\textup{Cond}}

\newcommand{\Spc}{\textup{Spc}}
\newcommand{\PSh}{\textup{PSh}}
\newcommand{\Sh}{\textup{Sh}}
\newcommand{\Shhyp}{\Sh^{\mathrm{hyp}}} 
\renewcommand{\Shhyp}{\Sh^\wedge} 
\newcommand{\ProFin}{\textup{ProFin}}
\newcommand{\EDS}{\textup{EDS}}
\newcommand{\Cat}{\textup{Cat}}
\newcommand{\Nuc}{\textup{Nuc}}
\newcommand{\Tate}{\textup{Tate}}
\renewcommand{\Adic}{\textup{Adic}}
\newcommand{\PreAdic}{\textup{PreAdic}} 
\newcommand{\AnAdic}{\textup{AnAdic}} 
\newcommand{\Adicsm}{\textup{Adic}^{\textup{sm}}}
\newcommand{\Set}{\textup{Set}}
\newcommand{\Algrm}{\textup{Alg}} 
\newcommand{\fib}{\textup{fib}} 
\newcommand{\cofib}{\textup{cofib}} 
\newcommand{\AnRing}{\textup{AnRing}} 

\RequirePackage{xspace}

\newcommand{\base}{B} 

\newcommand{\an}{\textup{an}}
\renewcommand{\et}{\textup{\'{e}t}}
\newcommand{\discrete}{\textup{disc}}
\newcommand{\ft}{\textup{ft}}
\newcommand{\lft}{\textup{lft}}
\newcommand{\cond}{\textup{cond}}
\newcommand{\aff}{\textup{aff}}
\newcommand{\nuc}{\textup{nuc}}
\newcommand{\fin}{\textup{fin}}

\newcommand{\cn}{\textup{cn}}
\newcommand{\stable}{\textup{st}} 
\newcommand{\pre}{\textup{pre}} 
\renewcommand{\inv}{^{-1}}

\newcommand{\K}{\textup{K}}
\newcommand{\KH}{\textup{KH}}
\newcommand{\NK}{\textup{NK}}
\newcommand{\Nup}{\textup{N}}
\newcommand{\Nuphat}{\hat{\textup{N}}}
\newcommand{\Kan}{\K^\textup{an}}
\newcommand{\Kanu}{\underline{\K}^\textup{an}}
\newcommand{\kan}{\textup{k}^\textup{an}}
\newcommand{\kanu}{\underline{\textup{k}}^\textup{an}}
\newcommand{\Kcont}{\K^\textup{cont}}
\newcommand{\Kcontu}{\underline{\K}^\textup{cont}}

\newcommand{\KHcontu}{\underline{\textup{KH}}^\textup{cont}}

\newcommand{\Knuc}{\K^{\nuc}}
\newcommand{\Knucu}{\underline{\K}^{\nuc}}
\newcommand{\on}{\,\textup{on}\,} 

\newcommand{\Map}{\textup{Map}}
\newcommand{\Mapu}{\underline{\textup{Ma\smash{p}}}}
\newcommand{\topspace}[1]{|#1|} 
 
\newcommand{\Arig}{\AA^{1}}

\newcommand{\Brig}{\BB^{1}}

\newcommand{\skp}[1]{\langle #1 \rangle}
\newcommand{\tea}{\skp{t}}

\renewcommand{\to}[1][]{\overset{#1}{\rightarrow}}	
	
\newcommand{\To}[1][]{\overset{#1}{\longrightarrow}}	
	
\newcommand{\inj}[1][]{\overset{#1}{\hookrightarrow}}		
\newcommand{\surj}[1][]{\overset{#1}{\twoheadrightarrow}}		

\DeclareMathOperator*{\prolim}{``lim''}

\newcommand{\varplim}{\prolim_{t\mapsto\varpi t}}
\DeclareMathOperator*{\indcolim}{``colim''}
\newcommand{\icolim}{\indcolim_{t\mapsto\pi t}}
\renewcommand{\L}{\textup{L}}
\newcommand{\Lmot}{\L_{\textup{mot}}}

\newcommand{\PrL}{\textup{Pr}^{\textup{L}}}
\newcommand{\PrLst}{\textup{Pr}_{\textup{st}}^{\textup{L}}}
\newcommand{\PrLL}{\textup{Pr}^{\textup{LL}}}
\newcommand{\PrLLst}{\textup{Pr}_{\textup{st}}^{\textup{LL}}}
\newcommand{\Catdual}{\textup{Cat}^{\textup{dual}}_{\textup{st}}}
\newcommand{\PrLO}{\textup{Pr}^{\textup{L},\otimes}}
\newcommand{\CAlgPr}{\textup{CAlg}(\PrLO)}

\newcommand{\infcat}{category}

\newcommand{\carre}[4]{\[\begin{xy}\xymatrix{#1\ar[r]\ar[d]&#2\ar[d]\\#3\ar[r]&#4}\end{xy}\]}

\makeatletter
\newcommand{\customlabel}[2]{%
    \protected@edef\@currentlabel{#2}
    \phantomsection
    \label{#1}
}
\makeatother

\date{}
\title{Representability of continuous K-theory in rigid analytic motivic $\AA^1$-homotopy theory}
\author{Christian Dahlhausen, Can Yaylali, and Yicheng Zhou}

\begin{document}
\maketitle
\begin{abstract}
\let\thefootnote\relax\footnote{2020 Mathematics Subject Classification: 19E99, 14F42, 14G22}
We prove that both continuous K-theory and analytic K-theory of rigid analytic spaces (à la Kerz--Saito--Tamme) satisfiy descent with respect to the Nisnevich topology. Together with the fact that it is $\Arig$-invariant assuming resolutions of singularities, we deduce that it is representable in the $\mathbb{A}^{1}$-homotopy category of rigid spaces (à la Dahlhausen--Yaylali). We identifiy the representing object with both $\mathbb{Z}\times\textup{BGL}$ and the analytification of algebraic K-theory. As a consequence, we get a representability statement for coefficients in light condensed spectra.
Moreover, we show Weibel vanishing and that continuous K-theory is $\Arig$-invariant on local Tate pairs (without any regularity assumption).
\end{abstract}

\thispagestyle{empty}
\setcounter{tocdepth}{1}
\tableofcontents

\section{Introduction}
Algebraic K-theory is a Nisnevich sheaf on qcqs schemes and it is $\AA^1$-invariant and connective on regular schemes. Therefore, for any regular scheme $\base$, it can be seen as an object $\K$ in Morel--Voevodsky's unstable motivic motivic homotopy category $\textup{H}(\base)$ where there is an equivalence $\K \simeq$  $\ZZ\times \BGL$ \cite[Prop.~3.10]{MV1}. This description was used by Riou to establish the Adams-filtration on the underlying spectrum and upgrade the Grothendieck--Riemann--Roch isomorphism motivically. Based on this result Cisinski--Déglise proposed a category of rational motives (Belinson-motives) admitting a full six-functor formalism and recovering rational Chow groups as extension groups, at least for smooth schemes. In the non-smooth case, similar results are achieved by working with homotopy invariant connective K-theory \cite{cisinski-descente}.

\vspace{6pt}\noindent
The goal of this article is to prove an analogous representability result for continuous K-Theory 
\[ \Kcontu \colon \AnAdic \To \Pro(\Sp^+) \]
within the the rigid analytic $\Arig$-motivic homotopy category $\RigH(\base,\Pro(\Sp^+))$ which has been introduced by the two first named authors \cite{DY}.
We recall that the definition of $\Kcontu$, but with values in spectra, has been suggested by Morrow \cite{Morrow} and that its foundations have been extensively studied by Kerz--Saito--Tamme \cite{kst-i,kst-ii}. So far, it has already been established that $\Kcontu$ is a sheaf with respect to the anayltic topology, see \cite[Thm.~3.5]{Morrow} and \cite[\S 7]{dahli-thesis-paper}, and that it is $\Arig$-invariant \emph{assuming locally the existence of regular models} \cite[Prop.~5.14]{kst-i}. In this paper, we establish Nisnevich descent which is a necessary ingredient for representability.

\begin{intro-thm}[{Theorem~\ref{Thm:Nisnevich-descent-Kcont}}]
\label{thm:Nisnevich-descent-Kcont-intro}
Let $\base = \Spa(k, k^\circ)$ for a nonarchimedean field $k$, or $\base = \Spa(R, R^+)$ with $R$ being a Tate ring that admits a noetherian and finite-dimensional ring of definition.
Then the presheaf
\[ \Kcontu \,\colon\, \Adic_\base^{\lft,\opp} \To \Pro(\Sp^+),\quad \Spa(A,A^+) \mapsto \Kcontu(A), \]
is a Nisnevich sheaf.
\end{intro-thm}
As a consequence, we can show that $\KHcontu\coloneqq \L_{\AA^1}\Kcontu$ is also a Nisnevich sheaf on quasi-separated spaces of finite dimension (Lemma~\ref{lem:Nisnevich-descent-KHcont}).
Moreover, $\KHcontu(A) \simeq \Kanu(A)$ for any Tate ring $A$ of topologically finite type over a nonarchimedean field $k$, or a Tate ring that admits a noetherian and finite-dimensional ring of definition (Proposition~\ref{prop:A1-localisation}).
For these results, we need that the K-theory spectra are bounded below. This follows from Weibel vanishing (Proposition~\ref{prop:weibel-vanising-Kcont}) which we generalise beyond the case over discretely valued fields shown by Kerz \cite[Thm.~12]{kerz-icm2018} and Dahlhausen \cite[Thm.~A]{dahli-thesis-paper}.

Consequently, under an appropriate assumtion \ref{spadesuit-condition-analytic} about the existence of regular models locally in the analytic topology, we establish the following result. Note that the analogous assumption \ref{spadesuit-condition-etale} for the étale topology is void (Remark~\ref{rem:key-assumption}) so that an étale version of this holds unconditionally (Remark~\ref{rem:representability-etale-case}).

\begin{intro-thm}[Representability Theorem~\ref{thm:representability}]
\label{thm:representability-intro}
Let $\base = \Spa(R, R^+)$ be an adic space such that $R$ admits a noetherian ring of definition and assume \ref{spadesuit-condition-analytic}.
Then there is a canonical equivalence
\[\Omega^\infty\Kanu_{\geq 0} \simeq \Lmot(\ZZ\times\BGL) \]
in the category $\RigH(\base,\Pro^\omega(\Spc))$.
In particular, for every $X \in \Adicsm_\base$ there is a functorial equivalence
\[\Omega^\infty\gamma^\omega\Kanu_{\geq 0}(X) \simeq \Homline_{\Cond^\omega(\Spc)}(\Lmot X,\Lmot(\ZZ\times\BGL)) \]
in the category $\Cond^\omega(\Spc)$ of light condensed spaces; here $\gamma^\omega$ is defined below and the right-hand side denotes the enriched Hom-space as a left $\Cond^\omega(\Spc)$-module, see Appendix~\ref{app.yoneda}.
As a consequence of the Bass Fundamental Theorem (Corollary~\ref{bass-fundamental-theorem-for-Kan--cor}), we obtain a commutative algebra object $\mathrm{KGL}^\an\in \RigSH(\base)$ representing analytic $\K$-theory (Corollary~\ref{cor-K-P1-spectrum}).
\end{intro-thm}

Let us comment on the role of condensed mathematics in this paper. First, passing from pro-coefficients to condensed coefficients enables us to deduce the representability formula in Theorem~\ref{thm:representability-intro}, because with coefficients in $\Cond^\omega(\Spc)$ the category $\RigH(\base,\Cond^\omega(\Spc))$ admits an enrichment. This change of coefficients is harmless, since the limit-preserving comparison functor
\[ \gamma^\omega\colon \Pro^\omega(\Sp^+)\to \Cond^{\omega}(\Sp) \]
is conservative (Theorem~\ref{thm:comparison-conservative}).
More importatly, due to results of Efimov and Andreychev, for a Tate ring $A$ the underlying spectrum $\Kcont(A)$ of the pro-spectrum $\Kcontu(A)$ is equivalent to $\K(\Nuc(A))$, i.e.\@ the K-theory of the dualisable category $\Nuc(A)$ of nuclear modules on the analytic ring (à la Clausen--Scholze) associated with $A$. Using that the category $\AnAdic$ is tensored over the category of profinite sets (Lemma~\ref{lem:XotimesSsheafy}), we lift the functor $\K(\Nuc(-))$ to a functor $\Knucu$ with values in condensed spectra. Subsequently, we prove an enriched equivalence
\[ \Knucu \simeq \gamma_\kappa\Kcontu \]
where $\gamma_\kappa \colon \Pro^\omega(\Sp^+) \to \Cond_\kappa(\Sp)$ is the comparison functor (Proposition~\ref{prop:Kcond=Kcont-enriched}). Using Andreychev's result that $\K(\Nuc(-))$ satisfies Nisnevich descent \cite[Satz~5.12]{andreychev-thesis}, we deduce Nisnevich descent for its condensed enrichement $\Knucu$ and Theorem~\ref{thm:Nisnevich-descent-Kcont-intro}.

\vspace{6pt}\noindent
One drawback of Theorem~\ref{thm:representability-intro} is its assumption \ref{spadesuit-condition-analytic} which is only known to be true in the case where the base Tate ring $R$ admits a quasi-excellent ring of definition of characteristic zero, see Remark~\ref{rem:key-assumption}. It is still an open problem whether for a \emph{regular} adic space $X$ the canonical map
\begin{equation} \tag{$\clubsuit$}
    \Kcontu(X) \To \Kcontu(X \times \Arig) 
\end{equation}
is an equivalence. As a potential first step towards a positive answer, we prove the case for local Tate pairs(Definition~\ref{def:local-huber-pair}); observe that there is no regularity assumption on the ring $A$, which is quite unexpected.

\begin{intro-thm}[{Theorem~\ref{thm:Kcont-local-Huber-pair}}]
\label{thm:Kcont-local-Huber-pair-intro}
    Let $(A,A^+)$ be a local Tate pair with pseudo-uniformiser $\varpi$ such that $A^+$ is bounded (i.e.\@ a ring of definition). Then the canonical map
    \[ \Kcontu(A) \To \varplim \Kcontu(A\tea) \simeq \Kcontu(\Arig_A) \]
    is an equivalence of pro-spectra.
\end{intro-thm}
 
Given an adic space $X$, one can check whether the map $(\clubsuit)$ is an equivalence or not on the stalks of the sheaf $\Kcontu \colon \textup{Open}(X) \to \Pro(\Sp^+)$. Moreover, with any analytic point $x\in X$ one can associate a local Huber pair $(A_x,A_x^+)$ satisfying the prerequisites of Theorem~\ref{thm:Kcont-local-Huber-pair-intro} and such that its underling pairs of rings equals $(\Ocal_{X,x},\Ocal_{X,x}^+)$, the pair of (completed) stalks of the structure sheaves $(\Ocal_X,\Ocal_X^+)$, see Remark~\ref{rem:local-Huber-pairs}. Hence it would be sufficient to know that the stalk $(\Kcontu)_x$ is indeed equivalent to $\Kcontu(A_x)$ which may be true or may be not. Since categorical K-theory commutes with filtered colimits, for the analogous question on underlying spectra it would be enough to show that the canonical map
\[ \colim_{x\in U\subseteq X} \Nuc(U) \To \Nuc(A_x) \]
is an equivalence in $\PrL$. For the latter, a better understanding of the fully faithful functor \cite[Prop.~13.16]{analytic}
\[ \Tate \To \AnRing,\quad (A,A^+) \mapsto (A,A^+)_\blacksquare \]
shall be helpful.

\vspace{6pt}\noindent\textbf{Acknowledgements.}
We thank Peter Scholze for communicating to us the proof of Theorem~\ref{thm:comparison-conservative} and Vlaidimir Hinich for answering our questions about the enriched Yoneda lemma.
Furthermore, we want to thank Gregory Andreychev, Dustin Clausen, Georg Tamme, and Luca Passolunghi for helpful discussions about continuous K-theory.

This collaboration was supported by the Deutsche Forschungsgemeinschaft (DFG)\linebreak through the Collaborative Research Centre TRR 326 \textit{Geometry and Arithmetic of Uniformized Structures}, project number 444845124. Furthermore, CY was supported through the Walter Benjamin Programme (DFG, project number 524431573) during his stay at the Laboratoire de Mathématiques d'Orsay. He wants to thank Vincent Pilloni for hosting him and all the other people giving a friendly and mathematically interesting environment.
YZ was supported by Simons Collaboration on Perfection in Algebra, Geometry, and Topology, and the Fondation Mathématique Jacques Hadamard.

\vspace{6pt}\noindent\textbf{Assumptions and notations.}
Throughout, we fix some inaccessible regular cardinal $\kgbar$. By \textit{small}, we will mean \textit{$\kgbar$-small}. 
A \emph{category} always means an $\infty$-category, i.e. an $(\infty,1)$-category in the sense of Lurie \cite{HTT}, and we identify 1-categories with their image under the nerve functor.
The term \emph{essentially unique} is short hand for the expression \emph{unique up to contractible choice}.

Let us fix some notation for the categories appearing in this article.
\begin{enumerate}
    \item[$\bullet$] $\Spc$, the category of small spaces, or equivalently small $\infty$-groupoids or small $\infty$-sets, or small anima (depending on your taste).
    \item[$\bullet$] $\Sp$, the category of spectra.
    \item[$\bullet$] $\PrL$, the category of presentable categories with colimit preserving functors.
    \item[$\bullet$] $\PrLO$, the category $\PrL$ endowed with the Lurie tensor product \cite[\S 4.8.1]{HA}.
    \item[$\bullet$] $\Tate$, the category of affinoid Tate rings $(A,A^+)$.
    \item[$\bullet$] $\PreAdic$, the category of pre-adic spaces (whose structure presheaf is not necessarily sheafy, as opposed to adic spaces).
    \item[$\bullet$] $\Adic$, the category of adic spaces \cite{huber1996etale}. 
    \item[$\bullet$] $\AnAdic$, the category of analytic adic spaces, cf.\@ \cite[Def.~1.1.2]{kedlaya2017sheaves}, \cite[Def.~4.3.2]{weinstein-scholze}, \cite[Def.~6.1]{huebner-adic}. These are precisely those adic spaces which are locally isomorphic to $\Spa(A,A^+)$ for an affinoid Tate ring $(A,A^+)$ \cite[Prop.~6.5]{huebner-adic}.
    \item[$\bullet$] $\Adic^\lft_\base$, the category of adic spaces locally of topologically finite type over an adic space $\base$. If $\base$ is analytic, then $\Adic^\lft_\base$ is contained in $\AnAdic$.
    \item[$\bullet$] $\Adicsm_\base$, the category of adic spaces that are smooth over an adic space $\base$.
    \item[$\bullet$] $\Cond(\Ccal)$, for a presentable category $\Ccal$, denotes either of the categories $\Cond^\omega(\Ccal)$ of light condensed objects or $\Cond_\kappa(\Ccal)$ of $\kappa$-condensed objects, see Section~\ref{sec:pro-and-condensed}.
\end{enumerate}
We notice that the existing terminologies for different variants of K-theory are not entirely consistent across the literature. We shall use the following terminology:
\begin{enumerate}
    \item[$\bullet$] We write $\K \colon \Cat^\textup{dual}_\textup{st} \To \Sp$ for the universal localising invariant on dualisable stable categories, i.e.\@ Efimov's extension of (an $\infty$-categorial version) of the categorical K-theory functor à la Blumberg--Gepner--Tabuada \cite{efimov-k-theory}.
    \item[$\bullet$] $\Kcont$ denotes Morrow's continuous K-theory as a spectrum-valued functor \cite[\S 3]{Morrow}.
    \item[$\bullet$] $\Kcontu$ denotes the pro-enrichement of $\Kcont$ by Kerz--Saito--Tamme \cite[\S 5]{kst-i}.
    \item[$\bullet$] $\Knuc \coloneqq \K\circ\Nuc$ denotes the K-theory of nuclear modules \cite[\S 5]{andreychev-thesis}.
    \item[$\bullet$] $\Knucu$ denotes a condensed enrichement of $\Knuc$ (Definition~\ref{Def:nuclear-K-theory}).
\end{enumerate}

\vspace{6pt}\noindent\textbf{Rigid spaces vs.\@ adic spaces.} In this article, we work with the notion of \emph{adic spaces}. On the other hand, in the article \cite{DY} the two first named authors worked with the category $\Rig$ of \emph{rigid spaces} à la Raynaud \cite{raynaud}, Abbes \cite{egr}, and Fujiwara--Kato \cite{fuji-kato}. Recall that there exists a fully faithful functor $\iota\colon\Adic^\textup{unif} \inj \Rig$ sending $\Spa(R,R^+)$ to $\Spf(R^+)^\rig$ which is compatible with gluing along open immersions \cite[\S 1.2]{AGV}. 
Moreover, for a nonarchimedean field $k$, Tate's classical category of rigid analytic $k$-varieties embedds fully faithfully into both $\Adic$ and $\Rig$; for uniform spaces these embeddings are compatible with the functor $\iota$. In the adic world, a (quasi-separated) rigid analytic $k$-variety corresponds to a (quasi-seprated) adic space which is locally of finite type over $\Spa(k,k^\circ)$ \cite[Prop.~4.5]{huber1994generalization}.
Hence we can use the results of \cite{DY} about the rigid motivic $\Arig$-homotopy category $\RigSH(\base,\Vcal)$ (for a stable category $\Vcal$).
For us, the term \emph{rigid analytic space} and its abbreviation \emph{rigid space} refer to the platonic concept whithout choosing a concrete model category for it.

\section{Preliminaries on adic spaces}
\label{sec:adic-spaces}

In this section, we treat some background on adic spaces that is used in the sequel. Eventually, we may enrich the nuclear K-theory $\Knuc \coloneqq \K \circ \Nuc(-)$, which is valued in spectra, to a condensed K-theory $\Knucu$ valued in condensed spectra (Definition~\ref{Def:nuclear-K-theory}). In order to do that, we need the fact that the category of analytic adic spaces is tensored over $\ProFin$ (Lemma~\ref{lem:XotimesSsheafy}), which should be known to experts but which we could not find in the current literature. First, with any profinite set we associate an adic space.

\begin{defi}[{\cite[\S 4.1]{weinstein-scholze}}]
\label{Def:profinite-tensor-adic-2}
    For any profinite set $S$, let us define the pre-adic space
    \[\underline{S} \coloneqq \Spa(\Cont(S,\ZZ)),\]
    where $\Cont(S,\ZZ)$ is the discrete ring of continuous (thus locally constant) maps from $S$ to $\ZZ$.
    Moreover, for any pre-adic space $X$, we set
    \[ S \otimes X \coloneqq \underline{S} \times_{\Spa(\ZZ)}X. \]
\end{defi}

\begin{rem}[{\cite[\S 4.1]{weinstein-scholze}}]
\begin{enumerate}
    \item The pre-adic space $\underline{S}$ is actually an adic space\footnote{%
        Let us stress out that $\underline{S}$ is really an adic space but not an analytic adic space as it is \emph{not} locally Tate.
    }
    (since it is associated with a discrete Huber pair), and represents the functor:
    \[\Adic\op \to \Set, \quad X \mapsto \Cont(\topspace{X}, S).\]
    \item     The underlying topological space $\topspace{\underline{S}}$ is identified with $S \times \topspace{\Spa(\ZZ)}$.
    \item The assignment $S \mapsto \underline{S}$ defines a functor $\ProFin \inj \Adic$ which is fully faithful; indeed, we have
    \[\Hom_\Adic(\underline{T}, \underline{S}) \simeq \Cont(T \times \topspace{\Spa(\ZZ)}, S) \simeq \lim_n \colim_m \Cont(T_m \times \topspace{\Spa(\ZZ)}, S_n).\]
    But there is no nonconstant map to a profinite set from $\topspace{\Spa(\ZZ)}$ since the latter space has one single generic point, hence we obtain
    \[ \lim_n \colim_m \Cont(T_m \times \topspace{\Spa(\ZZ)}, S_n) \simeq \lim_n \colim_m \Cont(T_m, S_n) \simeq \Cont(T, S).\]
\end{enumerate}
\end{rem}

\begin{rem}
    If $S$ is a finite set, then $\underline{S}$ is canonically identified with $\bigsqcup_S \Spa(\ZZ)$. For a general profinite set with profinite presentation $S = \lim_i S_i$, we have canonical identifications
    \[\underline{S} = \Spa(\Cont(S, \ZZ)) \simeq \Spa(\colim_i \Cont(S_i, \ZZ)) \simeq \lim_i \Spa(\Cont(S_i, \ZZ)) = \lim_i \underline{S_i}\]
    in the category of (pre-)adic spaces. It follows immediately that $S \otimes X \simeq \lim_i (S_i \otimes X)$ as pre-adic spaces.
\end{rem}

The space $S \otimes X$ has a convenient description when $X$ is an affinoid (pre-)adic space (Lemma~\ref{lem:profinite-tensor-adic}).

Let $(A_0, I)$ be a pair of definition of the Huber pair $(A, A^+)$; we may assume $A_0 \subset A^+$. Then $\Cont(S, A^+)$ becomes naturally a topological ring such that $\Cont(S, A_0)$ is an open subring with topology generated by the ideal of definition $\Cont(S, I)$. Hence the following statement is meaningful.

\begin{lem}
\label{lem:profinite-tensor-adic}
    Let $S$ be a profinite set and let $X = \Spa(A, A^+)$ be an affinoid pre-adic space. 
    We have a canonical isomorphism of pre-adic spaces
    \[S \otimes X \simeq \Spa(\Cont(S, A), \Cont(S, A^+)).\]
    The Huber pair $(\Cont(S, A), \Cont(S, A^+))$ is complete if the pair $(A,A^+)$ is complete.
\end{lem}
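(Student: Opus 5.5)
The plan is to compute the fibre product directly in the category of (complete) Huber pairs: for affinoids, fibre products of pre-adic spaces over $\Spa(\ZZ)$ are given by $\Spa$ of the (completed) tensor product of Huber pairs. Since $\underline{S} = \Spa(\Cont(S,\ZZ))$ with $\Cont(S,\ZZ)$ discrete, we get
\[ S \otimes X \simeq \Spa\bigl(\Cont(S,\ZZ)\otimes_\ZZ A,\ \text{integral closure of the image of } \Cont(S,\ZZ)\otimes_\ZZ A^+\bigr), \]
with the tensor-product topology, followed by completion if desired. Because $\Spa$ only depends on the completion, the whole question reduces to an identification of Huber pairs up to completion. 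Ring theory first: write $S=\lim_i S_i$ with finite $S_i$. Then $\Cont(S,\ZZ)=\colim_i \ZZ^{S_i}$, and tensor products commute with filtered colimits, so
\[ \Cont(S,\ZZ)\otimes_\ZZ A \simeq \colim_i A^{S_i} \simeq \Cont_{\mathrm{lc}}(S,A) = \Cont(S,A), \]
where the last equality uses that $A$ carries its topology and $\Cont(S,A)$ means continuous maps. I will check that continuous maps from a profinite set into a Huber ring with the ideal of definition topology are uniform limits of locally constant maps, so $\Cont(S,A)$ is the completion of the locally constant ones in the topology defined by $\Cont(S,I)$. In fact, if one reads $\Cont(S,A)$ as continuous maps with $A$ discrete, the equality is exact; in general it holds after completion. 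Likewise $\Cont(S,\ZZ)\otimes A^+ \to \Cont(S,A^+)$ has image the locally constant maps, which is integrally closed in $\Cont_{\mathrm{lc}}(S,A)$: pointwise integrality over $A^+$ together with local constancy gives this, and integral closedness is checked on each finite quotient $A^{S_i}$, where it is a product.

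Topology: the tensor topology on $\colim_i A^{S_i}$ has ring of definition $\colim_i A_0^{S_i}$ and ideal generated by $I$, that is, $\colim_i I^{S_i}$. This matches the topology on $\Cont(S,A)$ generated by $\Cont(S,A_0)$ and $\Cont(S,I)$. The point is that $\Cont(S,I^n)=\Cont(S,I)^n$ up to finitely generated issues: since $I$ is finitely generated by $f_1,\dots,f_r$, a locally constant map into $I^n$ can be written as $\sum$ (monomials in $f_j$)$\cdot$(locally constant maps into $A_0$), by writing it on each of finitely many clopens. So the pairs agree up to completion, and since $\Spa$ is insensitive to completion we obtain the isomorphism of pre-adic spaces.

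Completeness: if $A$ is complete, then $\Cont(S,A_0)$ is $\Cont(S,I)$-adically complete. Given a Cauchy sequence $(g_k)$, it converges uniformly, because the topology is the uniform one, as just shown. The pointwise limit is continuous since it is a uniform limit of continuous functions into a complete topological group. Hence $\Cont(S,A)$ is complete, and $\Cont(S,A^+)$ is closed, because $A^+$ is closed in $A$. I expect the main obstacle to be this step of matching the tensor-product topology with the uniform topology, namely $\Cont(S,I)^n$ versus $\Cont(S,I^n)$ and the density of locally constant functions. Both rely on the compactness of $S$ and the finite generation of $I$.
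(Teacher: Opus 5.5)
Your proposal is correct and follows essentially the same route as the paper. Both arguments identify the fibre product with $\Spa$ of the tensor-product pair $(\Cont(S,\ZZ)\otimes_\ZZ A,\ \Cont(S,\ZZ)\otimes_\ZZ A^+)$, that is, of locally constant maps, note that this subring is integrally closed and that the powers of the ideal of definition are the locally constant maps into $I^n$, and conclude because $\Spa$ is insensitive to completion; your density and uniform-convergence arguments just spell out the step ``whose completion is $(\Cont(S,A),\Cont(S,A^+))$'', which the paper asserts without detail.
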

\begin{proof}
    Replacing $(A, A^+)$ with its completion and 
    by construction of the fibre product of pre-adic spaces, the left-hand side is given by the pair
    \[(\Cont(S, \ZZ) \otimes_\ZZ A, \Cont(S, \ZZ) \otimes_\ZZ A^+) \simeq (\Cont(S, A^\discrete), \Cont(S, A^{+,\discrete})).\]
    Consider the ring $D_0 \coloneqq \Cont(S, A_0^\discrete) \simeq \Cont(S, \ZZ) \otimes A_0$ and its ideal $J \coloneqq \Cont(S, \ZZ) \otimes_\ZZ I$ (recalling that $\Cont(S, \ZZ)$ has discrete topology).
    Let us show that it defines the same adic spectrum as in the statement.
    The subring $\Cont(S, A^{+,\discrete}) \subset \Cont(S, A^\discrete)$ is already integrally closed.
    Observing that the fundamental neighbourhood of this pair is given by
    \[J^n = I^n \Cont(S, A_0^\discrete) = \Cont(S, I^n) \subset D_0 = \Cont(S, A_0^\discrete), \quad n \in \NN,\]
    we see immediately that it is a Huber pair whose completion is $(\Cont(S, A), \Cont(S, A^+))$. In particular, its adic spectrum represents the desired fibre product of pre-adic spaces.
\end{proof}

\begin{cor}
    \label{cor:profinite-tensor-adic}
    For a complete Huber pair $(A,A^+)$ and a profinite set $S=\lim_iS_i$ (where all $S_i$ are finite), the canonical morphism
    \[ (\colim_i\Cont(S_i,A),\colim_i\Cont(S_i,A^+)) \To (\Cont(S,A),\Cont(S,A^+)) \]
    exhibits the completion of the Huber pair in the source.
\end{cor}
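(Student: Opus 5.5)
The plan is to deduce the corollary from Lemma~\ref{lem:profinite-tensor-adic} and its proof, by identifying the source pair with the discrete tensor product appearing there. Since each $S_i$ is finite, $\Cont(S_i,A)=A^{S_i}$ and $\Cont(S_i,A^+)=(A^+)^{S_i}$, and $\Cont(S_i,\ZZ)\otimes_\ZZ A\simeq A^{S_i}$. Because $\Cont(S,\ZZ)=\colim_i\Cont(S_i,\ZZ)$ and tensor products commute with filtered colimits, we obtain
\[ \colim_i\Cont(S_i,A)\simeq \Cont(S,\ZZ)\otimes_\ZZ A\simeq \Cont(S,A^\discrete), \]
the ring of locally constant maps $S\to A$, and likewise for $A^+$. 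Under this identification the canonical morphism in the statement becomes the inclusion of locally constant functions into continuous functions.

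Next I will check that the colimit topology on the source agrees with the topology used in the proof of Lemma~\ref{lem:profinite-tensor-adic}. For finite $S_i$, the pair $(A^{S_i},(A^+)^{S_i})$ has pair of definition $(A_0^{S_i},I^{S_i})$, and the transition maps are compatible with these. The colimit Huber pair therefore has ring of definition $D_0=\colim_iA_0^{S_i}=\Cont(S,A_0^\discrete)$ with ideal of definition $J=\colim_iI^{S_i}=\Cont(S,\ZZ)\otimes_\ZZ I$, which is exactly the pair $(D_0,J)$ from that proof. Since $\Cont(S,A^{+,\discrete})$ is integrally closed in $\Cont(S,A^\discrete)$, as noted there, the source is the Huber pair considered in the lemma.

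It remains to show that the completion of this pair is $(\Cont(S,A),\Cont(S,A^+))$, which the lemma asserts. The main point to verify is that $\Cont(S,A_0^\discrete)$ is dense in $\Cont(S,A_0)$ for the $\Cont(S,I^n)$-adic topology and that the latter ring is $J$-adically complete. Density holds because, given $f\in\Cont(S,A_0)$ and $n$, the composite $S\to A_0\to A_0/I^n$ is continuous into a discrete set, hence factors through some finite $S_i$; lifting it gives a locally constant function congruent to $f$ modulo $\Cont(S,I^n)$. Completeness follows from $\Cont(S,A_0)=\lim_n\Cont(S,A_0/I^n)$, using that $A_0$ is complete. Finally, the plus-rings are handled by the same argument applied to $A^+$, which is open and complete.
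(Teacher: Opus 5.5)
Your argument is correct, but it takes a different route from the paper. The paper's proof never works with topological rings directly. It writes
\[ \Spa(\Cont(S,A),\Cont(S,A^+))\cong S\otimes X\cong\lim_i(S_i\otimes X)\cong\lim_i\Spa(\Cont(S_i,A),\Cont(S_i,A^+))\cong\Spa(\colim_i\Cont(S_i,A),\colim_i\Cont(S_i,A^+)), \]
using Lemma~\ref{lem:profinite-tensor-adic} for the first isomorphism and $\underline{S}\simeq\lim_i\underline{S_i}$ for the second. It then implicitly reads off the statement about completions from the fact that $\Spa(R,R^+)$ depends only on, and recovers through its global sections, the completion of $(R,R^+)$.

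You instead identify the colimit with the ring of locally constant functions carrying the $(D_0,J)$-adic topology. You then check by hand that $\Cont(S,A_0^{\discrete})$ is dense in $\Cont(S,A_0)$ and that the target is complete. Your route is more elementary and self-contained. In effect it supplies the verification that the proof of the lemma dismisses with ``we see immediately''. The paper's route is shorter, but it relies on that unproven completion claim and on Huber's description of the structure presheaf.

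There is one small point you should state explicitly. To conclude that the map exhibits the completion, you also need it to be a topological embedding, i.e.\ $\Cont(S,I^n)\cap D_0=J^n$. This holds because a locally constant $I^n$-valued function factors through some finite $S_i$, and therefore lies in $(I^n)^{S_i}=I^nA_0^{S_i}\subseteq J^n$.
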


\begin{proof}
We compute
    \begin{align*}
    \Spa(\Cont(S,A),\Cont(S,A^+))
    &\cong S\otimes X \\
    &\cong \lim_i(S_i\otimes X) \\
    &\cong \lim_i(\Spa(\Cont(S_i,A),\Cont(S_i,A^+)) \\
    &\cong \Spa(\colim_i\Cont(S_i,A),\colim_i\Cont(S_i,A^+))
    \end{align*}
where the first isomorphism is Lemma~\ref{lem:profinite-tensor-adic}.
\end{proof}

\begin{lem}
    \label{lem:XotimesSsheafy}
    For any analytic adic space $X$ and any profinite set $S$, the pre-adic space $S \otimes X$ is an analytic adic space. In other words, $\AnAdic$ is stable under the action $S \otimes (-)$ by any profinite set $S$, i.e.\@ the category $\AnAdic$ is tensored over the category $\ProFin$.
\end{lem}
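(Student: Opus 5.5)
Since being an analytic adic space is local on $X$, and $S\otimes(-)$ is compatible with open immersions (an open $U\subseteq X$ gives an open $S\otimes U\subseteq S\otimes X$, because fibre products of pre-adic spaces are formed locally), I would reduce to the affinoid case $X=\Spa(A,A^+)$ with $(A,A^+)$ a complete Tate pair with sheafy structure presheaf. By Lemma~\ref{lem:profinite-tensor-adic}, $S\otimes X\simeq \Spa(\Cont(S,A),\Cont(S,A^+))$, and this Huber pair is complete. It is Tate: if $\varpi\in A$ is a topologically nilpotent unit, the constant function $\varpi$ is a topologically nilpotent unit of $\Cont(S,A)$, since $\varpi^n\in I^n$ gives $\varpi^n\in\Cont(S,I^n)$. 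Hence $S\otimes X$ is analytic, and it remains to show that the pair is sheafy.

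For sheafiness I would use the criterion via rational localisations. Let $U=U(f_1,\dots,f_n/g)\subseteq S\otimes X$ be a rational subset with $f_i,g\in\Cont(S,A)$. By Corollary~\ref{cor:profinite-tensor-adic}, $\colim_i\Cont(S_i,A)$ is dense in $\Cont(S,A)$. Since rational subsets of Tate rings are stable under small perturbation of the defining parameters, I may assume the $f_j,g$ factor through a finite quotient $S\to S_i$. On each fibre $s\in S_i$, with corresponding clopen $S^{(s)}\subseteq S$, the parameters are then constant $A$-valued. So $U$ is a finite disjoint union of pieces $S^{(s)}\otimes U_s$ with $U_s\subseteq X$ rational, and
\[\Ocal(U)\simeq \prod_{s\in S_i}\Cont(S^{(s)},\Ocal_X(U_s)),\]
obtained by applying Lemma~\ref{lem:profinite-tensor-adic} to $(\Ocal_X(U_s),\Ocal_X^+(U_s))$. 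The same argument applies to any finite rational cover: after refining, it is pulled back from a finite stage $S_i$, and on each clopen it is a rational cover $\{V_k\}$ of $X$ tensored with the profinite set $S^{(s)}$.

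Therefore the Čech complex of $\Ocal_{S\otimes X}$ for such a cover is $\Cont(S^{(s)},-)$ applied to the Čech complex $0\to A\to\prod_k \Ocal_X(V_k)\to\prod_{k,l}\Ocal_X(V_k\cap V_l)$. That complex is exact, indeed strictly exact (by the open mapping theorem for Tate rings, using sheafiness of $X$). The functor $\Cont(T,-)$ for profinite $T$ preserves exact sequences of complete topological groups that are strict with open maps: continuous functions lift along open surjections of complete metrisable groups locally on the profinite set $T$, since $T$ is totally disconnected and compact. This gives exactness, and hence sheafiness of $S\otimes X$.

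The main obstacle is the combination of the perturbation step with the exactness of $\Cont(T,-)$. The former needs care with the fact that $\Cont(S,A)$ is not a colimit but only its completion. For the latter I would present $T=\lim T_m$ and use that the complex $\colim_m\Cont(T_m,-)$ is a filtered colimit of finite products of exact complexes, and hence exact. I would then pass to completions, which preserves strict exactness for first countable complete groups.
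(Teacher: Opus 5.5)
Your proposal is correct and follows essentially the same route as the paper. Both arguments reduce to an affinoid Tate $X=\Spa(A,A^+)$, identify $S\otimes X$ with $\Spa(\Cont(S,A),\Cont(S,A^+))$ (which is Tate via the constant pseudo-uniformiser), use Huber's approximation lemma to make the rational parameters locally constant, split rational subsets and covers into pieces $S_\alpha\otimes(-)$ of rational subsets and covers of $X$, and then apply $\Cont(S,-)$ to a strictly left exact \v{C}ech sequence.

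The one difference is in how strictness is obtained. The paper reduces via Laurent covers (as in Buzzard--Verberkmoes) to standard binary rational covers and takes strictness from their Lemma~2. You instead treat arbitrary finite rational covers using an open mapping theorem for complete Tate rings. For the sheaf condition, only strictness of $\Ocal_X(X)\to\prod_k\Ocal_X(V_k)$ is needed, so your lifting argument along open surjections is never actually used.
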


\begin{proof}
    We have to show that the pre-adic space $S\otimes X$ is locally Tate and that the structure presheaf $\Ocal_{S \otimes X}$ is a sheaf.

    By a standard argument of \v{C}ech cohomology, we may assume $X = \Spa(A, A^+)$ with $A$ being a Tate ring. We may assume the Huber pair $(A, A^+)$ to be complete. Let $(A_0, I)$ be a pair of definition of $(A, A^+)$.

    First note that the space $S \otimes X$ is the adic spectrum associated with the complete Huber pair $(\Cont(S, A), \Cont(S, A^+))$ by Lemma~\ref{lem:profinite-tensor-adic}. If $A$ is a Tate ring with topologically nilpotent unit $\pi$, then $\Cont(S, A)$ is also a Tate ring with the constant function $\pi \in \Cont(S, A)$ being a topologically nilpotent unit.     
    
    In order to show sheafiness, let us write $S = \lim_{i \in I} S_i$ as a cofiltered limit of finite sets $S_i$.
    We have
    \begin{align*}
        S \otimes X & = \Spa \big( \Cont(S, \ZZ) \otimes_\ZZ A, \Cont(S, \ZZ) \otimes_\ZZ A^+ \big) \\
        & \simeq \Spa \big( \colim_i \Cont(S_i, A), \colim_i \Cont(S_i, A^+) \big),
    \end{align*}
    hence any rational subset of $S \otimes X$ is of the form $U = (S \otimes X)\left(\tfrac{f_1, \dots, f_n}{g}\right)$, for some given locally constant functions $f_1, \dots, f_n, g \in \Cont(S_i, A)$ for some $i \in I$.\footnote{%
        One may wonder why we may always take locally constant functions. In fact, here we are considering the adic spectrum of a Huber pair $(R, R^+)$ which is not necessarily complete. In the proof we claim that the structure presheaf of $\Spa(R, R^+)$ is sheafy.
        As for its associated completed Huber pair $(\hat R, \hat R^+)$, we have a natural isomorphism of pre-adic spaces $\Spa(\hat R, \hat R^+) \to \Spa(R, R^+)$ by \cite[Proposition~3.9, Lemma~3.10]{huber1993continuousvaluation}, the crucial reason being the approximation lemma (\cite[Lemma~3.10]{huber1993continuousvaluation}) for elements defining a rational subset.
    }
    
    Moreover, we can split $U$ as a disjoint union according to values of these functions on $S_i$. For $\alpha \in S_i$ denote by $S_\alpha \subset S$ its preimage. Then we have
    \begin{equation}
        \label{eq:decompSotimesXSheafy}
        U = \bigsqcup_{\alpha \in S_i} (S_\alpha \otimes X)\left(\tfrac{f_1(\alpha), \dots, f_n(\alpha)}{g(\alpha)}\right) \simeq \bigsqcup_{\alpha \in S_i} S_\alpha \otimes X\left(\tfrac{f_1(\alpha), \dots, f_n(\alpha)}{g(\alpha)}\right).
    \end{equation}
    The last isomorphism follows from the fact that both sides are the completion of the finite product of
    \begin{equation*}
        \big(\Cont(S_\alpha, \ZZ) \otimes_\ZZ A\big)[\tfrac{1}{g}] \simeq \Cont(S_\alpha, \ZZ) \otimes_\ZZ A[\tfrac{1}{g}], \qquad \alpha \in S_i
    \end{equation*}
    with ideal of definition $C(S_\alpha, \ZZ) \otimes_\ZZ \big(I \cdot A_0[\tfrac{1}{g}]\big)$. Hence, the global sections of $\Ocal_{S \otimes X}$ on the spaces at the right hand side of (\ref{eq:decompSotimesXSheafy}) are given by
    \begin{equation}
        \label{eq:globalSectionSotimesXSheafy}
        \Cont \Big(S_\alpha, A \big\langle \tfrac{f_1(\alpha), \dots, f_n(\alpha)}{g(\alpha)} \big\rangle \Big).
    \end{equation}

    Let $\Ucal$ be a cover of $S\otimes X$. It can be refined by a finite cover by rational subsets. For any such finite cover, by the decomposition in the previous paragraph, we may assume that $\Ucal$ comes by base change from a finite rational cover $\Ucal_0$ of $X$, i.e.\@ we may take $S_i$ to be the singleton $\{\alpha\}$ and $S_\alpha = S$ in the above. The sheaf condition of $\Ocal_{S \otimes X}$ for $\Ucal = S \otimes \Ucal_0$ then follows from the sheaf condition of $\Ocal_X$ for $\Ucal_0$. Indeed, as in the proof of \cite[Thm.~7]{buzzardverberkmoes2018stablyuniformsheafy}, by standard considerations of Laurent covers and induction, we may reduce to checking the sheaf condition for $\Ucal_0 = \{X_1, X_2\}$ being a standard binary rational cover of $X$. By the sheaf condition of $\Ocal_X$ for $\Ucal_0$, we have a left exact sequence
    \begin{equation}
        \label{eq:sheafyBase}
        0 \to \Ocal_X(X) \to \Ocal_X(X_1) \oplus \Ocal_X(X_2) \to \Ocal_X(X_1 \cap X_2),
    \end{equation}
    which is then strict by \cite[Lem.~2]{buzzardverberkmoes2018stablyuniformsheafy} (here we have the hypothesis that $A$ is a Tate ring).
    Applying $\Cont(S, -)$ to this strictly left exact sequence, we obtain a short left exact sequence\footnote{%
        The left exact sequence (\ref{eq:sheafyBaseTensorS}) is even right exact. Indeed, the sequence (\ref{eq:sheafyBase}) is even surjective on the right by \cite[Lem.~2]{buzzardverberkmoes2018stablyuniformsheafy}. As $A$ is a Tate ring, it is a surjection of Banach spaces, hence it admits the lifting property for continuous maps from a compact topological space.
    }
    \begin{equation}
        \label{eq:sheafyBaseTensorS}
        0 \to \Ocal_{S \otimes X}(S \otimes X) \to \Ocal_{S \otimes X}(S \otimes X_1) \oplus \Ocal_{S \otimes X}(S \otimes X_2) \to \Ocal_{S \otimes X}(S \otimes X_1 \cap S \otimes X_2).
    \end{equation}
    This completes the proof of sheafiness of $\Ocal_{S \otimes X}$.
\end{proof}

\begin{lem} 
\label{Lem:EDS-tensor-Nisnevich-square}
    For every Nisnevich square (\ref{Diagram:NisnevichSquare}) of analytic adic spaces and every $S\in\ProFin$, the tensored square (\ref{Diagram:NisnevichSquareTensorS}) is a Nisnevich square, too.
    Moreover, for every cover $X=\bigcup_{i\in I}X_i$ in the analytic topology, get an induced cover $S\otimes X = \bigcup_{i\in I} S\otimes X_i$.
\end{lem}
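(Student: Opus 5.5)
The plan is to reduce both assertions to the fact that $S\otimes(-)=\underline{S}\times_{\Spa(\ZZ)}(-)$ is a base change functor. It therefore preserves open immersions, étale morphisms and fibre products. The only real content is on the level of underlying topological spaces and points.

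First I describe $|S\otimes X|$. For affinoid $X=\Spa(A,A^+)$, Lemma~\ref{lem:profinite-tensor-adic} gives $S\otimes X=\Spa(\Cont(S,A),\Cont(S,A^+))$. The disjoint-union decomposition used in the proof of Lemma~\ref{lem:XotimesSsheafy} then shows that the projection $|S\otimes X|\to S\times|X|$ is a homeomorphism. Concretely, a continuous valuation on $\Cont(S,A)$ factors through evaluation at a unique $s\in S$: the idempotents $1_{S_\alpha}$ take value $0$ or $1$, and their compatible choice singles out $s$. It also gives a point of $X$, and conversely. I would prove this by writing $S=\lim_i S_i$, using the remark $S\otimes X\simeq\lim_i(S_i\otimes X)$ with $S_i\otimes X=\bigsqcup_{S_i}X$, and the fact that the topological space of a limit of affinoid spectra along maps of this form is the limit of the spaces. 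The affinoid case then globalises by gluing.

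Second, I treat the cover statement. Since $S\otimes X_i\to S\otimes X$ is an open immersion by base change, and the underlying spaces are $S\times|X_i|\subseteq S\times|X|$, the family $\{S\otimes X_i\}$ is jointly surjective whenever $\{X_i\}$ is. Hence it is a cover in the analytic topology.

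Third, I treat the Nisnevich square. Recall that the square consists of an open immersion $j\colon U\to X$ and an étale $p\colon V\to X$ inducing an isomorphism $p^{-1}(X\setminus U)\to X\setminus U$, or equivalently with $V\times_X V$ surjected on by the diagonal together with $U$-parts. After tensoring, $S\otimes j$ is an open immersion and $S\otimes p$ is étale, both by base change. Cartesianness is preserved because base change commutes with fibre products: $(S\otimes U)\times_{S\otimes X}(S\otimes V)\simeq S\otimes(U\times_X V)$. For the isomorphism over the closed complement, I use the point description: the complement of $S\otimes U$ is $S\times(|X|\setminus|U|)$. The map $S\otimes p$ restricted to the preimage of this complement is a base change of $p^{-1}(Z)\to Z$, where $Z$ is the reduced closed complement, possibly a non-analytic or non-reduced structure. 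Base change of an isomorphism is an isomorphism. If the Nisnevich condition in (\ref{Diagram:NisnevichSquare}) is phrased via residue field isomorphisms at points instead, I check it pointwise: residue fields of $S\otimes X$ at $(s,x)$ agree with those of $X$ at $x$, which follows from evaluating at $s$.

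The main obstacle I expect is the first step, identifying the points of $S\otimes X$ and especially the completed residue fields. Completion and limits of adic spectra interact delicately, so I would argue affinoid-locally via Corollary~\ref{cor:profinite-tensor-adic}. That corollary lets me reduce to valuations on $\colim_i\Cont(S_i,A)$, where everything is a filtered colimit of finite products of copies of $A$.
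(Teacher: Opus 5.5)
Your proposal is correct. Its skeleton matches the paper's: $S\otimes(-)=\underline{S}\times_{\Spa(\ZZ)}(-)$ is a base change, so the tensored square is the pullback of the original square along $S\otimes X\to X$. Open immersions, étale maps and cartesianness therefore survive.

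\textbf{Where you differ.} The difference lies in how you handle the degree-one (Nisnevich) condition.
\begin{itemize}
\item The paper cites Andreychev's general result that Nisnevich covers are stable under arbitrary base change of analytic adic spaces (his Lemma~A.12). For the degree-one property, which the paper invokes later, the relevant reference is his Lemma~A.19.
\item You verify the condition by hand for this particular base change. You use the identification $|S\otimes X|\cong S\times|X|$: continuous valuations on $\Cont(S,A)$ factor through evaluation at a point $s$, detected by the idempotents. You then check that the fibre over $(s,x)$ is $\{s\}\times p^{-1}(x)$ with the same residue field.
\end{itemize}
Your route is self-contained and gives an explicit description of points. It costs real work exactly where you expect it. Functions that are not locally constant need either a continuity argument or the reduction through Corollary~\ref{cor:profinite-tensor-adic}. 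The residue-field claim should be phrased carefully. The stalk at $(s,x)$ is not $\Ocal_{X,x}$; it consists of germs of functions near $s$. Only the residue fields agree, because the kernel of evaluation at $s$ lies in the support of the valuation. The paper's route takes two lines and works for any base change.

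\textbf{Two corrections.}
\begin{itemize}
\item The cover statement needs none of the point-set analysis. Preimages of an open cover under any morphism already cover.
\item Drop the variant with a ``reduced closed complement'' $Z$. In adic geometry, $|X|\setminus|U|$ generally does not underlie an adic space, so ``the base change of $p^{-1}(Z)\to Z$'' is not meaningful. Keep only your pointwise argument, which is the right one for Andreychev's definition.
\end{itemize}
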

\begin{proof}
    The diagram (\ref{Diagram:NisnevichSquareTensorS}) is the base change, by definition, of the diagram (\ref{Diagram:NisnevichSquare}) along the map $S \otimes X \to X$ by Definition~{\ref{Def:profinite-tensor-adic-2}}, hence it is also a Nisnevich square since Nisnevich covers are stable under base change of analytic adic spaces by \cite[Lemma~A.12]{andreychev-thesis}.
    Similarly, the covers in the analytic topology are also preserved under base change.
\end{proof}

As an application to the tensor-enrichment, we later shall lift functors on sousperfectoid spaces with values in a stable category $\Vcal$ to an enriched functor with values in the condensed category $\Cond(\Vcal)$, see Section~\ref{subsec:lifting-functors-to-condensed}.

\section{Preliminaries on pro-categories and condensed categories}
\label{sec:pro-and-condensed}
In the main body of this article, we deal with continuous K-theory as defined and studied by Kerz-Saito-Tamme \cite{kst-i,kst-ii} which is a functor
\[ \Kcontu \colon \Rig \To \Pro(\Sp) \]
with values in the category of pro-spectra. Since pro-categories are not that well-behaved, e.g.\@ computing colimits in them is not easy, we eventually will postcompose $\Kcontu$ with the canonical functor $\Pro(\Sp)\to\Cond(\Sp)$ to condensed spectra.

\subsection{Pro-categories} 
\label{subsec:pro-categories}

Analytic K-theory of rigid spaces is naturally a pro-object in the category of spectra. Let us quickly recall some facts about pro-categories  \cite[\S A.8.1]{SAG}. Let $\Ccal$ be a presentable category. Then we can define $\Pro(\Ccal)$ as those functors in $\Fun(\Ccal,\Spc)$, which are accessible and preserve finite limits. Equivalently, we have $\Pro(\Ccal)\simeq \textup{Ind}(\Ccal\op)\op$. An object $X\in\Pro(\Ccal)$ can be thought of a limit of a filtered  diagram $\lbrace X_{i}\rbrace$, where each $X_{i}$ is contained in the essential image of the Yoneda embedding $\Ccal\rightarrow \Pro(\Ccal)$. Following \cite{kst-i}, we denote these objects as $\prolim X_{i}$.
We have a limit functor $\Pro(\Ccal)\rightarrow \Ccal$ that maps an object $\prolim X_i$ to $\lim X_i \in \Ccal$. 

Let us remark, that finite limits and colimits in $\Pro(\Ccal)$ can be computed pointwise in the following sense. Let $K$ be a finite simplicial set. The limit functor $\Fun(K,\Ccal)\to \Ccal$ induces a functor $\Pro(\Fun(K,\Ccal))\rightarrow \Pro(\Ccal)$, which we call \textit{level-wise limit} (similarly for colimits). Furthermore, there exists a functor $\Pro(\Fun(K,\Ccal))\rightarrow \Fun(K,\Pro(\Ccal))$. Using this functor we obtain that the levelwise limit of $F\in \Pro(\Fun(K,\Ccal))$ is the limit of the induced diagram $K\rightarrow \Pro(\Ccal)$ (similarly for colimits) \cite[Lem. 2.1]{kst-i}.

Let $\Ccal = \Sp$. Then $\Pro(\Sp)$ is stable (Lemma~\ref{Pro-omega-stable--lemma}) and we can define homotopy groups on $\Pro(\Sp)$ pointwise \cite{kst-i}. To be more precise, for any $n\in \ZZ$ and any $\prolim X_i\in \Pro(\Sp)$, we set 
$$
    \pi_{n} (\prolim X_i )\coloneqq \prolim \pi_{n} X_{i} \in \Pro(\textup{Ab}).
$$
We will see that $\Pro(\Sp)$ admits a $t$-structure  and the pointwise homotopy groups are precisely the homotopy groups induced by this $t$-structure (Lemma \ref{lem-t-structure-pro}).

Lastly, let us recall the notion of \textit{weak equivalence}. In \textit{op.\!\! cit.} it is shown that there is a functor $\iota^{*}\colon \Pro(\Sp)\rightarrow \Pro(\Sp^{+})$, where $\Sp^{+}$ denotes the category of bounded above spectra. On objects this functor is given by 
$$
    \iota^{*}(\prolim_I X_i)\simeq \prolim_{\NN\times I} \tau_{\leq n}X_{i}.
$$
A morphism of pro-spectra $f\colon X\rightarrow Y$ is a weak equivalence if $\iota^{*}f$ is an equivalence. Analogously, a fibre sequence in $\Pro(\Sp)$ is called a \textit{weak fibre sequence} if it is a fibre sequence after applying $\iota^{*}$.
Note that $f$ is  weak equivalence if and only if $\tau_{\leq n}f$ is an equivalence for some $n\in\ZZ$ and $\pi_{i}f$ is an isomorphism for all $i\in \ZZ$ \cite[Lem. 2.8]{kst-i}.

\begin{defi} \label{Def:Pro-omega}For a \infcat{} $\Ccal$ denote by $\Pro^\omega(\Ccal)$ the full subcategory of $\Pro(\Ccal)$ of objects $X$ that are representable by diagrams $X \colon I\to\Ccal$ for a \emph{countable} cofiltered category $I$.
\end{defi}

\begin{lemma} \label{Pro-omega-stable--lemma}
Let $\Ccal$ be a stable category. Then the categories $\Pro(\Ccal)$ and $\Pro^\omega(\Ccal)$ both are stable.
In particular, the categories $\Pro(\Sp^+)$ and $\Pro^\omega(\Sp^+)$ are stable.
\end{lemma}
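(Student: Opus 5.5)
To prove the lemma, I would use the description $\Pro(\Ccal)\simeq \textup{Ind}(\Ccal\op)\op$ and reduce stability of $\Pro(\Ccal)$ to stability of $\textup{Ind}(\Ccal\op)$. Since $\Ccal$ is stable, so is $\Ccal\op$: it has a zero object, finite limits and colimits, and pushout squares coincide with pullback squares, because these notions are self-dual. For a small stable category $\Dcal$, the ind-completion $\textup{Ind}(\Dcal)$ is stable; this is \cite[Prop.~1.1.3.6]{HA}. The statement there is for small $\Dcal$, and I would handle size by enlarging the universe, or by noting that the proof only uses that $\Dcal$ is stable and that filtered colimits in $\textup{Ind}(\Dcal)$ commute with finite limits.

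For completeness I would also recall that argument. First, $\textup{Ind}(\Dcal)$ has finite limits and colimits, and a zero object given by the image of $0\in\Dcal$. Second, finite limits and finite colimits of ind-objects can be computed levelwise: any finite diagram in $\textup{Ind}(\Dcal)$ lifts to a filtered colimit of diagrams in $\Dcal$, by \cite[Lem.~2.1]{kst-i} or its dual in the pro-setting. Third, since $\Dcal$ is stable, each levelwise square is a pushout if and only if it is a pullback. Filtered colimits are exact in $\textup{Ind}(\Dcal)$, so this equivalence passes to the colimit. Taking opposites gives stability of $\Pro(\Ccal)$.

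For $\Pro^\omega(\Ccal)$ it suffices to show that it is a full subcategory of the stable category $\Pro(\Ccal)$ that contains $0$ and is closed under fibres and cofibres, or equivalently under finite limits and under suspension and loops. Take a morphism $f\colon X\to Y$ in $\Pro^\omega(\Ccal)$. The key step, and the main technical point, is to show that $f$ can be represented by a level map $\{f_n\colon X_n\to Y_n\}$ indexed by a countable cofiltered category, which one may even take to be $\NN\op$. I would use the standard reindexing argument: a countable cofiltered category admits a cofinal functor from $\NN\op$, and a morphism between pro-objects indexed by $\NN\op$ can be straightened into a level map by inductively choosing representatives $X_{k(n)}\to Y_n$ and passing to a cofinal subsequence.

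With the level representation in hand, the finite-limit statement recalled above shows that the fibre of $f$ is $\prolim_n \fib(f_n)$, which is indexed by $\NN\op$ and therefore lies in $\Pro^\omega(\Ccal)$. Cofibres are handled the same way, and $\Sigma$ and $\Omega$ are computed levelwise. The final claim follows by applying the lemma to $\Ccal=\Sp^+$, which is stable because it is closed under finite limits, finite colimits and shifts in $\Sp$.
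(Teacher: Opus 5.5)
Your proposal is correct and follows essentially the paper's route. Stability of $\Pro(\Ccal)$ is taken from the literature: the paper cites \cite[2.5]{kst-i}, while you use $\Pro(\Ccal)\simeq\textup{Ind}(\Ccal\op)\op$ together with \cite[Prop.~1.1.3.6]{HA}. Stability of $\Pro^\omega(\Ccal)$ then follows because finite (co)limits in pro-categories are computed levelwise, so $\Pro^\omega(\Ccal)$ is closed under them inside $\Pro(\Ccal)$; the paper phrases this as closure under finite colimits plus $\Sigma$ being an equivalence, whereas you phrase it as closure under fibres and cofibres. Your explicit reduction to level maps indexed by $\NN\op$ is a useful addition, since it spells out the countable level representation that the paper's phrase ``computed levelwise'' uses tacitly when restricting to $\Pro^\omega$.
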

\begin{proof}
Since $\Ccal$ is stable, the pro-category $\Pro(\Ccal)$ is stable as well \cite[2.5]{kst-i}. Since finite colimits in pro-categories can be computed levelwise, the inclusion $\Pro^\omega(\Ccal) \inj \Pro(\Ccal)$ creates finite colimits. Hence $\Pro^\omega(\Ccal)$ has admits finite colimits and the suspension functor is an equivalence. Thus $\Pro^\omega(\Ccal)$ is stable \cite[1.4.2.27]{HA}.
The rest follows as the category $\Sp^+$ is a stable subcategory of $\Sp$ \cite[p.~44]{HA}.
\end{proof}

In the work of Kerz-Saito-Tamme, the notion of \emph{pro-homotopy invariance} plays a major role when working with rings \cite{kst-bass-quillen}, \cite[\S 5.3]{kst-i}. For sheaves, pro-homotopy invariance is equivalent to $\Arig$-invariance.

\begin{lemma} \label{pro-invariant=Aan-invariant--lem}
A sheaf $F \colon \Rig_\base\op \to \Pro(\Ccal)$ is $\Arig$-invariant if and only if it is pro-$\Brig$-invariant, i.e. if the canonical map 
    \[
	F(X) \To \prolim_{t\mapsto\pi t} F(X\times\Brig)
	\]
is an equivalence in $\Pro(\Ccal)$.
\end{lemma}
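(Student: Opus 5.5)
The plan is to reduce $\Arig$-invariance to pro-$\Brig$-invariance through the standard presentation of the affine line as an increasing union of closed discs. Fix a pseudo-uniformiser $\pi$ of the base. Then
\[ X\times\Arig = \bigcup_{n\ge 0} X\times\Brig(|\pi|^{-n}), \]
where $\Brig(|\pi|^{-n})\cong\Brig$ via $t\mapsto \pi^n t$. The inclusion of the $n$-th disc into the $(n+1)$-st corresponds, under these coordinates, to the transition map $t\mapsto\pi t$. This union is a cover in the analytic topology by an increasing chain of opens (or rational subsets). Since $F$ is a sheaf, I will first show
\[ F(X\times\Arig)\simeq \lim_n F(X\times\Brig(|\pi|^{-n})) \]
in $\Pro(\Ccal)$. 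Equivalently, $F(X\times\Arig)$ is computed as the limit over the \v{C}ech nerve of the cover. Because the cover is a nested chain, all intersections are again discs, so the \v{C}ech limit collapses to the sequential limit.

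The subtle step is to identify this limit in $\Pro(\Ccal)$ with the formal pro-limit $\prolim_{t\mapsto\pi t}F(X\times\Brig)$. A countable cofiltered limit in $\Pro(\Ccal)$ of objects of $\Pro(\Ccal)$ is computed by reindexing. In particular, when each $F(X\times\Brig)$ is viewed as an object of $\Pro(\Ccal)$ with the transition maps induced by $t\mapsto\pi t$, the limit of the tower is exactly the pro-object $\prolim$ of the tower. This holds because limits of cofiltered diagrams of pro-objects are again computed formally, using $\Pro(\Ccal)\simeq\mathrm{Ind}(\Ccal\op)\op$ and the fact that filtered colimits in Ind-categories of Ind-objects are computed by concatenating index categories. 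Hence the map in the statement identifies with $F(X)\to F(X\times\Arig)$, composed with the equivalence above.

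With this identification, both implications follow at once. If $F$ is $\Arig$-invariant, then $F(X)\to F(X\times\Arig)\simeq\prolim F(X\times\Brig)$ is an equivalence. Conversely, if the pro-map is an equivalence for all $X$, then $F(X)\to F(X\times\Arig)$ is an equivalence, which is $\Arig$-invariance. Both notions are checked for all $X\in\Rig_\base$, so no further base change argument is needed.

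I expect the main obstacle to be the first step: justifying that the sheaf condition for the \emph{infinite} increasing cover yields the sequential limit in $\Pro(\Ccal)$. In rigid geometry, $\{X\times\Brig(|\pi|^{-n})\}_n$ is an admissible cover whose \v{C}ech nerve is cofinal with the chain. I would verify this using that finite intersections of members of the chain are again members, so the \v{C}ech diagram's limit reduces by cofinality to $\lim_n$. I would also check that "sheaf" in the source is meant with respect to a topology in which this non-quasi-compact cover is allowed, which holds for $\Rig$ with its analytic topology.
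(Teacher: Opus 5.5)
Your proposal is correct and follows essentially the same route as the paper. You write $X\times\Arig$ as the colimit of discs along $t\mapsto\pi t$, use the sheaf property to identify $F(X\times\Arig)$ with the limit of the tower $F(X\times\Brig)$, and identify that limit in $\Pro(\Ccal)$ with the formal pro-limit, so that the two conditions coincide. Your extra care about the non-quasi-compact cover (cofinality of the chain in the \v{C}ech diagram) and about towers whose terms are themselves pro-objects fills in details the paper leaves implicit; the paper only remarks that $\prolim_i C_i$ is the limit of the constant pro-objects $C_i$.
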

\begin{proof}
Note that a pro-system $\prolim_iC_i$ is the limit of the constant pro-systems given by the $C_i$ in $\Pro(\Ccal)$. Hence we compute
	\begin{align*}
	F(X\times\Arig) 
	\simeq F(\colim_{t\mapsto\pi t}(X\times\Brig)) 
	\simeq \prolim_{t\mapsto\pi t} F(X\times\Brig).
	\end{align*}
Hence the map $F(X) \to F(X\times\Arig)$ is an equivalence if and only if the map $F(X) \to \prolim_{t\mapsto\pi t} F(X\times\Brig)$ is an equivalence.
\end{proof}

\subsection{Condensed objects}
\label{subsec:recollections-condensed}

\vspace{6pt}\noindent\textbf{Notation.} For this entire section, we fix an uncountable strong limit cardinal $\kappa$.
Given a category $\Ccal$, we work with the categories $\Cond^\omega(\Ccal)$ of light condensed objects in $\Ccal$ and $\Cond_\kappa(\Ccal)$ of $\kappa$-condensed objects in $\Ccal$. One advantage is the following: If the category $\Ccal$ is presentable, then $\Cond^\omega(\Ccal)$ and $\Cond_\kappa(\Ccal)$ are also presentable (Lemma~\ref{condensed-objects-presentable--lem}). Hence we can apply Adjoint Functor Theorems, for instance. We also write $\Cond(\Ccal)$ for either $\Cond_\kappa(\Ccal)$ or $\Cond^\omega(\Ccal)$ when there is no need for specification. The material about light condensed objects is extracted from the lectures by Clausen and Scholze on Analytic Stacks.\footnote{%
    See \url{https://www.youtube.com/playlist?list=PLx5f8IelFRgGmu6gmL-Kf_Rl_6Mm7juZO}.
}

\begin{defi}[Condensed objects]
\begin{enumerate}
\item A profinite set is called \emph{light} if it is isomorphic to a countable limit of finite sets. Denote by $\ProFin^\omega$ the full subcategory of $\ProFin$ that is spanned by light profinite sets. 
We equip the category $\ProFin^\omega$ with the topology generated by finite families of jointly surjective maps.
\item Analogously, let $\ProFin_{<\kappa}$ be the full subcategory of $\ProFin$ that is spanned by $\kappa$-small sets, equipped with the topology generated by finite families of jointly surjective maps.
\end{enumerate}
Let $\Ccal$ be a category that admits finite limits.
\begin{enumerate} \setcounter{enumi}{2}
\item A \emph{light condensed object in} $\Ccal$ is a hypersheaf $X$ on $\ProFin^\omega$ with values in $\Ccal$.
We denote $\Cond^\omega(\Ccal) \coloneqq \Sh^{\wedge}(\ProFin^\omega,\Ccal)$ the category of light condensed objects in $\Ccal$.
\item A $\kappa$\emph{-small condensed object in} $\Ccal$ is a hypersheaf on $\ProFin_{<\kappa}$ with values in $\Ccal$.
We denote $\Cond_\kappa(\Ccal) \coloneqq \Shhyp(\ProFin_{<\kappa} ,\Ccal)$ the category of $\kappa$-small condensed objects in $\Ccal$.
\end{enumerate}
\end{defi}

\begin{rem}
Explicitly, a presheaf $X\colon (\ProFin^\omega)\op\to\Ccal$ is a \emph{sheaf} if and only if it satisfies
\begin{enumerate}
    \item $X(\varnothing)\simeq *$,
    \item $X(S\sqcup S') \to[\simeq] X(S)\times X(S')$, and
    \item $X(S) \to[\simeq] \lim_\Delta X(\Check{C}(T\to S))$ 
\end{enumerate}
for all $S,S'$ and every surjective map $T\surj S$ in $\ProFin^\omega$.\\
A sheaf $X\colon (\ProFin^\omega)\op\to\Ccal$ is called a \emph{hypersheaf} if for all hypercovers $S_\bullet\to S$, the natural map
\[
    X(S)\to \lim_{\Delta} X(S_{\bullet})
\]
is an equivalence.
\end{rem}

\begin{lemma} \label{condensed-objects-presentable--lem}
Let $\Ccal$ be a presentable category. Then the categories $\Cond^\omega(\Ccal)$ and $\Cond_\kappa(\Ccal)$ are presentable.
\end{lemma}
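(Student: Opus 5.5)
The plan is to realise each of $\Cond^\omega(\Ccal)$ and $\Cond_\kappa(\Ccal)$ as a localisation of a presheaf category on a small site, and then to invoke the standard fact that accessible localisations of presentable categories are presentable. The first point to settle is smallness. The category $\ProFin^\omega$ is essentially small: every light profinite set is a countable limit of finite sets, so it is described by a countable tower of finite sets, and there is only a set of these up to isomorphism. Likewise $\ProFin_{<\kappa}$ is essentially small by definition. Fix a small skeleton $\Ecal$ of either site.

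Next I would treat the case $\Ccal=\Spc$. The category $\PSh(\Ecal)=\Fun(\Ecal\op,\Spc)$ is presentable. Sheaves for the Grothendieck topology generated by finite jointly surjective families form a topological, hence accessible, left exact localisation, so $\Sh(\Ecal)$ is an $\infty$-topos. Hypercompletion is again an accessible left exact localisation of an $\infty$-topos (\cite[6.5.2]{HTT}). Therefore $\Shhyp(\Ecal,\Spc)$ is presentable.

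For general presentable $\Ccal$, I would use Lurie's identification $\Shhyp(\Ecal,\Ccal)\simeq \Shhyp(\Ecal,\Spc)\otimes\Ccal$, formed in $\PrL$. This comes from \cite[\S 1.3.1]{SAG}, where $\Ccal$-valued sheaves on an $\infty$-topos $\Xcal$ are identified with limit-preserving functors $\Xcal\op\to\Ccal$, that is, with $\Xcal\otimes\Ccal$. Applying this with $\Xcal$ equal to the hypercomplete topos, a hypersheaf on $\Ecal$ is exactly a functor that sends hypercovering diagrams to limits. The Lurie tensor product of presentable categories is presentable (\cite[4.8.1]{HA}), which finishes the argument.

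The main obstacle is this last identification for \emph{hyper}sheaves, since \cite{SAG} is phrased for sheaves on a topos. If it causes trouble, I would argue directly instead. The full subcategory $\Shhyp(\Ecal,\Ccal)\subseteq\Fun(\Ecal\op,\Ccal)$ is cut out by requiring the maps $X(S)\to\lim_\Delta X(S_\bullet)$ to be equivalences. Up to isomorphism there is only a set of hypercovers $S_\bullet\to S$ in the small category $\Ecal$. Since $\Ccal$ is presentable, it has a small set of generators $G$. For each such $G$ and each such hypercover, the condition is equivalent to $\Map(G,-)$ turning $X(S)\to\lim_\Delta X(S_\bullet)$ into an equivalence. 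This expresses the hypersheaves as the objects that are local with respect to a small set of morphisms in the presentable category $\Fun(\Ecal\op,\Ccal)$, and \cite[5.5.4.15]{HTT} then gives presentability.
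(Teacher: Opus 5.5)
Your main route is essentially the paper's: the $\Spc$-valued case is an $\infty$-topos, $\Ccal$-valued sheaves are handled via \cite[\S 1.3.1]{SAG}, and hypercompletion is an accessible localisation by \cite[6.5.2.8]{HTT}. The only difference is that you hypercomplete at the level of spaces before tensoring with $\Ccal$, which avoids the paper's somewhat informal notion of ``hypercomplete objects'' inside $\Sh(\Cond^\omega(\Spc),\Ccal)$. Your fallback is also correct and more self-contained: it exhibits $\Cond(\Ccal)$ as the objects of $\Fun(\Ecal\op,\Ccal)$ that are local with respect to a small set of maps corepresenting $\Map(G,X(S))\to\lim_\Delta\Map(G,X(S_\bullet))$, and then applies \cite[5.5.4.15]{HTT}. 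There you should add the finite-product conditions of the sheaf axiom, which are local conditions of the same kind.
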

\begin{proof}
Being a category of sheaves on the small site $\ProFin^\omega$, the category $\Cond^\omega(\Spc)$ is an $\infty$-topos \cite[6.2.2.7]{HTT}.
We can identify $\Cond^\omega(\Ccal)$ with the hypercomplete objects inside $\Sh(\Cond^\omega(\Spc),\Ccal)$. The latter category is presentable as $\Ccal$ is presentable \cite[1.3.1.6]{SAG}, hence $\Cond^\omega(\Ccal)$ is also presentable by \cite[Propositions~6.5.2.8 and 5.5.4.15]{HTT}.
Same argument for $\Cond_\kappa(\Ccal)$.
\end{proof}

\begin{lemma} \label{condensed-objects-stable--lem}
Let $\Ccal$ be a stable category. Then the categories $\Cond^\omega(\Ccal)$ and $\Cond_{\kappa}(\Ccal)$ are stable.
\end{lemma}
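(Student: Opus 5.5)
The plan is to reduce stability to two facts: the categories in question are pointed with finite limits and colimits, and the loop functor $\Omega$ is an equivalence. We then invoke the criterion \cite[1.4.2.27]{HA}, exactly as in Lemma~\ref{Pro-omega-stable--lemma}. Alternatively, we check directly that the category admits finite limits and that fibre and cofibre sequences coincide. We treat $\Cond^\omega(\Ccal)$; the argument for $\Cond_\kappa(\Ccal)$ is verbatim the same, with $\ProFin^\omega$ replaced by $\ProFin_{<\kappa}$.

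\emph{Step 1 (presheaves).} The category $\Fun((\ProFin^\omega)\op,\Ccal)$ of presheaves is stable, because functor categories into a stable category are stable \cite[1.1.3.1]{HA}. In it, limits and colimits are computed objectwise.

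\emph{Step 2 (closure under finite limits).} Hypersheaves are defined by limit conditions. For sheaves these are $X(\varnothing)\simeq *$, the product condition, and Čech descent; for hypersheaves one adds descent along hypercovers. Limits commute with limits, so the full subcategory $\Cond^\omega(\Ccal)$ is closed under all limits that exist in presheaves, in particular under finite limits. It contains the zero object. Moreover $\Omega X = 0\times_X 0$ is computed objectwise, and so is its inverse candidate $\Sigma^{\mathrm{pre}}X$, the objectwise suspension. Since $\Ccal$ is stable, the objectwise suspension is the objectwise loop-inverse, i.e.\ $\Sigma^{\mathrm{pre}}X = \Omega^{-1}X$ objectwise; this is a finite limit in $\Ccal$ (for instance $\Sigma Y\simeq \Omega^{-1}Y$, and $\Omega^{-1}$ is an exact autoequivalence of $\Ccal$). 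It therefore commutes with the limit diagrams defining the (hyper)sheaf condition, so $\Omega^{-1}X$ is again a hypersheaf. Hence $\Omega$ restricts to an autoequivalence of $\Cond^\omega(\Ccal)$.

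\emph{Step 3 (finite colimits and conclusion).} A pointed category with finite limits in which $\Omega$ is an equivalence is stable \cite[1.4.2.27]{HA}, in its dual form. Alternatively, the subcategory is closed under finite colimits of presheaves: pushouts in $\Ccal$ are pullbacks up to shift, since a square is a pushout iff it is a pullback. So pushouts computed objectwise remain hypersheaves, and the subcategory is a stable subcategory of the presheaf category \cite[1.1.3.3]{HA}.

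The only subtle point is Step 2: one must see that objectwise cofibres preserve the sheaf conditions. This includes the infinite totalisations $\lim_\Delta$, which is where stability of $\Ccal$ is used, since finite colimits in $\Ccal$ are finite limits up to shift and hence commute with arbitrary limits. No sheafification is needed.
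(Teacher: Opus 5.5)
Your proof is correct and has the same skeleton as the paper's: the presheaf category is stable by \cite[1.1.3.1]{HA}, (hyper)sheaves are closed under finite limits, the loop functor is invertible, and \cite[1.4.2.27]{HA} concludes. The difference is in how invertibility is justified.

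The paper derives closure under finite limits, and implicitly the invertibility of suspension, from left exactness of the (hyper)sheafification functor. That presupposes such a sheafification exists, e.g.\ for presentable $\Ccal$. You instead note that the objectwise inverse $\Omega^{-1}=\Sigma$ is an autoequivalence of $\Ccal$, so it preserves every limit diagram in the (hyper)sheaf condition, including the totalisations. Hence no sheafification is needed, and your argument works for an arbitrary stable $\Ccal$, at the cost of a few more lines.

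Two small corrections. First, $\Sigma Y$ is not literally ``a finite limit in $\Ccal$''. The correct reason, which you also give, is that $\Sigma_{\Ccal}$ is an equivalence. For your alternative in Step 3, the reason is that a pushout is $\Sigma$ of a fibre of a map into a direct sum, so it commutes with all existing limits. Second, the criterion in \cite[1.4.2.27]{HA} is already stated for a pointed category with finite limits and $\Omega$, so no dualisation is required.
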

\begin{proof}
As $\Ccal$ is stable, the functor category $\Fun(\ProFin^{\omega,\opp},\Ccal)$ is stable \cite[1.1.3.1]{HA}.  Since the (hyper)sheafification functor $\Fun(\ProFin^{\omega,\opp},\Ccal)\to\Cond^\omega(\Ccal)$ 
is left-exact, the inclusion functor $\Cond^\omega(\Ccal) \inj \Fun(\ProFin^{\omega,\opp},\Ccal)$ creates finite limits. Hence, the category $\Cond^\omega(\Ccal)$ admits finite limits and the suspension functor is an equivalence, thus the category is stable \cite[1.4.2.27]{HA}.
Same argument for $\Cond_\kappa(\Ccal)$.
\end{proof}

\begin{lemma} \label{condensed-objects-symmetric-monoidal}
Let $\Ccal$ be a category that admits small colimits and finite limits. Then the categories $\Cond^\omega(\Ccal)$ and $\Cond_\kappa(\Ccal)$ both admit finite limits. In particular, they admit the structure of a cartesian symmetric monoidal category. 
\end{lemma}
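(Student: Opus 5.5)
The plan is to show that the full subcategory of (hyper)sheaves is closed under finite limits inside the presheaf category, where limits exist and are computed objectwise. Since $\Ccal$ admits finite limits, the functor category $\Fun(\ProFin^{\omega,\opp},\Ccal)$ admits finite limits, and they are computed pointwise \cite[5.1.2.3]{HTT}. Let $X\colon K\to\Cond^\omega(\Ccal)$ be a diagram indexed by a finite simplicial set $K$, and let $L$ denote its pointwise limit in presheaves.

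We then check that $L$ again satisfies the explicit sheaf conditions of the preceding remark, and the hypersheaf condition as well. The key point is that all of these conditions say that certain canonical maps
\[ L(S)\To \lim_{\Delta}L(S_\bullet) \]
are equivalences, and similarly for $L(\varnothing)\to *$ and $L(S\sqcup S')\to L(S)\times L(S')$. Limits commute with limits, so for instance
\[ \lim_\Delta \lim_K X_k(S_\bullet)\simeq\lim_K\lim_\Delta X_k(S_\bullet). \]
Hence each such map for $L$ is the $K$-indexed limit of the corresponding maps for the $X_k$, and those are equivalences by assumption. One caveat: the $\lim_\Delta$ appearing in the (hyper)sheaf condition is a totalisation, not a finite limit. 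So $\Ccal$ having finite limits does not a priori guarantee that it exists. I would address this by noting that $\Ccal$ has small colimits and finite limits, which is the setting in which the definition of $\Cond(\Ccal)$ is meant. Concretely, the condition is interpreted as saying that $X(S)$ is a limit of the cosimplicial diagram. The argument above shows directly that $L(S)$ satisfies the corresponding universal property, since a cone over $\lim_K$ of limit diagrams is again a limit. Therefore $L$ is a hypersheaf, and it is the limit of $X$ in $\Cond^\omega(\Ccal)$ because the inclusion is fully faithful. The same argument applies verbatim to $\ProFin_{<\kappa}$.

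For the final assertion, a category with finite products, in particular one with finite limits, carries the cartesian symmetric monoidal structure by \cite[2.4.1]{HA}. The terminal object is the constant presheaf $*$, which is a hypersheaf by the above with $K=\varnothing$.

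I expect the only delicate step to be the existence issue for the totalisations discussed above. The commutation of limits is formal, so everything else is routine.
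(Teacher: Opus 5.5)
Your proposal is correct and follows the paper's route: finite limits in $\Fun(\ProFin^{\omega,\opp},\Ccal)$ are computed pointwise \cite[5.1.2.3]{HTT}, (hyper)sheaves are closed under them, and the cartesian symmetric monoidal structure then comes from finite products \cite[2.4.1.5]{HA}. The paper leaves the closure step implicit, whereas you spell it out by commuting limits with limits, and your treatment of the totalisations is sound: you use the universal property of limit cones instead of assuming $\lim_\Delta$ exists in $\Ccal$.
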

\begin{proof}
By assumption on $\Ccal$, the functor category $\Fun(\ProFin^{\omega,\opp},\Ccal)$ admits small colimits and finite limits \cite[5.1.2.3]{HTT} and this property passes to its full subcategory $\Cond^\omega(\Ccal)$ of sheaves.
Now the statement about the cartesian symmetric monoidal category from the existence of finite products \cite[2.4.1.5.(5)]{HA}.
Same argument for $\Cond_\kappa(\Ccal)$.
\end{proof}

\begin{remark}[Extremally disconnected sets]
\label{extremally-disconnected--rem}
A profinite set is called an \emph{extremally disconnected set} if it is a projective object in the category $\ProFin$. Denote by $\EDS_{<\kappa}$ the full subcategory of $\ProFin_{<\kappa}$ spanned by extremally disconnected sets. By definition of being projective, every cover of profinite sets $S'\to S$ with $S$ extremally disconnected has a split. In particular, the identity map $\id_S$ is cofinal among all covers of $S$. We shall use this property in the proof of Lemma~\ref{pro-vanishing-and-Mittag-Leffler--lemma} below.

As a matter of fact, the restriction along the inclusion functor $\EDS_{<\kappa} \inj \ProFin_{<\kappa}$ induces an equivalence from the category $\Cond_\kappa(\Ccal)$ onto the full subcategory\linebreak $\Fun^\times(\EDS_{<\kappa}\op,\Ccal)$ of $\Fun(\EDS_{<\kappa}\op,\Ccal)$ which is spanned by those objects that preserve finite products, i.e.\@ sending finite disjoint unions in $\EDS_{<\kappa}$ to products in $\Ccal$.
\end{remark}

\begin{remark}[light vs. $\kappa$-small] 
\label{comparison-light-kappa-condensed}
Let $\Ccal$ be a category that admits limits and colimits. The restriction along the inclusion functor $j\colon \ProFin^\omega \inj \ProFin_{<\kappa}$ induces an adjunction

\begin{center}
\begin{tikzcd}
 \Fun(\ProFin^{\omega,\opp},\Ccal) \arrow[r, bend left, "j_!"] \arrow[r, bend right, "j_*", swap] &  \Fun(\ProFin_{<\kappa}\op,\Ccal) \arrow[l, "j^*", swap]
\end{tikzcd}
\end{center}
where $j_!$ is a left Kan extension and $R \coloneqq j_*$ is a right Kan extension.
Hence the functor $j^*$ preserves limits, so that it preserves sheaves and restricts to a functor $j^* \colon \Cond_\kappa(\Ccal)\to\Cond^\omega(\Ccal)$. Now the localisation functor $\Fun(\ProFin_{<\kappa}\op,\Ccal) \to \Cond_\kappa(\Ccal) \simeq \Fun^{\times}(\EDS\op_{<\kappa},\Ccal)$ factors over the restriction functor $\Fun(\ProFin_{<\kappa}\op,\Ccal) \to\Fun(\EDS_{<\kappa}\op,\Ccal)$. Given a light condensed object $X\in\Cond^\omega(\Ccal)$ and $S_1,S_2\in\EDS_{<\kappa}$, we have
\begin{align*}
    RX(S_1\sqcup S_2) 
    &\simeq \lim_{\ProFin^\omega\ni W\to S_1\sqcup S_2} X(W) \\
    &\simeq \lim_{\ProFin^\omega\ni W_1\to S_1} X(W_1) \times \lim_{\ProFin^\omega\ni W_2\to S_2} X(W_2) \\
    &\simeq RX(S_1) \times RX(S_2).
\end{align*} 
Hence the adjunctions above induce adjunctions 
\begin{center}
\begin{tikzcd}
 \Cond^{\omega}(\Ccal) \arrow[r, bend left, "\L_\kappa j_!"] \arrow[r, bend right, "j_*", swap] &  \Cond_{\kappa}(\Ccal) \arrow[l, "j^*", swap]
\end{tikzcd}
\end{center}
where $j^*$ and $j_*$ are just the restricted functors.
Via the explicit formula for Kan extensions we see that the composition $j^*\circ R$ is equivalent to the identity functor, hence $R$ is fully faithful.
Since colimits commute with finite products we get analogously as above that
\begin{align*}
    j_!X(S_1\sqcup S_2) 
    &\simeq \colim_{S_1\sqcup S_2 \to W\in\ProFin^\omega} X(W) \\
    &\simeq \colim_{S_1 \to W_1\in\ProFin^\omega} X(W_1) \times \colim_{S_2 \to W_2\in\ProFin^\omega} X(W_2) \\
    &\simeq j_!X(S_1) \times j_!X(S_2).
\end{align*} 
\end{remark}

Next, we examine the relation between sheafification and stabilisation of presheaf categories. This will apply to the passage to condensed objects. First, we start with a general lemma:

\begin{lemma} \label{condensed-sheaves--lem}
Let $(\Ccal,\tau)$ and $(\Dcal,\sigma)$ be two sites. Then:
\begin{enumerate}
\item $\Fun(\Ccal\op,\Sh_\sigma(\Dcal)) \simeq \Sh_\sigma(\Dcal,\PSh(\Ccal))$
\item $\Sh_\tau(\Ccal,\Sh_\sigma(\Dcal)) \simeq \Sh_{\tau\times\sigma}(\Ccal\times\Dcal)$
\item $\Shhyp_\tau(\Ccal, \Sh_\sigma(\Dcal)) \simeq \Sh_\sigma(\Dcal, \Shhyp_\tau(\Ccal))$
\end{enumerate}
\end{lemma}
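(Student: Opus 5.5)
The plan is to reduce all three equivalences to a single currying identity for functor categories, $\Fun(\Ccal\op,\Fun(\Dcal\op,\Ecal))\simeq\Fun(\Ccal\op\times\Dcal\op,\Ecal)\simeq\Fun(\Dcal\op,\Fun(\Ccal\op,\Ecal))$, and then check that the sheaf conditions on both sides of each claimed equivalence single out the same full subcategory. The guiding principle is that limits in functor categories, and in categories of sheaves (which are closed under limits in presheaves), are computed pointwise. Consequently a limit condition imposed in one variable commutes with evaluation in the other variable.

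For (1), an object of $\Fun(\Ccal\op,\Sh_\sigma(\Dcal))$ is a functor $F\colon\Ccal\op\times\Dcal\op\to\Spc$ such that $F(c,-)$ is a $\sigma$-sheaf for every $c$. An object of $\Sh_\sigma(\Dcal,\PSh(\Ccal))$ is a functor $G\colon\Dcal\op\to\PSh(\Ccal)$ sending $\sigma$-covering sieves to limit diagrams. Since limits in $\PSh(\Ccal)$ are computed objectwise in $c$, this condition holds if and only if each $G(-)(c)$ is a $\sigma$-sheaf, and under currying this is the same condition as before.

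For (2), I would take $\tau\times\sigma$ to be the topology on $\Ccal\times\Dcal$ generated by sieves of the form $U\times\{d\}$ and $\{c\}\times V$, with $U$ a $\tau$-covering and $V$ a $\sigma$-covering; this is the natural reading of the product topology. A presheaf on $\Ccal\times\Dcal$ is then a sheaf exactly when it satisfies the sheaf condition separately in each variable. By (1), the condition in the $\sigma$-variable identifies presheaves on $\Ccal$ with values in $\Sh_\sigma(\Dcal)$. The condition in the $\tau$-variable is then the sheaf condition in $\Sh_\sigma(\Dcal)$, because limits there are computed in $\PSh(\Dcal)$ and hence objectwise.

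For (3), apply the same argument with hypercovers in place of covering sieves. A functor $\Ccal\op\to\Sh_\sigma(\Dcal)$ is a $\tau$-hypersheaf when, for every $\tau$-hypercover $U_\bullet\to c$, the map $F(c)\to\lim_\Delta F(U_\bullet)$ is an equivalence in $\Sh_\sigma(\Dcal)$. Limits there are pointwise, so this is equivalent to each $F(-)(d)$ being a $\tau$-hypersheaf. Swapping the variables, this says that $d\mapsto F(-)(d)$ lands in $\Shhyp_\tau(\Ccal)$. The $\sigma$-sheaf condition on that functor is again tested pointwise, because $\Shhyp_\tau(\Ccal)$ is closed under limits in $\PSh(\Ccal)$. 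A limit of hypersheaves is a hypersheaf, since the hypersheaf condition is itself a limit condition.

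The main obstacle is set-theoretic and definitional rather than conceptual. One must fix what $\tau\times\sigma$ means and ensure that "limits are pointwise" holds in the sheaf categories used, i.e.\ that $\Sh$ and $\Shhyp$ are closed under limits inside presheaves. For (3) one should also note that hypercompleteness is needed only in the $\tau$-variable, which is why the two sides mix $\Sh$ and $\Shhyp$. If a representable-free formulation is preferred, all three statements follow from $\Sh_\tau(\Ccal,\Ecal)\simeq\Sh_\tau(\Ccal)\otimes\Ecal$ \cite[1.3.1.6]{SAG}, but the pointwise-limit argument above is more direct.
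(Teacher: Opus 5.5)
Your proposal is correct and follows the same route as the paper: it realises both sides as full subcategories of $\Fun(\Ccal\op\times\Dcal\op,\Spc)$ via currying, matches the descent conditions using that limits in presheaf and (hyper)sheaf categories are computed pointwise, and deduces (3) from (1) by imposing $\tau$-hyperdescent on both sides. Your explicit product topology, generated by the sieves $U\times\{d\}$ and $\{c\}\times V$, makes precise what the paper leaves implicit when it says the sheaf conditions can be checked in each factor independently.
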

\begin{proof}
(1) Both categories are canonically subcategories of $\Fun(\Ccal\op\times\Dcal\op,\Spc)$. Since equivalences of presheaves can be computed pointwise, the condition for lying in both sides amounts to the sheaf condition for $\sigma$, hence the subcategories agree.\\
(2) This holds since one can check the respective sheaf conditions in each factor independently. \\
(3) This follows from (1) by checking $\tau$-descent on both sides.
\end{proof}

\begin{cor} \label{condensed-sheaves--cor}
Given a site $(\Ccal,\tau)$, there is a canonical equivalence
\[
\Sh_\tau(\Ccal,\Cond^{\omega}(\Spc)) \simeq \Cond^{\omega}(\Sh_\tau(\Ccal))
\]
between the categories of $\tau$-sheaves of condensed spaces and condensed $\tau$-sheaves.
The analogous result for $\Cond_\kappa(-)$ is true as well.
\end{cor}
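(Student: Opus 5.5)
The plan is to deduce the corollary from Lemma~\ref{condensed-sheaves--lem}, specialising the second site to $(\Dcal,\sigma) = (\ProFin^\omega,\text{finite jointly surjective families})$ (respectively $\ProFin_{<\kappa}$). Then $\Cond^\omega(\Spc) = \Shhyp(\ProFin^\omega)$. The left-hand side $\Sh_\tau(\Ccal,\Cond^\omega(\Spc))$ unwinds to functors $\Ccal\op\times(\ProFin^{\omega})\op\to\Spc$ that satisfy $\tau$-descent in the first variable and hyperdescent in the second. The right-hand side $\Cond^\omega(\Sh_\tau(\Ccal)) = \Shhyp(\ProFin^\omega,\Sh_\tau(\Ccal))$ unwinds to the same description.

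Concretely, I would apply Lemma~\ref{condensed-sheaves--lem}(3), with the roles of the two sites arranged so that the hypercomplete site is $\ProFin^\omega$ and the ordinary one is $(\Ccal,\tau)$. This gives
\[ \Shhyp(\ProFin^\omega,\Sh_\tau(\Ccal)) \simeq \Sh_\tau(\Ccal,\Shhyp(\ProFin^\omega)) = \Sh_\tau(\Ccal,\Cond^\omega(\Spc)), \]
which is exactly the claim.

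To make the proof self-contained, I would also say explicitly why the equivalence holds. Both categories are full subcategories of $\Fun(\Ccal\op\times(\ProFin^\omega)\op,\Spc)$. The membership conditions are the following.
\begin{enumerate}
\item For each $S$, the functor $F(-,S)$ is a $\tau$-sheaf.
\item For each $c$, the functor $F(c,-)$ is a hypersheaf on $\ProFin^\omega$.
\end{enumerate}
The key point is that limits in $\Sh_\tau(\Ccal)$ are computed pointwise in presheaves. Hence the hyperdescent condition for a functor $(\ProFin^\omega)\op\to\Sh_\tau(\Ccal)$ can be checked objectwise in $c$. Symmetrically, limits in $\Cond^\omega(\Spc)$ are computed pointwise, because the inclusion of hypersheaves into presheaves is a right adjoint. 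So $\tau$-descent for a functor $\Ccal\op\to\Cond^\omega(\Spc)$ can be checked objectwise in $S$.

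The only mildly delicate step is this interchange, i.e.\ checking that the relevant limits (\v{C}ech nerves for $\tau$, hypercovers for $\ProFin^\omega$) are computed pointwise in each ambient sheaf category. I would verify it by noting that both sheaf inclusions preserve all limits. The $\kappa$-version is verbatim the same with $\ProFin_{<\kappa}$ in place of $\ProFin^\omega$.
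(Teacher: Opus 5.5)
Your proposal is correct and follows the paper's route: the corollary is Lemma~\ref{condensed-sheaves--lem}(3), with $\ProFin^\omega$ (resp.\ $\ProFin_{<\kappa}$) as the lemma's hypercomplete site and $(\Ccal,\tau)$ in the role of $(\Dcal,\sigma)$. Your pointwise justification (limits in both sheaf categories are computed in presheaves, so each descent condition can be checked one variable at a time) is an expanded version of the paper's proof of that lemma. The only blemish is that your first paragraph assigns $\ProFin^\omega$ to the lemma's second slot $(\Dcal,\sigma)$, which would give plain rather than hyper-sheaves on $\ProFin^\omega$; the arrangement in your second paragraph is the correct one.
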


\begin{lemma} \label{sheaves-and-stabilisation--lem}
Let $(\Ccal,\tau)$ be a site and $\Dcal$ be a category. There is a canonical equivalence
\[
\Sh_\tau(\Ccal,\Sp(\Dcal)) \simeq \Sp\bigl( \Sh_\tau(\Ccal,\Dcal) \bigr) \quad \text{resp.} \quad \Shhyp_\tau(\Ccal,\Sp(\Dcal)) \simeq \Sp\bigl( \Shhyp_\tau(\Ccal,\Dcal) \bigr) 
\]
between $\tau$-sheaves (resp. $\tau$-hypersheaves) of spectra and spectrum objects in $\tau$-sheaves (resp. in $\tau$-hypersheaves).
\end{lemma}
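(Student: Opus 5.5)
We realise both sides as the same full subcategory of a common functor category and match the defining conditions. Recall that for $\Dcal$ with finite limits, $\Sp(\Dcal)$ is the limit of the tower $\cdots \xrightarrow{\Omega} \Dcal_* \xrightarrow{\Omega} \Dcal_*$. Equivalently, it is the full subcategory of $\Fun(\Spc^{\fin}_*,\Dcal)$ spanned by reduced excisive functors. We use the second description. Then
\[
\Fun(\Ccal\op,\Sp(\Dcal)) \subseteq \Fun(\Ccal\op,\Fun(\Spc^{\fin}_*,\Dcal)) \simeq \Fun(\Spc^{\fin}_*,\Fun(\Ccal\op,\Dcal)),
\]
and both sides of the claimed equivalence sit inside this common category.

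On the left we take functors $F(U,K)$ that are reduced and excisive in $K$ for each fixed $U$, and that satisfy $\tau$-descent in $U$ when viewed as $\Sp(\Dcal)$-valued. On the right we take functors that are $\tau$-sheaves in $U$ for each fixed $K$, and that are reduced and excisive in $K$ when viewed as $\Sh_\tau(\Ccal,\Dcal)$-valued.

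The key input is that limits in $\Sp(\Dcal)$ are detected and computed levelwise. The functor $\Omega^{\infty-n}\colon \Sp(\Dcal)\to\Dcal$, i.e. evaluation at $S^n$, preserves limits, and the family of these functors is jointly conservative. Evaluation at an arbitrary finite pointed space $K$ also preserves limits. Hence $F$ is a $\tau$-sheaf (resp. hypersheaf) with values in $\Sp(\Dcal)$ exactly when each $F(-,K)$ is a $\tau$-sheaf (resp. hypersheaf) in $\Dcal$. The descent conditions are limit conditions over Čech nerves or hypercovers, so this matches the $\tau$-condition on the two sides.

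Dually, reducedness and excisiveness are finite-limit conditions: $F(U,*)\simeq *$, and pushout squares go to pullback squares. We must check these conditions in $\Sh_\tau(\Ccal,\Dcal)$. Here we use that the inclusion $\Sh_\tau(\Ccal,\Dcal)\inj\Fun(\Ccal\op,\Dcal)$ preserves and reflects limits, since sheaves are closed under limits. The same holds for hypersheaves. So these conditions can be checked pointwise in $U$, and the two full subcategories coincide.

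The main point needing care is the ambient hypothesis on $\Dcal$. The argument needs $\Dcal$ to admit finite limits, so that $\Sp(\Dcal)$ makes sense. It also needs $\Dcal$ to admit the limits used in the (hyper)descent conditions, which by the paper's conventions we take for granted, e.g. for $\Dcal$ presentable or at least complete. Alternatively, one can phrase everything through the tower description, $\Sp(\Sh_\tau(\Ccal,\Dcal)) \simeq \lim_\Omega \Sh_\tau(\Ccal,\Dcal)_*$. Then one uses that $\Sh_\tau(\Ccal,-)$ commutes with this limit of categories, because limits of categories of functors are computed pointwise and the sheaf condition is checked levelwise as above. Either route gives the canonical equivalence, and it is visibly natural in $(\Ccal,\tau)$ and $\Dcal$.
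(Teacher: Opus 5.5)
Your proof is correct and is essentially the paper's argument: the paper cites \cite[1.3.2.2]{SAG} and observes that it carries over to $\Dcal$-valued sheaves because $\Sp(\Dcal)$ is the category of reduced excisive functors $\Spc^{\fin}_*\to\Dcal$. That observation amounts to identifying both sides inside $\Fun(\Spc^{\fin}_*,\Fun(\Ccal\op,\Dcal))$, which you carry out explicitly, with the hypersheaf case handled the same way. Your caveat that $\Dcal$ must admit finite limits and the limits appearing in the (hyper)descent conditions is a hypothesis the paper leaves implicit.
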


\begin{proof}
This is \cite[1.3.2.2]{SAG} applied to the $\infty$-topos $\Sh_\tau(\Ccal,\Dcal)$. The argument loc.\@ cit.\@ deals with $\Spc$-valued sheaves, but the proof remains valid for $\Dcal$-valued sheaves, in view of the defintion of $\Sp(\Dcal)$ \cite[Definition~1.4.2.8]{HA} as being the category of the reduced and excisive functors from the category of pointed finite spaces $\Spc^\fin_*$ to $\Dcal$.
Moreover, the claim for hypersheaves holds since the hypercompleteness on both sides coincides.
\end{proof}

\begin{cor} \label{condensed-and-stabilisation--cor}
For any \infcat{} $\Dcal$ there is a canonical equivalence
\[
\Cond^{\omega}(\Sp(\Dcal)) \simeq \Sp(\Cond^{\omega}(\Dcal)).
\]
The analogous result for $\Cond_\kappa(-)$ is true as well.
\end{cor}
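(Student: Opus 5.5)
The equivalence is obtained by specialising Lemma~\ref{sheaves-and-stabilisation--lem} to the site of light profinite sets. We take $(\Ccal,\tau)=(\ProFin^\omega,\tau)$, where $\tau$ is the topology generated by finite jointly surjective families. With this choice the hypersheaf version of Lemma~\ref{sheaves-and-stabilisation--lem} reads
\[
\Shhyp_\tau(\ProFin^\omega,\Sp(\Dcal)) \simeq \Sp\bigl(\Shhyp_\tau(\ProFin^\omega,\Dcal)\bigr).
\]
By definition, the left-hand side is $\Cond^\omega(\Sp(\Dcal))$ and the right-hand side is $\Sp(\Cond^\omega(\Dcal))$, which is the claim.

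To see that the equivalence is canonical, we unwind it. Following \cite[Def.~1.4.2.8]{HA}, $\Sp(\Dcal)$ is the full subcategory of $\Fun(\Spc^\fin_*,\Dcal)$ spanned by the reduced excisive functors. Currying then gives an identification
\[
\Fun\bigl((\ProFin^\omega)\op,\Fun(\Spc^\fin_*,\Dcal)\bigr)\simeq\Fun\bigl(\Spc^\fin_*,\Fun((\ProFin^\omega)\op,\Dcal)\bigr).
\]
On the left we cut out the objects that are hypersheaves in the first variable and pointwise reduced and excisive in the second; on the right we cut out the functors that are reduced and excisive with values in hypersheaves.

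The only point needing care is that both conditions refer to the same limits in $\Dcal$. A hypersheaf is characterised by limit conditions: sending $\varnothing$ to a terminal object, finite disjoint unions to products, and hypercovers to totalisations. Reducedness and excisiveness are likewise limit conditions, namely sending the point to a terminal object and pushout squares to pullback squares. Limits in functor categories are computed pointwise, and limits commute with limits, so the two conditions commute. It follows that the two full subcategories coincide.

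One subtlety is whether the required limits exist in $\Dcal$ and whether limits in $\Shhyp(\ProFin^\omega,\Dcal)$ are computed in presheaves. We handle this by reading both sides as subcategories of the common presheaf category, where no limit needs to be formed. Pullbacks in the hypersheaf category agree with pointwise ones whenever the latter exist, because the inclusion of hypersheaves into presheaves preserves limits, so excisiveness means the same thing in either setting. Hypercompleteness is likewise detected pointwise in $\Spc^\fin_*$, as the proof of Lemma~\ref{sheaves-and-stabilisation--lem} already notes.

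The $\kappa$-condensed case follows by the same argument, with $\ProFin^\omega$ replaced by $\ProFin_{<\kappa}$.
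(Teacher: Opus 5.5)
Your proposal is correct and takes the same approach as the paper: the paper's entire proof is to specialise Lemma~\ref{sheaves-and-stabilisation--lem} (in its hypersheaf version) to the site $\ProFin^\omega$ (resp.\ $\ProFin_{<\kappa}$), and your currying paragraph just reproves that lemma directly rather than citing \cite[1.3.2.2]{SAG}. You rely on one implicit hypothesis, which the paper also leaves unstated: $\Dcal$ must admit finite limits and totalisations, so that $\Sp(\Dcal)$ is defined and limits in both $\Sp(\Dcal)$ and $\Cond^\omega(\Dcal)$ are computed pointwise.
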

\begin{proof}
This follows from Lemma~\ref{sheaves-and-stabilisation--lem}.
\end{proof}

In order to render the functor $\Kan_{\geq 0} \colon \Rig_k \to \Pro(\Spc) \to \Cond^{\omega}(\Spc)$ representable, we need to have internal condensed mapping spectra in $\RigH(k,\Cond^{\omega}(\Spc))$. By \cite[Rem.~3.17]{DY} it is enough to see that the $\otimes$-product in $\Cond^{\omega}(\Spc)$ given by Lemma \ref{condensed-objects-symmetric-monoidal} preserves colimits in each variable. 

\begin{lem}
    The cartesian symmetric monoidal structure on $\Cond(\Spc)$ is closed, i.e.\@ for any objects $Y,Z\in\Cond(\Spc)$ there exists an internal mapping object $\Mapu(Y,Z)\in\Cond(\Spc)$ such that for any $X\in\Cond(\Spc)$ there is a canonical equivalence 
\[
\Map_{\Cond(\Spc)}(X,\Mapu(Y,Z)) \simeq \Map_{\Cond(\Spc)}(X\times Y,Z).
\]
Setting $X=y(S)$ for $S\in\EDS$ we get that
\[
\Mapu(Y,Z)(S) \simeq \Map_{\Cond(\Spc)}(y(S),\Mapu(Y,Z)) \simeq \Map_{\Cond(\Spc)}(y(S)\times Y,Z).
\]
Moreover, for any $\Vcal\in \CAlgPr$, we have $\Cond^{\omega}(\Vcal)$ admits a closed monoidal structure.
\end{lem}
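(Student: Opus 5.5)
The plan is to use that $\Cond(\Spc)$ is an $\infty$-topos and to exploit presentability. First, $\Cond(\Spc)$ (light or $\kappa$-small) is a left exact localisation of the presheaf category $\Fun(\ProFin^{\omega,\opp},\Spc)$, respectively of $\Fun(\ProFin_{<\kappa}\op,\Spc)$, as in Lemma~\ref{condensed-objects-presentable--lem}. Hence it is an $\infty$-topos; for the hypercomplete version, hypercompletion of a topos is again a topos \cite[6.5.2]{HTT}. In an $\infty$-topos, colimits are universal \cite[6.1.0.6]{HTT}: the functor $-\times Y$ preserves small colimits for every $Y$. Since $\Cond(\Spc)$ is presentable, the adjoint functor theorem \cite[5.5.2.9]{HTT} gives a right adjoint $\Mapu(Y,-)$ to $-\times Y$. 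This is exactly the asserted equivalence $\Map(X,\Mapu(Y,Z))\simeq\Map(X\times Y,Z)$, and it is natural in $Y$ by the standard parametrised adjoint argument.

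For the formula on $S\in\EDS$, I would use the Yoneda lemma. The representable presheaf $y(S)$ is already a hypersheaf, because the topology on profinite sets is subcanonical and hypercomplete on extremally disconnected objects (every cover of $S$ splits, as in Remark~\ref{extremally-disconnected--rem}). So evaluating a condensed anima at $S$ is the same as $\Map(y(S),-)$, and the displayed formula follows by plugging in $X=y(S)$. In the light case, where $\EDS$ is unavailable, the same argument works with $S$ light profinite, using that the light site is subcanonical.

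For the last claim, let $\Vcal\in\CAlgPr$. I would identify $\Cond^\omega(\Vcal)\simeq\Cond^\omega(\Spc)\otimes\Vcal$ in $\PrL$ via the Lurie tensor product, using that sheaves with values in a presentable $\Vcal$ are $\Sh\otimes\Vcal$ \cite[1.3.1.6]{SAG}; the hypercomplete variant follows since hypercompletion is a left exact localisation compatible with this identification. The tensor product of two commutative algebras in $\PrLO$ is again one. Concretely, one takes the cartesian structure on $\Cond^\omega(\Spc)$, which is an object of $\CAlgPr$ by the first part, and tensors it with $\Vcal$. An object of $\CAlgPr$ has a tensor product preserving colimits separately in each variable by definition, so by presentability and the adjoint functor theorem it is closed.

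The main obstacle I expect is this last step. One has to check carefully that the identification $\Cond^\omega(\Vcal)\simeq\Cond^\omega(\Spc)\otimes\Vcal$ holds for hypersheaves and not only for sheaves. If that becomes subtle, the fallback is to define the monoidal structure directly by Day convolution: take the pointwise tensor on $\Fun(\ProFin^{\omega,\opp},\Vcal)$ and then sheafify. One then verifies that hypersheafification is compatible with it, because the product of a hypercover with a profinite set is again a hypercover. Closedness then follows again from colimit preservation in each variable together with presentability (Lemma~\ref{condensed-objects-presentable--lem}).
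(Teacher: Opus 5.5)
Your proposal is correct and follows the paper's argument. Closedness comes from $\Cond(\Spc)$ being an $\infty$-topos with universal colimits; you only make the adjoint functor theorem and Yoneda steps explicit. The $\Vcal$-linear claim comes from $\Cond^\omega(\Spc)\otimes\Vcal\simeq\Cond^\omega(\Vcal)$, which the paper takes from \cite[Prop.~2.4]{DrewMHM} rather than deducing it from \cite[1.3.1.6]{SAG} plus hypercompletion.

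One small correction: $y(S)$ is a hypersheaf because it is a $0$-truncated sheaf on a subcanonical site, not because covers of extremally disconnected sets split. In any case $\Map(y(S),F)\simeq F(S)$ holds by Yoneda for the sheafified representable.
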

\begin{proof}
    This follows immediately from the definition, since $\Cond^{\omega}(\Spc)$ as an $\infty$-topos and thus colimits are universal. If $\Vcal$ is a symmetric monoidal presentable category, then we have $\Cond^{\omega}(\Spc)\otimes \Vcal\simeq \Cond^{\omega}(\Vcal)$ \cite[Prop. 2.4]{DrewMHM}.
\end{proof}

\subsection{t-structures}
In this subsection, we examine a t-structures on condensed spectra which is induced by the canonical t-structure on spectra.

\begin{lemma}
There exists a t-structure on $\Cond_\kappa(\Sp)$ with connective part $\Cond(\Sp)_{\geq 0} = \Cond(\Sp_{\geq 0})$ and coconnective part $\Cond(\Sp)_{\leq 0} = \Cond(\Sp_{\leq 0})$
\end{lemma}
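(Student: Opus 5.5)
We obtain the t-structure from the canonical one on $\Sp$ by working objectwise on extremally disconnected sets. By Remark~\ref{extremally-disconnected--rem}, restriction gives an equivalence
\[
\Cond_\kappa(\Sp)\simeq \Fun^\times(\EDS_{<\kappa}\op,\Sp).
\]
On the right-hand side the hypersheaf condition reduces to preservation of finite products. This condition is stable under $\tau_{\geq 0}$ and $\tau_{\leq 0}$ applied pointwise, because truncations in $\Sp$ commute with finite products: finite products of spectra are finite direct sums, and both $\tau_{\geq 0}$ and $\tau_{\leq 0}$ are exact on these. So I define $\Cond(\Sp)_{\geq 0}$ and $\Cond(\Sp)_{\leq 0}$ as the full subcategories of those $X$ with $X(S)\in\Sp_{\geq 0}$, respectively $X(S)\in\Sp_{\leq 0}$, for every $S\in\EDS_{<\kappa}$.

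I then verify the t-structure axioms pointwise.
\begin{enumerate}
\item Stability under shifts, $\Sigma\Cond(\Sp)_{\geq 0}\subseteq\Cond(\Sp)_{\geq 0}$ and $\Omega\Cond(\Sp)_{\leq 0}\subseteq\Cond(\Sp)_{\leq 0}$, holds because shifts in $\Cond_\kappa(\Sp)$ are computed pointwise on $\EDS_{<\kappa}$: finite (co)limits of finite-product-preserving functors are again such, by stability.
\item For orthogonality, take $X$ connective and $Y\in\Cond(\Sp)_{\leq -1}$. Then $\Map(X,Y)$ is an end over $\EDS_{<\kappa}$ of the spaces $\Map_{\Sp}(X(S),Y(S))$, each of which is contractible; hence $\Map(X,Y)\simeq *$. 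In particular $\pi_0\Map(X,Y)=0$.
\item For the truncation sequences, the pointwise fibre sequence $\tau_{\geq 0}X(S)\to X(S)\to\tau_{\leq -1}X(S)$ is functorial in $S$, and by the first paragraph its outer terms preserve finite products. It therefore lies in $\Fun^\times$.
\end{enumerate}

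Finally I identify these subcategories with $\Cond(\Sp_{\geq 0})$ and $\Cond(\Sp_{\leq 0})$. Both $\Sp_{\geq 0}\inj\Sp$ and $\Sp_{\leq 0}\inj\Sp$ preserve finite products, so they induce fully faithful functors on $\Fun^\times(\EDS_{<\kappa}\op,-)$. The essential images are exactly the pointwise connective, respectively coconnective, product-preserving functors. Combined with the $\EDS$ description of $\Cond_\kappa(\Sp_{\geq 0})$ and $\Cond_\kappa(\Sp_{\leq 0})$ (Remark~\ref{extremally-disconnected--rem} applied to these coefficient categories), this gives the claimed equalities.

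The main obstacle is justifying the $\EDS$ description for $\Ccal=\Sp_{\leq 0}$ and $\Ccal=\Sp_{\geq 0}$ as hypersheaves, which is where hypercompleteness enters. I would argue this by noting that on $\EDS$ every cover splits, so hypersheaves there are exactly finite-product-preserving presheaves for any $\Ccal$ with finite limits. Working on $\EDS$ throughout avoids needing truncation to commute with the Čech limits over all of $\ProFin_{<\kappa}$.
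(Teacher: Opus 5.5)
Your argument is correct, but it takes a different route from the paper's.

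The paper obtains the t-structure abstractly. It checks that $\Cond_\kappa(\Sp_{\geq 0})$ is presentable and closed under small colimits and extensions in $\Cond_\kappa(\Sp)$, and invokes Lurie's criterion \cite[1.4.4.11]{HA} to get a t-structure with that connective part. It then identifies the coconnective part separately, by appealing to the theory of t-structures on sheaves of spectra on an $\infty$-topos \cite[1.3.2.7]{SAG}. That step includes a somewhat loose assertion that the connective part ``should be'' the $0$-connective objects there.

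You instead work entirely inside $\Fun^\times(\EDS_{<\kappa}\op,\Sp)$. You use that $\tau_{\geq 0}$ and $\tau_{\leq -1}$ preserve finite direct sums, so postcomposing with them stays in $\Fun^\times$, and you verify the three axioms by hand.

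Your route gives explicit truncation functors, computed pointwise on extremally disconnected sets. It yields both halves at once, with no separate identification of the right orthogonal. It also makes clear why connectivity must be tested on $\EDS_{<\kappa}$ and not on all profinite sets: the inclusion $\Sp_{\geq 0}\inj\Sp$ does not preserve limits. The paper's route, in exchange, gives accessibility of the t-structure for free from the presentability criterion. Its description of the coconnective part via truncated objects in an $\infty$-topos is also the form that carries over to settings without extremally disconnected objects.

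One correction to your closing aside. $\EDS_{<\kappa}$ is not closed under fibre products, so ``covers split'' is not by itself the justification for the $\EDS$-description. The real input is that every profinite set admits a hypercover by extremally disconnected sets; equivalently, one uses the comparison lemma for hypersheaves along $\EDS_{<\kappa}\subset\ProFin_{<\kappa}$, where covering sieves on $\EDS_{<\kappa}$ are generated by finite clopen decompositions. Extending a product-preserving functor to a hypersheaf also needs totalisations in the coefficient category, so ``any $\Ccal$ with finite limits'' should read ``any $\Ccal$ with small limits''. This does not affect your proof: $\Sp_{\geq 0}$ and $\Sp_{\leq 0}$ are presentable, and Remark~\ref{extremally-disconnected--rem} already supplies the equivalence you cite.
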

\begin{proof}
Since $\Sp_{\geq 0}$ is presentable, this inherits to the presheaf category $\Fun(\EDS_{<\kappa}\op,\Sp_{\geq 0})$ \cite[5.5.3.6]{HTT} and its Bousfield localisation $\Cond_\kappa(\Sp_{\geq 0})$ \cite[5.5.4.15]{HTT}. The category $\Sp_{\geq 0}$ is closed under small colimits in $\Sp$ since the inclusion functor is left-adjoint. Since colimits in presheaf categories can be computed pointwise, $\Fun(\EDS_{<\kappa}\op,\Sp_{\geq 0})$ is closed under small colimits in $\Fun(\EDS_{<\kappa}\op,\Sp_{\geq 0})$. Since a colimit of sheaves is given by the sheafification of the colimits of the underlying presheaves, $\Cond_\kappa(\Sp_{\geq 0})$ is closed under small colimits in $\Cond_\kappa(\Sp)$. From the long exact sequences of homotopy groups one sees that $\Cond_\kappa(\Sp_{\geq 0})$ is closed under extension in $\Cond_\kappa(\Sp)$. These three closure properties imply the existence of a t-structure on $\Cond_\kappa(\Sp)$ whose connective part is $\Cond_\kappa(\Sp_{\geq 0})$ \cite[1.4.4.11]{HA}.

The category $\Cond_\kappa(\Sp)_{\geq 0}$ should be the same as the full subcategory of 0-connective objects in the sense of \cite[1.3.2.5]{SAG}. According to \cite[1.3.2.7]{SAG}, the coconnective part of the t-structure is given by the full subcategory of 0-truncated objects; these are precisely those sheaves that take value in 0-truncated spectra \cite[1.3.2.6]{SAG} (since the inclusion functor $\Sp_{\leq 0} \inj \Sp$ preserves limits. 
\end{proof}

If we consider countable products in $\Cond(\Sp)$, then we can prove that these are $t$-exact. This will be crucial for Theorem \ref{thm:comparison-conservative}.

\begin{lem}
\label{Lem:count-prod-t-exact}
    Countable products in $\Cond^{\omega}(\Sp)$ are $t$-exact.
\end{lem}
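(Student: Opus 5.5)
Since products preserve limits, a countable product of coconnective objects is coconnective. The coconnective part $\Cond^\omega(\Sp)_{\leq 0}$ consists of the hypersheaves valued in $\Sp_{\leq 0}$, it is closed under all limits, and limits of sheaves are computed pointwise. So the whole content of the statement is that a countable product $\prod_{n\in\NN} X_n$ of connective light condensed spectra is again connective. Equivalently, for each $k$ the canonical map
\[ \pi_k\bigl(\textstyle\prod_n X_n\bigr) \To \textstyle\prod_n \pi_k(X_n) \]
of light condensed abelian groups is an isomorphism. Here $\pi_k$ denotes the sheafified homotopy groups, that is, the t-structure homotopy objects.

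My plan is to reduce this to the statement that countable products are exact in the abelian category $\Cond^\omega(\textup{Ab})$. That statement is a known feature of the light setting: it is the light analogue of the AB4$^*$ property, which for $\kappa$-condensed abelian groups holds for all products because of extremally disconnected sets. The light case has no projective generators, so I would argue as follows. Let $f_n\colon A_n\surj B_n$ be a countable family of surjections of light condensed abelian groups, and let $S$ be light profinite with a section $b=(b_n)\in\prod_n B_n(S)$. For each $n$ there is a cover $T_n\surj S$ over which $b_n$ lifts. The fibre product $T\coloneqq \lim_N T_1\times_S\cdots\times_S T_N$ is a countable cofiltered limit of light profinite sets with surjective transition maps. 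Hence $T$ is again a light profinite set, and $T\to S$ is surjective. Over $T$ all the $b_n$ lift simultaneously, so $\prod_n A_n\to\prod_n B_n$ is an epimorphism of sheaves. This is the step where countability is essential: a countable sequential limit of surjections of profinite sets stays surjective and stays light, and this fails for uncountable families in the light world.

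To pass to spectra, I would use the Postnikov-type description of the homotopy sheaves. A connective $X$ has $\pi_k X=0$ for $k<0$ as sheaves, but its values $X(S)$ may have negative homotopy which is only killed after refinement. Taking products pointwise gives $(\prod_n X_n)(S)=\prod_n X_n(S)$. The sheaf $\pi_k(\prod_n X_n)$ is the sheafification of $S\mapsto\prod_n\pi_k X_n(S)$, which is the presheaf product of the presheaves $S\mapsto \pi_k X_n(S)$. The covering argument above shows that sheafification commutes with countable products, at least for the property of local vanishing. Indeed, if each element of $\pi_k X_n(S)$ with $k<0$ vanishes on some cover $T_n\surj S$, then a countable family of such elements vanishes on the single cover $T$ built above. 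Hence $\pi_k(\prod X_n)=0$ for $k<0$, which is what connectivity requires. Hypercompleteness causes no trouble here, because the products are computed in hypersheaves and connectivity is detected on homotopy sheaves there.

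I expect the main obstacle to be the careful identification of the sheaf homotopy groups of a product with the sheafified pointwise products. One has to know that the presheaf $S\mapsto\prod_n X_n(S)$ is already a hypersheaf, which holds because limits of hypersheaves are hypersheaves. One must also check that the local vanishing criterion for $\pi_k$ of a hypersheaf is exactly the refinement statement used above. Once that is in place, the single-cover construction through the countable limit $T$ does all the work.
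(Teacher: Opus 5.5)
Your argument is correct and rests on the same mechanism as the paper's, but it is a more hands-on version.

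The paper also identifies $\pi_n(\prod_i X_i)$ with the sheafification of the presheaf product $\prod_i \pi_n^{\pre}(X_i)$. It then concludes $\pi_n(\prod_i X_i)\cong\prod_i\pi_n(X_i)$ by two citations: that the $1$-topos $\Cond^\omega(\Set)$ is replete, and Mondal--Reinecke's result that in a replete topos sheafification commutes with countable products of multiplicative presheaves.

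You instead reduce t-exactness to the statement that a countable product of connective objects is connective. You prove only the vanishing this needs, directly, via the single cover $T=\lim_N T_1\times_S\cdots\times_S T_N$. This construction is exactly the repleteness mechanism unpacked: a countable tower of surjections of light profinite sets has light limit surjecting onto the base. So your proof is self-contained. The paper's route gives the isomorphism on all $\pi_n$ in one stroke, which is what the Milnor sequence (Lemma~\ref{lem:condensed-milnor-sequence}) uses. That isomorphism also follows formally from t-exactness, so nothing is lost.

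Two small remarks. First, you say each element of $\pi_k(X_n(S))$ dies on a \emph{single} surjection $T_n\surj S$. This uses that $S\mapsto\pi_k(X_n(S))$ sends finite disjoint unions to products, which lets you merge a finite covering family into one map from a disjoint union. It holds because $X_n$ is a sheaf and $\pi_k$ commutes with finite products. This is precisely the multiplicativity input the paper cites from Mondal--Reinecke, and it deserves a sentence. Second, your paragraph on exactness of countable products in $\Cond^\omega(\textup{Ab})$ is correct but is never used in the spectrum-level argument, so it can be dropped.
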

\begin{proof}
    Let $(X_i)_{i \in I}$ be a countable family of condensed spectra. Let $n \in \ZZ$. The fundamental group $\pi_n(\prod_{i \in I} X_i)$ can be computed as the sheafification of the presheaf
    \[\pi_n^\pre(\prod_{i \in I} X_i) \colon \ProFin^\omega \to \Ab, \quad S \mapsto \pi_n \big( \prod_{i \in I} X_i(S) \big) \simeq \prod_{i \in I} \pi_n (X_i(S)) \simeq \prod_{i \in I} \pi_n^\pre(X_i)(S),\]
    where the second to last equivalence is due to the t-exactness of products on $\Sp$.
    That is $\pi_n(\prod_{i \in I} X_i)$ is identified with the sheafification of $\prod_{i \in I} \pi_n^\pre(X_i)$.
    The presheaves $\pi_n^\pre(X_i)$ are multiplicative (see \cite[Proposition~3.21]{mondalreinecke2025postnikov}).
    Since the $1$-topos $\Cond_\kappa(\Set)$ (resp. $\Cond^\omega(\Set)$) is replete \cite[Theorem~2.2]{condensed} (resp. \cite[Remark~2.2.2]{camargo2026notesolid}), sheafification commutes with countable products of multiplicative presheaves \cite[Proposition~2.14]{mondalreinecke2025postnikov}. (Beware that sheafification does not necessarily commute with countable products of arbitrary presheaves.)
\end{proof}

\subsection{From pro-objects to condensed objects}
\label{subsec:pro-to-cond}

\textbf{Notation.} In this subsection, let $\Ccal$  be a category that is accessible, admits colimits and admits finite limits.
Denote by $\Pro(\Ccal)$ the associated category of pro-objects $\Ccal$, cf.\@ the beginning of subsection~\ref{subsec:pro-categories}.
We have adjunctions
	\[
	\textup{const} \colon \Ccal \rightleftarrows \Pro(\Ccal) \colon \textup{lim}\quad\quad\quad \textup{const}\colon\Ccal \rightleftarrows \Cond^{\omega}(\Ccal) \colon\textup{can}
	\]
where the left-adjoints are the respective constant functors which are fully faithful.

\begin{lemma} \label{comparison-functor--lemma}
Assume additionally that $\Ccal$ admits small cofiltered limits.\footnote{%
    In particular, $\Ccal$ admits all limits since it has pullbacks and products (which are cofiltered limits of finite products) \cite[Prop. 4.4.2.6]{HTT}.
}
Then there is a canonical comparison functor 
\[ \gamma_\kappa \colon \Pro(\Ccal) \to \Cond_{\kappa}(\Ccal) \]
commuting with small limits and fitting into a commutative diagram
\[ \begin{xy} \xymatrix{
	&\Ccal \ar[dl]_{\mathrm{const}} \ar[d]^{\mathrm{const}} \ar[dr]^{\mathrm{const}} \\
	\Pro(\Ccal) \ar[dr]_{\mathrm{lim}} \ar[r]^-{\gamma_\kappa} & \Cond_{\kappa} \ar[d]^{\mathrm{can}}(\Ccal) \ar[r]^{j^*} & \Cond^\omega(\Ccal) \ar[dl]^{\mathrm{can}}
	 \\ & \Ccal
} \end{xy} \]
Explicitly, the functor $\gamma_\kappa$ sends the pro-object $\prolim_{i\in I}X_i$ to the limit $\lim_{i\in I}\underline{X_i}$.
\end{lemma}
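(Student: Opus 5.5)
The plan is to obtain $\gamma_\kappa$ from the universal property of pro-categories. Since $\Ccal$ is accessible and admits finite limits, $\Pro(\Ccal)\simeq\textup{Ind}(\Ccal\op)\op$ is the free completion of $\Ccal$ under small cofiltered limits. Hence any functor $\Ccal\to\Dcal$ into a category $\Dcal$ admitting small cofiltered limits extends essentially uniquely to a functor $\Pro(\Ccal)\to\Dcal$ that preserves cofiltered limits. We will apply this with $\Dcal=\Cond_\kappa(\Ccal)$ and the functor $\mathrm{const}\colon\Ccal\to\Cond_\kappa(\Ccal)$, $X\mapsto\underline{X}$. This already gives the explicit formula $\gamma_\kappa(\prolim_{i}X_i)\simeq\lim_i\underline{X_i}$ and the commutativity of the upper-left triangle.

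The first check is that $\Cond_\kappa(\Ccal)$ has all small limits and that they are computed on the level of presheaves. The key point is that (hyper)sheaves valued in $\Ccal$ are defined by limit conditions. These are stable under limits in $\Fun(\ProFin_{<\kappa}\op,\Ccal)$, which exist pointwise because $\Ccal$ has all limits (footnote). So the sheaves form a full subcategory closed under limits.

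Next we show that $\gamma_\kappa$ preserves all small limits. Since $\Pro(\Ccal)$ has finite limits computed levelwise, it suffices to show that $\gamma_\kappa$ preserves finite limits as well as cofiltered ones. I expect this to be the main point. For a finite diagram in $\Pro(\Ccal)$, I would use the reindexing result recalled in \S\ref{subsec:pro-categories} (\cite[Lem.~2.1]{kst-i}) to present it as a pro-object of $\Fun(K,\Ccal)$. Its limit is then the levelwise limit $\prolim_i\lim_K X_i$. Applying $\gamma_\kappa$ gives $\lim_i\underline{\lim_K X_i}$. Since $\mathrm{const}$ is a right adjoint (of evaluation at the point), or directly since it is given pointwise by the sheafification of the constant presheaf, it preserves finite limits. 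Limits commute with limits, so this equals $\lim_K\lim_i\underline{X_i}$. Combined with the preservation of cofiltered limits, and writing an arbitrary limit as a cofiltered limit of finite ones, we conclude that $\gamma_\kappa$ preserves all small limits.

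It remains to check the rest of the diagram. For the lower triangle, $\mathrm{can}$ is evaluation at the point $*$, which preserves limits. So $\mathrm{can}\circ\gamma_\kappa(\prolim X_i)\simeq\lim_i\mathrm{can}(\underline{X_i})\simeq\lim_i X_i$, because $\mathrm{can}\circ\mathrm{const}\simeq\id$ by full faithfulness of $\mathrm{const}$. For the right-hand triangles, $j^*$ is restriction, so it commutes with evaluation at $*$ and sends $\underline{X}$ to $\underline{X}$. The latter holds because the constant sheaf on $\ProFin_{<\kappa}$ restricts to the constant sheaf on $\ProFin^\omega$, as both are given by $S\mapsto$ locally constant maps into $X$, and these are computed as colimits over finite quotients. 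These identifications are natural, which yields the commutative diagram.
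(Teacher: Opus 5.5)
Your proposal is correct and follows essentially the paper's route: $\gamma_\kappa$ comes from the universal property of $\Pro(\Ccal)$ \cite[A.8.1.6]{SAG}, preservation of pullbacks comes from the levelwise description of finite limits in $\Pro(\Ccal)$, and all limits then follow from finite plus cofiltered ones; you also make explicit the ingredient that the paper's square $(\clubsuit)$ uses silently, namely that $\mathrm{const}\colon\Ccal\to\Cond_\kappa(\Ccal)$ preserves finite limits. One correction: $\mathrm{const}$ is the \emph{left} adjoint of evaluation at the point, not a right adjoint (it does not preserve infinite products), so only your second justification, left exactness of sheafifying the constant presheaf, is valid.
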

\begin{proof}
The commutativity of the lower right triangle follows since can is given by evaluating at a single point.
For showing the commutativity of the upper right triangle consider the following square of left adjoint functors
\[ \begin{tikzcd}
    \Fun(\ProFin_{<\kappa}\op,\Ccal) \arrow[d,"\L_\kappa",swap]\arrow[r,"j^*"]
    &  \Fun(\ProFin^\omega,\Ccal) \arrow[d,"\L^\omega"]\\
    \Cond_\kappa(\Ccal) \arrow[r,"j^*"] & \Cond^{\omega,\opp}(\Ccal).
\end{tikzcd} \]
This square commutes because the associated square of right adjoint functors commutes.
Since $\Ccal$ admits small filtered limits, the category $\Cond_{\kappa}(\Ccal)$ admits small filtered limits as well. Hence restriction along the constant functor $\Ccal \to \Pro(\Ccal)$ induces an equivalence 
\[ \Fun^\mathrm{lim}(\Pro(\Ccal),\Cond_\kappa(\Ccal)) \To[\simeq] \Fun(\Ccal,\Cond_\kappa(\Ccal)) \]
where the source denotes the full subcategory of all functors spanned by those which preserve small filtered limits \cite[A.8.1.6]{SAG}. Thus the constant functor $\Ccal \to \Cond_\kappa(\Ccal)$ yields the functor $\gamma_\kappa$ and the upper left commutative triangle. 
Analogously, we get get a functor $\gamma^\omega \colon \Pro(\Ccal) \to \Cond^\omega(\Ccal)$ restricting to the constant functor $\Ccal \to \Cond^\omega(\Ccal)$. Since both $\gamma^\omega$ and $j^*\circ\gamma_\kappa$ restrict along to the constant functor, we see that they are equivalent.

The commutativity of the lower left triangle follows if $\gamma \coloneqq \gamma_\kappa$ preserves all limits, as both compositions $\lim\circ \mathrm{ const}$ are equivalent to the identity functor.

We are left to show that $\gamma$ preserves finite pullbacks, as then $\gamma$ preserves all limits by \cite[Prop. 4.4.2.7]{HTT}. For this, we remark that finite limits in $\Pro(\Ccal)$ can be computed pointwise in the sense of \cite[\S 2.1]{kst-i}. Thus, any diagram 
$$
\begin{tikzcd}
    & X\arrow[d,""]\\
    Y\arrow[r,""]& Z
\end{tikzcd}
$$
in $\Pro(\Ccal)$ can be lifted to an element $F\in \Pro(\Fun(K,\Ccal))$, where $K$ is the set representing the diagram above, such that its image under 
$$
    \Pro(\lim)\colon \Pro(\Fun(K,\Ccal))\To\Pro(\Ccal)
$$
is equivalent to $X\times_Z Y$. We also have a functor
$$
    \Cond(\lim)\colon \Cond_\kappa(\Fun(K,\Ccal))\To\Cond_\kappa(\Ccal)
$$
that is induced by the following construction. We have a limit preserving functor 
$$ \gamma^K\colon \Cond_\kappa(\Fun(K,\Ccal))\inj\Fun(\ProFin_{<\kappa},\Fun(K,\Ccal))\simeq \Fun(K,\Fun(\ProFin_{<\kappa},\Ccal)).
$$
Then $\Cond(\lim)$ is the composition of the above functor with the limit functor and sheafification. Now we obtain an essentially commutative square
\begin{equation}\tag{$\clubsuit$}   
    \begin{tikzcd}
        \Pro(\Fun(K,\Ccal)) \arrow[d,"\Pro(\lim)",swap]\arrow[r,"\gamma^{K}"]& \Cond_\kappa(\Fun(K,\Ccal))\arrow[d,"\Cond(\lim)"]\\
        \Pro(\Ccal)\arrow[r,"\gamma"]&\Cond_\kappa(\Ccal).
    \end{tikzcd}
\end{equation}
Note that limits and colimits in functor categories can be computed pointwise, thus we have an equivalence $\Cond_\kappa(\Fun(K,\Ccal))\simeq \Fun(K,\Cond_\kappa(\Ccal))$. Under this equivalence $\Cond(\lim)$ corresponds to the limit functor $\Fun(K,\Cond_{\kappa}(\Ccal))\rightarrow \Cond_\kappa(\Ccal)$. Therefore, commutativity of $(\clubsuit)$ shows that $\gamma$ preserves $K$-indexed limits, i.e. pullbacks. 
\end{proof}

\begin{remark} \label{continuous-comparison-lemma}
An informal argument for the comparison functor $\gamma$ to commute with cofiltered limits goes as follows:
Let $A \to \Pro(\Sp), \alpha \mapsto X_\alpha$ be a cofiltered diagram of pro-spectra $X_\alpha = \prolim_{i\in I_\alpha}X_{\alpha, i}$. We may assume that $A$ is cofinite and that the diagram $A\to\Pro(\Sp)$ is given in level-representation with cofiltered index set $I$ ($=I_\alpha$ for all $\alpha\in A$) so that $A\times I$ is cofiltered again and $\prolim_{A\times I}X_{\alpha,i} \simeq \lim_AX_\alpha$ is the limit \cite[\S 4]{isaksen-calculating}. Then
	\begin{align*}
	\gamma( \lim_{A} X_\alpha ) 
	\simeq \gamma( \prolim_{A\times I} X_{\alpha, i} ) 
	= \lim_{A\times I} \underline{X_{\alpha, i}} 
	\simeq   \lim_{A} \, \lim_{I} \underline{X_{\alpha, i}} 
	=  \lim_{A} \gamma( X_\alpha ).
	\end{align*}
\end{remark}

\begin{cor}
The comparison functor $\gamma$ preserves cofibre sequences and pushouts.
\end{cor}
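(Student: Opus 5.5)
The corollary should follow formally from Lemma~\ref{comparison-functor--lemma}, which shows that $\gamma$ preserves all small limits, in particular finite limits. My plan is to use stability in the case of interest and finite pullbacks/pushouts in general. First I would treat the stable situation $\Ccal=\Sp^+$ (or $\Sp$). By Lemma~\ref{Pro-omega-stable--lemma} the source $\Pro(\Ccal)$ is stable, and by Lemma~\ref{condensed-objects-stable--lem} the target $\Cond_\kappa(\Ccal)$ is stable. A functor between stable categories that preserves finite limits is exact, by \cite[1.1.4.1]{HA}, so it also preserves finite colimits. In particular it preserves pushouts and cofibre sequences.

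To make this concrete, I would argue directly. A cofibre sequence $X\to Y\to Z$ in a stable category is the same as a square with corner $0$ that is simultaneously a pullback and a pushout; equivalently it is a fibre sequence. Since $\gamma$ preserves pullbacks and the terminal object, it sends $0$ to $0$ and fibre sequences to fibre sequences. In the stable target these are cofibre sequences again. For pushouts, a square in a stable category is a pushout if and only if it is a pullback \cite[1.1.3.4]{HA}. So a pushout square in $\Pro(\Ccal)$ is a pullback, hence its image under $\gamma$ is a pullback, hence a pushout in $\Cond_\kappa(\Ccal)$.

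The only real subtlety is the scope of the statement. If $\Ccal$ is a non-stable category such as $\Spc$, preservation of pushouts does not follow from limit preservation alone. I would handle this by restricting to the stable coefficient categories actually used later, which is implicitly the intended setting since cofibre sequences are mentioned. Alternatively, one could mimic the argument with $(\clubsuit)$ in the proof of Lemma~\ref{comparison-functor--lemma}, replacing $\lim$ by $\colim$ over the finite diagram $K$ and computing pushouts levelwise in $\Pro(\Ccal)$ by \cite[Lem.~2.1]{kst-i}. But then one would need $\gamma$ to commute with a levelwise colimit followed by a cofiltered limit, which uses exactness of cofiltered limits and holds in the stable case. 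So I would commit to the stable-case argument above and note that this is the setting where the corollary is applied.
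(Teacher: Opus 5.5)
Your argument is correct and is the paper's: $\gamma$ preserves limits by Lemma~\ref{comparison-functor--lemma}, and since $\Pro(\Sp)$ and $\Cond(\Sp)$ are both stable, it is exact and therefore preserves cofibre sequences and pushouts. Restricting to stable coefficients also matches the paper, whose one-line proof is stated for $\Ccal=\Sp$.
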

\begin{proof}
This follows since both $\Pro(\Sp)$ and $\Cond(\Sp)$ are stable categories.
\end{proof}

Before we continue, let us note in the following that we also have a caonical $t$-structure on $\Pro(\Sp)$ obtained from the $t$-structure on $\Sp$. Moreover, the comparison functor $\gamma$ is left $t$-exact with respect to the $t$-structures on both sides.

\begin{lemma}
\label{lem-t-structure-pro}
There exists a t-structure on $\Pro(\Sp)$ with $\Pro(\Sp)_{\geq 0}$ given by the essential image of the functor $\Pro(\Sp_{\geq 0})\to\Pro(\Sp)$ and with $\Pro(\Sp)_{\leq 0}$ given by the full subcategory $\Pro(\Sp_{\leq 0}) \inj \Pro(\Sp)$.  

For $X=\prolim X_i\in\Pro(\Sp)$ and $n\in \ZZ$ we can compute the $n$-truncation pointwise, i.e. $\pi_n X\simeq \prolim \pi_n  X_i$.
\end{lemma}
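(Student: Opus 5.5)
We verify the t-structure axioms directly for the pair of full subcategories $\Pro(\Sp)_{\geq 0}$ and $\Pro(\Sp)_{\leq 0}$ (the latter is full since $\Pro$ of a full subcategory embeds fully faithfully). The plan is to build the truncation functors levelwise. By functoriality, $\tau_{\geq 0}$ and $\tau_{\leq -1}$ on $\Sp$ induce functors $\Pro(\tau_{\geq 0})$ and $\Pro(\tau_{\leq -1})$ on $\Pro(\Sp)$. They send $\prolim X_i$ to $\prolim \tau_{\geq 0}X_i$ and $\prolim\tau_{\leq -1}X_i$, landing in $\Pro(\Sp_{\geq 0})$ and $\Pro(\Sp_{\leq -1})$ respectively.

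The first axiom is stability under shifts: $\Sigma\Pro(\Sp)_{\geq 0}\subseteq \Pro(\Sp)_{\geq 0}$ and $\Omega\Pro(\Sp)_{\leq 0}\subseteq\Pro(\Sp)_{\leq 0}$. This is immediate because finite (co)limits, hence $\Sigma$ and $\Omega$, are computed levelwise (\cite[Lem.~2.1]{kst-i}).

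The second axiom is the existence of fibre sequences. Any $X=\prolim X_i$ admits the levelwise fibre sequences $\tau_{\geq 0}X_i\to X_i\to\tau_{\leq -1}X_i$. These form a pro-object of $\Fun(K,\Sp)$ for $K$ the cofibre-sequence diagram. Since finite colimits are computed pointwise, the image is a cofibre sequence $\prolim\tau_{\geq 0}X_i\to X\to\prolim\tau_{\leq -1}X_i$ in $\Pro(\Sp)$.

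The third axiom, orthogonality, is the step needing some care. For $X=\prolim_I X_i$ with $X_i\in\Sp_{\geq 0}$ and $Y=\prolim_J Y_j$ with $Y_j\in\Sp_{\leq -1}$, the mapping spectra in a pro-category give
\[\Map_{\Pro(\Sp)}(X,Y)\simeq\lim_{j\in J}\colim_{i\in I}\Map_\Sp(X_i,Y_j).\]
Each $\Map_\Sp(X_i,Y_j)$ is contractible by the t-structure on $\Sp$. Filtered colimits and limits of contractible spaces are contractible, so $\Map(X,Y)\simeq *$. One also has to ensure that objects merely equivalent to such pro-systems are covered; this holds because we defined the subcategories as essential images. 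Together these three axioms give the t-structure.

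For the last claim, the truncation functors of a t-structure are unique and determined by the fibre sequence above. Hence $\tau_{\geq n}X\simeq\prolim\tau_{\geq n}X_i$ and $\tau_{\leq n}X\simeq\prolim\tau_{\leq n}X_i$, and composing these gives $\pi_nX=\Sigma^{-n}\tau_{\leq n}\tau_{\geq n}X\simeq\prolim\Sigma^{-n}\tau_{\leq n}\tau_{\geq n}X_i=\prolim\pi_nX_i$. Here we identify the heart with $\Pro(\Sp^\heartsuit)\simeq\Pro(\textup{Ab})$; this also follows from the levelwise description, since an object lies in both halves if and only if it is levelwise equivalent to a pro-system in $\Sp^\heartsuit$ after applying $\Pro(\tau_{\leq 0}\tau_{\geq 0})$.

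I expect the main obstacle to be the careful bookkeeping that levelwise-constructed sequences are genuine cofibre sequences in $\Pro(\Sp)$, and that $\Pro(\Sp_{\leq 0})\to\Pro(\Sp)$ is fully faithful. For full faithfulness I would cite \cite[Lem.~2.1]{kst-i} and the mapping-space formula \cite[\S A.8.1]{SAG}.
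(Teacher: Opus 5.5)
Your proposal is correct and follows essentially the same route as the paper: the truncation sequence is built levelwise, the t-structure axioms are reduced to those of $(\Sp_{\geq 0},\Sp_{\leq 0})$ using that finite (co)limits in $\Pro(\Sp)$ are computed levelwise, and $\pi_n$ is then read off pointwise. Your explicit orthogonality check via $\Map(X,Y)\simeq\lim_j\colim_i\Map(X_i,Y_j)$ spells out a step the paper leaves implicit; there it comes from the adjunction $\iota^*\dashv\Pro(\iota)$ of \cite[Ex.~A.8.1.8]{SAG}, with $\iota^*$ computed levelwise.
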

\begin{proof}
First let us remark that $\Pro(\Sp)$ is a stable category \cite[Lem. 2.5]{kst-i}. The inclusion $\iota\colon\Sp_{\leq 0}\subseteq \Sp$ induces an adjunction 
$$
 \begin{tikzcd}
	\iota^{*}\colon \Pro(\Sp)\arrow[r,"",shift left = 0.3em]&\arrow[l,"",shift left = 0.3em]\Pro(\Sp_{\leq 0})\colon  \Pro(\iota)
\end{tikzcd}
$$
\cite[Ex. A.8.1.8]{SAG}. The functor $\Pro(\iota)$ is fully faithful \cite[Prop. A.8.1.9]{SAG}. 

We can apply the same argument to the truncation $\tau_{\geq 0}\colon \Sp\rightarrow \Sp_{\geq 0}$ and get an adjunction
$$
 \begin{tikzcd}
	\tau_{\geq 0}^{*}\colon \Pro(\Sp_{\geq 0})\arrow[r,"",shift left = 0.3em]&\arrow[l,"",shift left = 0.3em]\Pro(\Sp)\colon \Pro(\tau_{\geq 0}).
\end{tikzcd}
$$
Let us now define $\Pro(\Sp)_{\geq 0}$ as the essential image of $\tau_{\geq 0}^{*}$. We claim that the tuple 
$$
    (\Pro(\Sp)_{\geq 0}, \Pro(\Sp_{\leq 0}))
$$
defines a $t$-structure on $\Pro(\Sp)$.

Indeed, finite limits and colimit can be computed levelwise in $\Pro(\Sp)$ \cite[lem. 2.1]{kst-i}. So, checking the axioms of a $t$-structure reduces to the axioms for $(\Sp_{\geq 0},\Sp_{\leq 0})$, which holds by design.

For the homotopy groups note that $\tau_{\leq n}$ can be computed pointwise by construction. For $\tau_{\geq n}$ we note that for any $X\in \Pro(\Sp)$, we have a fibre sequence
$$
    \tau_{\geq n} X\to X\to \tau_{\leq n-1} X
$$
by the proof of \cite[Prop. 1.2.1.5]{HA}. As finite limits in $\Pro(\Sp)$ can be computed pointwise, we see that also $\tau_{\geq n}$ can be computed pointwise.
\end{proof}

\begin{prop}
The canonical functor $\gamma\colon \Pro(\Sp)\to\Cond(\Sp)$ is left t-exact.
\end{prop}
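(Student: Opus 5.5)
Left t-exactness means that $\gamma$ sends $\Pro(\Sp)_{\leq 0}$ into $\Cond(\Sp)_{\leq 0}$. We prove this directly from the explicit formula in Lemma~\ref{comparison-functor--lemma}, $\gamma(\prolim_{i\in I} X_i) \simeq \lim_{i\in I}\underline{X_i}$, together with the description of the t-structures in Lemma~\ref{lem-t-structure-pro} and the lemma constructing the t-structure on condensed spectra.

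First, let $X\in\Pro(\Sp)_{\leq 0}$. By Lemma~\ref{lem-t-structure-pro} this subcategory is $\Pro(\Sp_{\leq 0})$, the fully faithful image of $\Pro(\iota)$. Hence we may represent $X$ as $\prolim_{i\in I} X_i$ with every $X_i\in\Sp_{\leq 0}$.

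Second, we check that each $\underline{X_i}$ is coconnective. The constant condensed object $\underline{X_i}$ is the sheafification of the constant presheaf; concretely, its value on $S\in\EDS$ is (a colimit of) finite products of copies of $X_i$. For instance, on extremally disconnected $S$ it is $\colim_{S\to T \text{ finite}} X_i^{T}$, or in the light case it is computed analogously. Finite products and filtered colimits of $0$-truncated spectra are again $0$-truncated, since $\Sp_{\leq 0}$ is closed under finite limits and filtered colimits in $\Sp$. Hence $\underline{X_i}$ takes values in $\Sp_{\leq 0}$, i.e.\ it lies in $\Cond(\Sp_{\leq 0})=\Cond(\Sp)_{\leq 0}$.

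An alternative route for this step avoids explicit formulas. The functor $\textup{const}\colon\Sp\to\Cond(\Sp)$ is left adjoint to evaluation at the point, which is t-exact and therefore preserves connective objects; by the standard adjunction criterion, this alone shows that $\textup{const}$ is \emph{right} t-exact, not left t-exact, so the explicit check above is the one actually needed.

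Third, we pass to the limit. The coconnective part $\Cond(\Sp)_{\leq 0}$ is closed under all small limits. This follows because it is characterised by taking values in $\Sp_{\leq 0}$, limits of sheaves are computed pointwise, and $\Sp_{\leq 0}$ is closed under limits in $\Sp$. Alternatively, it is the right orthogonal of $\Cond(\Sp)_{\geq 1}$. Therefore $\gamma(X)\simeq\lim_i\underline{X_i}\in\Cond(\Sp)_{\leq 0}$.

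The main obstacle is the second step, namely making the coconnectivity of the constant sheaf $\underline{X_i}$ precise, and in particular identifying the sheafification of the constant presheaf in the $\kappa$-small and light settings. I would handle this via the finite-product description on $\EDS$ from Remark~\ref{extremally-disconnected--rem}. In the light case I would instead use that $j^*$ preserves the coconnective parts together with $\gamma^\omega\simeq j^*\gamma_\kappa$ from Lemma~\ref{comparison-functor--lemma}.
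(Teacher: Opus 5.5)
Your proposal is correct and follows the paper's argument. The paper notes that $\gamma$ restricted to $\Pro(\Sp_{\leq 0})$ factors through $\Cond(\Sp_{\leq 0})$ because the inclusion $\Cond(\Sp_{\leq 0})\subset\Cond(\Sp)$ preserves limits, which is your first and third steps. Your second step, that the constant sheaves $\underline{X_i}$ are coconnective, spells out what the paper's one-line proof leaves implicit; your remark that adjunction alone only gives right t-exactness of the constant functor is also correct.
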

\begin{proof}
Since the inclusion functor $\Cond(\Sp_{\leq 0}) \inj \Cond(\Sp)$ preserves limits, the composition $\Pro(\Sp_{\leq 0}) \to \Pro(\Sp) \to[\gamma] \Cond(\Sp)$ factors over the inclusion of $\Cond(\Sp_{\leq 0}) \inj \Cond(\Sp)$.
Thus $\gamma$ is left t-exact. 
\end{proof}

Next we want to show that the comparison functor is conservative. For this purpose, we start with some preparations.

\begin{lemma} \label{lem:condensed-milnor-sequence}
Let $\bigl( (X_i)_i, (p_i\colon X_i\to X_{i-1})_i \bigr)$ be a tower of condensed spectra and $n\in\ZZ$. Then there is an exact sequence
\[
0 \To {\lim_i}^1\pi_{n+1}(X_i) \To \pi_n(\lim_iX_i) \To \lim_i\pi_n(X_i) \To 0
\]
of condensed abelian groups, the so-called Milnor sequence.
\end{lemma}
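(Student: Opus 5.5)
The plan is to copy the classical proof of the Milnor sequence, working internally in the stable category $\Cond(\Sp)$ with its t-structure. I would first write the limit of the tower as a fibre
\[
\lim_i X_i \To \prod_i X_i \xrightarrow{\ \id - p\ } \prod_i X_i ,
\]
where the second map sends the $i$-th factor to $x_i - p_{i+1}(x_{i+1})$. This is a fibre sequence because a sequential limit in any stable category with countable products is the equaliser of $\id$ and the shift map, and an equaliser is the fibre of the difference. Taking the long exact sequence of homotopy objects in the heart $\Cond(\Ab)$ gives
\[
\cdots \to \pi_{n+1}\bigl(\textstyle\prod_i X_i\bigr) \xrightarrow{\id-p} \pi_{n+1}\bigl(\textstyle\prod_i X_i\bigr) \to \pi_n(\lim_i X_i) \to \pi_n\bigl(\textstyle\prod_i X_i\bigr) \xrightarrow{\id-p} \pi_n\bigl(\textstyle\prod_i X_i\bigr)\to\cdots
\]

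The key input is Lemma~\ref{Lem:count-prod-t-exact}. Since countable products in $\Cond^\omega(\Sp)$ are t-exact, we get $\pi_m(\prod_i X_i)\simeq \prod_i \pi_m(X_i)$ in condensed abelian groups. This identification is compatible with the map $\id - p$, which becomes the usual map $\prod_i\pi_m(X_i)\to\prod_i\pi_m(X_i)$. Its kernel is $\lim_i \pi_m(X_i)$ by the definition of limits in the abelian category $\Cond(\Ab)$. Its cokernel is, by definition, ${\lim_i}^1\pi_m(X_i)$. This is the standard description of $\lim^1$ in an abelian category in which countable products are exact; that exactness again follows from Lemma~\ref{Lem:count-prod-t-exact}, restricted to the heart. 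Splitting the long exact sequence at $\pi_n(\lim_i X_i)$ then gives the claimed short exact sequence.

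The main obstacle is the t-exactness of countable products. Everything else is formal. For $\Cond_\kappa$, Lemma~\ref{Lem:count-prod-t-exact} is stated only in the light case, but the same repleteness argument applies, since $\Cond_\kappa(\Set)$ is replete. Alternatively, one can note that on extremally disconnected sets products are computed pointwise and are exact.

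A secondary point to check is that the t-structure homotopy objects $\pi_n$ used here agree with the sheafified homotopy groups used in the proof of Lemma~\ref{Lem:count-prod-t-exact}. This holds because the coconnective part consists of sheaves with values in $\Sp_{\leq 0}$, so truncations are the sheafified pointwise truncations.
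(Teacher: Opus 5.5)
Your proposal is correct and matches the paper's proof essentially verbatim: the fibre sequence $\lim_i X_i \to \prod_i X_i \xrightarrow{\id - p} \prod_i X_i$, its long exact sequence of homotopy objects combined with t-exactness of countable products (Lemma~\ref{Lem:count-prod-t-exact}), and the identification of the kernel and cokernel of $\id - p$ with $\lim_i$ and ${\lim_i}^1$. Your extra remarks fill in points the paper leaves implicit. These are the $\kappa$-condensed case, where products are even computed pointwise on extremally disconnected sets, and the compatibility of t-structure homotopy objects with sheafified homotopy groups.
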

\begin{proof}
This is a standard proof.\footnote{%
    Cf.\@~\url{https://ncatlab.org/nlab/show/lim^1+and+Milnor+sequences}.
}
The sequential limit $\lim_iX_i$ fits into a fibre sequence
\[
\lim_i X_i \To \prod_iX_i \To[\id-p_*] \prod_iX_i
\]
where $p_*$ is the map induced by the maps $(p_i\colon X_i\to X_{i-1})_i$.
Consider the associated long exact sequence of homotopy groups
\[
\ldots \To \prod_i\pi_{n+1}(X_i) \To[\id-p_*] \prod_i\pi_{n+1}(X_i) \To[\alpha] \pi_n(\lim_iX_i) \To[\beta] \prod_i\pi_n(X_i) \To[\id-p_*] \prod_i\pi_n(X_i) \To \ldots,
\]
where we have used the fact that countable products are t-exact in $\Cond^\omega(\Sp)$ by Lemma \ref{Lem:count-prod-t-exact}.
We have that
\[
\im(\beta) = \ker\bigl( \prod_i\pi_n(X_i) \To[\id-p_*] \prod_i\pi_n(X_i) \bigr) = \lim_i\pi_n(X_i)
\]
and
\[
\ker(\alpha) = \coker\bigl( \prod_i\pi_{n+1}(X_i) \To[\id-p_*] \prod_i\pi_{n+1}(X_i) \bigr) = {\lim_i}^1\pi_{n+1}(X_i)
\]
which shows the claim.
\end{proof}

The following result and its proof was communicated to us by Peter Scholze and we do not claim originality. 

\begin{thm}[Clausen--Scholze] \label{thm:comparison-conservative}
The comparison functor 
\[ \gamma^\omega = j^* \circ \gamma_\kappa \,\colon\, \Pro^\omega(\Sp^+) \to \Cond_\kappa(\Sp) \]
is conservative on bounded below objects.
In particular, also the comparison functor $\gamma_\kappa$ is conservative on bounded below objects.
\end{thm}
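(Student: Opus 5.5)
Since $\Pro^\omega(\Sp^+)$ and $\Cond_\kappa(\Sp)$ are stable (Lemmas~\ref{Pro-omega-stable--lemma} and~\ref{condensed-objects-stable--lem}) and $\gamma^\omega$ preserves finite limits (Lemma~\ref{comparison-functor--lemma}), conservativity on bounded below objects reduces to the following claim: if $X\in\Pro^\omega(\Sp^+)$ is bounded below and $\gamma^\omega X\simeq 0$, then $X\simeq 0$. Apply this to the cofibre of a map between bounded below objects; the cofibre is again bounded below. The statement for $\gamma_\kappa$ then follows, because $\gamma^\omega=j^*\circ\gamma_\kappa$.

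\emph{Reduction to homotopy pro-groups.} A countable cofiltered category has a cofinal sequence, so I write $X=\prolim_n X_n$ as a tower. Then $\gamma^\omega X\simeq\lim_n\underline{X_n}$, a sequential limit of constant condensed spectra, which I may compute in $\Cond^\omega(\Sp)$. Next I reduce $X\simeq0$ to pro-homotopy groups. Assume $X$ is bounded below, say concentrated in degrees $\geq -N$. In $\Pro(\Sp^+)$ we have $X\simeq\prolim_{n,m}\tau_{\leq m}X_n$, so it suffices that $\tau_{\leq m}X\simeq 0$ for every $m$. Each $\tau_{\leq m}X$ is a finite iterated extension, computed levelwise (Lemma~\ref{lem-t-structure-pro}), of the pro-Eilenberg--MacLane objects attached to $\pi_kX=\prolim_n\pi_kX_n$ for $-N\leq k\leq m$. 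It therefore suffices to show that every pro-abelian group $\prolim_n\pi_kX_n$ is pro-zero. That is, for every $i$ there should be some $j\geq i$ such that $\pi_kX_j\to\pi_kX_i$ vanishes.

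\emph{Input from the Milnor sequence.} By Lemma~\ref{lem:condensed-milnor-sequence}, applied to the tower $(\underline{X_n})_n$, and using $\pi_k\underline{X_n}=\underline{\pi_kX_n}$, the vanishing of $\gamma^\omega X$ gives, for every $k$,
\[ \lim_n\underline{A_n}=0\quad\text{and}\quad {\lim_n}^1\underline{A_n}=0,\qquad A_n\coloneqq\pi_kX_n, \]
in light condensed abelian groups. So the whole problem becomes a purely algebraic statement about a tower of discrete abelian groups $A_n$: if the condensed $\lim$ and $\lim^1$ of $(\underline{A_n})_n$ vanish, then $(A_n)_n$ is pro-zero.

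\emph{Step 1: the Mittag-Leffler condition.} The vanishing of $\lim^1$ means that $\id-p_*\colon\prod_n\underline{A_n}\to\prod_n\underline{A_n}$ is an epimorphism of condensed abelian groups. Concretely, for every light profinite $S$ and every section over $S$ there is a cover $S'\surj S$ on which the section lifts. I would test this on $S=\NN\cup\{\infty\}$, where $\underline{A}(S)$ consists of eventually constant sequences. Suppose the Mittag-Leffler condition fails at some level $i$, so the images $\im(A_j\to A_i)$ never stabilise. Then I choose indices $j_1<j_2<\cdots$ and elements witnessing that each image $\im(A_{j_m}\to A_i)$ is strictly bigger than $\im(A_{j_{m+1}}\to A_i)$. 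From these I assemble a family of sections over $S$ which converges to $0$ at $\infty$. Any local lift through $\id-p_*$ would have to be locally constant near $\infty$, and this forces these images to stabilise, a contradiction. Hence the tower is Mittag-Leffler.

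\emph{Step 2: from Mittag-Leffler to pro-zero.} Given Mittag-Leffler, the stable images $A_n^\infty$ form a tower with surjective transition maps. Its limit is a subobject of $\lim_n\underline{A_n}=0$, as one checks by evaluating at a point. A tower of surjections with zero limit is zero, so every $A_n^\infty=0$, which is exactly the pro-zero condition.

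I expect Step~1 to be the main obstacle. The difficulty is to choose the correct witness section on $\NN\cup\{\infty\}$, or possibly on a countable product of such spaces, and to use the convergence at $\infty$ to force stabilisation of the images. This is where the countability of the index category and the lightness of the test space are genuinely used.
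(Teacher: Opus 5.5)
Your reductions are the same as the paper's: reflecting zero objects, passing to a tower, reducing to the pro-groups $\prolim_n\pi_kX_n$, extracting $\lim_n\underline{A_n}=0={\lim_n}^1\underline{A_n}$ from Lemma~\ref{lem:condensed-milnor-sequence}, and your Step~2. The divergence, and the gap, is in Step~1.

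The paper gets Mittag-Leffler from the projectivity of $\ZZ[\NN\cup\{\infty\}]$ in $\Cond^\omega(\Ab)$. This gives $({\lim_n}^1\underline{A_n})(S)={\lim_n}^1\Cont(S,A_n)$ for $S=\NN\cup\{\infty\}$ without sheafifying. That group contains ${\lim_n}^1\bigoplus_\NN A_n$ as a direct summand, and Emmanouil's theorem turns its vanishing into Mittag-Leffler.

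You bypass projectivity, so you must handle lifts that exist only over a cover $\phi\colon S'\surj S$. Your sentence ``any local lift through $\id-p_*$ would have to be locally constant near $\infty$'' does not handle this, for two reasons.
\begin{itemize}
\item $S'$ need not admit a section over any neighbourhood of $\infty$. For instance, take $S'$ to be two copies of $\NN\cup\{\infty\}$ mapping onto the even and the odd integers respectively.
\item A lift $x=(x_n)_n\in\prod_n\Cont(S',A_n)$ is only componentwise locally constant, with neighbourhoods that may shrink as $n$ grows.
\end{itemize}
Since the witness section is also never specified, Step~1 is at present a heuristic rather than an argument.

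It can be closed along your lines.
\begin{itemize}
\item Suppose $I_j\coloneqq\im(A_j\to A_i)$ does not stabilise. Choose $i\leq j_1<j_2<\cdots$ and $b_m\in A_{j_m}$ whose image $\bar b_m\in A_i$ lies outside $I_{j_{m+1}}$.
\item Let $y_n\in\Cont(S,A_n)$ take the value $b_m$ at $m$ if $n=j_m$, and $0$ elsewhere. It is finitely supported and vanishes at $\infty$.
\item Suppose $x_n-p_{n+1}(x_{n+1})=y_n\circ\phi$ on $S'$. Iterating gives $x_i(t)\in\bar b_m+I_N$ whenever $\phi(t)=m$ and $N>j_m$, and $x_i(t)\in\bigcap_N I_N$ whenever $\phi(t)=\infty$.
\item Pick $t_m\in\phi^{-1}(m)$ and an accumulation point $t_\infty$ of $(t_m)_m$ in the compact space $S'$; necessarily $\phi(t_\infty)=\infty$.
\item The single component $x_i$ is constant on some neighbourhood of $t_\infty$, and that neighbourhood contains some $t_m$. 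Taking $N=j_{m+1}$ gives $\bar b_m\in I_{j_{m+1}}$, a contradiction.
\end{itemize}

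Completed this way, your route is more elementary than the paper's. It uses only compactness of covers and one component of the lift. It needs neither the projectivity of $\ZZ[\NN\cup\{\infty\}]$ (a nontrivial Clausen--Scholze result), nor Emmanouil's criterion, nor any product of test spaces. It also works verbatim in $\Cond_\kappa(\Ab)$. The paper's route instead computes the sections of ${\lim}^1$ on a test object exactly, which is the more structural statement.
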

\begin{proof}
Since the categories in question are stable (Lemma~\ref{Pro-omega-stable--lemma} and Lemma~\ref{condensed-objects-stable--lem}), it suffices to check that the functor $\gamma$ reflects zero objects. Let $\ldots\to X_2 \to X_1 \to X_0$ be a
system of spectra such that $\lim_i\underline{X_i} = 0$ as condensed spectra (where each
$X_i$ is considered as a discrete/constant condensed spectrum, and the
limit is taken in condensed spectra). 
We have to see that the pro-spectrum $\prolim_iX_i$ is zero. Since our spectra are bounded below by assumption, we can show this on homotopy pro-groups $\lim_i\pi_j(X_i)$ for all $j\in\ZZ$ \cite[2.8]{kst-i}. We may assume $j=0$ by shifting. The Milnor sequences (Lemma~\ref{lem:condensed-milnor-sequence}) for $n\in\{-1,0\}$
\[
0 \To {\lim_i}^1\pi_{n+1}(X_i) \To \pi_n(\lim_iX_i) \To \lim_i\pi_n(X_i) \To 0
\]
in $\Cond(\Ab)$ yield that $\lim_i \pi_0 X_i = \lim_i^1 \pi_0 X_i = 0$ in this category. Thus, the claim follows from the Lemma~\ref{pro-vanishing-and-Mittag-Leffler--lemma} below.
\end{proof}

\begin{remark}
The comparison functor $\gamma\colon\Pro^\omega(\Ab)\to\Cond_\kappa(\Ab)$ is not conservative. Consider the tower $(p^i\ZZ)_i$ whose structure maps are the inclusions (for a prime number $p$). Then $\prolim_ip^i\ZZ$ is not zero in $\Pro(\Ab)$ as for every $j\geq i$ the map $p^j\ZZ \to p^i\ZZ$ is not zero. But $\gamma(\prolim_ip^i\ZZ) = \lim_i\underline{p^i\ZZ} = \underline{\lim_ip^i\ZZ} =0$ (Remark~\ref{lim-and-lim^1-in-Ab-and-CondAb--remark}).
\end{remark}

\begin{remark} \label{lim-and-lim^1-in-Ab-and-CondAb--remark}
The faithful functor $\Top\Ab \to \Cond_\kappa(\Ab), M\mapsto \underline{M} \coloneqq C(-,M)$, commutes with limits and hence admits a left adjoint $\L \colon \Cond_\kappa(\Ab)\to\Top\Ab$. Thus for a tower $(M_i)_i$ of topological abelian groups with limit $M\coloneqq\lim_iM_i$ in $\Top\Ab$, the canonical morphism $\underline{M} \to \lim_i\underline{M_i}$ of associated condensed abelian groups is an isomorphism.
In $\Cond_\kappa(\Ab)$ we have an exact sequence
\[ \tag{$\clubsuit$}
0 \To \lim_i\underline{M_i} \To \prod_i\underline{M_i} \To[\delta] \prod_i\underline{M_i} \To {\lim_i}^1\underline{M_i} \To 0
\]
where $\delta$ is the difference of the identity and the structure map of the tower.
As the canonical morphism $\underline{\prod_iM_i}\To\prod_i\underline{M_i}$ is an isomorphism and since for every $N\in\Top\Ab$ the counit $\L(\underline{N})\to N$ is an isomorphism, the sequence $\L(\clubsuit)$ identifies with the exact sequence
\[
0 \To \lim_iM_i \To \prod_iM_i \To[\delta] \prod_iM_i \To {\lim_i}^1M_i \To 0
\]
of topological abelian groups, hence $\L(\lim_i^1\underline{M_i}) = \lim_i^1M_i$.
\end{remark}

\begin{lemma} \label{pro-vanishing-and-Mittag-Leffler--lemma}
For a tower $(M_i)_i$ of abelian groups the following are equivalent:
\begin{enumerate}
    \item The tower is Mittag-Leffler and $\lim_iM_i=0$.
    \item The tower is pro-zero, i.e.\@ $\prolim_iM_i=0$ in $\Pro(\Ab)$.
    \item It holds $\lim_i \underline{M_i} =0= \lim^1_i \underline{M_i}$ in $\Cond_\kappa(\Ab)$. 
    \item It holds $\lim_i \underline{M_i} =0= \lim^1_i \underline{M_i}$ in $\Cond^\omega(\Ab)$.
\end{enumerate}
\end{lemma}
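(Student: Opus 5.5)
I will prove the cycle (1)$\Leftrightarrow$(2), (1)$\Rightarrow$(3)$\Rightarrow$(4)$\Rightarrow$(1).

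\emph{(1)$\Leftrightarrow$(2).} This is classical. If the tower is Mittag-Leffler, let $M_i'\subseteq M_i$ be the stable images. Then $\prolim_i M_i\simeq\prolim_i M_i'$, since the inclusions and the transition maps give mutually inverse pro-morphisms. Moreover, the maps $M_j'\to M_i'$ are surjective. A tower of surjections with limit zero must vanish: any element of $M_i'$ lifts compatibly to an element of $\lim_i M_i'=\lim_i M_i=0$, so every $M_i'=0$. Conversely, if the tower is pro-zero, then for each $i$ some $M_j\to M_i$ is zero. Hence the images stabilise (at zero), so the tower is Mittag-Leffler, and $\lim_i M_i=0$.

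\emph{(1)$\Rightarrow$(3).} Apply Remark~\ref{lim-and-lim^1-in-Ab-and-CondAb--remark} with discrete $M_i$, so that $\lim_i\underline{M_i}=\underline{\lim_i M_i}=0$. For $\lim^1$, evaluate the exact sequence $(\clubsuit)$ on an extremally disconnected $S$, where sheaf values are computed naively (Remark~\ref{extremally-disconnected--rem}). There $\underline{M_i}(S)=\Cont(S,M_i)$, and the cokernel of $\delta$ on $\prod_i\Cont(S,M_i)$ is $\lim^1_i\Cont(S,M_i)$. Since $\Cont(S,-)$ is an exact-enough functor (a filtered colimit of finite products), the tower $(\Cont(S,M_i))_i$ is still pro-zero, so its $\lim^1$ vanishes.

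\emph{(3)$\Rightarrow$(4).} Apply $j^*$, which is exact and commutes with limits. Alternatively, repeat the argument of (1)$\Rightarrow$(3) directly with light profinite sets; I must check that $\lim^1$ there is computed by the same cokernel, which holds because countable products are exact (Lemma~\ref{Lem:count-prod-t-exact}).

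\emph{(4)$\Rightarrow$(1).} This is the main step. Evaluating at the point gives $\lim_i M_i=0$, so the work is to deduce Mittag-Leffler from $\lim^1_i\underline{M_i}=0$ in $\Cond^\omega(\Ab)$. I first try the following. Vanishing of the sheaf $\lim^1$ means that for every light profinite $S$ and every family $(f_i)\in\prod_i\Cont(S,M_i)$ there is a cover $T\surj S$ on which the family lies in the image of $\delta$. Suppose the tower is not Mittag-Leffler at some index $i_0$. Then the images $\im(M_j\to M_{i_0})$ strictly decrease along a subsequence, and I would pick elements witnessing this. Take $S=\NN\cup\{\infty\}$ and build a family $(f_i)$ that equals the $n$-th witnessing element near the point $n$ and is $0$ near $\infty$. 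After pulling back to a cover $T$, solving $f=\delta(g)$ means that the $g_i$ are locally constant on the compact space $T$, so each $g_i$ takes only finitely many values. Near the fibre over $\infty$, the solution is therefore uniform, and this should force an element of $M_{i_0}$ to lie in all images $\im(M_j\to M_{i_0})$, contradicting the strict decrease. This is the classical proof that $\lim^1$ of a countable tower vanishes "uniformly" only under Mittag-Leffler; a similar argument is Gray's theorem that $\lim^1$ of countable groups is either $0$ or uncountable, here replaced by a compactness argument.

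The delicate point is arranging the witnesses so that the locally constant equation $f_i=g_i-p(g_{i+1})$ near $\infty$ gives the contradiction. If this direct approach becomes messy, I would fall back on \cite[2.8]{kst-i}-style characterisations, or reduce to showing that $\lim^1_i\Cont(S,M_i)=0$ for $S=\NN\cup\{\infty\}$ implies Mittag-Leffler. That last statement follows from the identification $\Cont(\NN\cup\{\infty\},M)=$ eventually constant sequences, combined with the standard Emmanouil-type criterion that $\lim^1(M_i^{(\NN)})=0$ iff the tower is Mittag-Leffler.
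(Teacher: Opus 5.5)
Your arguments for (1)$\Leftrightarrow$(2), (1)$\Rightarrow$(3) and (3)$\Rightarrow$(4) are fine. The gap is in (4)$\Rightarrow$(1), the only non-formal step, which your proposal does not actually prove.

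\textbf{The fallback needs an input you do not supply.} It silently strengthens the hypothesis. As you note yourself, $\lim^1_i\underline{M_i}=0$ in $\Cond^\omega(\Ab)$ only says that $\delta$ is an epimorphism of sheaves. That is, every class in the naive cokernel $\lim^1_i\Cont(S,M_i)$ dies after pulling back along some cover $T\surj S$. To conclude that $\lim^1_i\Cont(\NN\cup\{\infty\},M_i)$ itself vanishes, you need evaluation at $S=\NN\cup\{\infty\}$ to be exact on $\Cond^\omega(\Ab)$, i.e.\ $\ZZ[\NN\cup\{\infty\}]$ must be projective there. This is a nontrivial theorem of Clausen--Scholze, and it is exactly the input the paper uses at this point. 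It does not follow from the definitions. Exactness of countable products, which you invoke in your alternative for (3)$\Rightarrow$(4), only ensures that the cokernel of $\delta$ computes the derived $\lim^1$; it says nothing about sections over $S$. Once projectivity is granted, your fallback ($\Cont(\NN\cup\{\infty\},M)\cong M^{(\NN)}$ naturally in $M$, then Emmanouil's criterion) is precisely the paper's proof.

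\textbf{The direct route is incomplete but can be finished.} As written it stops at ``this should force'', and the construction is not well posed. Each coordinate $f_i\in\Cont(\NN\cup\{\infty\},M_i)$ must be locally constant at $\infty$, so it can carry only finitely many nonzero witnesses; the witnesses have to sit in different coordinates. It can be completed without projectivity as follows.
\begin{itemize}
\item Suppose the tower is not Mittag-Leffler at $i_0$. Choose $i_0<j_0<j_1<\cdots$, elements $x_k\in\im(M_{j_k}\to M_{i_0})\setminus\im(M_{j_{k+1}}\to M_{i_0})$, and lifts $y_k\in M_{j_k}$.
\item Let $f_{j_k}$ be $y_k$ at the point $k$ and $0$ elsewhere, with all other $f_i$ equal to $0$. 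Each $f_i$ is then continuous.
\item Suppose $f=\delta(g)$ on a light cover $T\surj S$, and pick $t_k\in T$ over $k$.
\item Pointwise, the kernel of $\delta$ on $\prod_iM_i$ is $\lim_iM_i$. An explicit pointwise solution at $k$ is given by the images of $y_k$ in $M_i$ for $i\le j_k$ and $0$ for $i>j_k$; its $i_0$-component is $x_k$.
\item Hence $g_{i_0}(t_k)\in x_k+L$, where $L$ is the image of $\lim_iM_i\to M_{i_0}$. Since $f$ vanishes over $\infty$, also $g_{i_0}(t^*)\in L$ for every $t^*$ over $\infty$.
\item $T$ is compact metrizable, so a subsequence of $(t_k)$ converges to some $t^*$, necessarily lying over $\infty$. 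Local constancy of $g_{i_0}$ then gives $x_k\in L\subseteq\im(M_{j_{k+1}}\to M_{i_0})$ for some $k$, a contradiction.
\end{itemize}
Together with $\lim_iM_i=0$ (evaluate at the point), this yields (1). It would be a more elementary argument than the paper's, avoiding both the projectivity of $\ZZ[\NN\cup\{\infty\}]$ and Emmanouil's theorem.
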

\begin{proof}
First we show (1)$\Rightarrow$(2). The tower $(M_i)_i$ being Mittag-Leffler means that for every $i\geq 0$ the tower $(\im(M_j\to M_i))_{j\geq i}$ stabilises. Let $B_i\subset M_i$ be this stable image, i.e.\@ $B_i=\bigcap_{j\geq i}\im(M_j\to M_i)$. 
Then $(B_i)_i$ is a subtower of $(M_i)_i$, hence $\lim_iB_i \subset \lim_iM_i =0$. Thus for every $i$ there exists a $j\geq i$ such that the map $M_j\to M_i$ is zero, hence the pro-system $\prolim_iM_i$ is zero. The direction (2)$\Rightarrow$(1) works analogously.

Now we assume (3). For any profinite set $S$ we have that $\Cont(S,M_i)
= \Cont(S,\ZZ) \otimes_\ZZ M_i$ and $\Cont(S,\ZZ)$ is some large free abelian group \cite[Thm. 5.4]{condensed}, and so has $\bigoplus_\NN$ $\ZZ$ as a direct factor if $S$ is large enough. 
Thus there exists an extremally disconnected set $S$ such that 
\[
{\lim_i}^1 \bigoplus_\NN M_i \subset {\lim_i}^1 \Cont(S,M_i) = {\lim_i}^1 \underline{M_i}(S) \overset{(\clubsuit)}{=} \bigl( {\lim_i}^1\underline{M_i} \bigr)(S)=0
\]
where $(\clubsuit)$ holds as $\id_S$ is cofinal among all covers of $S$ (Remark~\ref{extremally-disconnected--rem}). The condition that $\lim_i^1\bigoplus_\NN M_i=0$ in $\Ab$ is equivalent to the tower $(M_i)_i$ being Mittag-Leffler \cite[Cor. 6]{emmanouil}, hence we have (1).

Assuming (1), for any extremally disconnected set $S$ we have 
\[
({\lim_i}^1\underline{M_i})(S) = {\lim_i}^1\underline{M_i}(S) = {\lim_i}^1 \Cont(S,M_i) \cong {\lim_i}^1\bigoplus_J M_i
\]
for some set $J$ and the last term vanishes as the tower is Mittag-Leffler. Hence $\lim_i^1\underline{M_i}$ in $\Cond_\kappa(\Ab)$.

The implication (3)$\Rightarrow$(4) follows since $j^*$ preserves limits and colimits.

It remains to show (4)$\Rightarrow$(1). For this let $S\coloneqq\NN\cup\{\infty\}\in\ProFin^\omega$. Then $\ZZ[S]$ is a projective object in $\Cond^\omega(\Ab)$\footnote{%
    See Lecture 3 by Scholze in the lectures on Analytic Stacks, \url{https://youtu.be/me1KNo3WJHE?si=iSCb0CAYyCRvIn1R&t=2595}.
} so that the equality $(\clubsuit)$ above holds for this particular $S$ in $\Cond^\omega(\Ab)$.
\end{proof}

\begin{example}
If a tower $(M_i)_i$ of abelian groups satisfies $\lim_iM_i=0=\lim_i^1M_i$, then it needs not to be pro-zero. For instance, consider the tower $(p^i\ZZ_p)_i$ for some prime number~$p$.
\end{example}

\subsection{Lifting functors to condensed coefficients}
\label{subsec:lifting-functors-to-condensed}
We end this section by lifting functors to enriched functors with condensed coeffiecients by using that the category of
analytic adic spaces is tensored over the category of profinite sets (Lemma~\ref{lem:XotimesSsheafy}).

\begin{construction}
    \label{construction:condensify}
    Consider the following containments of categories inside $\ProFin$:
    \[\ProFin^\omega \overset{j}{\hookrightarrow} \ProFin_{< \kappa} \overset{\iota_\kappa}{\hookleftarrow} \EDS_{< \kappa}.\]
    The cardinality restriction $<\kappa$ may be removed for the construction below.
    
    Let $\Ccal$ be an essentially small category which admits an action by $\ProFin_{< \kappa}$ and let $\Vcal$ be a category.
    There are natural restriction functors
    \[\PSh(\ProFin^\omega, \Vcal) \xleftarrow{j^*} \PSh(\ProFin_{< \kappa}, \Vcal) \xrightarrow{\iota_\kappa^*} \PSh(\EDS_{< \kappa}, \Vcal).\]
    Both functors preserve sheaves and hypersheaves by Remark~\ref{comparison-light-kappa-condensed}. The functor $\iota_\kappa^*$ induces an equivalence $\Shhyp(\ProFin_{< \kappa}, \Vcal) \simeq \Shhyp(\EDS_{< \kappa}, \Vcal) \simeq \Fun^\times(\EDS_{< \kappa}\op, \Vcal)$ by Remark~\ref{extremally-disconnected--rem}, whose quasi-inverse is given by unfolding the hypersheaves.
    The action functor $\ProFin_{< \kappa} \times \Ccal \to \Ccal$ induces functors
    \[\begin{tikzcd}[column sep=1.5em, every cell/.append style={font=\small}]
        & \arrow[dl] \PSh(\Ccal, \Vcal) \arrow[d] \arrow[dr] \\
        \PSh(\ProFin^\omega \times \Ccal, \Vcal) \arrow[d, phantom, "\simeq", sloped] & \arrow[l, "j^*"'] \PSh(\ProFin_{< \kappa} \times \Ccal, \Vcal) \arrow[r, "\iota_\kappa^*"] \arrow[d, phantom, "\simeq", sloped]  & \PSh(\EDS_{< \kappa} \times \Ccal, \Vcal) \arrow[d, phantom, "\simeq", sloped]  \\
        \PSh(\Ccal, \PSh(\ProFin^\omega, \Vcal))  \arrow[d, phantom, "\simeq", sloped]& \arrow[l, "j^*"'] \PSh(\Ccal, \PSh(\ProFin_{< \kappa}, \Vcal)) \arrow[r, "\iota_\kappa^*"] \arrow[d, phantom, "\simeq", sloped] & \PSh(\Ccal, \PSh(\EDS_{< \kappa}, \Vcal)) \arrow[d, phantom, "\simeq", sloped] \\
        \PSh(\ProFin^\omega, \PSh(\Ccal, \Vcal)) & \arrow[l, "j^*"'] \PSh(\ProFin_{< \kappa}, \PSh(\Ccal, \Vcal)) \arrow[r, "\iota_\kappa^*"] & \PSh(\EDS_{< \kappa}, \PSh(\Ccal, \Vcal)),
    \end{tikzcd}\]
    where $F \in \PSh(\Ccal, \Vcal)$ is sent to $F^\cond \in \PSh(\Ccal, \PSh(\ProFin_{< \kappa}, \Vcal))$ in the middle, whose value at $X \in \Ccal$ is given by
    \begin{equation}
        F^\cond(X) \colon S \mapsto F(S \otimes X).
    \end{equation}
    We will also consider its restriction $j^*F^\cond$ to light profinite sets and its restriction $\iota_\kappa^* F^\cond$ to $\kappa$-extremally totally disconnected sets. Both will be simply denoted by $F^\cond$ whenever it is clear from the context.
    
    Now, let $\tau$ be a Grothendieck topology on $\Ccal$ which is stable under $S \otimes (-)$ for $S \in \ProFin_{< \kappa}$. Assume also that the natural morphism $(S_1 \otimes -) \sqcup (S_2 \otimes -) \to (S_1 \sqcup S_2) \otimes -$ is an equivalence for any $S_1, S_2 \in \ProFin_{< \kappa}$. Then, by imposing $\tau$-descent, the above (commutative) diagram restricts to the following:
    \[\begin{tikzcd}[column sep=1.5em, every cell/.append style={font=\small}]
        & \arrow[dl] \Sh_\tau(\Ccal, \Vcal) \arrow[d] \arrow[dr] \\
        \Sh_\tau(\Ccal, \Fun^\times(\ProFin^{\omega, \opp}, \Vcal)) & \arrow[l, "j^*"'] \Sh_\tau(\Ccal, \Fun^\times(\ProFin_{< \kappa}\op, \Vcal)) \arrow[r, "\iota_\kappa^*"] & \Sh_\tau(\Ccal, \Fun^\times(\EDS_{< \kappa}\op, \Vcal)) \\
        \Sh_\tau(\Ccal, \Cond^\omega(\Vcal)) \arrow[u, phantom, "\subset", sloped] & \arrow[l, "j^*"'] \Sh_\tau(\Ccal, \Cond_\kappa(\Vcal)) \arrow[r, "\iota_\kappa^*"] \arrow[u, phantom, "\subset", sloped] & \Sh_\tau(\Ccal, \Cond_\kappa(\Vcal)). \arrow[u, phantom, "\simeq"', sloped]
    \end{tikzcd}\]
    In general, we have $\iota_\kappa^* F^\cond \in \Sh_\tau(\Ccal, \Cond_\kappa(\Vcal))$ lying in the bottom right; however, $F^\cond$ does not necessarily satisfy (hyper)descent for $S \in \ProFin^\omega$. For example, if $\Ccal = \AnAdic$ and $\tau = \et$, for general map $S' \to S$ in $\ProFin^\omega$, the natural map $F(S \otimes X) \to \lim F(S'^{\times/S}) \otimes X$ is in general not an isomorphism as $S' \otimes X \to S \otimes X$ is a pro-finite étale cover which is not necessarily an étale cover.
\end{construction}

The following proposition is a consequence of the above construction.

\begin{prop} 
\label{prop:condensed-enrichment-of-sheaves}
    Let $\Ccal \subset \AnAdic$ be a full subcategory which is closed under the operation $S\otimes (-)$ on $\PreAdic$ for every $S\in\ProFin$.
    Let $F\colon \Ccal\op \to \Vcal$ be a sheaf with values in a stable category $\Vcal$.
    Then there exists an essentially unique sheaf
    \[ F^\cond \colon \Ccal\op \To \Cond(\Vcal) \]
    such that for every $X\in\Ccal$ and every $S\in\EDS$ we have an equivalence
    \[ F^\cond(X)(S) \simeq F(S\otimes X) \]
    functorial in $X$ and $S$.
\end{prop}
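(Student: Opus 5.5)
We apply Construction~\ref{construction:condensify} with $\Ccal$ the given full subcategory of $\AnAdic$, $\tau$ the topology in question (analytic or Nisnevich), and the action of $\ProFin_{<\kappa}$ (resp.\ of all of $\ProFin$) given by $S\otimes(-)$. Lemma~\ref{lem:XotimesSsheafy} shows that $S\otimes X$ is again an analytic adic space, and by hypothesis it lies in $\Ccal$. So the assignment $(S,X)\mapsto F(S\otimes X)$ is a well-defined presheaf on $\ProFin_{<\kappa}\times\Ccal$, functorial in both variables. We define $F^\cond\coloneqq \iota_\kappa^*F^\cond$ as in the construction, i.e.\ the restriction to $\EDS_{<\kappa}$ in the $S$-variable. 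The required formula $F^\cond(X)(S)\simeq F(S\otimes X)$ then holds by definition.

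Next we verify the two hypotheses of the construction. \emph{Stability of $\tau$:} Lemma~\ref{Lem:EDS-tensor-Nisnevich-square} says that $S\otimes(-)$ carries Nisnevich squares to Nisnevich squares and analytic covers to analytic covers. Since the base change of a $\tau$-cover along $S\otimes X\to X$ is again a cover, $X\mapsto F(S\otimes X)$ is a $\tau$-sheaf for each fixed $S$. \emph{Additivity:} for finite disjoint unions we need $(S_1\sqcup S_2)\otimes X\simeq (S_1\otimes X)\sqcup(S_2\otimes X)$. This follows from $\Cont(S_1\sqcup S_2,\ZZ)\simeq\Cont(S_1,\ZZ)\times\Cont(S_2,\ZZ)$ and Lemma~\ref{lem:profinite-tensor-adic}, because $\Spa$ of a product of Huber pairs is the disjoint union. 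Also $\varnothing\otimes X=\varnothing$. Since $F$ is a sheaf, it sends finite disjoint unions to products and $\varnothing$ to the terminal object. Hence $S\mapsto F(S\otimes X)$ lies in $\Fun^\times(\EDS_{<\kappa}\op,\Vcal)$, and by Remark~\ref{extremally-disconnected--rem} this category is equivalent to $\Cond_\kappa(\Vcal)$. This is the step where the restriction to extremally disconnected $S$ is essential: as the construction warns, descent for general surjections of profinite sets fails, but on $\EDS$ only finite-product preservation is needed.

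It remains to check that the induced functor $\Ccal\op\to\Cond_\kappa(\Vcal)$ is a $\tau$-sheaf. Limits in $\Fun^\times(\EDS\op,\Vcal)$ are computed pointwise, since products commute with limits. So descent can be checked after evaluating at each $S\in\EDS$, where it is exactly the stability statement above; this is Lemma~\ref{condensed-sheaves--lem}. For the light variant we compose with $j^*$, or equivalently run the same argument with $\Cond^\omega$, using the square in the construction.

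\emph{Essential uniqueness:} any other sheaf $G$ with $G(X)(S)\simeq F(S\otimes X)$ naturally in $X$ and $S$ agrees with $F^\cond$ on $\EDS\times\Ccal$. Since restriction to $\EDS$ is an equivalence (Remark~\ref{extremally-disconnected--rem}), $G\simeq F^\cond$. The main point needing care is functoriality of the identifications in $S$ and $X$ simultaneously, including coherence of the action $S\otimes(-)$ as a functor $\ProFin\times\Ccal\to\Ccal$. This coherence comes from its definition as a fibre product over $\Spa(\ZZ)$, which makes it a genuine bifunctor of 1-categories.
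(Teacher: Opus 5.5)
Your proposal is correct and follows the paper's route. Like the paper, you apply Construction~\ref{construction:condensify}, checking stability of $\tau$ under the base change $S\otimes(-)$, additivity in $S$, and the identification $\Cond_\kappa(\Vcal)\simeq\Fun^\times(\EDS_{<\kappa}\op,\Vcal)$, and you get essential uniqueness from the fact that values on $\EDS$ determine a $\kappa$-condensed object; you simply spell out what the paper leaves implicit. One small correction: the light variant must be obtained by composing with $j^*$, not by ``running the same argument with $\Cond^\omega$''. There is no extremally disconnected description of light condensed objects, and $S\mapsto F(S\otimes X)$ on $\ProFin^\omega$ need not be a (hyper)sheaf, as the construction itself warns.
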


\begin{proof}
    The desired functor $F^\cond$ is essentially uniquely determined by the requested property. The construction of $F^\cond$ follows from immediately from Construction~\ref{construction:condensify}, applied to any Grothendieck topology $\tau$ on $\Ccal$ (whose covers are preserved by $S \otimes (-)$ since the latter operation amounts to the base change by $\underline{S}$).
\end{proof}

\section{K-theory of adic spaces}
\label{sec:k-theory}

In this section we recall the definitions and basic properties of the different K-theories for rigid spaces. These theories have been developped by Morrow~\cite{Morrow}, Kerz--Saito--Tamme  \cite{kst-i,kst-ii}, Efimov \cite{efimov-k-theory,efimov-inverse-limits}, and Andreychev \cite{andreychev-thesis}.
Moreover, we establish the new result that the continuous K-theory $\Kcontu$ satisfies Nisnevich descent (Theorem~\ref{Thm:Nisnevich-descent-Kcont}).

\vspace{6pt}\noindent\textbf{Notation.}
In this entire section, let $(R,R^+)$ be a Tate pair.
We set $\base=\Spa(R,R^+)$ and choose a pseudo-uniformiser $\varpi\in R$ so that $R=R_0[\varpi^{-1}]$.
Recall, that given a category $\Vcal$, we write $\Cond(\Vcal)$ for either of the categories $\Cond^\omega(\Vcal)$ and $\Cond_\kappa(\Vcal)$.

\vspace{6pt} Some of the results crucially use the following assumption, cf.\@ \cite[\S 3.2]{kst-i}.
\begin{enumerate}[label=$(\dagger)_A$]
    \item \label{dagger-condition} There exists a ring of definition $A_0 \subset A$ which is noetherian and there exists a proper morphism $p \colon X \to \Spec(A_0)$ such that $X$ is regular and such that $p$ is an isomorphism over $\Spec(A)$.
\end{enumerate}
Note that the noetherian assumption can be relaxed in some statements, see Section~\ref{sec:non-noetherian-case} below.
In our setting, we have to make the following assumption for a Grothendieck topology $\tau$ on $\Adicsm_R$, mainly for $\tau$ being the analytic topology or the étale topology.

\begin{enumerate}[label=$(\spadesuit)_R^\tau$]
    \item
    \label{spadesuit-condition}
    \customlabel{spadesuit-condition-analytic}{$(\spadesuit)_R^\an$}
    \customlabel{spadesuit-condition-analytic-k}{$(\spadesuit)_k^\an$}
    \customlabel{spadesuit-condition-etale}{$(\spadesuit)_R^\et$}
    \customlabel{spadesuit-condition-etale-k}{$(\spadesuit)_k^\et$}
    Every object of $\Adicsm_R$ is $\tau$-locally of the form $\Spa(A,A^+)$, where $A$ satisfies \ref{dagger-condition} (over some choice of $R_0$). 
\end{enumerate}

\begin{rem}
\label{rem:key-assumption}
\begin{enumerate}
    \item The assumption \ref{spadesuit-condition-analytic} is satisfied if $R$ admits a quasi-excellent ring of definition $R_0$ of characterisitc zero \cite{temkin-desing-char0}. For instance, any discrete valuation ring $R_0$ containing $\QQ$ meets this assumption \cite[07QW]{stacks-project}; in this case $R = \mathrm{Frac}(R_0)$ and one can take $R^+=R_0$.
    
    \item Let $A_0$ be a regular $k^\circ$-algebra of finite type over a discrete valuation ring $k^\circ$.
    Then its $\pi$-adic completion $\Ahat_0$ is a regular ring. Indeed, since $k$ is discretely valued and complete, the ring $k^\circ$ is (a complete noethering local ring, hence) a G-ring; as a finite type algebra over it, $A_0$ is also a G-ring \cite[\S 33, Thm.~77]{matsumura1980commutativealgebra}; as a result, passing to its $\pi$-adic completion preserves the regularity \cite[\S 33, Thm.~79]{matsumura1980commutativealgebra}. Since $A_0$ is regular by assumption, so is $\Ahat_0$.
    \item Let $k$ be a discretely valued nonarchimedean field. Temkin's altered local uniformisation result \cite{temkin2017altered} and the last remark (2) imply that the assumption \ref{spadesuit-condition-etale-k} always holds. Indeed, one can achieve even that $A$ has a model $A_0$ which is the $\pi$-adic completion of a semistable $k^\circ$-algebra.
\end{enumerate}
\end{rem}

\subsection{Continuous K-theory}
\label{subsec:continuous-k-theory}
The definition of continuous $\K$-theory is due to Morrow \cite{Morrow} and the theory has been elaborated by Kerz-Saito-Tamme \cite{kst-i}. The first named author made some contributions about negative continuous K-groups \cite{dahli-thesis-paper}.

\begin{defi}
\label{def:continuous-k-theory}
Let $A_0$ be $\pi$-adic ring for some $\pi\in A_0$.  Then we set $\Kcontu(A_0)\coloneqq \prolim \K(A_0/\pi^{n})$ which is an object in $\Pro^\omega(\Sp)$. For a Tate ring $A$ with ring of definition $A_0$, we can define $\Kcontu(A)$ as the pushout of the diagram , up to weak equivalence
$$ \begin{tikzcd}
		\K(A^{\circ})\arrow[r,""]\arrow[d,""]& \K(A)\arrow[d,""]\\
		\Kcontu(A_0)\arrow[r,""]&\Kcontu(A).
\end{tikzcd} $$
within the category $\Pro^\omega(\Sp)$. When passing to the localisation $\Pro^\omega(\Sp^+)$, the object $\Kcontu(A)$ does not depend on the choice of $A_0$ \cite[Prop.~5.4]{kst-i}.

Let $\base = \Spa(k, k^\circ)$ for a nonarchimedean field $k$, or $\base = \Spa(R, R^+)$ with $R$ being a Tate ring that admits a noetherian and finite-dimensional ring of definition.
Continuous $\K$-theory $\Kcontu$ on affinoids of finite type over $\base$ satisfies descent for the analytic topology by the same arguments as in \cite[\S7]{dahli-thesis-paper}; but in the case $\base = \Spa(k, k^\circ)$, we use the generalised pro-cdh descent \cite[Thm.~A]{kelly-saito-tamme} instead of pro-cdh descent for noetherian schemes; see also Section~\ref{sec:non-noetherian-case}. Therefore, it extends uniquely to a sheaf on $\Adic_\base^\lft$ for the analytic topology, which is still denoted by $\Kcontu$. 
\end{defi}

This definition of continous K-theory seems to be ad-hoc, but it shall be equivalent to the more conceptual approach via applying Efimov's extension of categorical K-theory \cite{efimov-k-theory} to the dualisable category of nuclear modules à la Clausen--Scholze \cite[Def.~13.10]{analytic}. For the spectrum-valued functors, this has been explored in Andreychev's thesis \cite{andreychev-thesis}. Let us introduce the following notion.

\begin{defi}[Nuclear K-theory]
\label{Def:nuclear-K-theory}
    Let $\Knucu \colon \AnAdic\op\to\Cond_\kappa(\Sp)$ be the sheaf associated with the sheaf $\Knuc \coloneqq \K\circ\Nuc \colon \AnAdic\op \to \Sp$ as in Proposition~\ref{prop:condensed-enrichment-of-sheaves}.
\end{defi}

\begin{rem}
This nuclear K-theory is more generally defined for any analytic animated ring. In particular, for any condensed animated ring $R$, we may impose the induced analytic ring structure by considering the (derived) category $\Nuc(R)$ of nuclear modules over the analytic animated ring $(R,\ZZ)$, which is a dualisable category, and define $\Knuc(R) \coloneqq \K(\Nuc(R, \ZZ))$. This construction will be applied mainly for an adically complete animated ring $R$. This is consistent with $\Nuc(A) \coloneqq \Nuc(A, A^+)$; indeed, the latter category is independent of the choice of $A^+$ \cite[Korollar~3.18]{andreychev-thesis}.
\end{rem}

Recall that given a (commutative, unital) ring $\Lambda$, an ideal $I \subset \Lambda$ is called \emph{weakly proregular} if for some (resp. for any) finite set of generators $a_1, \dots, a_m \in I$, the pro-system
\[(H_i(\Kos(A; a_1^n, \dots, a_m^n))_{n \in \NN}\]
of abelian groups is essentially zero for any $i > 0$ \cite[\S 3]{yekutieli2021proregular}.

\begin{example}[{\cite[Prop.~5.6]{yekutieli2021proregular}}]
    \label{example:proregularOneElement}
    In the case of $m = 1$, the weak proregularity of a principal ideal $(a) \subset A$ is equivalent to the boundedness of the $a^\infty$-torsion of $A$.
\end{example}

\begin{thm}[Efimov Contunity Theorem {\cite[Cor.~7.10]{efimov-inverse-limits}}]
\label{thm:efimov-continuity-proregular}
    Let $\Lambda$ be a ring and $I \subset \Lambda$ be a weakly proregular ideal. Let $\Lambda^\wedge_I$ denote the derived $I$-adic completion of $\Lambda$. Then the canonical map
    \begin{equation}
        \label{eq:efimov-continuity-proregular}
        \Knuc(\Lambda^\wedge_I) \To[\simeq] \lim_n \K(\Lambda/I^n)
    \end{equation} 
    is an equivalence of spectra.
\end{thm}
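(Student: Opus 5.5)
This theorem is cited from Efimov, so the task is to outline how one would derive it from Efimov's work on K-theory of inverse limits of categories. The plan has three steps.

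\emph{Step 1: categorical identification.} First identify $\Nuc(\Lambda^\wedge_I)$ with a categorical limit. Efimov's central result is that continuous K-theory commutes with suitable inverse limits of dualisable categories. Concretely, $\K$ sends the lax limit, or more precisely the dualisable ``limit'' $\lim^{\mathrm{dual}}_n \mathrm{Mod}(\Lambda/I^n)$, to $\lim_n \K(\Lambda/I^n)$. So the key is to show that
\[ \Nuc(\Lambda^\wedge_I) \simeq \lim^{\mathrm{dual}}_n \mathrm{D}(\Lambda/I^n) \]
in $\Catdual$, where the right-hand side is Efimov's dualisable inverse limit. The transition functors are the base-change functors $\mathrm{D}(\Lambda/I^{n+1})\to \mathrm{D}(\Lambda/I^n)$.

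\emph{Step 2: role of weak proregularity.} Weak proregularity enters here. By the Example~\ref{example:proregularOneElement}-type characterisation and Koszul arguments, the pro-systems $(\Lambda/I^n)_n$ and the derived quotients $(\Kos(\Lambda;a_1^n,\dots,a_m^n))_n$ are pro-isomorphic. Hence the limit over the discrete quotients agrees with the limit over derived quotients. The latter are the natural objects: for the derived quotients, the compact objects of $\mathrm{D}(\Lambda^\wedge_I)$ that are $I$-complete are controlled by the Koszul tower. I would first prove the statement with $\Lambda/I^n$ replaced by the Koszul complexes, where no hypothesis is needed. Then I would use weak proregularity to replace the Koszul tower by the tower of quotients. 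Since $\K$ of connective rings is insensitive to this pro-isomorphism after taking limits, this replacement is harmless: it passes through the pro-equivalence $\prolim \K(\Kos_n)\simeq \prolim \K(\Lambda/I^n)$.

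\emph{Step 3: nuclear modules.} Identify the nuclear modules over the $I$-adically complete analytic ring $(\Lambda^\wedge_I,\ZZ)$ with Efimov's dualisable limit. Nuclear modules are generated under colimits by ``basic nuclear'' objects, namely sequential colimits along trace-class maps. Efimov's description of compact morphisms in the dualisable limit matches this: a compatible system of compact morphisms in each $\mathrm{D}(\Lambda/I^n)$ corresponds to a trace-class map of $I$-complete perfect modules.

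\emph{Main obstacle.} I expect this matching in Step 3 to be the main difficulty. It requires checking that the functor $\Nuc(\Lambda^\wedge_I)\to \lim^{\mathrm{dual}}_n$ is fully faithful on compact morphisms and essentially surjective. I would attempt it by reducing to the universal case $\ZZ[a_1,\dots,a_m]\to\Lambda$ and base-changing.
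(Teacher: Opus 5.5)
Your skeleton is the right one: first prove the Koszul version without hypotheses, then use weak proregularity to pass to the quotients $\Lambda/I^n$. However, your effort goes into the part that can be cited, and the step the paper actually proves is left without its key argument.

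The hypothesis-free statement
\[ \Knuc(\Lambda^\wedge_I) \simeq \lim_n \K(\Kos(\Lambda;a_1^n,\dots,a_m^n)) \]
is what the paper takes directly from \cite[Cor.~7.10]{efimov-inverse-limits}. Your Steps~1 and~3 are the content of that citation and need not be redone. This includes what you call the main obstacle, the identification of $\Nuc(\Lambda^\wedge_I)$ with a dualisable inverse limit. As written, Step~1 uses the discrete quotients $\mathrm{D}(\Lambda/I^n)$, so it is not the hypothesis-free statement; it is only justified once Step~2 is in place. What the paper actually proves is your Step~2: it replaces Efimov's noetherian input (his Lem.~5.26) by weak proregularity.

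That is where your proposal lacks the key argument. $\K$ is a functor of ($\EE_1$-)rings, not of modules. So ``pro-isomorphic'' must mean that the natural map of towers
\[ \varepsilon\colon(\Kos(\Lambda;a_1^n,\dots,a_m^n))_n\to(\Lambda/I^n)_n \]
is an equivalence in $\Pro(\Algrm^\cn_{\EE_1}(\Dcal(\Lambda)))$.

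Weak proregularity only controls homotopy pro-groups. It gives that $H_i$ of the Koszul tower is pro-zero for $i>0$. On $H_0$ one uses $I^{m(n-1)+1}\subseteq(a_1^n,\dots,a_m^n)\subseteq I^n$. Both towers are uniformly bounded, in degrees $[0,m]$, so this yields an equivalence in $\Pro(\Dcal(\Lambda))$.

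The missing step is the upgrade to pro-algebras. The paper gets it from \cite[Lem.~2.28]{landtamme2018pullback}: the forgetful functor $\Pro(\Algrm^\cn_{\EE_1}(\Dcal(\Lambda)))\to\Pro(\Dcal(\Lambda))$ is conservative on bounded objects. Only after this does functoriality give
\[ \prolim_n\K(\Kos(\Lambda;a_1^n,\dots,a_m^n))\simeq\prolim_n\K(\Lambda/I^n), \]
and hence the equivalence of limits.

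A smaller point: Example~\ref{example:proregularOneElement} only covers principal ideals. For general $m$ you should use the definition of weak proregularity directly.
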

\begin{proof}
    In the statement of \cite[Cor.~7.10]{efimov-inverse-limits} the ring is assumed to be noetherian, whereas in \cite[Satz~5.8]{andreychev-thesis} the wording is the same as here, but without proof. For the convenience of the reader, we briefly give an argument.
    
    Let $a_1, \dots, a_m$ be a finite collection of generators of the ideal $I \subset A$. By \protect{\cite[Cor.~7.10]{efimov-inverse-limits}}, the canonical map 
    \[\Knuc(\Lambda^\wedge_I) \To[\simeq] \lim_n \K(\Kos(\Lambda; a_1^n, \dots, a_m^n)\]
    is an equivalence of spectra.
    It has been shown also in op.\@ cit., Lemma~5.26, that if $\Lambda$ is a noetherian ring, then the natural map
    \[\varepsilon \colon \bigl(\Kos(\Lambda; a_1^n, \dots, a_m^n)\bigr)_{n \in \NN} \to \Lambda/I^n\]
    of pro-systems of connective $\EE_1$-algebras in $\Dcal(\Lambda)$ is an equivalence; in particular, we obtain a canonical equivalence
    \begin{equation}
        \label{eq:isoKoszulIdealistic}
        \lim_n \K(\Kos(\Lambda; a_1^n, \dots, a_m^n) \To[\simeq] \lim_n \K(\Lambda/I^n).
    \end{equation}
    More generally, in the case where $\Lambda$ is not necessarily noetherian and where the ideal $I = (a_1, \dots, a_m)$ is weakly proregular in $\Lambda$, the above natural map $\varepsilon$ is still an equivalence in $\Pro(\Algrm_{\EE_1}(\Dcal(\Lambda)))$. Indeed, the forgetful functor
    \[\Pro(\Algrm^\cn_{\EE_1}(\Dcal(\Lambda)) \to \Pro(\Dcal(\Lambda))\]
    is conservative on bounded objects by \cite[Lem.~2.28]{landtamme2018pullback}, and for the (weak) equivalence in the latter between bounded objects, it suffices to verify on homotopy pro-groups. Now, $\Kos(\Lambda; a_1^n, \dots, a_m^n)$ is connective and also bounded above by $m$. For any $i \in \NN$, the map $\pi_i(\varepsilon) \colon \bigl(H^i(\Kos(\Lambda; a_1^n, \dots, a_m^n))\bigr)_{n \in \NN} \to \bigl(H^i(\Lambda/I^n[0])\bigr)_{n \in \NN}$ is an equivalence in $\Pro(\Mod_\Lambda)$ by the weak proregularity hypothesis. Therefore, we deduce that $\varepsilon$ is an equivalence in $\Pro(\Dcal(\Lambda))$, whence also in $\Pro(\Algrm_{\EE_1}(\Dcal(\Lambda)))$; in particular, the equivalence (\ref{eq:isoKoszulIdealistic}) still holds.
\end{proof}

The following Lemma~\ref{lem:CartesianDiagramNuc} and its Corollary~\ref{cor:andreychev-Kcont=KNuc-proregular} are results from Andreychev's thesis. Since the reasoning in loc.\@ cit.\@ is quite terse, we present here extended arguments for the convenience of the reader.
Recall that $\Catdual$ is the category of dualisable stable categories and strongly continuous functors.
A \emph{localisation sequence} in $\Catdual$ is a cofibre sequence $\Acal \to \Bcal \to \Ccal$ with $\Acal \to \Bcal$ being fully faithful (see Corollary~\ref{cor:fibrecofibreinPrLL} for equivalent characterisations).

\begin{lem}[{\cite[p.~28]{andreychev-thesis}}]
    \label{lem:CartesianDiagramNuc}
    Let $A$ be a Tate ring and let $A_0 \subset A$ be a ring of definition containing a pseudo-uniformiser $\varpi$. There is a commutative diagram in $\Catdual$ with rows being localisation sequences and fibre-cofibre sequences:
    \begin{equation}
    \begin{tikzcd}
        \label{eq:CartesianDiagramNuc}
        \Tor_{A_0}(\varpi^\infty) \arrow[r] \arrow[d, "\simeq"] & \Dcal(A_0) \arrow[r, "L"] \arrow[d] & \Dcal(A) \arrow[d] \\
        \Tor^\nuc_{A_0}(\varpi^\infty) \arrow[r] & \Nuc(A_0) \arrow[r, "L'"] & \Nuc(A).
    \end{tikzcd}
    \end{equation}
\end{lem}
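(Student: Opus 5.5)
The plan is to build the two rows separately and then connect them by the vertical functors induced from base change along the maps of analytic rings $(A_0,\ZZ)\to(A_0,A_0)_\blacksquare$ and $(A,\ZZ)\to(A,A^+)_\blacksquare$.

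For the top row, $\Dcal(A)=\Dcal(A_0[\varpi^{-1}])$ is the Bousfield localisation of $\Dcal(A_0)$ at $\varpi$. Base change $L=-\otimes_{A_0}A$ has a fully faithful right adjoint (restriction), and this right adjoint preserves colimits. Its kernel is the full subcategory of $\varpi^\infty$-torsion complexes, i.e.\@ those $M$ with $M[\varpi^{-1}]=0$. All three categories are compactly generated: $\Tor_{A_0}(\varpi^\infty)$ is generated by $A_0/\!\!/\varpi$, the Koszul cofibre of $\varpi$ on $A_0$. Hence they are dualisable, and $L$ and the inclusion are strongly continuous, since their right adjoints preserve colimits. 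So the top row is a localisation sequence in $\Catdual$. That it is also a fibre sequence follows from the characterisation in Corollary~\ref{cor:fibrecofibreinPrLL}.

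For the bottom row, I would use that $\Nuc(A)$ is the localisation of $\Nuc(A_0)$ obtained by inverting $\varpi$. Concretely, $L'=-\otimes A$ is a smashing localisation, because $A=\colim(A_0\xrightarrow{\varpi}A_0\xrightarrow{\varpi}\cdots)$ is an idempotent algebra in $\Nuc(A_0)$. This uses the independence of $\Nuc$ from the choice of $A^+$ \cite[Korollar~3.18]{andreychev-thesis}. The kernel $\Tor^\nuc_{A_0}(\varpi^\infty)$ consists of the nuclear modules killed by $\varpi$-inversion. Dualisability of the whole row follows from the general fact that smashing localisations of dualisable categories with colimit-preserving right adjoints give localisation sequences in $\Catdual$. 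The vertical functors commute with inverting $\varpi$, so both squares commute.

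The main obstacle is showing that the left vertical functor is an equivalence. My approach is to note that $\varpi^\infty$-torsion objects are colimits of objects killed by a power of $\varpi$, which form a $\Dcal(A_0/\varpi^n)$-type piece. For discrete $A_0/\varpi^n$, the nuclear and ordinary derived categories agree, because $\varpi$-adically complete nuclear modules over a discrete quotient are just discrete modules. Concretely, I would show that the generator $A_0/\!\!/\varpi$ maps to a compact generator of $\Tor^\nuc_{A_0}(\varpi^\infty)$. I would then check that mapping spectra are preserved, by computing $\Hom(A_0/\!\!/\varpi,-)$ as a finite limit on both sides and using that nuclear tensoring with the perfect complex $A_0/\!\!/\varpi$ agrees with the algebraic one. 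Finally, generation of the torsion subcategory follows because $M\in\Tor^\nuc$ satisfies $M\simeq\fib(M\to M[\varpi^{-1}])\simeq\colim_n \Sigma^{-1}M\otimes A_0/\!\!/\varpi^n$.
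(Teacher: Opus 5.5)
Your treatment of the two rows matches the paper's argument. Since $A=A_0[\varpi^{-1}]$ is an idempotent $A_0$-algebra, $L$ and $L'$ have fully faithful, colimit-preserving right adjoints. Corollary~\ref{cor:fibrecofibreinPrLL} together with Remark~\ref{rem:fibcofib} then upgrades this to localisation and fibre-cofibre sequences in $\Catdual$.

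You genuinely differ on the left vertical equivalence. The paper just cites \cite[Satz~4.11]{andreychev-thesis}; you prove it with a compact-generator argument, and that argument goes through.
\begin{itemize}
\item Compactness: $\underline{A_0}/\varpi$ is compact in $\Nuc(A_0)$, because $\Hom(\underline{A_0},-)=(-)(*)$ commutes with colimits.
\item Full faithfulness: this reduces to $(\underline{A_0}\otimes_{A_0}M)(*)\simeq M$.
\item Essential surjectivity: this follows from $M\simeq\colim_n\Sigma^{-1}M\otimes A_0/\varpi^n$, once you know each $M\otimes A_0/\varpi^n$ is discrete.
\end{itemize}

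That last step carries all the content. Your stated reason, that ``$\varpi$-adically complete nuclear modules over a discrete quotient are discrete'', essentially restates the claim. The actual reason is the nuclearity condition itself. Since $\varpi$ is a nonzerodivisor on $A_0\subset A$, the Koszul complex is the discrete ring $A_0/\varpi^n$. Now take $N=M\otimes_{A_0}A_0/\varpi^n$, an $A_0/\varpi^n$-module in $\Nuc(A_0)$, and $S$ extremally disconnected. Then
\[ N(S)\simeq\bigl(\Cont(S,A_0)\otimes_{A_0}N\bigr)(*)\simeq\bigl(\Cont(S,A_0/\varpi^n)\otimes_{A_0/\varpi^n}N\bigr)(*)\simeq\Cont(S,\ZZ)\otimes_\ZZ N(*), \]
because $\Cont(S,A_0/\varpi^n)$ is a discrete free $A_0/\varpi^n$-module. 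So $N$ is the discrete object on $N(*)$, and being $\varpi^n$-torsion it lies in the image of $\Tor_{A_0}(\varpi^\infty)$.

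Your route gives a self-contained proof. It also exhibits $\Tor^\nuc_{A_0}(\varpi^\infty)$ as compactly generated by $A_0/\varpi$, which makes its dualisability and the strong continuity of its inclusion immediate; the paper handles these only in a footnote. It also shows exactly where nuclearity is used: in the full solid category, $\prod_\NN A_0/\varpi$ is killed by inverting $\varpi$ but is not discrete. The paper's route is shorter, but it outsources precisely this nontrivial step.
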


\begin{proof}
    All the categories involved are stable.

    All the arrows in the right square are strongly continuous.
    First, observe that $L$ and $L'$ are the base change functors along $A_0 \to A$ and $\underline{A_0} \to \underline{A}$ respectively, hence they preserve colimits. Since $A = A_0[\varpi^{-1}]$ is an idempotent $A_0$-algebra, the right adjoints of $L$ and $L'$ are fully faithful forgetful functors which preserve colimits.
    Then, let us explain the vertical arrows. They are induced by the (fully faithful) condensification functor \cite[Def.~5.8, Thm.~5.9]{andreychev2021descent}, which are right adjoint to the evaluation functor $M \mapsto M(*)$, which in turn preserves colimits.\footnote{%
        The full subcategory $\Dcal(\Acal, \Mcal) \subset \Dcal(\Acal)$ is stable under colimits for any analytic ring $(\Acal, \Mcal)$, and colimits in $\Dcal(\Acal)$ commutes with evaluating at any extremally disconnected set $S$.
    }
    
    Consider the kernels $\Tor_{A_0}(\varpi^\infty) \simeq \ker(L)$ and $\Tor^\nuc_{A_0}(\varpi^\infty) \simeq \ker(L')$, which are taken in the category $\widehat{\Cat}_\infty$ of all $\infty$-categories. They are also the kernels taken in the category $\PrLst$ of presentable stable categories with continuous functors, since the inclusions $\PrLst \subset \PrL \subset \widehat{\Cat}_\infty$ create limits (see \cite[Prop.~4.8.2.18]{HA} and \cite[Prop.~5.5.3.13]{HTT}).
    The map between the kernels $\Tor_{A_0}(\varpi^\infty) \to \Tor^\nuc_{A_0}(\varpi^\infty)$ is an equivalence by \cite[Satz 4.11]{andreychev-thesis}. 
    
    Since all the categories involved in the diagram are stable, and $L$, $L'$ have fully faithful right adjoints which preserve colimits, the condition (ii) in Corollary~\ref{cor:fibrecofibreinPrLL} is satisfied in $\PrLLst$,\footnote{%
        One can check explicitly that the inclusion functors of both kernels are \emph{strongly} continuous.
        For the upper row, the right adjoint of the (colimit-preserving) inclusion $\Tor_{A_0}(\varpi^\infty) \subset \Dcal(A_0)$ is given by $ \fib(A_0 \to A) \otimes_{A_0}^\LL (-)$ \cite[\href{https://stacks.math.columbia.edu/tag/0952}{Section 0952}]{stacks-project}, which clearly commutes with colimits.
        For the bottom row, since we already know that $\Tor_{A_0}(\varpi^\infty) \To[\simeq] \Tor^\nuc_{A_0}(\varpi^\infty)$, the inclusion $\Tor^\nuc_{A_0}(\varpi^\infty) \subset \Nuc(A_0)$ is strongly continuous by composition with the left vertical arrow.
    }
    hence it is also satisfied in $\Catdual$ by Remark~\ref{rem:fibcofib} (since $\Dcal(A_0)$ and $\Nuc(A_0)$ are dualisable).
    Hence, the rows are localisation sequences and fibre-cofibre sequences in $\PrLLst$ and in $\Catdual$.
\end{proof}

\begin{cor}[{\cite[p.~28]{andreychev-thesis}}]
\label{cor:andreychev-Kcont=KNuc-proregular}
    Let $A$ be a Tate ring. Then there is a canonical equivalence
    \[ \Knuc(A) \To[\simeq] \Kcont(A) \]
    of spectra where $\Knuc(A) \coloneq \K(\Nuc(A))$.
\end{cor}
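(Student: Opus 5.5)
The plan is to apply the localising invariant $\K$ (Efimov's extension to $\Catdual$) to the diagram of Lemma~\ref{lem:CartesianDiagramNuc}. Since both rows are localisation sequences in $\Catdual$, we obtain a map of cofibre sequences of spectra
\[
\begin{tikzcd}
\K(\Tor_{A_0}(\varpi^\infty)) \arrow[r] \arrow[d, "\simeq"] & \K(A_0) \arrow[r] \arrow[d] & \K(A) \arrow[d] \\
\K(\Tor^\nuc_{A_0}(\varpi^\infty)) \arrow[r] & \Knuc(A_0) \arrow[r] & \Knuc(A).
\end{tikzcd}
\]
The left vertical map is an equivalence because the left vertical functor is one. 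Hence the right-hand square is cartesian (equivalently cocartesian, since $\Sp$ is stable). This yields a pushout square
\[ \Knuc(A) \simeq \K(A) \sqcup_{\K(A_0)} \Knuc(A_0) \]
in $\Sp$.

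Next I would compare this square with the one defining $\Kcont(A)$. The underlying spectrum of the pushout in Definition~\ref{def:continuous-k-theory} is obtained by applying $\lim\colon \Pro(\Sp)\to\Sp$. Finite colimits in $\Pro(\Sp)$ are computed levelwise and $\lim$ is exact between stable categories, so $\Kcont(A)$ is the pushout of $\K(A)\leftarrow\K(A_0)\to\lim_n\K(A_0/\varpi^n)$. I take $A_0$ as the ring of definition in that definition, noting that the $\K(A^\circ)$ appearing there should be read as $\K(A_0)$, and that the choice does not matter by \cite[Prop.~5.4]{kst-i} and Lemma~\ref{lem:CartesianDiagramNuc}, which holds for any $A_0$. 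It therefore remains to identify $\Knuc(A_0)\simeq\lim_n\K(A_0/\varpi^n)$ compatibly with the maps from $\K(A_0)$.

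For this step I apply Theorem~\ref{thm:efimov-continuity-proregular} with $\Lambda=A_0$ and $I=(\varpi)$. We may take $A_0$ to be complete, so it is $\varpi$-adically complete. Moreover $\varpi$ is a nonzerodivisor in $A_0\subset A$, since it is a unit in $A$, so the $\varpi^\infty$-torsion vanishes. By Example~\ref{example:proregularOneElement}, the ideal $(\varpi)$ is therefore weakly proregular. The same nonzerodivisor property makes the derived completion agree with the classical one, which is $A_0$ itself. The theorem then gives $\Knuc(A_0)\simeq\lim_n\K(A_0/\varpi^n)$, and this map is compatible with the maps from $\K(A_0)=\K(\Dcal(A_0)^\omega)$ by naturality of the continuity map.

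The main obstacle I expect is verifying that all the identifications are canonical and compatible, which is needed to glue the two squares. In particular, one must check that the vertical functor $\Dcal(A_0)\to\Nuc(A_0)$ induces, on $\K$, the canonical map $\K(A_0)\to\lim_n\K(A_0/\varpi^n)$, and that $\K(\Dcal(A))$ agrees with the connective-or-nonconnective $\K(A)$ used in \cite{kst-i}. Both $\K(A)$ and $\K(A_0)$ should be taken nonconnective, and $\K$ of a compactly generated category recovers $\K$ of its compact objects. Once these points are settled, combining the pushout from the first step with the equivalence from Theorem~\ref{thm:efimov-continuity-proregular} yields the canonical equivalence $\Knuc(A)\simeq\Kcont(A)$.
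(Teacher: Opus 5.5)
Your proposal is correct and follows essentially the same route as the paper. The paper also applies $\K$ to the diagram of Lemma~\ref{lem:CartesianDiagramNuc} to obtain a cocartesian square, compares it with the square defining $\Kcont(A)$, and concludes via Theorem~\ref{thm:efimov-continuity-proregular}, with $(\varpi)$ weakly proregular because $A_0$ is $\varpi$-torsion-free. Your observations that $\K(A^\circ)$ in Definition~\ref{def:continuous-k-theory} should be read as $\K(A_0)$, and that the comparison maps must be checked to be compatible, are points the paper leaves implicit.
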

\begin{proof}
    Let $(A_0, I)$ be an ideal of definition of $A$; we may assume $I = \varpi A_0$ with $\varpi \in A_0$ being a topologically unipotent of $A$. Then $I \subset A_0$ is weakly proregular by Example~\ref{example:proregularOneElement}.
    Taking the continuous $\K$-theory of the diagram (\ref{eq:CartesianDiagramNuc}) of dualisable categories, which is valued in the stable category $\Sp$, and using that $\K(-)$ is a localising invariant, we obtain a (co)cartesian diagram of spectra
    \begin{equation}
    \begin{tikzcd}
        \label{eq:CartesianDiagramNucK}
        \K(A_0) \arrow[r] \arrow[d] & \K(A) \arrow[d] \\
        \Knuc(A_0) \arrow[r] & \Knuc(A),
    \end{tikzcd}
    \end{equation}
    which maps compatibly to the pushout diagram defining the continuous $\K$-theory of $A$:
    \begin{equation}
    \begin{tikzcd}
        \label{eq:CartesianDiagramKcont}
        \K(A_0) \arrow[r] \arrow[d] & \K(A) \arrow[d] \\
        \Kcont(A_0) \arrow[r] & \Kcont(A).
    \end{tikzcd}
    \end{equation}
    The corollary follows since we know by Theorem~\ref{thm:efimov-continuity-proregular} that the natural map $\Knuc(A_0) \to \lim_n \K(A_0/I^n) = \Kcont(A_0)$ is an equivalence of spectra.
\end{proof}

This equivalence can be lifted to the associated functors valued  in condensed spectra.
For that purpose, let us start with the following observation, which should already be known to experts, justifying the fact that one can endow a spectrum with \emph{discrete topology} to obtain a condensed spectrum satisfying hyperdescent on $\ProFin_\kappa$, not only on $\EDS_\kappa$.

\begin{lem}
    \label{lem:hyperdescentDiscreteCondensed}
    Let $X$ be a spectrum. Consider the following functor 
    \[\underline{X} \colon \ProFin_\kappa\op \to \Sp, \quad S = \lim_i S_i \mapsto \colim_i \prod_{S_i} X.\]
    Then $\underline{X}$ satisfies hyperdescent on $\ProFin_\kappa$.
\end{lem}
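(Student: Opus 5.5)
Our plan is to reduce hyperdescent for $\underline{X}$ to a statement about discrete abelian groups, namely the vanishing of higher sheaf cohomology of locally constant functions on profinite sets. First we rewrite the functor. For $S=\lim_i S_i$ we have $\colim_i \prod_{S_i} X \simeq \colim_i \Cont(S_i,\ZZ)\otimes X \simeq \Cont(S,\ZZ)\otimes X$, because finite products are finite sums in $\Sp$ and tensor products commute with filtered colimits. Hence $\underline{X}(S)\simeq \Cont(S,\ZZ)\otimes_{\ZZ} X$, where $\Cont(S,\ZZ)$ is regarded as a discrete abelian group, i.e.\@ an $H\ZZ$-module. This formula is visibly functorial in $S$ and independent of the chosen presentation.

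Second, we fix a hypercover $S_\bullet\to S$ in $\ProFin_\kappa$ and show that the augmented cosimplicial abelian group $\Cont(S,\ZZ)\to\Cont(S_\bullet,\ZZ)$ is acyclic, i.e.\@ its alternating-sum complex is exact. This is the classical fact that $S\mapsto\Cont(S,\ZZ)$ is a hypersheaf of abelian groups with vanishing higher \v{C}ech/hyper-cohomology on profinite sets. We would prove it by writing the hypercover as a cofiltered limit of hypercovers of finite sets. Each level $S_n$ is profinite and the face maps can be approximated, so, after a cofinality argument at each simplicial degree (truncating to the $n$-skeleton suffices to test exactness in degree $n$), we get $S_{\le n}=\lim_\alpha S^\alpha_{\le n}$ with $S^\alpha_\bullet\to S^\alpha$ surjective hypercovers of finite sets. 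Then $\Cont(-,\ZZ)$ turns these limits into filtered colimits. A hypercover of finite sets by surjections admits a simplicial contraction after choosing sections, since the coskeletal matching maps are surjective and finite sets are projective. So the complex for each $\alpha$ is acyclic, and filtered colimits preserve exactness. Alternatively we could cite that $\Cont(-,\ZZ)$ is a condensed abelian group with $H^i(S,\ZZ)=0$ for $i>0$ on profinite $S$, which is a known statement in \cite{condensed}.

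Third, we tensor with $X$. The totalisation $\lim_\Delta \Cont(S_\bullet,\ZZ)\otimes X$ is a limit, while $-\otimes X$ commutes only with colimits, so this step needs an argument. We would use the Dold--Kan correspondence. The acyclicity from the second step, applied in the stable category $\Mod_{H\ZZ}$, means that the augmented cosimplicial object $\Cont(S_\bullet,\ZZ)$ is a limit diagram. Moreover, the resulting cochain complexes are exact complexes of \emph{free} abelian groups, because $\Cont(T,\ZZ)$ is free by N\"obeling (\cite[Thm.~5.4]{condensed}). We then argue that such an exact complex of free groups is split, so that the augmented cosimplicial object is split after applying $-\otimes X$. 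The kernels are subgroups of free groups and hence free, so each short exact piece splits and the complex is contractible. A contractible complex stays contractible after tensoring with any $X$, and degreewise this identifies $\operatorname{Tot}$ with the augmentation. For the unbounded totalisation we control convergence through the skeletal filtration: the $n$-th partial totalisations of the tensored diagram are finite limits, hence commute with $\otimes X$, and the contracting homotopy gives compatible splittings.

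We expect this convergence to be the main obstacle. The fibres of the skeletal tower are eventually split-zero pieces, so the Milnor $\lim^1$ term vanishes because the tower is pro-constant. If that fails, the fallback is to check the claim separately for each Postnikov piece of $X$ and treat bounded $X$ first.
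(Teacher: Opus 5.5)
Your first step is where the argument breaks. In $\Sp$ one has $\prod_{S_i}X\simeq\bigoplus_{S_i}X\simeq\SS^{\oplus S_i}\otimes X$, with the \emph{sphere} rather than $H\ZZ$ as coefficients. Hence $\underline X(S)\simeq\underline{\SS}(S)\otimes X$ with $\underline{\SS}(S)=\colim_i\SS^{S_i}$, whose homotopy groups are $\Cont(S,\pi_*\SS)$. This is not $H\Cont(S,\ZZ)\otimes X$: for $S$ a point and $X=\SS$ you would get $H\ZZ$ instead of $\SS$. Moreover, $\Cont(S,\ZZ)\otimes_\ZZ X$ has no meaning unless $X$ is an $H\ZZ$-module.

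Consequently your third step (Dold--Kan in $\Mod_{H\ZZ}$, then tensoring a contractible complex of abelian groups with $X$) only proves the lemma for $H\ZZ$-module spectra. The paper, however, applies the lemma to $X=\K(R)$. For a general spectrum, an exact complex of free abelian groups only gives a split complex in the homotopy category $\mathrm{h}\Sp$. By itself that says nothing about the totalisation of the coherent cosimplicial spectrum $\underline X(S_\bullet)$.

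Your convergence heuristic is also off. The normalised cochains of a hypercover need not vanish in high degrees; for the \v{C}ech nerve of $S\sqcup S\to S$ they are nonzero in every degree. So the skeletal tower is not eventually constant. What kills $\lim^1$ is rather that the error terms have zero transition maps on homotopy groups.

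The missing idea is to run your second step on homotopy groups instead of on $\Cont(-,\ZZ)$, which is what the paper does. Since filtered colimits and finite products are t-exact in $\Sp$, you have $\pi_n\underline X(T)\cong\Cont(T,\pi_nX)$ naturally in $T$. Your reduction to hypercovers of finite sets works verbatim with an arbitrary coefficient group $M=\pi_nX$; no N\"obeling is needed, only exactness of filtered colimits. So $\Cont(S,M)\to\Cont(S_\bullet,M)$ is a resolution. Hence the Bousfield--Kan spectral sequence of $\underline X(S_\bullet)$ has $E_2$-page concentrated on the line $s=0$ and the $\lim^1_r$ terms vanish. The complete convergence lemma then applies, and $\underline X(S)\to\lim_\Delta\underline X(S_\bullet)$ is a $\pi_*$-isomorphism.

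Your Postnikov fallback could also be completed, in four stages:
\begin{enumerate}
\item Eilenberg--MacLane pieces, by your $\Mod_{H\ZZ}$ argument.
\item Bounded $X$, by finite extensions.
\item Bounded below $X$, via Postnikov limits, since $X\mapsto\underline X(S)$ is t-exact.
\item Arbitrary $X$, via $\tau_{\geq m}X\to X\to\tau_{<m}X$ and the fact that $\lim_\Delta$ preserves $\Sp_{<m}$.
\end{enumerate}
As written, though, this is only a contingency, not a proof.
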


\begin{proof}
    Let $S_\bullet \to S$ be a hypercover in $\ProFin_\kappa$.
    As in \cite[Proof of Thm.~3.3]{condensed} (see \cite[Lem.~2.1.7]{Mann-Thesis} for a proof), one can write the hypercover $S_\bullet \to S$ as the cofiltered limit of hypercovers $S_{\bullet, j} \to S_j$ (indexed by $j \in J$) of finite sets by finite sets; in particular, the latter hypercovers all split.
    By definition and t-exactness of filtered colimits for spectra, we have identifications of (cosimplicial) abelian groups:
    \begin{align}
        \label{eq:KcolimProFin1}
        \pi_n \left(\underline{X}(S)\right) \simeq \colim_j \pi_n \left(\underline{X}(S_j)\right), \\
        \label{eq:KcolimProFin2}
        \pi_n \left(\underline{X}(S_\bullet)\right) \simeq \colim_j \left(\underline{X}(S_{\bullet, j})\right).
    \end{align}
    Let us now compute the homotopy groups of $\lim_\Delta \underline{X}(S_\bullet)$ via the Bousfield--Kan homotopy spectral sequence \cite[Ch.~X, \S 5]{bousfieldkan1972lecture}, whose dual version can be found in \cite[\S 1.2.4]{HA}.\footnote{%
        When we study the homotopy spectral sequence of a cosimplicial spectra, switching to the dual version requires working with $\Sp\op$, but then the results in \cite[\S 1.2.2, \S 1.2.4]{HA} do not apply naively.
        In fact, the reasons are twofold. On the one hand, in the opposite category $\Sp\op$, taking filtered colimits does not commute with the ``dual homotopy group'' functors as the ``dual $t$-structure'' is not compatible with sequential limits; thus \cite[Prop.~1.2.2.14]{HA} is not applicable. On the other hand, our objects may not lie in $\Sp_{\leq 0} = (\Sp\op)_{\geq 0}$ (nor up to shift), but rather have many nontrivial deep homotopy groups, hence the simplified version \cite[Prop.~1.2.4.5]{HA} is not applicable, either.
        Nevertheless, one can still resort to the complete convergence lemma \cite[Ch.~IX, 5.4]{bousfieldkan1972lecture}, which imposes a condition on vanishing of $\lim^1_\NN$.
    }
    The spectral sequence specialised to our case is the following:
    \[E_1^{s,t} = \pi_{-t} \left(\underline{X}(S_s)\right) \Rightarrow \pi_{-s-t} \left(\lim_\Delta \underline{X}(S_s)\right),\]
    where we have reindexed the degrees so that the differential $d_r$ on the $E_r$-page has degree $(r, -r+1)$.
    We claim that:
    \begin{enumerate}[label=(\roman*)]
        \item The complex $(E_1^{\bullet, t}, d_1)$ is isomorphic to the normalised chain complex associated with the cosimplicial abelian group $\pi_{-t} \left(\underline{X}(S_\bullet)\right)$.
        \item The $E_2$-page collapses to the degree $s = 0$.
    \end{enumerate}
    The claim (i) follows from \cite[Rem.~1.2.4.4]{HA} by passing to the opposite category.
    Let us now deduce (ii) from (i).
    Consider a similar homotopy spectral sequence for the cosimplicial spectrum $\underline{X}(S_{\bullet, j})$:
    \[E_{1, (j)}^{s,t} = \pi_{-t} \left(\underline{X}(S_{s, j})\right) \Rightarrow \pi_{-s-t} \left(\lim_\Delta \underline{X}(S_{\bullet, j})\right).\]
    Again, as in (i), the complex $(E_{1, (j)}^{\bullet, t}, d_1)$ is isomorphic to the normalised chain complex associated with the cosimplicial abelian group $\pi_{-t} \left(\underline{X}(S_{\bullet, j})\right)$; the latter splits as all the $S_{\bullet, j}$ are finite sets, hence $E_{2, (j)}^{s, t}$ collapses to the degree $s = 0$ where $E_{2, (j)}^{0, t} \simeq \pi_{-t} \left(\underline{X}(S_j)\right)$. Since filtered colimits is exact on abelian groups, so is the construction of normalised chain complexes and passage from $E_1$-page to $E_2$-page. Therefore, the natural map $\colim_j E_{1, (j)}^{s, t} \to E_1^{s, t}$ is an isomorphism by (\ref{eq:KcolimProFin2}) and thus $\colim_j E_{2, (j)}^{s, t} \To[\simeq] E_2^{s, t}$, which collapses to the degree $s = 0$ where
    \[E_2^{0, t} \simeq \colim_j \pi_{-t} \left(\underline{X}(S_j)\right).\]
    This proves (ii).
    In particular, we have $\lim^1_r E_r^{s,t} = 0$ for any $s, t \in \ZZ$, hence the spectral sequence $E_r^{s,t}$ converges by the complete convergence lemma \cite[Ch.~IX, 5.3 and 5.4]{bousfieldkan1972lecture}, so that
    \[E_2^{0, t} \simeq \pi_{-t} \left(\lim_\Delta \underline{X}(S_\bullet) \right).\]
    In conclusion, using (\ref{eq:KcolimProFin1}), we obtain 
    \[\pi_{-t} \left(\underline{X}(S)\right) \To[\simeq] \pi_{-t} \left(\lim_\Delta \underline{X}(S_\bullet)\right), \quad \forall t \in \ZZ,\]
    where the arrow is induced by the canonical morphism $\underline{X}(S) \to \lim_\Delta \underline{X}(S_\bullet)$.
\end{proof}

\begin{lem}
    \label{lem:K(R)discrete}
    Let $R$ be an ordinary ring. Let $\underline{\K(R)} \in \Cond(\Sp)$ be the associated discrete condensed $\K$-theory spectrum via \textup{Lemma~\ref{lem:hyperdescentDiscreteCondensed}}.
    Then $\underline{\K(R)}(S) \simeq \K(\Cont(S, R))$ for any $S \in \ProFin$.
\end{lem}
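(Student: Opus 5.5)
Write $S=\lim_i S_i$ as a cofiltered limit of finite sets. By the formula in Lemma~\ref{lem:hyperdescentDiscreteCondensed}, $\underline{\K(R)}(S)=\colim_i \prod_{S_i}\K(R)$. The plan is to identify each term of this colimit with $\K(\Cont(S_i,R))$, and then to pass to the colimit using the fact that algebraic K-theory commutes with filtered colimits of rings.

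First, for a finite set $T$ we have $\Cont(T,R)=\prod_T R$ as rings. K-theory commutes with finite products of rings, since $\Perf(\prod_T R)\simeq\prod_T\Perf(R)$ and $\K$ preserves finite products of stable categories, being an additive invariant. Hence $\prod_{T}\K(R)\simeq\K(\Cont(T,R))$. This equivalence is natural in maps $T\to T'$ of finite sets: the transition map on the left is the diagonal/restriction map, and on the right it is induced by pullback of functions $\Cont(T',R)\to\Cont(T,R)$. One checks that these agree, since both come from the evident functor $\prod_{T'}\Perf(R)\to\prod_T\Perf(R)$. I expect this compatibility of the transition maps to be the only point needing care; it reduces to the case of a single map of finite sets, and there it is clear because base change along a projection $\prod_{T'}R\to R$ is the corresponding projection of categories.

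Second, the ring $\Cont(S,R)$ is the filtered colimit $\colim_i\Cont(S_i,R)$. A continuous map from the profinite set $S$ to the discrete ring $R$ has finite image and factors through some $S_i$. Indeed, its fibres form a finite clopen partition of $S$, and by compactness such a partition is pulled back from some $S_i$. (Equivalently, $\Cont(S,R)\simeq\Cont(S,\ZZ)\otimes_\ZZ R$, compatibly with the colimit.) Since $\K$ commutes with filtered colimits of rings, we obtain
\[ \K(\Cont(S,R))\simeq\colim_i\K(\Cont(S_i,R))\simeq\colim_i\prod_{S_i}\K(R)=\underline{\K(R)}(S). \]

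Finally, the identifications above are natural in $S$, because a map of profinite sets can be presented levelwise by maps of finite sets. Hence the equivalence holds as presheaves on $\ProFin$, and in particular after restriction to $\EDS$.
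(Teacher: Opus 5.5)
Your proposal is correct and follows the same route as the paper's proof: identify $\prod_{S_i}\K(R)\simeq\K(\Map(S_i,R))$ using additivity of $\K$ on finite products of rings, then use $\Cont(S,R)\simeq\colim_i\Map(S_i,R)$ and the fact that $\K$ commutes with filtered colimits of rings. Your remarks on why the transition maps agree and on naturality in $S$ fill in details that the paper leaves implicit.
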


\begin{proof}
    We have
    \begin{align*}
        \underline{\K(R)}(S) & = \colim_i \prod_{S_i} \K(R) \\
        & \simeq \colim_i \K(\Map(S_i, R)) \\
        & \simeq \K(\colim_i \Map(S_i, R)) \\
        & \simeq \K(\Cont(S, R)),
    \end{align*}
    where the second to last equivalence is due to algebraic $\K$-theory commuting with filtered colimits of rings.
\end{proof}

\begin{prop} 
\label{prop:Kcond=Kcont-enriched}
    There exists an equivalence
    \[ \Knucu \simeq \gamma_\kappa\Kcontu \]
    in $\Sh_\an(\AnAdic,\Cond_\kappa(\Sp))$.
\end{prop}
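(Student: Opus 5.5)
The plan is to construct a natural comparison map $\Knucu \to \gamma_\kappa\Kcontu$ on affinoids, check that it is an equivalence objectwise after evaluating at extremally disconnected sets, and then extend to all of $\AnAdic$ by descent. Both sides are analytic sheaves: $\Knucu$ by Proposition~\ref{prop:condensed-enrichment-of-sheaves}, and $\gamma_\kappa\Kcontu$ because $\Kcontu$ is an analytic sheaf (Definition~\ref{def:continuous-k-theory}) and $\gamma_\kappa$ preserves small limits (Lemma~\ref{comparison-functor--lemma}). It therefore suffices to produce an equivalence on the basis of affinoids $X=\Spa(A,A^+)$ with $A$ Tate, functorial in $X$. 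Since $\Cond_\kappa(\Sp)\simeq\Fun^\times(\EDS_{<\kappa}\op,\Sp)$ (Remark~\ref{extremally-disconnected--rem}), it is enough to compare the values at every $S\in\EDS_{<\kappa}$, functorially in $S$ and $X$.

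For the left-hand side, Lemma~\ref{lem:profinite-tensor-adic} gives $S\otimes X\simeq\Spa(\Cont(S,A),\Cont(S,A^+))$, so $\Knucu(X)(S)\simeq\Knuc(\Cont(S,A))$. For the right-hand side, fix a ring of definition $A_0$ with pseudo-uniformiser $\varpi$. Because $\gamma_\kappa$ is exact between stable categories, it sends the defining pushout of $\Kcontu(A)$ to a pushout in $\Cond_\kappa(\Sp)$. On $\EDS_{<\kappa}$, finite (co)limits of product-preserving functors with stable values are computed pointwise, so evaluating at $S$ commutes with this pushout. I compute the three corners as follows.
\begin{itemize}
\item The constant corners $\gamma_\kappa\K(A_0)=\underline{\K(A_0)}$ and $\gamma_\kappa\K(A)=\underline{\K(A)}$ evaluate, by Lemma~\ref{lem:K(R)discrete}, to $\K(\Cont(S,A_0))$ and $\K(\Cont(S,A))$.
\item The pro-corner gives $\gamma_\kappa\prolim_n\K(A_0/\varpi^n)(S)\simeq\lim_n\K(\Cont(S,A_0/\varpi^n))$, since limits of sheaves are computed pointwise.
\item Because $S$ is compact and $A_0/\varpi^n$ is discrete, there are identifications $\Cont(S,A_0/\varpi^n)=\Cont(S,A_0)/\varpi^n$. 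Hence the pro-corner is $\Kcont$ of the $\varpi$-adic ring $\Cont(S,A_0)$.
\end{itemize}
By the proof of Lemma~\ref{lem:profinite-tensor-adic}, $\Cont(S,A_0)$ is a ring of definition of the Tate ring $\Cont(S,A)$. So the evaluated pushout is exactly the pushout defining $\Kcont(\Cont(S,A))$. Corollary~\ref{cor:andreychev-Kcont=KNuc-proregular}, applied to the Tate ring $\Cont(S,A)$, then identifies this with $\Knuc(\Cont(S,A))$. Its comparison map comes from the map of squares \eqref{eq:CartesianDiagramNucK}$\to$\eqref{eq:CartesianDiagramKcont}, induced by the functors $\Dcal(-)\to\Nuc(-)$, and is visibly natural in the pair $(\Cont(S,A),\Cont(S,A_0))$. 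Base changes along maps $S'\to S$ and maps of affinoids preserve all these structures.

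The main obstacle is making this coherent rather than merely objectwise. Two issues arise here.
\begin{itemize}
\item $\Kcontu(A)$ depends on the choice of $A_0$ only up to weak equivalence, that is, only in $\Pro^\omega(\Sp^+)$. I would work on the category of triples $(A,A^+,A_0)$, on which every construction above is strictly functorial. I would then observe that the forgetful functor to affinoids is a localisation for both sides: the choices of $A_0$ form a filtered poset, and the maps between choices are equivalences by \cite[Prop.~5.4]{kst-i} and the independence of $\Nuc(A)$ from choices.
\item One must check that $\gamma_\kappa$ applied to $\iota^*\Kcontu(A)=\prolim_{m,n}\tau_{\leq m}\K(A_0/\varpi^n)$ recovers the same limit as above. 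Evaluated at $S$, this is the Postnikov limit $\lim_m\tau_{\leq m}$ of the previous spectrum, which converges because Postnikov towers converge in $\Sp$.
\end{itemize}
Once the comparison is a natural equivalence on affinoids, the analytic sheaf property of both sides extends it to an equivalence in $\Sh_\an(\AnAdic,\Cond_\kappa(\Sp))$.
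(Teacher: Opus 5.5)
Your strategy is the paper's: evaluate at $S\in\EDS_{<\kappa}$, compute the pro-corner as $\lim_n\K(\Cont(S,A_0/\varpi^n))$ via Lemma~\ref{lem:K(R)discrete}, apply Corollary~\ref{cor:andreychev-Kcont=KNuc-proregular} to the Tate ring $\Cont(S,A)$, and glue by analytic descent. Your remarks on the choice of $A_0$ and on $\iota^*$ are sound refinements that the paper leaves implicit.

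There is, however, one step that is wrong as you use it: ``the constant corners evaluate, by Lemma~\ref{lem:K(R)discrete}, to $\K(\Cont(S,A_0))$ and $\K(\Cont(S,A))$''. That lemma treats $A_0$ and $A$ as discrete rings. So $\underline{\K(A_0)}(S)\simeq\K(\Cont(S,A_0^\discrete))$ and $\underline{\K(A)}(S)\simeq\K(\Cont(S,A^\discrete))$ are $\K$-theories of \emph{locally constant} functions. The ring of definition of the complete Tate ring $\Cont(S,A)$ that you then invoke is instead the $\varpi$-adic ring $\Cont(S,A_0)$ of continuous functions. These spectra differ in general: for infinite $S$, $\Cont(S,A_0)$ has units that are not locally constant, and the determinant on $\K_1$ detects them. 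Hence your evaluated pushout is not ``exactly'' the pushout defining $\Kcont(\Cont(S,A))$, and Corollary~\ref{cor:andreychev-Kcont=KNuc-proregular} does not yet identify it with $\Knuc(\Cont(S,A))$.

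The missing input is that the fibre of $\K(B)\to\K(B[\varpi^{-1}])$ is $\K(\Tor_B(\varpi^\infty))$, which is insensitive to $\varpi$-adic completion. Apply this to $B=\Cont(S,A_0^\discrete)$, which has the following properties:
\begin{itemize}
\item $B$ is $\varpi$-torsion-free, since $A_0\subset A$, and $B[\varpi^{-1}]=\Cont(S,A^\discrete)$;
\item its completion is $\Cont(S,A_0)$, with $\Cont(S,A_0)[\varpi^{-1}]=\Cont(S,A)$;
\item $B/\varpi^n=\Cont(S,A_0/\varpi^n)$.
\end{itemize}
Now compare your evaluated square with the square defining $\Kcont(\Cont(S,A))$. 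The natural map between them is the identity on the pro-corner. It is also an equivalence on the cofibres of $\K(B)\to\K(B[\varpi^{-1}])$ and $\K(\Cont(S,A_0))\to\K(\Cont(S,A))$. Hence it induces an equivalence of pushouts.

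This is exactly the equivalence \eqref{eq:KonPiInsensitiveToCompletion} proved in Proposition~\ref{prop:KnucHyperdescentProfinite}. The paper's own proof of the present statement passes over the same point in its ``Hence'', so you should make it explicit.
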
 

\begin{proof}
    For a (complete) Tate ring $A$ and $S\in\EDS_\kappa$ the topological ring $\Cont(S,A)$ is still a (complete) Tate ring. 
    Let $A_0 \subset A$ be a ring of definition. 
    We have
    \begin{align*}
        (\gamma_\kappa \Kcontu(A_0))(S) & \simeq \left(\lim_n \underline{\K(A_0/\varpi^n)}\right)(S) & \text{(Lemma~\ref{comparison-functor--lemma})} \\
        & \simeq \lim_n \left( \underline{\K(A_0/\varpi^n)}(S) \right)& \\
        & \simeq \lim_n \K(\Cont(S, A_0/\varpi^n)) & \text{(Lemma~\ref{lem:K(R)discrete})} \\
        & \simeq \Kcont(\Cont(S,A)).
    \end{align*}
    Hence we get that
    \[(\gamma_\kappa\Kcontu(A))(S) \simeq \Kcont(\Cont(S,A)) \simeq \K(\Nuc(\Cont(S,A))) \simeq \Knucu(A)(S),\]
    where the middle equivalence is Corollary~\ref{cor:andreychev-Kcont=KNuc-proregular} and the right equivalence follows from combining Lemma~\ref{lem:profinite-tensor-adic} with Proposition~\ref{prop:condensed-enrichment-of-sheaves}. 
    Now the claim follows since both are sheaves for the analytic topology.
\end{proof}

One can in fact prove a stronger descent property with respect to $S \in \ProFin_\kappa$, beyond extremally disconnected sets. 

\begin{lem}
    \label{lem:hyperdescentProfiniteForDiscreteRings}
    Let $R$ be a discrete ring.
    Let $F$ be a $\Sp$-valued functor on commutative $R$-algebras which commutes with filtered colimits.
    For any hypercover $S_\bullet \to S$ in $\ProFin_\kappa$, the natural morphism
    \begin{equation}
        \label{eq:KProFinDescentDiscrete}
        F(\Cont(S, R)) \to \lim_\Delta F(\Cont(S_\bullet, R))
    \end{equation}
    of spectra is an equivalence.
\end{lem}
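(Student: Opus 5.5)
We reduce to the case of finite sets by writing everything as a filtered colimit, following the strategy of Lemma~\ref{lem:hyperdescentDiscreteCondensed}.

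\textbf{Step 1: approximation by finite hypercovers.} As in the proof of Lemma~\ref{lem:hyperdescentDiscreteCondensed}, we write the hypercover $S_\bullet \to S$ as a cofiltered limit, indexed by $j \in J$, of hypercovers $S_{\bullet,j} \to S_j$ of finite sets by finite sets (\cite[Lem.~2.1.7]{Mann-Thesis}). Since $R$ is discrete, for every profinite $T = \lim_j T_j$ we have $\Cont(T,R) \simeq \colim_j \Map(T_j,R)$. This colimit is a filtered colimit of $R$-algebras, so the hypothesis on $F$ gives
\[ F(\Cont(S,R)) \simeq \colim_j F(\Map(S_j,R)), \qquad F(\Cont(S_n,R)) \simeq \colim_j F(\Map(S_{n,j},R)) \]
for every $n$, naturally in the simplicial direction.

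\textbf{Step 2: the finite case.} For a hypercover of finite sets $S_{\bullet,j}\to S_j$, the augmented simplicial object is split; this holds because surjections of finite sets split, as already used in Lemma~\ref{lem:hyperdescentDiscreteCondensed}. Applying the functor $T\mapsto F(\Map(T,R))$, which is functorial in finite sets, yields a split augmented cosimplicial spectrum. Hence $F(\Map(S_j,R)) \to \lim_\Delta F(\Map(S_{\bullet,j},R))$ is an equivalence. Note that this step needs no hypothesis on $F$ at all, only functoriality.

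\textbf{Step 3: commuting the colimit past $\lim_\Delta$.} This is the main obstacle, since totalisations do not commute with filtered colimits in general. We follow the homotopy spectral sequence argument from Lemma~\ref{lem:hyperdescentDiscreteCondensed}. Consider the Bousfield--Kan spectral sequence with
\[ E_1^{s,t} = \pi_{-t}F(\Cont(S_s,R)) \simeq \colim_j \pi_{-t}F(\Map(S_{s,j},R)). \]
Here the identification uses Step 1 together with the exactness of filtered colimits on homotopy groups. The $E_1$-page is the normalised complex of a cosimplicial abelian group that is a filtered colimit of split ones. The $E_2$-page is therefore the filtered colimit of the $E_2$-pages for the finite hypercovers. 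Each of these collapses onto $s=0$ with value $\pi_{-t}F(\Map(S_j,R))$, so in the colimit $E_2$ is concentrated in $s=0$, with
\[ E_2^{0,t} = \colim_j \pi_{-t}F(\Map(S_j,R)) \simeq \pi_{-t}F(\Cont(S,R)). \]
Collapse at $E_2$ makes the relevant $\lim^1$ terms vanish. The complete convergence lemma of Bousfield--Kan then shows that the spectral sequence converges to $\pi_*\lim_\Delta F(\Cont(S_\bullet,R))$. The edge map agrees with the map induced by \eqref{eq:KProFinDescentDiscrete}, so that map is an isomorphism on all homotopy groups, hence an equivalence.

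The only genuinely delicate point is convergence, exactly as in Lemma~\ref{lem:hyperdescentDiscreteCondensed}. Since $F$ is not assumed bounded, we must rely on the collapse at $E_2$ rather than on a connectivity bound. Here I would cite the same footnoted convergence argument verbatim. Alternatively, one could observe that this lemma is formally the case $X=F(R)$ of Lemma~\ref{lem:hyperdescentDiscreteCondensed} whenever $F(\Map(T,R))\simeq\prod_T F(R)$. That identification fails for general $F$, however, so the spectral sequence argument above should be run directly.
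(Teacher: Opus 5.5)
Your proposal is correct, but it proves the lemma by a different route from the paper's. The paper does not rerun any spectral sequence. It computes $F(\Cont(T,R))\simeq\colim_i F(\Cont(T_i,R))\simeq\colim_i\prod_{T_i}F(R)$, identifies $T\mapsto F(\Cont(T,R))$ with the functor $\underline{F(R)}$, and concludes by Lemma~\ref{lem:hyperdescentDiscreteCondensed}.

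As you point out, the last identification uses $F(\prod_{T_i}R)\simeq\prod_{T_i}F(R)$, i.e.\ that $F$ preserves finite products. That is not among the stated hypotheses. It does hold for the two functors the lemma is applied to in Proposition~\ref{prop:KnucHyperdescentProfinite}: algebraic K-theory, and $B\mapsto\fib(\K(B)\to\K(B[\varpi^{-1}]))$, since inverting $\varpi$ commutes with finite products of rings. So the paper's one-line reduction suffices for its purposes.

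You instead observe that the proof of Lemma~\ref{lem:hyperdescentDiscreteCondensed} uses only two properties of $G(T)=F(\Cont(T,R))$:
\begin{enumerate}
\item $G(\lim_j T_j)\simeq\colim_j G(T_j)$ for presentations by finite sets;
\item $G$ sends the finite hypercovers $S_{\bullet,j}\to S_j$, which are split, to split coaugmented cosimplicial spectra.
\end{enumerate}
Neither needs a product condition. So your argument proves the lemma in the full generality stated, at the cost of repeating the convergence discussion.

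One point to tighten in Step~2. That every surjection of finite sets splits does not by itself make the augmented simplicial set $S_{\bullet,j}\to S_j$ split. The right reason is that a hypercover of a finite set by finite sets is a trivial Kan fibration onto a discrete simplicial set. Fibrewise, $X\to\ast$ then has the right lifting property against monomorphisms. Extending $\mathrm{id}_X$ along $X\hookrightarrow\Delta^0\star X$ produces the extra degeneracies. The paper also asserts this splitting without comment.
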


\begin{proof}
    By assumption, for any $T = \lim_i T_i \in \ProFin_\kappa$, we have 
    \[
        F(\Cont(T, R)) \simeq F(\colim_i \Cont(T_i, R)) \simeq \colim_i F(\Cont(T_i, R)) \simeq \colim_i \prod_i F(R).
    \]    
    Hence, the functor
    \[\ProFin_\kappa\op \to \Sp, \quad T \mapsto F(\Cont(T, R))\]
    is identified with the functor $\underline{F(R)}$ associated with the spectra $F(R)$ as in Lemma~\ref{lem:hyperdescentDiscreteCondensed}. Then we conclude by this lemma. 
\end{proof}

\begin{prop}
    \label{prop:KnucHyperdescentProfinite}
    For any Tate ring $A$ and any hypercover $S_\bullet \to S$ in $\ProFin$, the natural map
    \[\Knuc(\Cont(S, A)) \to \lim_\Delta \Knuc(\Cont(S_\bullet, A))\]
    is an equivalence of spectra.
\end{prop}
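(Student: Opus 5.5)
We reduce to the discrete case via the localisation square in the proof of Corollary~\ref{cor:andreychev-Kcont=KNuc-proregular}, combined with Efimov's continuity (Theorem~\ref{thm:efimov-continuity-proregular}).

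\textbf{Step 1: the ring $\Cont(T,A)$.} Choose a ring of definition $A_0\subset A$ and a pseudo-uniformiser $\varpi\in A_0$. For every $T\in\ProFin$, the ring $\Cont(T,A)$ is a complete Tate ring with ring of definition $\Cont(T,A_0)$. We have $\Cont(T,A)=\Cont(T,A_0)[\varpi^{-1}]$ because $T$ is compact, so the image of any continuous map $T\to A$ is bounded. Applying $\K$ to the diagram (\ref{eq:CartesianDiagramNuc}) for $\Cont(T,A)$ gives a cartesian square
\[ \begin{tikzcd} \K(\Cont(T,A_0)) \arrow[r]\arrow[d] & \K(\Cont(T,A)) \arrow[d] \\ \Knuc(\Cont(T,A_0)) \arrow[r] & \Knuc(\Cont(T,A)) \end{tikzcd} \]
functorial in $T$.

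\textbf{Step 2: descent for the three corners.} Since limits over $\Delta$ commute with pullbacks, it suffices to prove hyperdescent along $S_\bullet\to S$ for the three other corners.
\begin{enumerate}
\item For the upper left, $\Cont(T,A_0)$ is not $\Cont(T,R)$ for $R$ discrete. Instead, write $\K(\Cont(T,A_0)^{\discrete})$ as the composite $F(\Cont(T,\ZZ)\otimes_\ZZ -)$ and argue as in Lemma~\ref{lem:hyperdescentProfiniteForDiscreteRings}. Here $\Cont(T,A_0)$ genuinely differs from $\Cont(T,\ZZ)\otimes A_0$, because the latter only contains locally constant functions. It is cleaner to avoid this corner and use the upper right in the same way; see the alternative below.
\item For the lower left, use Theorem~\ref{thm:efimov-continuity-proregular}. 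We have $\Knuc(\Cont(T,A_0))\simeq\lim_n\K(\Cont(T,A_0)/\varpi^n)$, and $\Cont(T,A_0)/\varpi^n\simeq\Cont(T,A_0/\varpi^n)$ since $A_0/\varpi^n$ is discrete and $\varpi^nA_0$ is open. For this we need that $\varpi$ is a nonzerodivisor, or at least that the $\varpi^\infty$-torsion of $\Cont(T,A_0)$ is bounded; we can arrange this by replacing $A_0$ by its image, which has no $\varpi$-torsion since $A_0\subset A$. Then Lemma~\ref{lem:hyperdescentProfiniteForDiscreteRings}, applied with $R=A_0/\varpi^n$ and $F=\K$, gives descent at each level $n$, and limits commute with $\lim_\Delta$.
\end{enumerate}

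\textbf{Alternative to the two upper corners.} The upper row is $\K(\Tor_{\Cont(T,A_0)}(\varpi^\infty))\to\K(\Cont(T,A_0))\to\K(\Cont(T,A))$. By Lemma~\ref{lem:CartesianDiagramNuc} its fibre is identified with the fibre of the lower row. So we instead use the fibre sequence
\[\K(\Tor^\nuc)\to\Knuc(\Cont(T,A_0))\to\Knuc(\Cont(T,A)).\]
It then suffices to prove descent for $\K(\Tor_{\Cont(T,A_0)}(\varpi^\infty))$. This is the K-theory of perfect complexes of $\Cont(T,A_0)$ that are $\varpi$-power torsion, and by dévissage-free arguments it depends only on $\Cont(T,A_0)/\varpi^n$ for varying $n$. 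Concretely, $\Tor(\varpi^\infty)=\colim_n$ of modules over $\Kos(\varpi^n)$, and K-theory commutes with this filtered colimit. Since $\Kos(\Cont(T,A_0);\varpi^n)=\Cont(T,A_0)/\varpi^n=\Cont(T,A_0/\varpi^n)$ in the torsion-free case, each stage is a functor of $\Cont(T,R)$ with $R=A_0/\varpi^n$ discrete. More precisely, $\K$ of $\varpi^n$-torsion perfect complexes is a filtered-colimit-preserving functor of the discrete ring $\Cont(T,\ZZ)\otimes A_0=\colim_i\Cont(T_i,A_0)$; the completion does not change $\Tor(\varpi^\infty)$, since torsion categories see only the quotients mod $\varpi^n$. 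So Lemma~\ref{lem:hyperdescentProfiniteForDiscreteRings} applies with $R=A_0$ and $F=\K(\Tor_{(-)}(\varpi^\infty))$, which commutes with filtered colimits of rings.

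\textbf{Main obstacle.} The hard part is this torsion term. It requires justifying that $\Tor_{\Cont(T,A_0)}(\varpi^\infty)\simeq\Tor_{\Cont(T,\ZZ)\otimes A_0}(\varpi^\infty)$, which is a Beauville--Laszlo type equivalence for the $\varpi$-adic completion, and that this torsion K-theory commutes with filtered colimits of rings. Once that is in place, combining the descent for $\K(\Tor)$ with Step 2.2 in the fibre sequence gives the claim for $\Knuc(\Cont(-,A))$.
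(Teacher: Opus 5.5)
Your final route is the paper's proof. You use the localisation square together with Efimov continuity to get the fibre sequence $\K(\Tor_{\Cont(T,A_0)}(\varpi^\infty)) \to \lim_n \K(\Cont(T,A_0/\varpi^n)) \to \Knuc(\Cont(T,A))$, prove descent for the middle term levelwise via Lemma~\ref{lem:hyperdescentProfiniteForDiscreteRings} with $R=A_0/\varpi^n$, and prove descent for the torsion term by applying that lemma to the discrete ring $A_0$ and then using that torsion K-theory does not change under $\varpi$-adic completion. The abandoned Step~2.1 and the vague description of $\Tor(\varpi^\infty)$ as a colimit of Koszul-module categories are unnecessary: the commutation with filtered colimits follows directly from $\K(\Tor_B(\varpi^\infty))\simeq\fib(\K(B)\to\K(B[\varpi^{-1}]))$, which is how the paper argues.
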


\begin{proof}
    Choose a pseudo-uniformiser $\varpi \in A_0$.
    From the cartesian diagram (\ref{eq:CartesianDiagramNucK}) and Efimov's equivalence (\ref{eq:efimov-continuity-proregular}), we obtain a fibre sequence
    \begin{equation}
        \label{eq:KonPiForS}
        F(\Cont(T, A_0)) \to \lim_n \K(\Cont(T,A_0)/\varpi^n) \to \Knuc(\Cont(T, A))
    \end{equation}
    for any profinite set $T$, where $F(B) \coloneqq \fib(\K(B) \to \K(B[\varpi^{-1}]))$ for any (ordinary) $A_0$-algebra $B$. 
    The descent for $\Knuc(\Cont(-, A_0))$ is then reduced to that for $F(\Cont(-, A_0))$ and for $\K(\Cont(-, A_0/\varpi^n))$.

    On the one hand, applying Lemma~\ref{lem:hyperdescentProfiniteForDiscreteRings} to the algebraic $\K$-theory functor and $R = A_0/\varpi^n$, one obtains the equivalences (for $n\in\NN$).
    \begin{equation}
        \label{eq:torsionProFinDescent}
        \K(\Cont(S, A_0/\varpi^n)) \To[\simeq] \lim_\Delta \K(\Cont(S_\bullet, A_0/\varpi^n)).
    \end{equation}
    
    On the one hand, applying Lemma~\ref{lem:hyperdescentProfiniteForDiscreteRings} to $F(-)$ and $R = A_0^\discrete$ (the ring $A_0$ equipped with the discrete topology), we see that there is a natural equivalence
    \begin{equation}
        \label{eq:fiberProFinDescentDiscrete}
        F(\Cont(S, A_0^\discrete)) \To[\simeq] F(\Cont(S_\bullet, A_0^\discrete))
    \end{equation}
    for any hypercover $S_\bullet \to S$.
    Let us get rid of the discrete topology by considering the localisation sequence $\Tor_B(\varpi^\infty) \to \Dcal(B) \to \Dcal(B[\varpi^{-1}])$. Using the fact that $\K(-)$ is a localising invariant, we see that $F(B) \simeq \K(\Tor_B(\varpi_\infty))$, which is insensitive to the $\varpi$-adic completion. We deduce that for any profinite set $T$, the natural map
    \begin{equation}
        \label{eq:KonPiInsensitiveToCompletion}
        F(\Cont(T, A_0^\discrete)) \to F(\Cont(T, A_0))
    \end{equation}
    of spectra is an equivalence, hence (\ref{eq:fiberProFinDescentDiscrete}) becomes
    \begin{equation}
        \label{eq:fiberProFinDescentAdic}
        F(\Cont(S, A_0)) \To[\simeq] F(\Cont(S_\bullet, A_0)),
    \end{equation}
    which shows the desired hyperdescent for $F$.
\end{proof}

\begin{thm}[Nisnevich descent for continuous K-theory]
\label{Thm:Nisnevich-descent-Kcont}
Let $\base = \Spa(k, k^\circ)$ for a nonarchimedean field $k$, or $\base = \Spa(R, R^+)$ with $R$ being a Tate ring that admits a noetherian and finite-dimensional ring of definition.
The presheaf
\[ \Kcontu \,\colon\, \Adic_\base^{\lft,\opp} \To \Pro(\Sp^+),\quad \Spa(A,A^+) \mapsto \Kcontu(A) \]
satisfies Nisnevich descent.
\end{thm}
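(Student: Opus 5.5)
Our plan is to reduce Nisnevich descent for $\Kcontu$ to Andreychev's Nisnevich descent for $\Knuc = \K\circ\Nuc$ \cite[Satz~5.12]{andreychev-thesis}, passing through the condensed enrichment and the conservativity of the comparison functor. Since $\Kcontu$ is already an analytic sheaf on $\Adic^\lft_\base$ (Definition~\ref{def:continuous-k-theory}), Nisnevich descent is equivalent to excision for elementary Nisnevich squares (\ref{Diagram:NisnevichSquare}). So it suffices to show that for every such square $W \to V$, $U \to X$ with $X=U\cup\{\text{image of }V\}$, the square of pro-spectra
\[ \begin{tikzcd} \Kcontu(X) \arrow[r]\arrow[d] & \Kcontu(U) \arrow[d] \\ \Kcontu(V) \arrow[r] & \Kcontu(W) \end{tikzcd} \]
is cartesian in $\Pro(\Sp^+)$. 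By analytic descent we may reduce further to affinoid $X$, $U$, $V$, $W$, all Tate and topologically of finite type over $\base$. Since $\Pro(\Sp^+)$ is stable, cartesianness means that the total fibre $T$, a finite limit of the four $\Kcontu$'s computed levelwise, is zero. Each $\Kcontu(-)$ is countably indexed, so $T$ lies in $\Pro^\omega(\Sp^+)$.

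First, we show that $T$ is bounded below. This uses Weibel vanishing for $\Kcontu$ (Proposition~\ref{prop:weibel-vanising-Kcont}, valid for the bases in the statement with noetherian finite-dimensional rings of definition, resp.\ over $k$ via \cite{kelly-saito-tamme}). It gives a uniform lower bound $-\dim$ on the homotopy pro-groups of each $\Kcontu(-)$, and finite limits preserve such a bound up to a shift. Second, we apply Theorem~\ref{thm:comparison-conservative}: it suffices to show $\gamma_\kappa T = 0$ in $\Cond_\kappa(\Sp)$. Since $\gamma_\kappa$ preserves finite limits (Lemma~\ref{comparison-functor--lemma}), $\gamma_\kappa T$ is the total fibre of the square obtained by applying $\gamma_\kappa\Kcontu$. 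By Proposition~\ref{prop:Kcond=Kcont-enriched} this is the corresponding square for $\Knucu$. Third, for $S\in\EDS_\kappa$, evaluation at $S$ is exact and jointly conservative on $\Cond_\kappa(\Sp)\simeq\Fun^\times(\EDS_{<\kappa}\op,\Sp)$. We therefore need that
\[ \Knuc(S\otimes X)\to \Knuc(S\otimes U)\times_{\Knuc(S\otimes W)}\Knuc(S\otimes V) \]
is an equivalence. By Lemma~\ref{Lem:EDS-tensor-Nisnevich-square} the tensored square is again a Nisnevich square of analytic adic spaces, and $\Knuc$ satisfies Nisnevich descent on analytic adic spaces by Andreychev. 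This gives the claim.

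The main obstacle, we expect, is the applicability of Andreychev's descent to the tensored square. The spaces $S\otimes X$ are no longer of finite type over $\base$, and $\Cont(S,A)$ is not noetherian. So we must check that \cite[Satz~5.12]{andreychev-thesis} holds on all (sheafy) analytic adic spaces, as Lemma~\ref{lem:XotimesSsheafy} provides, and not only on finite type ones. If it needs extra hypotheses, our first fallback would be to write $S\otimes X=\lim_i S_i\otimes X$ with $S_i$ finite, where descent is clear. We would then use Corollary~\ref{cor:profinite-tensor-adic}, together with Proposition~\ref{prop:KnucHyperdescentProfinite} and the fibre sequence (\ref{eq:KonPiForS}), to pass to the limit: the $\K(\Cont(S,A_0/\varpi^n))$ and $F$ terms commute with the colimit over $i$. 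A secondary point is the boundedness bookkeeping, namely that the lower bound from Weibel vanishing is uniform enough for Theorem~\ref{thm:comparison-conservative} to apply to $T$.
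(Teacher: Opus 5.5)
Your proposal is correct and is essentially the paper's proof: excision for Nisnevich squares, the identification $\gamma_\kappa\Kcontu\simeq\Knucu$ (Proposition~\ref{prop:Kcond=Kcont-enriched}), Andreychev's descent for $\Knuc$ applied to the tensored square $S\otimes\square$ for $S\in\EDS$, and then Weibel vanishing (Proposition~\ref{prop:weibel-vanising-Kcont}) together with conservativity of $\gamma$ on bounded-below objects (Theorem~\ref{thm:comparison-conservative}). One small point: your reduction to all four corners being affinoid is not justified in general, but it is also not needed, since the argument works verbatim for qcqs squares, which is what the paper uses.

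The paper likewise applies \cite[Satz~5.12]{andreychev-thesis} directly to $S\otimes\square$, which is a square of analytic adic spaces by Lemma~\ref{lem:XotimesSsheafy}. So your fallback is not needed, and as sketched it would not work anyway. Descent at the finite $S_i$ only involves the spectra $\Kcont$, and since $\lim_n$ does not commute with $\colim_i$, passing to infinite $S$ this way cannot recover the pro-information that $\gamma_\kappa\Kcontu(X)(S)$ carries.
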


\begin{proof}
    The Nisnevich topology is generated by the analytic topology and Nisnevich squares. Recall that $\Kcontu$ is defined as the unique extension for the analytic topology from affinoids (Definition~\ref{def:continuous-k-theory}), hence it remains to show that $\Kcontu$ sends any Nisnevich square to a pullback square. 
    
    First, recall that $\Knuc$ is a Nisnevich sheaf \cite[Satz~5.12]{andreychev-thesis}. Next, we shall lift this to condensed spectra.
    Given a Nisnevich square
    \begin{equation*}
    \begin{tikzcd}
        \label{Diagram:NisnevichSquare}
        \tag{$\square$}
		V \arrow[r,""]\arrow[d,""]& Y \arrow[d,""]\\
		U \arrow[r,""]& X
    \end{tikzcd}
    \end{equation*}
    of qcqs adic spaces locally of finite type over $\base$, for every extremally disconnected set $S\in\EDS$ 
    the tensored square
    \begin{equation*}
    \begin{tikzcd}
        \label{Diagram:NisnevichSquareTensorS}
        \tag{$S \otimes \square$}
		S \otimes V \arrow[r,""]\arrow[d,""]& S \otimes Y \arrow[d,""]\\
		S \otimes U \arrow[r,""]& S \otimes X
    \end{tikzcd}
    \end{equation*}
    is a Nisnevich square by Lemma~\ref{Lem:EDS-tensor-Nisnevich-square}. Since limits of condensed spectra are computed pointwise on $\ProFin_\kappa$, the square 
    $$ \begin{tikzcd}
		\Knucu(V) \arrow[r,""]\arrow[d,""]& \Knucu(Y) \arrow[d,""]\\
		\Knucu(U) \arrow[r,""]& \Knucu(X)
    \end{tikzcd} $$
    is cartesian.
    
    The equivalence $\gamma\Kcontu\simeq\Knucu$ (Proposition~\ref{prop:Kcond=Kcont-enriched}) yields that the square $\gamma \Kcontu((\square))$ is cartesian for qcqs spaces in $\Adic_\base^\lft$.
    The comparison functor $\gamma \colon \Pro^\omega(\Sp^+) \to \Cond^\omega(\Sp)$ preserving limits and being conservative on bounded below objects (Theorem~\ref{thm:comparison-conservative}), we deduce the cartesianness of $\Kcontu((\square))$ as a result of the following bounded-below properties: $\Kcontu(U), \Kcontu(V), \Kcontu(X), \Kcontu(Y) \in \Pro^\omega(\Sp^+)$ by Weibel vanishing (Proposition~\ref{prop:weibel-vanising-Kcont}).
\end{proof}

\begin{prop}
    If $X$ is regular and assuming \ref{spadesuit-condition-analytic}, then $\Kcont$ is $\Arig$-invariant, i.e.\@ the canonical map $\Kcont(X) \to[\simeq] \Kcont(X\times \Arig)$ is an equivalence in $\Pro(\Sp^+)$. Moreover, the connective cover of $\Kcont(X)$ agrees with $\kan(X)$.
\end{prop}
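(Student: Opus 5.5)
The plan is to reduce, via descent, to the affinoid case where the Kerz--Saito--Tamme result applies, and then to globalise.

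\emph{Step 1: the local case.} Since $\Kcontu$ is a sheaf for the analytic topology (Definition~\ref{def:continuous-k-theory}), I first treat affinoids $X=\Spa(A,A^+)$ with $A$ satisfying \ref{dagger-condition}. For such $A$, Kerz--Saito--Tamme \cite[Prop.~5.14]{kst-i} show that $\Kcontu(A)\to\varplim\Kcontu(A\tea)$ is an equivalence in $\Pro(\Sp^+)$. By Lemma~\ref{pro-invariant=Aan-invariant--lem} this is exactly the statement $\Kcontu(X)\simeq\Kcontu(X\times\Arig)$.

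\emph{Step 2: globalisation.} Let $X$ be regular and smooth over $\base$. By \ref{spadesuit-condition-analytic}, $X$ has an analytic cover by affinoids satisfying \ref{dagger-condition}. I must also check that finite intersections (or a hypercover refinement) can be taken of this form; if not, I would use a hypercover whose terms are disjoint unions of such affinoids. Both $X\mapsto\Kcontu(X)$ and $X\mapsto\Kcontu(X\times\Arig)$ are analytic sheaves, since $U\mapsto U\times\Arig$ preserves covers. The comparison map is then a limit of equivalences.

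There is a subtlety in this limit: it is a limit in $\Pro(\Sp^+)$ over a possibly infinite diagram. To avoid issues with pro-limits, I would pass through $\gamma$, which preserves limits. For qcqs $X$ the cover can be taken finite, so only finite limits are needed, and these are computed levelwise. For general $X$, the sheaf on $\Adic$ is defined by right Kan extension from affinoids, so the statement follows formally.

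\emph{Step 3: the connective cover.} For the second claim, recall that $\kan$ is defined (later in the paper, following KST) as the $\Arig$-localisation of connective K-theory, or via the homotopy-invariant construction $\L_{\AA^1}$. By Step 2, $\Kcontu$ is already $\Arig$-invariant on $X$ and on all $X\times\AA^n$, which are again regular and satisfy the hypothesis. Hence the simplicial construction $|\Kcontu(X\times\Delta^\bullet_{\rig})|$ is constant and equals $\Kcontu(X)$.

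It remains to compare $\tau_{\ge0}\Kcontu$ with $\kan$. For regular $A$, KST show that $\K^\an$ and $\Kcont$ agree and that the negative analytic K-groups are compared via the Bass construction. Hence taking connective covers commutes with the $\Arig$-localisation on regular inputs, because the localisation is trivial there. I would invoke \cite[Prop.~5.14, Thm.~6.x]{kst-i} for the identification $\Kan(A)\simeq\Kcontu(A)$ under \ref{dagger-condition}, then sheafify.

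\emph{Main obstacle.} I expect the main obstacle to be Step 2's compatibility of the regularity/\ref{dagger-condition} condition with the analytic cover and products with $\Arig$. Concretely, one must ensure that $\Kcontu(X\times\Arig)$ is computed from the cover $\{U_i\times\Arig\}$ together with the pro-colimit $\Arig=\colim_{t\mapsto\pi t}\Brig$, and that this procolimit interchanges with the finite Čech limits. I would handle this by working levelwise in $\Pro^\omega$, using that finite limits commute with cofiltered pro-systems.
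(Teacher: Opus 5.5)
Your proof of the $\Arig$-invariance (Steps~1--2) is correct but uses a different key input than the paper. You apply the pro-homotopy invariance of $\Kcontu$ for Tate rings satisfying \ref{dagger-condition} \cite[Prop.~5.14]{kst-i} directly and translate it with Lemma~\ref{pro-invariant=Aan-invariant--lem}. The paper instead identifies $\Kcontu\simeq\Kanu$ locally via \cite[Thm.~6.19]{kst-i}, and then uses that $\Kanu$ is pro-$\Brig$-invariant by construction and is an analytic sheaf. Your route is more economical for the first claim alone. The paper's route already yields the local identification with $\Kanu$, which is what the second claim needs.

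The globalisation is easier than you fear. \ref{spadesuit-condition-analytic} applies to every open of $X$, so affinoids satisfying \ref{dagger-condition} form a basis, and a map of analytic sheaves that is an equivalence on a basis is an equivalence. No hypercovers are needed. The detour through $\gamma$ would not help anyway, since $\gamma$ is only known to be conservative on bounded-below objects of $\Pro^\omega(\Sp^+)$.

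Step~3 has a genuine gap: you misidentify what $\kanu$ is built from. $\kanu(A)=\prolim_j|\K_{\geq 0}(A\skp{\Delta^\bullet_{\pi^j}})|$ is not the $\Arig$-localisation of $\tau_{\geq 0}\Kcontu$. It is built from connective \emph{algebraic} K-theory of the Tate rings $A\skp{\Delta^n_{\pi^j}}$, and on that input the localisation is far from trivial, even for regular $A$. For example, the unit $1+pt\in\K_1(\QQ_p\tea)$ is sent to $1+p^{k+1}t\neq 1$ by the $k$-fold transition map $t\mapsto p^kt$, so $\K$ is not pro-$\Brig$-invariant.

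Consequently, your observation that $|\Kcontu(X\times\Delta^{\an,\bullet})|$ is constant only shows $\KHcontu(X)\simeq\Kcontu(X)$, which is a statement about $\Kcontu$. The sentence ``taking connective covers commutes with the localisation because the localisation is trivial there'' says nothing about $\kanu$.

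What you need, for $A$ satisfying \ref{dagger-condition} (hence regular), are two separate comparisons:
\begin{itemize}
\item $\Kcontu(A)\simeq\Kanu(A)$, by \cite[Thm.~6.19]{kst-i};
\item $\tau_{\geq 0}\Kanu(A)\simeq\kanu(A)$, which the paper records in \S\ref{subsec:analytic-k-theory} from \cite[Rem.~6.15, Lem.~6.18]{kst-i}.
\end{itemize}
The second comparison is not formal. $\Kanu$ is the Bass delooping of the connected theory $\Kanu_{\geq 1}$, whereas $\kanu$ is built from $\K_{\geq 0}$ and sits in the fibre sequence \eqref{fibre-sequence-1-connective-cover}. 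One must therefore know that the delooping reproduces $\kanu$ in degrees $\geq 0$; in particular, $\pi_0\Kanu(A)$ must be the constant pro-group $\K_0(A)$.

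With both inputs, $\tau_{\geq 0}\Kcontu(A)\simeq\tau_{\geq 0}\Kanu(A)\simeq\kanu(A)$ locally. This is how the paper's ``the claims can be checked locally'' should be read for the second assertion.
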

\begin{proof}
The claims can be checked locally. In this case, $\Kcont(X)\simeq\Kan(X)$ \cite[Thm.~6.19]{kst-i} and $\Kan$ is pro-$\Brig$-invariant by design, hence $\Arig$-invariant by Lemma~\ref{pro-invariant=Aan-invariant--lem}, since it is a sheaf \cite[\S 4.1]{kst-ii}.
\end{proof}

\subsection{The non-noetherian case}
\label{sec:non-noetherian-case}

In the work of Kerz--Saito--Tamme often there appears the condition on a Tate ring $A$ that it admits a noetherian ring of definition. The main necessity of this assumption lies in the use of pro-cdh descent --- which back then has been established only for noetherian schemes by Kerz--Strunk--Tamme \cite[Thm.~A]{kerz-strunk-tamme}. Thanks to a generalisation to arbitrary qcqs (derived) schemes by Kelly--Saito--Tamme \cite[Thm.~A]{kelly-saito-tamme}, those results about continuous K-theory which rely on pro-cdh descent generalise immediately.

\begin{prop}
\label{prop:Kcont-model-fibre-sequence}
    Let $A$ be a Tate ring with ring of definition $A_0$ so that $A=A_0[\pi\inv]$ for some topologically nilpotent $\pi\in A_0$. Then for every proper morphism $X\to\Spec(A_0)$ of finite presentation there is a fibre sequence
\[ \K(X\on\pi) \To \Kcontu(X) \To \Kcontu(A) \]
in $\Pro(\Sp^+)$.
\end{prop}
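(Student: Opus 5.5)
This is the analogue of \cite[Prop.~5.8]{kst-i} (or its noetherian version there), and I would follow the same strategy, replacing pro-cdh descent for noetherian schemes by the generalised pro-cdh descent of Kelly--Saito--Tamme \cite[Thm.~A]{kelly-saito-tamme}.

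First I fix notation. Let $X_n \coloneqq X\times_{\Spec(A_0)}\Spec(A_0/\pi^n)$, put $\Kcontu(X)\coloneqq\prolim_n\K(X_n)$, and let $U\coloneqq X\times_{\Spec(A_0)}\Spec(A)$. Since $p$ is proper of finite presentation, I regard $X$ as the model for the rigid generic fibre.

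The argument then runs in four steps.

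\textbf{Step 1.} Write $\K(X\on\pi)=\fib(\K(X)\to\K(U))$. By the definition of $\Kcontu(A)$ as a pushout, there is an analogous pushout for $A_0$. Comparing the two, the claim is equivalent to the square
\[ \K(X)\to \Kcontu(X),\qquad \K(\Spec A_0)\to\Kcontu(A_0) \]
having equivalent horizontal fibres, after passing to $\Pro(\Sp^+)$. Here one also uses $\K(A)\simeq \K(U)$ in the relevant sense: $U\to\Spec(A)$ is proper of finite presentation, and the comparison only involves the generic fibre.

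\textbf{Step 2.} To identify the fibres, I would use that $X\to\Spec(A_0)$ is an abstract blow-up square away from $V(\pi)$: it is an isomorphism over $\Spec(A)$ only after possibly replacing $X$, so more precisely one treats the general case by considering both $\K(X\on\pi)$ and $\K(A_0\on\pi)$. Apply pro-cdh descent to the square formed by $X\to\Spec(A_0)$, the infinitesimal thickenings $X_n\to\Spec(A_0/\pi^n)$, and the base change to $\Spec A$. This gives a weak pro-cartesian square
\[ \K(A_0)\to\K(X)\oplus \prolim_n \K(A_0/\pi^n)\to \prolim_n\K(X_n), \]
which is exactly the input used in \cite{kst-i}. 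In the non-noetherian setting this is the step where \cite[Thm.~A]{kelly-saito-tamme} must replace \cite{kerz-strunk-tamme}. One has to check that it applies to qcqs schemes with a finitely generated ideal $(\pi)$ and a proper finitely presented map that is an isomorphism off $V(\pi)$. I expect this verification, including matching their pro-system conventions with $\Pro(\Sp^+)$ and weak equivalences, to be the main obstacle.

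\textbf{Step 3.} Combine the pro-cartesian square with the defining pushout of $\Kcontu(A)$ via pasting of pushouts in the stable category $\Pro(\Sp^+)$ (Lemma~\ref{Pro-omega-stable--lemma}). This yields
\[ \Kcontu(X)\sqcup_{\K(X)}\K(U)\simeq\Kcontu(A). \]

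\textbf{Step 4.} Taking fibres of the map from $\K(X)\to\K(U)$ to $\Kcontu(X)\to\Kcontu(A)$ gives the fibre sequence $\K(X\on\pi)\to\Kcontu(X)\to\Kcontu(A)$. Finally, independence of the choice of ring of definition follows as in \cite[Prop.~5.4]{kst-i}, again using the generalised descent.
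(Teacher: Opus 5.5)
Your route is the paper's. Its proof is the single sentence that the argument of \cite[Prop.~5.8]{kst-i} goes through with \cite[Thm.~A]{kelly-saito-tamme} in place of \cite[Thm.~A]{kerz-strunk-tamme}. Your Steps 2--4 are exactly that argument: pro-cdh descent for the square formed by $X_n\to X$ over $\Spec(A_0/\pi^n)\to\Spec(A_0)$, then pasting with the defining pushout of $\Kcontu(A)$ in a stable category.

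What is wrong is your handling of the generic fibre in Steps 1--2. Properness and finite presentation of $U\to\Spec(A)$ do not give an equivalence $\K(U)\simeq\K(A)$. Nor can you ``replace $X$'' to arrange that $p$ is an isomorphism over $\Spec(A)$, since the statement concerns the given $X$. In fact, for an arbitrary proper finitely presented $p$ the assertion is false:
\begin{itemize}
\item $X=\varnothing$ (cut out by the unit ideal) gives $\K(X\on\pi)=0=\Kcontu(X)$, which would force $\Kcontu(A)\simeq 0$;
\item $X=\Spec(A_0)\sqcup\Spec(A_0)$ has natural cofibre $\Kcontu(A)^{\oplus 2}$.
\end{itemize}
The hypothesis that is really needed is that $p$ is an isomorphism over $\Spec(A)$, and the statement has to be read with it. This hypothesis is part of \cite[Prop.~5.8]{kst-i}. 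It also holds where the paper applies the proposition, namely the $\Spec(A)$-admissible blow-ups from \cite{raynaud-gruson} in the proof of Proposition~\ref{prop:weibel-vanising-Kcont}.

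With that hypothesis your argument is complete:
\begin{itemize}
\item $U\cong\Spec(A)$, so $\K(A_0)\to\K(A)$ factors as $\K(A_0)\to\K(X)\to\K(U)\simeq\K(A)$.
\item $p$ together with $V(\pi)\subset\Spec(A_0)$ forms an abstract blow-up square with finitely generated centre $(\pi)$. So \cite[Thm.~A]{kelly-saito-tamme} gives $\Kcontu(X)\simeq\K(X)\sqcup_{\K(A_0)}\Kcontu(A_0)$ in $\Pro(\Sp^+)$.
\item Pasting then yields $\Kcontu(X)\sqcup_{\K(X)}\K(U)\simeq\Kcontu(A_0)\sqcup_{\K(A_0)}\K(A)=\Kcontu(A)$.
\item Taking fibres gives $\K(X\on\pi)\to\Kcontu(X)\to\Kcontu(A)$.
\end{itemize}
Your closing appeal to independence of the choice of $A_0$ is not needed for this statement.
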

\begin{proof}
    The same proof as for \cite[Prop.~5.8]{kst-i} goes through by using \cite[Thm.~A]{kelly-saito-tamme} instead of \cite[Thm.~A]{kerz-strunk-tamme}.
\end{proof}

\begin{prop}[Weibel vanishing]
\label{prop:weibel-vanising-Kcont}
    Let $\base = \Spa(k, k^\circ)$ for a nonarchimedean field $k$, or $\base = \Spa(R, R^+)$ with $R$ being a Tate ring\footnote{%
        As follows from the proof, by patching arguments, one could relax the condition of $R$ being ``Tate'' to being ``analytic''. We do not pursue this direction of generality here.
    }
    that admits a noetherian and finite-dimensional ring of definition.
    Let $X$ be a qcqs adic space of locally finite type over $\base$ and of finite dimension $d$.
    Then there exists an integer $N$ such that:
    \begin{enumerate}
        \item $\Kcontu_{-n}(X)=0$ for $n>N$.
        \item The canonical morphism $\Kcontu_{-n}(X) \to \Kcontu_{-n}(X\times\BB^m_k)$ is an isomorhism for $m\geq 0$ and $n\geq N$.
    \end{enumerate}
    Furthermore, one can take $N = d$ if $\base = \Spa(k, k^\circ)$ or if $\base = \Spa(R, R^+)$ and the ring of definition is moreover excellent.
\end{prop}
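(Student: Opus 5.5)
The plan is to reduce to affinoids, then to formal models, and then to use the known Weibel-type results for qcqs schemes. The tool that makes this work in the non-noetherian setting is the fibre sequence of Proposition~\ref{prop:Kcont-model-fibre-sequence}, which rests on the pro-cdh descent of Kelly--Saito--Tamme.

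\emph{Step 1: reduction to affinoids.} $\Kcontu$ is a sheaf for the analytic topology (Definition~\ref{def:continuous-k-theory}) and $X$ is qcqs. So $X$ has a finite affinoid cover, with finite affinoid covers of the intersections. The Mayer--Vietoris (or finite descent) spectral sequence
\[ E_2^{p,q} = H^p_{\an}(X, \pi_q\Kcontu) \Rightarrow \pi_{q-p}\Kcontu(X) \]
in pro-abelian groups is then a finite-length spectral sequence. The analytic cohomological dimension of $X$ is bounded by $d$: $|X|$ is spectral of Krull dimension $d$, so Scheiderer's bound applies. Hence it suffices to show the following for every affinoid $U=\Spa(A,A^+)$ of dimension $\le d$:
\begin{enumerate}
\item[(a)] $\Kcontu_{-n}(A)=0$ for $n>d$;
\item[(b)] $\Kcontu_{-d}(A) \to \Kcontu_{-d}(A\langle t_1,\dots,t_m\rangle)$ is an isomorphism, and $\Kcontu_{-d}$ is \emph{pro}-$\Brig$-invariant in the sense of Lemma~\ref{pro-invariant=Aan-invariant--lem}.
\end{enumerate}
Granting (a) and (b), statement (1) follows, with a possible loss of a bounded shift when $R$ is not excellent. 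Statement (2) follows the same way, using that $-\times\BB^m$ preserves affinoid covers.

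\emph{Step 2: the affinoid case via models.} Choose a ring of definition $A_0$ that is topologically of finite type over $R_0$ (respectively over $k^\circ$), so that $\dim(A_0/\pi) \le d$. For every admissible blow-up $X'\to\Spec(A_0)$, Proposition~\ref{prop:Kcont-model-fibre-sequence} gives a fibre sequence
\[ \K(X'\on\pi) \To \prolim_r \K(X'_r) \To \Kcontu(A), \qquad X'_r = X'\times\Spec(A_0/\pi^r). \]
Pass to the filtered colimit over all admissible blow-ups, as in Kerz's argument and \cite[\S 7]{dahli-thesis-paper}. The term $\colim_{X'}\K_{-n}(X'\on\pi)$ is then computed by cdh-descent on the Zariski--Riemann space. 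The negative groups of $\K(X'_r)$ vanish below $-\dim X'_r=-d$ by the Weibel conjecture for qcqs schemes: Kerz--Strunk--Tamme in the noetherian case, and Kelly--Saito--Tamme / the valuative-dimension version in the case $\base=\Spa(k,k^\circ)$. For $\K(X'\on\pi)$ one uses the localisation sequence together with the fact that the supports are of dimension $d$. This gives (a) with $N=d+1$ in general.

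\emph{Step 3: sharpening to $N=d$, and homotopy invariance.} To get $N=d$, I would show that the map $\K_{-d-1}(X'\on\pi)\to\K_{-d-1}(X')$, or its boundary map, becomes zero in the colimit over blow-ups. The argument identifies $\K_{-d}$ of the special fibre with $H^d_{\mathrm{cdh}}(-,\ZZ)$ and uses that, cofinally among admissible blow-ups, this class comes from the generic fibre. When $\base=\Spa(k,k^\circ)$ or $R_0$ is excellent, the needed cdh-cohomological dimension bound is exactly $d$, because the special fibre has dimension $d$. Without excellence one only has a bound in terms of $\dim R_0$, which explains the weaker integer $N$. For (b), apply the same sequence to $A\langle t\rangle$ with model $A_0[t]^\wedge$. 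Then use that $\K_{-d}$ of a $d$-dimensional qcqs scheme is $H^d_{\mathrm{cdh}}(-,\ZZ)$, which is $\AA^1$-invariant, together with the pro-$\varpi$-system $t\mapsto\varpi t$.

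I expect the sharp bound $N=d$ in Step 3 to be the main obstacle. It requires controlling the colimit over admissible blow-ups of the boundary map in degree $-d-1$ in pro-abelian groups rather than abelian groups. My first attempt would be to transfer Kerz's discretely valued argument verbatim, replacing pro-cdh descent by \cite[Thm.~A]{kelly-saito-tamme}.
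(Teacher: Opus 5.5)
\textbf{The globalisation in Step~1 loses the sharp bound.} A descent spectral sequence whose $E_2$-term is concentrated in $0\le p\le d$ and, by (a), in $q\ge -d$ only yields $\Kcontu_n(X)=0$ for $n<-2d$, not for $n<-d$. To get the range $n<-d$ from such a local-to-global argument, you would need the homotopy sheaves of $\Kcontu$ to vanish in \emph{negative} degrees locally. That is not available: negative K-groups of singular local rings need not vanish, and the stalks of $\Kcontu$ are not even known to be $\Kcontu$ of the local rings. Moreover, Scheiderer's bound and the descent spectral sequence are not available off the shelf for $\Pro(\Sp^+)$-valued sheaves.

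A finite Mayer--Vietoris induction does give \emph{some} $N$, but it loses degrees. So (a) and (b) do not imply the ``Furthermore'' part. As a result, your Step~3 tries to sharpen the affinoid case, which is not where the loss occurs. Its key claims are also unsubstantiated: the identification with $H^d_{\mathrm{cdh}}(-,\ZZ)$, and the assertion that the obstruction ``comes from the generic fibre cofinally''. There are two further dimension issues. The bound $\dim(A_0/\pi)\le d$ in Step~2 is not justified without excellence; the paper only uses that $\dim(\Xcal/\varpi)$ is bounded uniformly over all models. In the excellent case, the equality $\dim(\Xcal/\varpi)=\dim X$ needs an argument (Lemma~\ref{lem:commutativeAlgebraDimension}).

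\textbf{The missing idea} is to keep the $d$-dependent input global and make the remaining input independent of $d$. The paper takes a global admissible formal model $\Xcal$ of the qcqs space $X$ and the fibre sequence $\Kcontu(\Xcal\on\mfr)\to\Kcontu(\Xcal)\to\Kcontu(X)$. For $n\le -d$, nil-invariance gives $\Kcontu_n(\Xcal)=\K_n(\Xcal/\varpi)=\K_n(\Xcal/\mfr)$. Algebraic Weibel vanishing for the whole noetherian $d$-dimensional special fibre then gives a monomorphism $\Kcontu_n(X)\inj\Kcontu_{n-1}(\Xcal\on\mfr)$ for $n<-d$, with no loss; over $\Spa(R,R^+)$ one uses $\Xcal/\varpi$ instead.

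The support term then dies in the colimit over models in all degrees $\le -2$. On affinoids, Kerz's argument \cite[Prop.~7]{kerz-icm2018} kills every class of $\K_n(A_0\on\mfr)$ for all $n<0$, not only in degree $-d-1$. It uses flattening by admissible blow-ups \`a la Raynaud--Gruson and is made available here by Proposition~\ref{prop:Kcont-model-fibre-sequence}. Globalising as in \cite[Lem.~8.6]{dahli-thesis-paper} costs one degree, independently of $d$.

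Part (2) follows by running the same argument on the cofibre of $\Kcontu(X)\to\Kcontu(X\times\BB^m_k)$. One uses that the maps on pro-homotopy groups are split injective and that the special fibre is $\K_n$-regular for $n\le -d$. Neither cdh cohomology nor pro-$\Brig$-invariance is needed.
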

\begin{proof}
    Let us first consider the case of $\base = \Spa(k, k^\circ)$.
    Let $\mfr \subset k^\circ$ be the maximal ideal and $\varpi\in k^\circ$ a pseudo-uniformiser, so that $\mfr = \sqrt{(\varpi)}$.
    Any qcqs $X \in \Adic_k^\lft$ admits a cofiltered system of noetherian admissible formal models $\Xcal$ (see for example \cite[\S 8.4, Lemma~4 (e)]{bosch} and \cite[Ch.~II, Thm.~A.5.2]{fuji-kato}).
    For any admissible formal model $\Xcal$ of $X$, define $\Kcontu(\Xcal\on\mfr)$ to be the fibre of the map $\Kcontu(\Xcal) \to \Kcontu(X)$ as in \cite[Def.~8.2]{dahli-thesis-paper} and consider the induced exact sequence
    \[ \Kcontu_n(\Xcal\on\mfr) \To \Kcontu_n(\Xcal) \To \Kcontu_n(X) \To \Kcontu_{n-1}(\Xcal\on\mfr) \]
    of pro-abelian groups.
    Recall that we have that $\dim(\Xcal/\mfr) = \dim(X) = d$ \cite[Ch.~II, Cor.~10.1.11]{fuji-kato}.\footnote{%
        For the covenience of the reader, we make a transcription of the conditions in the statement \cite[Ch.~II, Cor.~10.1.11]{fuji-kato}: the adjective ``coherent'' means qcqs as in the topos theory; the condition ``of type $(V_\RR)$'' means being locally of finite type over a rank-one nonarchimedean field; the adjective ``distinguished'' for formal models amounts to admissibility.
    }
    Hence, for $n\leq -d$, we have 
    \[ \Kcontu_n(\Xcal) = \prolim_k\K_n(\Xcal/\varpi^k) = \K_n(\Xcal/\varpi) = \K_n(\Xcal/\mfr) \]
    by definition, by nil-invariance in low degrees \cite[Lem.~6.3]{dahli-thesis-paper}, and since algebraic K-theory commutes with filtered colimits.
    Using algebraic Weibel vanishing \cite[Thm.~B]{kerz-strunk-tamme} we deduce, for $n<-d$, a monomorphism $\Kcontu_n(X) \inj \Kcontu_{n-1}(\Xcal\on\mfr)$.
    The proof of \cite[Lem.~8.6]{dahli-thesis-paper} shows that $\colim_{\Xcal}\Kcontu_n(\Xcal\on\mfr)=0$ for $n\leq -2$, provided that this holds for $X$ being affinoid and $n\leq -1$; the latter follows as in the proof of \cite[Prop.~7]{kerz-icm2018} thanks to Proposition~\ref{prop:Kcont-model-fibre-sequence}.\footnote{%
        More precisely, Kerz shows that for $n<0$ and any element $\alpha\in \Kcontu_n(A_0 \on \mfr) = \K_n(A_0 \on \mfr)$, where $\Xcal=\Spf(A_0)$, there exists an admissble blow-up $p\colon X'\to \Spec(A_0)$ such that $p^*(\alpha)=0$ in $\Kcontu_n(X' \on \mfr)$. The morphism $p$, which is provided by Raynaud--Gruson's \emph{platification par éclatement} \cite[Thm.~5.2.2]{raynaud-gruson}, is a $U$-admissible blow-up in the sense of loc.\@ cit.\@ (for $U = \Spec(A_0) \times_{k^\circ} k = \Spec(A)$) and hence of finite presentation \cite[Def.~5.1.3]{raynaud-gruson}. Thus we are allowed to use Propositon~\ref{prop:Kcont-model-fibre-sequence} for the model $X'$. Recall that, since $\Xcal$ is affine, every admissible formal blow-up is algebraic\cite[Lem.~5.5]{dahli-thesis-paper}.
    }
    This shows (1). The same argument for applied to the cofibre of the map $\Kcontu(X) \to \Kcontu(X\times\BB^n_k)$ instead of $\Kcontu(X)$ yields (2) since the induced morphism on pro-homotopy groups are split monomorphisms.
    
    In the case of $\base = \Spa(R, R^+)$ (with a pseudo-uniformiser $\varpi$), the arguments are similar, but with the following modifications.
    For any affinoid open $\Spa(A, A^+) \subset X$, the complete ring $A$ is also Tate and admits a noetherian and finite-dimensional ring of definition as well (see the general discussion above \cite[Lem.~4.2]{kst-ii}); 
    hence we no longer need the radical $\mfr = \sqrt{\varpi R^+}$ to pass to $\Xcal/\mfr$, since $\Xcal/\varpi$ is already noetherian.
    Any qcqs $X \in \Adic_\base^\lft$ still admits a cofiltered system of noetherian admissible formal models $\Xcal$ but by applying \cite[Ch.~II, Thm.~A.5.3]{fuji-kato} this time.
    The main difference now is that the dimension formula $\dim(\Xcal/\mfr) = \dim(X)$ in the previous case may no longer hold in general. Nevertheless, $\dim(\Xcal/\varpi)$ is uniformly bounded above when $\Xcal$ runs through all admissible formal models of a given qcqs $X$ (since admissible formal blow-ups do not change the dimension of the underlying space). Then, the same arguments as above show that (1) and (2) hold for qcqs $X \in \Adic_\base^\lft$ with (finite) lower bound $- \dim(\Xcal/\varpi)$ for one, hence for all, $\Xcal$.
    
    Finally, assume that $R$ admits an \emph{excellent} ring of definition.
    Then for any affine open $\Spf(A_0) \subset \Xcal$ (where $A_0$ is $\varpi$-adically complete and $\Xcal$ is not necessarily qcqs), $A_0$ is also excellent (see for example \cite[\S 33]{matsumura1980commutativealgebra}). Let us prove the dimension identity $\dim(\Xcal/\varpi) = \dim(X)$. It suffices to treat the affine case.
    On the one hand, we have $\dim(\Spf(A_0)/\varpi) = \dim(A_0/\varpi)$.
    On the other hand, we have $\dim(\Spf(A_0)_\eta) = \dim(A_0[\varpi^{-1}])$ by \cite[Ch.~II, Cor.~10.1.10]{fuji-kato}.
    Observe that any maximal ideal of $A_0$ contains $\varpi$ by $\varpi$-adic completeness, and that $A_0$ is $\varpi$-torsion-free by admissibility. The desired dimension equality now follows from Lemma~\ref{lem:commutativeAlgebraDimension} below, as $A_0$ is noetherian and catenary.
\end{proof}

\begin{lem}
    \label{lem:commutativeAlgebraDimension}
    Let $A_0$ be a commutative algebra and let $\varpi$ be an element of the Jacobson ideal of $A_0$ (i.e.\@ it belongs to all maximal ideals).
    \begin{enumerate}
        \item We have $\dim(A_0[\varpi^{-1}]) \leq \dim(A_0)-1$.
    \end{enumerate}
    Assume moreover that $A_0$ is $\varpi$-torsion-free.
    \begin{enumerate}[resume]
        \item If $A_0$ is noetherian and catenary, then we have $\dim(A_0[\varpi^{-1}]) \geq \dim((A_0)_\mfr) - 1$.
        
        \item We have $\dim(A_0/\varpi) = \dim(A_0) - 1$.
    \end{enumerate}
\end{lem}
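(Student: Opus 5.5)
For (1) I would argue directly with chains of primes. Primes of $A_0[\varpi^{-1}]$ correspond to primes $\pfr\subset A_0$ with $\varpi\notin\pfr$. Take a chain $\pfr_0\subsetneq\dots\subsetneq\pfr_n$ of such primes. The top prime $\pfr_n$ does not contain $\varpi$, but every maximal ideal does. So $\pfr_n$ is not maximal, and the chain extends by a maximal ideal $\mfr\supsetneq\pfr_n$. This gives $n+1\le\dim(A_0)$, hence $\dim(A_0[\varpi^{-1}])\le\dim(A_0)-1$ (trivially true if $\dim A_0=\infty$).

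For (2), fix a maximal ideal $\mfr$ and put $d=\dim((A_0)_\mfr)$; the cases $d\le1$ are easy, since a minimal prime does not contain the nonzerodivisor $\varpi$. For $d\ge2$, choose a chain $\pfr_0\subsetneq\dots\subsetneq\pfr_{d-2}\subsetneq\pfr_{d-1}\subsetneq\mfr$ of maximal length; in particular $\pfr_0$ is minimal. The key observation is that only the top step needs adjusting. The ring $B\coloneqq(A_0/\pfr_{d-2})_\mfr$ is a $2$-dimensional noetherian local domain, so it has infinitely many height-one primes. I will use catenarity to ensure that the dimension count for $B$ is exactly $2$ and that its height-one primes are exactly the intermediate primes. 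The image of $\varpi$ in $B$ is nonzero: $\varpi\notin\pfr_{d-2}$, because otherwise the part of the chain above $V(\varpi)$ would be too long by Krull's principal ideal theorem. Only finitely many height-one primes of $B$ are minimal over $\varpi$, and a height-one prime containing $\varpi$ is minimal over it. Hence we can pick $\qfr$ with $\pfr_{d-2}\subsetneq\qfr\subsetneq\mfr$ and $\varpi\notin\qfr$. Then all $\pfr_i\subseteq\qfr$ avoid $\varpi$ automatically, and $\pfr_0\subsetneq\dots\subsetneq\pfr_{d-2}\subsetneq\qfr$ is a chain of length $d-1$ in $\Spec A_0[\varpi^{-1}]$.

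For (3), the inequality $\le$ works for any ring. Every minimal prime consists of zero divisors: if $\varpi\in\pfr$ minimal, then $\pfr A_{0,\pfr}$ is nil, so $\varpi^n s=0$ for some $s\notin\pfr$, contradicting $\varpi$-torsion-freeness. Hence any chain in $V(\varpi)$ starts at a non-minimal prime and extends downward by one. For $\ge$, I would choose $\mfr$ with $\dim (A_0)_\mfr=\dim A_0$ (note $\varpi\in\mfr$). The standard fact that a nonzerodivisor in the maximal ideal of a noetherian local ring drops dimension by exactly one gives $\dim (A_0/\varpi)_\mfr=\dim A_0-1$. This step uses noetherianity, which holds in the application in Proposition~\ref{prop:weibel-vanising-Kcont}. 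The main obstacle I expect is exactly here and in (2): pinning down where the noetherian/catenary hypotheses are genuinely used. I would assume noetherianity for the reverse inequality in (3) if a general argument is not available.
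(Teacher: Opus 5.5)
The genuine gap is in (2), at the claim $\varpi\notin\pfr_{d-2}$. For an arbitrary chain of maximal length this is false once $d\geq 3$. Take $A_0=k[[x,y,z]]$ with $\varpi=x$. The chain $0\subsetneq(x)\subsetneq(x,y)\subsetneq(x,y,z)$ has maximal length, yet $\pfr_{d-2}=(x)$ contains $\varpi$. So $\varpi$ maps to $0$ in $B=k[[y,z]]$, and every prime between $\pfr_{d-2}$ and $\mfr$ contains $\varpi$.

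Krull's theorem gives no contradiction here. The only available constraint is that the segment of the chain inside $V(\varpi)$ has length at most $d-1$, which just says $\varpi\notin\pfr_0$. So adjusting only the top step proves (2) only for $d\leq 2$. Conversely, the two things you want catenarity for already follow from maximality of the chain alone: $\dim B=2$, and the intermediate primes being exactly the height-one primes of $B$.

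The fix is to iterate your exchange step from the bottom of the chain.
\begin{itemize}
\item Let $j\geq 1$ be the least index with $\varpi\in\pfr_j$.
\item If $j<d$, then $(A_0/\pfr_{j-1})_{\pfr_{j+1}}$ is a noetherian local domain of dimension exactly $2$, by maximality of the chain, and $\varpi\neq 0$ in it.
\item So you can replace $\pfr_j$ by an intermediate prime avoiding $\varpi$, exactly as in your argument. This strictly raises $j$.
\item Once $j=d$, the chain $\pfr_0\subsetneq\dots\subsetneq\pfr_{d-1}$ lies in $\Spec A_0[\varpi^{-1}]$.
\end{itemize}
This version needs no catenarity at all.

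The paper argues differently. It takes $\pfr\subseteq\mfr$ maximal among primes avoiding $\varpi$, shows $\dim((A_0)_\mfr/\pfr)=1$ by the Hauptidealsatz and prime avoidance, and then invokes catenarity to get $\height(\pfr)=\dim((A_0)_\mfr)-1$. If you follow that route, choose $\pfr$ above a minimal prime $\pfr_0$ with $\dim((A_0)_\mfr/\pfr_0)=\dim((A_0)_\mfr)$, and run the argument in the catenary local domain $(A_0)_\mfr/\pfr_0$. Otherwise the height identity can fail: in $k[[x,y,z]]/(xy,xz)$ with $\varpi=x+y$, the prime $(y,z)$ is maximal among primes avoiding $\varpi$ but has height $0$.

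Your (1) and (3) match the paper. In (3) the paper also uses noetherianity implicitly, and it cannot be dropped: for a rank-two valuation ring with $\varpi$ outside its height-one prime, $\dim(A_0/\varpi)=0\neq\dim(A_0)-1$. Take the supremum over all $\mfr$, as the paper does, rather than choosing $\mfr$ with $\dim((A_0)_\mfr)=\dim(A_0)$. Such an $\mfr$ need not exist when $\dim(A_0)=\infty$.
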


\begin{proof}
    (1) 
    This is because no maximal ideal of $A_0$ survives in $A_0[\varpi^{-1}]$, by hypothesis that $\varpi$ belongs to all maximal ideals.

    (2) 
    It suffices to show that for any maximal ideal $\mfr \subset A_0$, we have $\dim(A_0[\varpi^{-1}]) \geq \dim((A_0)_\mfr) - 1$. Observe that as $A_0$ is $\varpi$-torsion-free, $\varpi$ is in particular non-nilpotent.

    Let $\pfr \subset A_0$ be maximal among the prime ideals contained in $\mfr$ but not containing $\varpi$; by abuse of notation, we denote by $\pfr$ the corresponding prime ideal of $(A_0)_\mfr$.
    Such $\pfr$ exists since $\varpi$ is not nilpotent, and $\pfr \neq \mfr$ because $\varpi \in \mfr$ by hypothesis.
    Then $(A_0)_\mfr/\pfr$ is an integral ring of positive dimension where every non-zero prime ideal $\bar\qfr \subset (A_0)_\mfr/\pfr$ contains the image $\bar\varpi (\neq 0)$ of $\varpi$.
    Since $(A_0)_\mfr/\pfr$ is integral, every non-zero element of $\bar\qfr$ is contained in some prime ideal of height one of $(A_0)_\mfr/\pfr$ by Krull's Hauptidealsatz. These prime ideals are all minimal prime ideals over $\bar\varpi$ by maximality of $\bar\pfr$.
    However, there are only finitely many minimal prime ideals over $\bar\varpi$ by noetherianity. So $\bar\qfr$ is contained in the \emph{finite} union of these prime ideals of height one of $(A_0)_\mfr/\pfr$, hence $\bar\qfr$ is contained in one of them by prime avoidance lemma.
    This implies that $\height(\bar\qfr) = 1$.
    Therefore we obtain $\dim((A_0)_\mfr/\pfr) = 1$, thus by catenarity, $\dim((A_0)_\mfr) = \height(\pfr) + 1 \leq \dim(A_0[\varpi^{-1}]) + 1$.

    (3)
    For any maximal ideal $\mfr$ of $A_0$, which always contains $\varpi$ by hypothesis, we have $\dim((A_0)_\mfr/\varpi) = \dim((A_0)_\mfr) - 1$ by Krull's Hauptidealsatz and $\varpi$-torsion-freeness. Taking supremum over $\mfr$, the left hand side becomes $\dim(A_0/\varpi)$, and the right hand side becomes $\dim(A_0) - 1$. 
\end{proof}

\subsection{Analytic K-theory}
\label{subsec:analytic-k-theory}
In this subsection, we want to recall the definition of analytic K-theory as a pro-object. Afterwards, we will see that under finiteness assumptions, analytic K-theory agrees with the $\Arig$-localisation of continuous K-theory. Consequently, we will only work with continuous K-theory and keep the comparison in mind.

The idea of (connective) analytic $\K$-theory is to define an analogue of topological K-theory for rigid spaces. The new idea in \cite{kst-i} is to view analytic $\K$-theory as an $\Arig$-invariant object, similarly as Weibel's definition of homotopy invariant K-theory $\KH$.

Let us be more precise: Let $A$ be a Tate ring with uniformiser $\pi$. Then for $n,j\geq 0$ define the Tate ring
\[ A\skp{\Delta^n_{\pi^j}} := A\skp{t_0,\ldots,t_n}/(t_0+\ldots+t_n-\pi^j). \]
For every $j$, this yields a simplicial Tate ring $A\skp{\Delta_{\pi^j}}$ via the standard structure maps. Via geometric realisation of the simplicial spectrum $[n] \mapsto \K_{\geq 1}(A\skp{\Delta^n_{\pi^j}})$ we get a connected spectrum
\[ \K_{\geq 1}(A\skp{\Delta_{\pi^j}}) := \colim_{[n]\in\Delta\op} \K_{\geq 1}(A\skp{\Delta^n_{\pi^j}}). \]
The structure morphisms $A\skp{\Delta_{\pi^{j+1}}} \to A\skp{\Delta_{\pi^j}}$ defined by $t_i\mapsto\pi t_i$ induce an inverse system of connected pro-spectra so that we can define the \emph{connected analytic K-theory} of $A$ to be the pro-spectrum
    \[ \Kanu_{\geq 1}(A)\coloneqq \prolim_{j} \K_{\geq 1} (A\skp{\Delta_{\pi^j}}). \]
Non-connective analytic K-theory is defined by delooping connected analytic K-theory via the analytic Bass construction $(-)^{\textup{B}}$ \cite[\S 6.3]{kst-i}; for details consider Section~\ref{sec:analytic-bass-construction} below.

\begin{defi}[{\cite[Def.~6.15]{kst-i}}]
    For a Tate ring $A$, its \emph{non-connective analytic K-theory} is defined to be the pro-spectrum
    \[ \Kanu(A) := (\Kanu_{\geq 1})^{\textup{B}}(A).\]
\end{defi}

Before applying Bass delooping, one could do the same construction starting with \emph{connective} algebraic K-theory obtaining a connective pro-spectrum $\kanu(A)$. For any $A$, we have at least a fibre sequence
\begin{align}
\label{fibre-sequence-1-connective-cover}
    \Kanu_{\geq 1}(A)\rightarrow \kanu(A)\rightarrow \K_{0}(A)
\end{align} 
in $\Pro(\Sp^+)$ \cite[Proof of Lem.~6.13]{kst-i}.
If $A$ is regular and satisfies \ref{dagger-condition}, then the canonical map $\Kanu(A) \to (\kanu)^{\textup{B}}(A)$ is an equivalence \cite[Rem.~6.15, Lem.~6.18]{kst-i}. In particular, the canonical map $\Kanu_{\geq 0}(A) \to \kanu(A)$ is an equivalence for such $A$.
The reason for starting with connected algebraic K-theory is that this construction has better formal properties:

\begin{thm}[Kerz--Saito--Tamme]
Let $\base=\Spa(R,R^+)$ where $R$ is a Tate ring that admits a noetherian and finite-dimensional ring of definition. Then the presheaf
\[ \Kanu \colon \Adic_\base^{\aff,\ft,\opp} \To \Pro(\Sp^+),\quad \Spa(A,A^+) \mapsto \Kanu(A) \]
is an $\Arig$-invariant sheaf for the analytic topology. In particular, it extends to a sheaf on the category $\Adic_\base^\lft$.
\end{thm}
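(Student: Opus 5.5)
We have to show two things about $\Kanu$ on affinoids of finite type over $\base$: that it is $\Arig$-invariant, and that it is a sheaf for the analytic topology. Extension to $\Adic_\base^\lft$ is then formal, since affinoids form a basis of the analytic topology.

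\emph{$\Arig$-invariance.} By Lemma~\ref{pro-invariant=Aan-invariant--lem} it suffices to show pro-$\Brig$-invariance, i.e.\@ that $\Kanu(A)\to\varplim\Kanu(A\tea)$ is an equivalence. For connected analytic K-theory $\Kanu_{\geq 1}$ I would run the usual KH-type argument, adapted to pro-systems:
\begin{enumerate}
\item Use the algebraic homotopy $A\tea\to A\tea\skp{s}$, $t\mapsto st$, rescaled by powers of $\pi$ as $j$ grows.
\item Show it induces a simplicial homotopy on $[n]\mapsto\K_{\geq 1}(A\tea\skp{\Delta^n_{\pi^j}})$. The point is that $A\tea\skp{\Delta^n_{\pi^j}}$ and $A\skp{\Delta^{n+1}_{\pi^{j'}}}$ are compatible after shifting the index $j$.
\item Conclude that the map to evaluation at $t=0$ is a pro-homotopy equivalence.
\end{enumerate}
The Bass delooping $(-)^{\textup{B}}$ is built from finite limits and colimits of the functor evaluated on $A$, $A\tea$, $A\skp{t^{-1}}$ and $A\skp{t,t^{-1}}$. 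These constructions commute with $\varplim$ in $\Pro(\Sp^+)$, because finite limits in pro-categories are computed levelwise. Hence invariance passes from $\Kanu_{\geq 1}$ to $\Kanu$.

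\emph{Analytic descent.} I would reduce to Laurent covers $\{X(f/1),X(1/f)\}$ by the standard Tate argument, and then show that the square
\[
\Kanu(A)\to\Kanu(A\skp{f})\oplus\Kanu(A\skp{f^{-1}})\to\Kanu(A\skp{f,f^{-1}})
\]
is a fibre sequence. The plan is to compare with continuous K-theory, in four steps.
\begin{enumerate}
\item By Definition~\ref{def:continuous-k-theory}, $\Kcontu$ is an analytic sheaf on affinoids over $\base$; for noetherian finite-dimensional rings of definition this rests on pro-cdh descent.
\item Since $A\skp{\Delta^n_{\pi^j}}$ is again affinoid of finite type, the Laurent cover base-changes to it, so each simplicial level of the analytic construction satisfies Laurent descent for $\Kcontu$.
\item Compare $\K_{\geq 1}$ with $\Kcontu$ levelwise. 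The cofibres are controlled by Weibel-type bounds (Proposition~\ref{prop:weibel-vanising-Kcont}), which are uniform in $n$ and $j$ because $\dim A\skp{\Delta^n_{\pi^j}}\le \dim A+n$ in a controlled way. This should show that $\Kanu\simeq\L_{\Arig}\Kcontu$ is computed by a uniformly bounded-below simplicial object.
\item Use that geometric realisation commutes with finite limits in $\Pro(\Sp^+)$ on uniformly bounded-below objects, so the Laurent square stays cartesian after realisation and $\prolim_j$.
\end{enumerate}

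\emph{Main obstacle.} I expect the hardest part to be this last commutation of geometric realisation with pullbacks in $\Pro(\Sp^+)$. I would handle it through the weak-equivalence criterion of \cite[2.8]{kst-i}: check the statement on pro-homotopy groups, truncate $\tau_{\leq m}$, and note that $\tau_{\leq m}$ of the realisation only involves finitely many simplicial levels once everything is uniformly bounded below. A second, related difficulty is deducing the non-connective statement from the connective one. There I would verify descent for $\Kanu_{\geq 1}$ up to $\K_0$-type error terms, using \eqref{fibre-sequence-1-connective-cover}, and then use that the Bass construction preserves fibre sequences and kills these error terms in the delooping.
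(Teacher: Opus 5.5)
\textbf{The gap in the descent argument is at step~3, and that step carries the whole proof.} For reference, the paper does not reprove this statement: it cites \cite[Thm.~4.1]{kst-ii} for descent and \cite[Cor.~2.7]{kst-ii} for pro-homotopy invariance. Its own later re-derivation of descent (Proposition~\ref{prop:A1-localisation} with Lemma~\ref{lem:Nisnevich-descent-KHcont}) follows your outline. However, it imports the decisive identification $\Kanu(A)\simeq\prolim_j|\Kcontu(A\skp{\Delta^\bullet_{\pi^j}})|$ from \cite[Cor.~2.9]{kst-ii}.

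A levelwise comparison of $\K_{\geq 1}$ with $\Kcontu$ cannot produce that identification. The cofibre of $\K_{\geq 1}(B)\to\Kcontu(B)$ is built from $\tau_{\leq 0}\K(B)$ (that is, $\K_0(B)$ and the negative K-groups) and $\cofib(\K(B_0)\to\Kcontu(B_0))$. Weibel vanishing only bounds these terms from below; it does not make them vanish, before or after realisation. What is actually needed is:
\begin{enumerate}
\item that $B\mapsto|\Kcontu(B\skp{\Delta^\bullet_{\pi^j}})|$ satisfies the analytic Bass fundamental theorem, so that it agrees with the Bass construction of its $1$-connective part. This is how $\K_0$ and the negative K-groups get accounted for; your remark that the Bass construction kills ``$\K_0$-type error terms'' points at this but does not prove it.
\item that the discrepancy between $\K$ and $\Kcontu$ is killed by the $\skp{\Delta}$-construction. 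This is where pro-cdh descent and the noetherian, finite-dimensional hypothesis enter.
\end{enumerate}

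Two smaller corrections to the descent part. First, $\dim A\skp{\Delta^n_{\pi^j}}=\dim A+n$ grows with $n$, so the dimension gives no uniform bound. Uniformity comes from part~(2) of Proposition~\ref{prop:weibel-vanising-Kcont}, since $A\skp{\Delta^n_{\pi^j}}\cong A\skp{t_1,\dots,t_n}$. Second, the ``main obstacle'' you name is not one. In the stable category $\Pro(\Sp^+)$, geometric realisations and $\prolim_j$ automatically preserve cartesian squares, so once the identification is known, descent is formal. The uniform bound, via \cite[Lem.~2.8]{kst-ii}, is needed to interchange the realisation with $\prolim_j$ when proving the identification itself.

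\textbf{The $\Arig$-invariance part has a second gap.} Reducing to pro-$\Brig$-invariance via Lemma~\ref{pro-invariant=Aan-invariant--lem} matches the paper; note that this lemma uses the sheaf property. The rescaled-homotopy argument for $\Kanu_{\geq 1}$ is the standard one. The problem is the passage to $\Kanu$. By Definition~\ref{analytic-bass-construction--defi}, $E^{\textup{B}}=\colim_k\Lambda^kE$ is a sequential colimit, not a finite one. Filtered colimits in $\Pro(\Sp^+)$ do not commute with $\varplim$ in general. Since $\Lambda$ is built from finite limits and colimits, your argument does show that each $\Lambda^k\Kanu_{\geq 1}$ is pro-homotopy invariant. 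It does not give the colimit. For that you need extra input, for instance:
\begin{enumerate}
\item a pro-null-homotopy that is natural in $A$, with an index shift independent of $k$, so that it survives $\Lambda^k$ and the colimit; or
\item a degreewise comparison of $\Lambda^k\Kanu_{\geq 1}$ with $\Kanu$.
\end{enumerate}
This is exactly the point the paper outsources to \cite[Cor.~2.7]{kst-ii}.
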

\begin{proof}
By \cite[Thm.~4.1]{kst-ii}, it is a sheaf. By \cite[Cor.~2.7]{kst-ii}, it is pro-homotopy invariant, hence $\Arig$-invariant by Lemma~\ref{pro-invariant=Aan-invariant--lem}.
\end{proof}

The next proposition shows that homotopy invariant continuous K-theory is equivalent to analytic $\K$-theory for certain adic spaces.
Note that if $\base=\Spa(R, R^+)$ with $(R, R^+)$ as above, then for any $X \in \Adic_\base^\lft$ and any open affinoid $U = \Spa(A, A^+) \subset X$, the Tate ring $A$ admits a finite-dimensional, noetherian ring of definition.

\begin{prop}[$\Arig$-localisation]
\label{prop:A1-localisation}
Let $A$ be a Tate ring of topologically finite type over $k$ or a Tate ring that admits a noetherian and finite-dimensional ring of definition.
Then there exists a canonical weak equivalence
\[ (\L_{\Arig}\Kcontu)(A) \simeq \Kanu(A). \]
\end{prop}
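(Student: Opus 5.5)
Write $\L_{\Arig}\Kcontu(A)$ for the naive $\Arig$-localisation, i.e.\ the geometric realisation in $\Pro(\Sp^+)$ of the simplicial object $[n]\mapsto \Kcontu(\AA^n_A)$. By Lemma~\ref{pro-invariant=Aan-invariant--lem}, this may be computed via pro-systems of closed balls: it is $\prolim_j|\Kcontu(A\skp{\Delta_{\pi^j}})|$, formed exactly as in the definition of $\Kanu$. The plan is to compare the two sides through a ladder of fibre sequences, with algebraic $\K$-theory replacing continuous $\K$-theory in the middle.

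\textbf{Step 1 (connected part).} There are natural maps $\K_{\geq 1}(B)\to\K(B)\to\Kcontu(B)$ for every Tate ring $B$. Applying them degreewise to $A\skp{\Delta^\bullet_{\pi^j}}$ and realising gives a natural map
\[
\Kanu_{\geq 1}(A)\To \L_{\Arig}\Kcontu(A)
\]
after passing to the pro-system in $j$. The idea, following \cite[\S 6]{kst-i}, is that the pro-fibre of $\K(B)\to\Kcontu(B)$ is the relative term $\K(B_0 \on \pi)$-type piece from Proposition~\ref{prop:Kcont-model-fibre-sequence}. These relative terms become pro-contractible after realisation along $\Delta_{\pi^j}$, by the pro-homotopy invariance arguments of \cite{kst-bass-quillen}. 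Similarly, the discrepancy between $\K_{\geq 1}$ and $\K$ is concentrated in $\K_0$ and in negative $\K$-groups, and these are pro-$\Brig$-invariant in the relevant range. For $\K_0$ this uses the fibre sequence \eqref{fibre-sequence-1-connective-cover}; for negative groups it uses Weibel vanishing (Proposition~\ref{prop:weibel-vanising-Kcont}).

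\textbf{Step 2 (Bass delooping).} Both sides satisfy the analytic Bass fundamental theorem. For $\Kanu$ this holds by construction, since it is defined as $(\Kanu_{\geq1})^{\textup{B}}$. For $\L_{\Arig}\Kcontu$ it follows from the Bass sequence for $\Kcontu$ \cite{kst-i}, which survives $\Arig$-localisation because geometric realisation commutes with finite limits in the stable category $\Pro(\Sp^+)$. The map from Step 1 then induces a map
\[
\Kanu(A)=(\Kanu_{\geq1})^{\textup{B}}(A) \To (\L_{\Arig}\Kcontu)^{\textup{B}}(A)\simeq \L_{\Arig}\Kcontu(A).
\]
To prove it is a weak equivalence, I would use \cite[Lem.~2.8]{kst-i} and check on pro-homotopy groups in each degree. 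The Bass construction shifts degrees, so it suffices to establish the equivalence on connective covers uniformly in all $A\skp{t_1^{\pm1},\dots,t_m^{\pm1}}$, and Step 1 supplies this.

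\textbf{Main obstacle.} The hardest point is commuting the geometric realisation (a colimit) with the pro-structure, together with boundedness. Pro-categories do not handle infinite colimits well. I would handle this by first using Weibel vanishing to get a uniform lower bound on all terms. With that bound, after applying $\iota^*$, each truncation $\tau_{\leq n}$ of the realisation depends only on a finite skeleton, so the realisation can be computed levelwise in $\Pro(\Sp^+)$. The finiteness hypotheses on $A$ are needed exactly here: either topologically finite type over $k$, or a noetherian finite-dimensional ring of definition, which makes Proposition~\ref{prop:weibel-vanising-Kcont} available uniformly for the rings $A\skp{\Delta^n_{\pi^j}}$.
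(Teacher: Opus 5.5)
Your first paragraph, together with the ``main obstacle'' paragraph, matches the paper's proof up to its last line. That covers three ingredients. First, $\L_{\Arig}\Kcontu(A)$ is a geometric realisation by \cite[Lem.~3.6]{DY}. Second, the analytic sheaf condition gives $\Kcontu(\Delta^{\an,n}_A)\simeq\prolim_j\Kcontu(A\skp{\Delta^n_{\pi^j}})$; note this is the computation inside the proof of Lemma~\ref{pro-invariant=Aan-invariant--lem}, not its statement. Third, realisation and pro-limit are interchanged by \cite[Lem.~2.8]{kst-ii}, using the uniform lower bound from Proposition~\ref{prop:weibel-vanising-Kcont}. The paper then concludes by quoting \cite[Cor.~2.9]{kst-ii}, which identifies $\prolim_j|\Kcontu(A\skp{\Delta_{\pi^j}})|$ with $\Kanu(A)$. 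Your Steps~1--2 try to reprove that identification, and this is where there is a genuine gap.

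Step~2 reduces everything to the claim that $\Kanu_{\geq 1}(B)\to\L_{\Arig}\Kcontu(B)$ is an equivalence on connective covers, uniformly in $B$. Even granting your assertion about the relative term, the remaining discrepancy is $\tau_{\leq 0}\Kcontu$. You need its realisation along $B\skp{\Delta^\bullet_{\pi^j}}$ to have no pro-homotopy in positive degrees, and you deduce this from pro-$\Brig$-invariance of $\Kcontu_q$ for all $q\leq 0$.

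That invariance is available only for $q=0$ (cf.\ \eqref{fibre-sequence-1-connective-cover}) and for $q\leq -N$ (part~(2) of Proposition~\ref{prop:weibel-vanising-Kcont}). For $-N<q<0$ nothing of the sort is known without regularity, and the statement explicitly allows non-regular $A$. Compare Conjecture~\ref{conj:A1-invariance-for-Kcont}, which is formulated only for regular spaces. So the agreement on connective covers is not established. It is also stronger than what is known: the paper records $\Kanu_{\geq 0}\simeq\kanu$ only for regular $A$ satisfying \ref{dagger-condition}.

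The algebraic analogue shows that the order of operations is the problem. $\KH$ is the Bass delooping of Karoubi--Villamayor $\textup{KV}$ for every ring, whereas $\textup{KV}_n\cong\KH_n$ for $n\geq 1$ is in general only available under regularity hypotheses such as $K_0$-regularity. The agreement is produced by the delooping, not by matching connective covers first.

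Step~1 also misidentifies the relative term. By the defining pushout, $\fib(\K(B)\to\Kcontu(B))\simeq\fib(\K(B_0)\to\Kcontu(B_0))$. By contrast, $\K(B_0\on\pi)$ from Proposition~\ref{prop:Kcont-model-fibre-sequence} is the fibre of $\K(B_0)\to\K(B)$. Its claimed pro-contractibility after realisation is asserted without argument.

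To close the gap, either cite \cite[Cor.~2.9]{kst-ii} as the paper does, or give an argument in which the comparison is made only after applying the Bass construction.
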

\begin{proof}
We have the following chain of equivalences:
\begin{align*}
(\L_{\Arig}\Kcontu)(A) 
&\simeq \colim_{n\in\Delta\op} \Kcontu(\Delta_A^{\an,n})
&\cite[\textup{Lem.}~3.6]{DY} \\
&\simeq \colim_{n\in\Delta\op} \prolim_j \Kcontu(A\skp{\Delta_{\pi^j}^n})
&(\textup{sheaf condition}) \\
&\simeq \prolim_j \colim_{n\in\Delta\op} \Kcont(A\skp{\Delta_{\pi^j}^n})
& (\clubsuit) \\
&\simeq \prolim_j \Kcontu(A\skp{\Delta_{\pi^j}})
& (\textup{definition}) \\
&\simeq \Kan(A) 
& \cite[\textup{Cor.}~2.9]{kst-ii}
\end{align*}
Note that the cited result for the last equivalence assumes that $A$ admits a noetherian and finite-dimensional ring of definition. This assumption is not necessary any more if $A$ is of topologically finite type over $k$, thanks to pro-cdh-descent for arbitrary qcqs schemes \cite[Thm.~A]{kelly-saito-tamme} and generalised Weibel vanishing Proposition~\ref{prop:weibel-vanising-Kcont}. For the equivalence $(\clubsuit)$ we use that the geometric realisation commutes with the pro-limit \cite[\textup{Lem.}~2.8]{kst-ii} since the spectra in question are uniformly bounded below according to Proposition~\ref{prop:weibel-vanising-Kcont}, by our assumption on $A$.
\end{proof}

\begin{defi} \label{def:KHcont}
We define \emph{continuous homotopy K-theory} to be the $\Arig$-localisation of continuous K-theory, i.e. $\KHcontu \coloneqq \L_{\Arig}\Kcontu\in \PSh^{\AA^1}(\Rig_\base, \Pro(\Sp^+))$.
\end{defi}

\begin{lem}[Nisnevich descent for $\KHcontu$]
\label{lem:Nisnevich-descent-KHcont}
Let $\base = \Spa(k, k^\circ)$ for a nonarchimedean field $k$, or $\base = \Spa(R, R^+)$ with $R$ being a Tate ring that admits a noetherian and finite-dimensional ring of definition.
Then the presheaf
\[ \KHcontu \,\colon\, \Adic_\base^{\lft,\opp} \To \Pro(\Sp^+),\quad \Spa(A,A^+) \mapsto \KHcontu(A)\]
is a Nisnevich sheaf on quasi-separated spaces of finite dimension (e.g.\@ qcqs spaces).
In particular, $\Kanu$ is a Nisnevich sheaf.
\end{lem}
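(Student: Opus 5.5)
The plan is to show that $\L_{\Arig}$ preserves Nisnevich descent here, using the singular construction of \cite[Lem.~3.6]{DY}:
\[ \KHcontu(X) \simeq \colim_{n\in\Delta\op} \Kcontu(X\times\Delta^{\an,n}). \]
The argument has three steps.

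\emph{Step 1: uniform boundedness.} By Theorem~\ref{Thm:Nisnevich-descent-Kcont}, for each fixed $n$ the presheaf $X\mapsto \Kcontu(X\times \Delta^{\an,n})$ is a Nisnevich sheaf. The reason is that Nisnevich squares and analytic covers are stable under base change along $\Delta^{\an,n}\to\base$, and $X\times\Delta^{\an,n}$ is again locally of finite type over $\base$. The issue is that $\Delta^{\an,n}\simeq\AA^{n}_{\base}$ is not quasi-compact, so I first write
\[ \Kcontu(X\times\Delta^{\an,n}) \simeq \prolim_j \Kcontu(X\times\BB^n_{\pi^j}), \]
as in Lemma~\ref{pro-invariant=Aan-invariant--lem}, or work with the pro-system of balls as in the proof of Proposition~\ref{prop:A1-localisation}. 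For $X$ qcqs of dimension $d$, each ball-product $X\times\BB^n$ is qcqs of dimension $d+n$. However, Proposition~\ref{prop:weibel-vanising-Kcont}(2) gives more: $\Kcontu_{-m}(X)\to\Kcontu_{-m}(X\times\BB^n)$ is an isomorphism for $m\ge N$, where $N$ depends only on $X$. Combined with (1), all the pro-spectra $\Kcontu(X\times\BB^n_{\pi^j})$ are bounded below by $-N$, uniformly in $n$ and $j$.

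\emph{Step 2: commuting the colimit with the square.} Given a Nisnevich square $(\square)$ of qcqs spaces, applying the simplicial sheaves levelwise gives a simplicial diagram of cartesian squares in $\Pro(\Sp^+)$. The category is stable, so a square is cartesian iff it is cocartesian, and cocartesian squares are preserved by geometric realisation. Hence $\KHcontu((\square))$ is cartesian. The colimits in question are colimits of uniformly bounded below objects, which is exactly where \cite[Lem.~2.8]{kst-ii} is used to make geometric realisation behave well in $\Pro(\Sp^+)$. Step 1 ensures the realisation is computed the same way at each corner.

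\emph{Step 3: extending to quasi-separated finite-dimensional spaces.} I expect this to be the main obstacle. Descent for Nisnevich squares of qcqs spaces gives descent on the qcqs subsite. For a general quasi-separated $X$ of finite dimension, I would write it as a filtered union of qcqs opens and reduce to checking analytic descent for $\KHcontu$. Here finite Krull dimension is what keeps the cohomological dimension, and hence the Weibel bound $N$, under control, so that $\L_{\Arig}$ commutes with the relevant limits over covers. In practice I would define $\KHcontu$ on such $X$ as the right Kan extension from qcqs opens, and check that it agrees with $\L_{\Arig}\Kcontu$ using the uniform bound from Step 1.

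Finally, the statement about $\Kanu$ follows from Proposition~\ref{prop:A1-localisation}, which identifies $\KHcontu(A)\simeq\Kanu(A)$ on affinoids; together with uniqueness of extensions along analytic covers, this shows $\Kanu$ is a Nisnevich sheaf.
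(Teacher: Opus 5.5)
Your proposal is correct and follows essentially the same route as the paper: the singular construction of \cite[Lem.~3.6]{DY}, levelwise Nisnevich descent by base change, stability of $\Pro(\Sp^+)$ to pass cartesian squares through geometric realisation, and, for quasi-separated $X$ of finite dimension, a filtered union of qcqs opens $U_i$ together with \cite[Lem.~2.8]{kst-ii} to commute the realisation with the cofiltered limit over the $U_i$. In that last step the relevant input is not cohomological dimension but the Weibel bound of Proposition~\ref{prop:weibel-vanising-Kcont} being uniform in $i$ (since $\dim U_i\le\dim X$); separately, your use of part (2) of that proposition to make the bound uniform in the ball dimension is a welcome sharpening of the paper's terse ``uniformly bounded below''.
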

\begin{proof}
    For any elementary Nisnevich square (\ref{Diagram:NisnevichSquare}), the square $\Kcontu \left( (\textup{\ref{Diagram:NisnevichSquare}}) \skp{\Delta_{\pi^j}^n} \right)$ is cartesian by Theorem~\ref{Thm:Nisnevich-descent-Kcont}, since the square $(\textup{\ref{Diagram:NisnevichSquare}}) \skp{\Delta_{\pi^j}^n}$, being the base change of an elementary Nisnevich square, is still an elementary Nisnevich square.\footnote{%
        It is clear that étaleness is preserved, and the property of being degree one over the non-isomophic locus is also preserved by \cite[Lemma~A.19]{andreychev-thesis}.
    }
    
    For $X \in \Adic_\base^\lft$, we have the following chain of equivalences in $\Pro^\omega(\Sp^+)$:
    \begin{align*}
        \KHcontu(X) &\simeq \colim_{n\in\Delta\op} \Kcontu(X \times \Delta^{\an,n}) &\cite[\textup{Lem.}~3.6]{DY} \\
        &\simeq \colim_{n\in\Delta\op} \prolim_j \Kcontu(X\skp{\Delta_{\pi^j}^n}) & \textup{(analytic descent for $\Kcontu$)} \\
        &\simeq \prolim_j \colim_{n\in\Delta\op} \Kcontu(X\skp{\Delta_{\pi^j}^n}) & \textup{if $X$ is qcqs, by $(\clubsuit)$}
    \end{align*}
    For the equivalence $(\clubsuit)$ we use that the geometric realisation commutes with the pro-limit \cite[\textup{Lem.}~2.8]{kst-ii} since the spectra in question are uniformly bounded below by Weibel vanishing (Proposition~\ref{prop:weibel-vanising-Kcont}).
    Since $\Pro^\omega(\Sp^+)$ is stable by Lemma~\ref{Pro-omega-stable--lemma}, cartesian squares commute with colimits, 
    hence any elementary Nisnevich square (\ref{Diagram:NisnevichSquare}) of qcqs objects becomes cartesian after applying $\KHcontu(-)$. Such squares generate the Nisnevich topology \cite[Korollar~A.28]{andreychev-thesis} for qcqs spaces. Therefore, $\KHcontu$ is a Nisnevich sheaf on qcqs spaces.
    
    For any quasi-separated $X \in \Adic_\base^\lft$, one can write it as a \emph{filtered} union of qcqs opens $X = \colim_{i \in I} U_i$ (so the index set $I$ is filterd).
    If $\dim(X) < \infty$, then $\dim(U_i)$ are all bounded by $\dim(X)$.
    We have equivalences in $\Pro^\omega(\Sp^+)$:
    \begin{align*}
        \KHcontu(X) & \simeq \prolim_j \colim_{n\in\Delta\op} \Kcontu(X\skp{\Delta_{\pi^j}^n}) & \textup{(see above)} \\
        & \simeq \prolim_j \colim_{n\in\Delta\op} \lim_{i \in I\op} \Kcontu(U_i\skp{\Delta_{\pi^j}^n}) & \textup{(analytic descent for $\Kcontu$)} \\
        & \simeq \prolim_j \lim_{i \in I\op} \colim_{n\in\Delta\op} \Kcontu(U_i\skp{\Delta_{\pi^j}^n}) & (\clubsuit) \\
        & \simeq \lim_{i \in I\op} \prolim_j \colim_{n\in\Delta\op} \Kcontu(U_i\skp{\Delta_{\pi^j}^n}) & \\
        & \simeq \lim_{i \in I\op} \KHcontu(U_i) & \textup{(see above)}.
    \end{align*}
    The equivalence $(\clubsuit)$ is again an application of \cite[\textup{Lem.}~2.8]{kst-ii}; here we use that $I\op$ is cofiltered and that $\Kcontu(U_i\skp{\Delta_{\pi^j}^n})$ are uniformly bounded below (Proposition~\ref{prop:weibel-vanising-Kcont}).
\end{proof}

\subsection{$\Arig$-invariance for local Tate pairs}
The assumption \ref{spadesuit-condition-analytic} on the base ring $R$ in the Representability Theorem~\ref{thm:representability} that we shall prove could be dropped if the following holds true.

\begin{conjecture}
\label{conj:A1-invariance-for-Kcont}
    Let $X$ be a regular analytic adic space. Then the canonical map
    \[ \Kcontu(X) \To \Kcontu(X \times \Arig) \]
    is an equivalence in $\Pro(\Sp^+)$.
\end{conjecture}

Since $\Kcontu$ is a sheaf with respect to the analytic topology, the question is of local nature. Hence we may assume that $X=\Spa(A) \coloneqq \Spa(A,A^+)$ for a Tate ring $A=A_0[\varpi^{-1}]$ (with integrally closed subring $A^+$ which does not matter here). Using the cover $\Arig_A = \bigcup_{\rho\in\RR_{>0}} \Spa(A\skp{t}_\rho) \cong \colim_{t\mapsto\varpi t} \Spa(A\tea)$ and the sheaf condition we see that Conjecture~\ref{conj:A1-invariance-for-Kcont} is equivalent to showing that the canonical map
\[ \Kcontu(A) \To \varplim \Kcontu(A\tea) \]
is an equivalence of pro-spectra, i.e. that continuous K-theory is \emph{pro-homotopy invariant} on regular Tate rings.
This has been proven by Kerz--Saito--Tamme under the assumption \ref{dagger-condition} \cite[Prop.~5.14]{kst-i}. Providing further evidence for the conjecture, we establish the case of local Huber pairs, without even assuming regularity.

\begin{defi}[{\cite[\S 5]{huebner-adic}}]
\label{def:local-huber-pair}
    A Huber pair $(A,A^+)$ is called \emph{local}, if $A$ is a local ring with residue field $\kappa$ and if $A^+$ is the preimage of a valuation ring $\kappa^+ \subset \kappa$ such that the valuation on $A$ defined by $\kappa^+$ is continuous.
    A \emph{local Tate pair} is a local Huber pair $(A,A^+)$ such that $A$ is a Tate ring.
\end{defi}

\begin{rem}
\label{rem:local-Huber-pairs}
    Given $x\in X$, the Huber pair $(\Ocal_{X,x},\Ocal^+_{X,x})$, equipped with the colimit topology, is local, but not uniform. This can be modified as follows. Let $(A_x,A_x^+)$ be the Huber pair whose underling pair of rings is the same as for $(\Ocal_{X,x},\Ocal^+_{X,x})$, but with a different topology: $A_x^+$ is equipped with the $IA_x^+$-adic topology for some (or equivalently, any) ideal of definition $I \subset \Ocal_X(U)$ for any open neighbourhood $x\in U\subseteq X$ and $A_x$ carries the induced topology.
\end{rem}

\begin{thm}
\label{thm:Kcont-local-Huber-pair}
    Let $(A,A^+)$ be a local Tate pair with pseudo-uniformiser $\varpi$ such that $A^+$ is bounded (i.e.\@ a ring of definition). Then the canonical map
    \[ \Kcontu(A) \To \varplim \Kcontu(A\tea) \]
    is an equivalence of pro-spectra.
\end{thm}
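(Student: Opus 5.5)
The plan is to exploit the very special shape of $A^+$ for a local Tate pair. We will reduce the statement, via the defining pushout of Definition~\ref{def:continuous-k-theory}, to two separate pro-homotopy invariance statements. One is for the ``$\varpi$-adic part'', which is formal. The other is for the ``$\varpi$-torsion part'', which we will deduce from $\K$-regularity of valuation rings and fields.

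\textbf{Step 1: structure of $A^+$.} Let $\mfr_A\subset A$ be the maximal ideal and $\kappa=A/\mfr_A$. Since $\varpi$ is a unit of $A$, its image in $\kappa$ is nonzero; since $\varpi$ is topologically nilpotent, this image lies in the maximal ideal of $\kappa^+$. As $A^+$ is the preimage of $\kappa^+$, we have $\mfr_A\subset A^+$, and $A^+ = A\times_\kappa\kappa^+$ is a Milnor square with common ideal $\mfr_A$. Every $x\in\mfr_A$ satisfies $x/\varpi^n\in\mfr_A\subset A^+$, so $\mfr_A$ is $\varpi$-divisible. Hence $A^+/\varpi^n\cong\kappa^+/\varpi^n$ for all $n$, and likewise $A^+\tea/\varpi^n\cong(\kappa^+/\varpi^n)[t]$. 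Because $\varpi$ is a nonzerodivisor on both $A^+$ and $\kappa^+$, these are also derived quotients.

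\textbf{Step 2: splitting via the defining pushout.} We use $A_0=A^+$ as ring of definition of $A$ and $A^+\tea$ as ring of definition of $A\tea$. The defining pushout gives a fibre sequence
\[\K(A^+\on\varpi)\To\Kcontu(A^+)\To\Kcontu(A),\]
and similarly for $A\tea$ and for each stage of the $\varplim$. Here $\K(B\on\varpi)\simeq\K(\Tor_B(\varpi^\infty))$, as in the proof of Proposition~\ref{prop:KnucHyperdescentProfinite}. The problem therefore splits into two claims.

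The first claim is that $\Kcontu(A^+)\to\varplim\Kcontu(A^+\tea)$ is an equivalence. By Step 1 this map is
\[\prolim_n\K(\kappa^+/\varpi^n)\To\varplim\prolim_n\K\bigl((\kappa^+/\varpi^n)[t]\bigr).\]
Fix $n$ and write $R_n=\kappa^+/\varpi^n$. The $n$-fold composite of the transition maps $t\mapsto\varpi t$ on $R_n[t]$ is $t\mapsto\varpi^n t=0$, so it factors through $R_n\to R_n[t]$. Hence $\varplim\K(R_n[t])\simeq\K(R_n)$ in $\Pro(\Sp)$, compatibly in $n$. Commuting the two pro-limits gives the first claim.

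The second claim is that $\K(A^+\on\varpi)\to\varplim\K(A^+\tea\on\varpi)$ is an equivalence. By Step 1 together with Thomason--Trobaugh / Beauville--Laszlo excision, base change identifies
\[\Tor_{A^+}(\varpi^\infty)\simeq\Tor_{\kappa^+}(\varpi^\infty)\simeq\Tor_{\kappa^+[t]}(\varpi^\infty)\text{-vs-}\Tor_{A^+\tea}(\varpi^\infty).\]
More precisely, $\varpi$-power torsion categories only depend on the derived quotients $-\otimes^\LL\ZZ[\varpi]/\varpi^n$, and these agree for $A^+\tea$ and $\kappa^+[t]$. We thus get
\[\K(A^+\tea\on\varpi)\simeq\K(\kappa^+[t]\on\varpi)=\fib\bigl(\K(\kappa^+[t])\to\K(\kappa[t])\bigr).\]
Now $\kappa^+$ is a valuation ring, and valuation rings are $\K$-regular (Kelly--Morrow), as is the field $\kappa$. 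So this fibre equals $\fib(\K(\kappa^+)\to\K(\kappa))=\K(A^+\on\varpi)$ levelwise. In particular the pro-system $\varplim$ is constant with value $\K(A^+\on\varpi)$.

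Combining the two claims with the five lemma for fibre sequences in the stable category $\Pro(\Sp^+)$ (Lemma~\ref{Pro-omega-stable--lemma}) yields the theorem. The main obstacle I expect is the second claim, namely making rigorous the identification $\Tor_{A^+\tea}(\varpi^\infty)\simeq\Tor_{\kappa^+[t]}(\varpi^\infty)$. This requires care since $\kappa^+$ is not noetherian and $A^+\tea$ is a completion. I would first try to prove it via the Milnor square of Step 1 and the derived isomorphism of $\varpi$-adic quotients. As a fallback, the insensitivity of $\K(-\on\varpi)$ to $\varpi$-adic completion, already used in the proof of Proposition~\ref{prop:KnucHyperdescentProfinite}, should handle the completion issue.
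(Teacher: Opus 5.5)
Your proof is correct and has the same skeleton as the paper's. Both use the fibre sequence $\K(A^+\on\varpi)\to\Kcontu(A^+)\to\Kcontu(A)$. Both treat the $\varpi$-adic term formally: your first claim is Lemma~\ref{lem:A1-invariance-for-models}, and your detour through $\kappa^+/\varpi^n$ is harmless but unnecessary. For the torsion term, both combine $\K$-regularity of $\kappa^+$ and $\kappa$ with Weibel's analytic isomorphism theorem for $A^+[t]\to A^+\tea$.

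The one genuine difference is how you pass from $A^+$ to $\kappa^+$.
\begin{itemize}
\item The paper cites \cite[Thm.~21]{dahlhausen-k-theory-semi-valuation} to know that the $\NK$-square of the Milnor square $A^+=A\times_\kappa\kappa^+$ is cartesian, whence $\NK(A^+\on\varpi)\simeq 0$.
\item You observe that $\mfr_A$ is $\varpi$-divisible. Hence $A^+\to\kappa^+$ and $A^+[t]\to\kappa^+[t]$ are themselves analytic isomorphisms along $\varpi$.
\end{itemize}
So a single excision principle handles both the reduction to $\kappa^+$ and the completion step: $\K(-\on\varpi)$ is invariant under maps of $\varpi$-torsion-free rings that induce isomorphisms on $-/\varpi^n$. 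It even shows that the $\K$-theory square itself is cartesian. This makes the argument more self-contained than the paper's.

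A few points to tidy up:
\begin{itemize}
\item Your displayed chain asserts $\Tor_{\kappa^+}(\varpi^\infty)\simeq\Tor_{\kappa^+[t]}(\varpi^\infty)$. This is false as categories; only their $\K$-theories agree, by $\K$-regularity.
\item There is no ring map between $A^+\tea$ and $\kappa^+[t]$, so the comparison has to run through the zigzag $A^+\tea\leftarrow A^+[t]\to\kappa^+[t]$.
\item Writing $\K(\kappa^+[t]\on\varpi)=\fib(\K(\kappa^+[t])\to\K(\kappa[t]))$ uses $\kappa^+[\varpi^{-1}]=\kappa$. This holds because $A^+\to\kappa^+$ is surjective and $A=A^+[\varpi^{-1}]$.
\end{itemize}
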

\begin{proof}
Let $\kappa = A/\mathfrak{m}$ be the residue field with valuation ring $\kappa^+ = A^+/\mathfrak{m} \subset \kappa$.
We may assume that $\varpi\in A^+$.

For $R$ being any of the rings in question and any functor $F$ (appropriatley defined), let $\Nup F(R) \coloneqq \cofib(F(R) \to F(R[t]))$ and $\Nuphat F(R) \coloneqq \cofib( F(R) \to F(R\tea))$. 
We use that finite limits and finite colimits in pro-categories can be computed levelwise.
Hence the desired assertion is equivalent to $\varplim\Nuphat\Kcontu(A) \simeq 0$.

It follows from \cite[Thm.~21]{dahlhausen-k-theory-semi-valuation} that the induced diagram of spectra
\begin{center}
\begin{tikzcd}
\NK(A^+) \arrow[r] \arrow[d] & \NK(A) \arrow[d] \\
\NK(\kappa^+) \arrow[r] & \NK(\kappa)
\end{tikzcd}
\end{center}
is cartesian, hence also in $\Pro(\Sp^+)$.
Since both $\kappa^+$ and $\kappa$ are regular coherent rings, both horizontal maps are equivalences so that $\NK(A^+ \on \varpi) \simeq 0$.
By Weibel's analytic isomorphism theorem \cite[Thm.~1.3]{weibel-analytic-iso} the canonical map
\[ \NK(A^+\on\varpi) \To \Nuphat\K(A^+\on\varpi) \]
is an equivalence. In particular, $\varplim\Nuphat\K(A^+\on\varpi) \simeq 0$ in $\Pro(\Sp^+)$. By Lemma~\ref{lem:A1-invariance-for-models} below and the fibre sequence
\[ \varplim\Nuphat\K(A^+\on\varpi) \To \varplim\Nuphat\Kcontu(A^+) \To \varplim\Nuphat\Kcontu(A), \]
we conclude the desired assertion.
\end{proof}

\begin{lem}
\label{lem:A1-invariance-for-models}
    Let $R$ be an $\varpi$-adically complete ring (for some $\varpi\in R$) and let $X$ be an $R$-scheme. Then the canonical map
    \[ \Kcontu(X) \To \varplim\Kcontu(X\tea) \]
    is an equivalence in $\Pro(\Sp)$ where $X\tea \coloneqq X\times_{\Spec(R)}\Spec(R\tea)$.
\end{lem}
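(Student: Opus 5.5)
Here $\Kcontu(X)$ for an $R$-scheme $X$ means $\prolim_n \K(X/\varpi^n)$, with $X/\varpi^n \coloneqq X\times_{\Spec R}\Spec(R/\varpi^n)$. The plan is to reduce everything to a statement about the nilpotent thickenings $X/\varpi^n$ and then use the fact that the rescaling $t\mapsto\varpi t$ kills the variable modulo powers of $\varpi$. Since $R\tea/\varpi^n \cong (R/\varpi^n)[t]$, we have
\[ \Kcontu(X\tea) \simeq \prolim_n \K\bigl((X/\varpi^n)[t]\bigr), \]
where $(X/\varpi^n)[t] \coloneqq X/\varpi^n \times \AA^1$. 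Both sides of the claimed map are therefore pro-systems indexed by $n$, and by $(n,m)$ on the right. Because finite colimits in pro-categories are computed levelwise, it suffices to show that
\[ \varplim_m \prolim_n \cofib\bigl(\K(X/\varpi^n) \to \K((X/\varpi^n)[t])\bigr) = \varplim_m \prolim_n \NK(X/\varpi^n) \]
vanishes in $\Pro(\Sp)$. Here the transition maps in $m$ are induced by $t\mapsto\varpi t$.

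The key observation is that on $(R/\varpi^n)[t]$ the map $t\mapsto\varpi^n t$ factors as
\[ (R/\varpi^n)[t] \to R/\varpi^n \to (R/\varpi^n)[t], \]
namely evaluation at $t=0$ followed by the inclusion, since $\varpi^n t=0$. Consequently the induced map $\NK(X/\varpi^n)\to\NK(X/\varpi^n)$, which is $n$-fold composition in the $m$-direction, factors through $\cofib(\K(X/\varpi^n)\xrightarrow{\id}\K(X/\varpi^n)) = 0$. It is therefore null, and naturally so in $n$.

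Next reindex the double pro-system over $\NN\times\NN$ along the cofinal diagonal $m=n$ (or $m=2n$). The transition map from index $(2n,2n)$ to $(n,n)$ factors as follows:
\begin{enumerate}
\item first reduce mod $\varpi^n$, landing in $\NK(X/\varpi^n)$ at level $m=2n$;
\item then apply the $n$ transition maps $t\mapsto\varpi t$ from $m=2n$ down to $m=n$.
\end{enumerate}
By the observation above, the composite in the second step is null on $\NK(X/\varpi^n)$. Hence every object of the cofinal tower maps by zero to some earlier object, so the pro-spectrum is pro-zero. This yields the equivalence even in $\Pro(\Sp)$, without passing to $\Pro(\Sp^+)$.

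The main obstacle is coherence in the $\infty$-categorical setting. Nullity of each transition map, rather than only on homotopy groups, is enough to conclude pro-zero, because an object $\prolim Y_k$ is zero exactly when for each $k$ some transition map $Y_{k'}\to Y_k$ is null-homotopic. I would therefore check carefully that the factorisation through $R/\varpi^n$ commutes as a diagram of rings, not just up to homotopy. This makes the null-homotopy canonical.

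One also has to make sure that the identification $\Kcontu(X\tea)\simeq\prolim_n\K((X/\varpi^n)[t])$ is compatible with the maps induced by $t\mapsto\varpi t$. This follows from naturality of the identification $R\tea/\varpi^n\cong(R/\varpi^n)[t]$.
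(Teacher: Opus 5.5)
Your proposal is correct and takes essentially the paper's approach: the paper uses the same observation that, modulo $\varpi^n$, the $n$-fold transition map $t\mapsto\varpi^n t$ equals the composite of the projection and the zero section. The paper phrases this as the projection $\indcolim_{t\mapsto\varpi t}(X/\varpi^n)\tea \to X/\varpi^n$ being an isomorphism of ind-$R$-schemes for each fixed $n$, whereas you pass to the cofibres $\NK(X/\varpi^n)$ and check directly that the doubly indexed pro-system vanishes, using the valid stable-setting criterion that one null transition map per index suffices.
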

\begin{proof}
    Recall that $\Kcontu(X) = \prolim_{n\geq 1}\K(X/\varpi^n)$ where $X/\varpi^n = X \times_{\Spec(R)}\Spec(R/\varpi^n)$. The composition
    \[ \icolim\,X/\varpi^n\tea \To[p] X/\varpi^n \To[\sigma] \icolim\,X/\varpi^n\tea \]
    given by the levelwise projections $p$ and zero-sections $\sigma$ is an isomorphism in the category $\textup{Ind}(\Sch_R)$; indeed, this morphism is levelwise given by $t\mapsto \varpi^nt$ and hence equivalent to the identity. As always $p\circ\sigma = \id_X$, we are done.
\end{proof}

\section{Representability for analytic K-theory}
\label{sec:representability}
\subsection{Unstable case}
\label{subsec:unstable-representability}

In this subsection, we want to prove Theorem \ref{thm:representability-intro}. Specifically, we want to prove that $\KHcontu$ is representable in $\RigSH(\base)$. Crucially, we will use the identification with analytic $\K$-theory $\Kanu$ (Proposition~\ref{prop:A1-localisation}). This identification allows us to use results of Kerz--Saito--Tamme, which we will make apparent. 

Let $\base = \Spa(R, R^+)$ be an adic space such that $R$ admits a noetherian ring of definition. Under the assumption \ref{spadesuit-condition-analytic}, the analytic K-theory $\Kanu\colon\Adic_\base^{\lft,\opp}\to\Pro^\omega(\Sp^+)$ is an $\Arig$-invariant Nisnevich sheaf, see the preceding section. Since both $\tau_{\geq 1}$ and $\Omega^\infty$ are right adjoint functors, the induced functor 
\[ \Adic_\base^{\lft,\opp}\to\Pro^\omega(\Sp^+) \To \Pro^\omega(\Sp) \To[\tau_{\geq 1}] \Pro^\omega(\Sp_{\geq 1}) \To[\Omega^\infty] \Pro^\omega(\Spc) \]
is again an $\Arig$-invariant Nisnevich sheaf, hence an object in $\RigH(\base,\Pro^\omega(\Spc))$ which we denote by $\Omega^\infty\Kan_{\geq 0}$. We would like to identify this object with more concrete objects. Unfortunately, we only can do this under the assumption $(\spadesuit)_{R}^\an$ in the beginning of Section~\ref{sec:k-theory}. In this case we have for any $R$-algebra $A$ of topologically finite type that \cite[Lem.~7.5]{kst-i}
\[
\Omega^\infty\tau_{\geq 1}\Kanu(A) \simeq \prolim_\rho\, \BGL(A\skp{\Delta}_\rho).
\]
This equivalence enables us to identify analytic K-theory similarly as the classical identification of algebraic K-theory with $\ZZ\times\BGL$ in the Morel-Voevodsky category \cite[Prop.~3.10]{MV1}.

\begin{thm}[Representability]
\label{thm:representability}
We assume \ref{spadesuit-condition-analytic}.
Then there is a canonical equivalence
\[ \Omega^\infty\Kanu_{\geq 0} \simeq \Lmot(\ZZ\times\BGL) \]
in the category $\RigH(\base,\Pro^{\omega}(\Spc))$.
In particular, for every $X \in \Adicsm_\base$ there is a functorial equivalence
\[ \Omega^\infty\gamma_\omega\KHcontu_{\geq 0}(X)\simeq  \Omega^\infty\gamma_\omega\Kanu_{\geq 0}(X) \simeq \Homline_{\Cond^\omega(\Spc)}(\Lmot X,\Lmot(\ZZ\times\BGL)) \]
in the category $\Cond^\omega(\Spc)$; here the right-hand side denotes the enriched Hom-space as a left $\Cond^\omega(\Spc)$-module, see Appendix~\ref{app.yoneda}.
\end{thm}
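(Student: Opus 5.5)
The plan is to mimic Morel--Voevodsky's proof of \cite[Prop.~3.10]{MV1} inside $\RigH(\base,\Pro^\omega(\Spc))$. First I construct a comparison map. The presheaf $\ZZ\times\BGL$ on $\Adicsm_\base$, with $\BGL=\colim_n \mathrm{BGL}_n$, maps to $\Omega^\infty\Kanu_{\geq 0}$. Indeed, $\mathrm{BGL}(A)\to\Omega^\infty\tau_{\geq1}\K(A)$ is the plus-construction map. It composes with $\K(A)\to\Kanu(A)$, using the fibre sequence (\ref{fibre-sequence-1-connective-cover}) to handle the $\pi_0$-part through the rank/$\K_0$ component. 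Since the target is an $\Arig$-invariant Nisnevich sheaf, this map factors through $\Lmot(\ZZ\times\BGL)$. The task is then to show that this factored map is an equivalence.

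Second, I compute $\Lmot(\ZZ\times\BGL)$. By \cite[Lem.~3.6]{DY}, the $\Arig$-localisation is computed by the analytic singular construction $\colim_{\Delta\op}(-)(X\times\Delta^{\an,\bullet})$. By the sheaf condition, on affinoids this becomes $\prolim_\rho \BGL(A\skp{\Delta}_\rho)$. I want to compare it with \cite[Lem.~7.5]{kst-i}, namely $\Omega^\infty\tau_{\geq1}\Kanu(A)\simeq\prolim_\rho\BGL(A\skp{\Delta}_\rho)$. The equivalence can be checked Nisnevich-locally, and by \ref{spadesuit-condition-analytic} analytic-locally on affinoids $A$ satisfying \ref{dagger-condition}. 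For such $A$, the fibre sequence (\ref{fibre-sequence-1-connective-cover}) and $\Kanu_{\geq0}\simeq\kanu$ identify $\Omega^\infty\Kanu_{\geq0}(A)$ with the product of $\K_0$ and the $1$-connected part. The $\ZZ$-factor matches $\K_0$ after Nisnevich sheafification and $\Arig$-localisation, because $\K_0$ of regular local objects is $\ZZ$ via the rank. Here I use that $\K_0$ is $\Arig$-invariant in the regular case (Proposition on $\Arig$-invariance of $\Kcont$ for regular $X$).

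The main obstacle is the interaction between sheafification, $\Arig$-localisation, and the pro-structure. The singular construction applied to $\ZZ\times\BGL$ need not already be a Nisnevich sheaf, and geometric realisations must commute with $\prolim_\rho$. I would first try the argument of Lemma~\ref{lem:Nisnevich-descent-KHcont}: use Weibel vanishing (Proposition~\ref{prop:weibel-vanising-Kcont}) for uniform boundedness and \cite[Lem.~2.8]{kst-ii} to interchange the two. Then I would show that the map from $\mathrm{Sing}^{\an}(\ZZ\times\BGL)$ to the sheaf $\Omega^\infty\Kanu_{\geq0}$ is a stalkwise, equivalently local-on-\ref{dagger-condition}-affinoids, equivalence. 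Since the target is already motivically local, this gives the motivic equivalence.

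Finally, for the condensed formula I apply the limit-preserving functor $\gamma^\omega$, which sends $\RigH(\base,\Pro^\omega(\Spc))$ to $\RigH(\base,\Cond^\omega(\Spc))$. I then use $\KHcontu\simeq\Kanu$ (Proposition~\ref{prop:A1-localisation}) and the enriched Yoneda lemma of Appendix~\ref{app.yoneda} for the closed monoidal structure on $\Cond^\omega(\Spc)$. Together these give $\gamma^\omega\mathcal{F}(X)\simeq\Homline(\Lmot X,\mathcal{F})$ for the motivically local object $\mathcal{F}=\Lmot(\ZZ\times\BGL)$.
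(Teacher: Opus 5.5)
Your proposal follows the paper's proof essentially step for step. It uses the same comparison map $\ZZ\times\BGL\to\Omega^\infty\Kanu_{\geq 0}$ built from the plus construction, and the same split via the fibre sequence \eqref{fibre-sequence-1-connective-cover} into a degree-zero part checked on local rings and a $1$-connective part computed by the analytic singular construction. It likewise interchanges $\colim_{\Delta\op}$ with $\prolim_\rho$ via \cite[Lem.~2.8]{kst-ii}, identifies the result with \cite[Lem.~7.5]{kst-i} under \ref{spadesuit-condition-analytic}, and finishes with the enriched Yoneda lemma after passing to $\Cond^\omega(\Spc)$. The only differences are organisational: you phrase the conclusion as a Nisnevich-local equivalence $\mathrm{Sing}^{\an}(\ZZ\times\BGL)\to\Omega^\infty\Kanu_{\geq 0}$ into an already motivically local target, and you invoke Weibel vanishing for the interchange, whereas the paper applies \cite[Lem.~2.8]{kst-ii} directly to the connected presheaf $\BGL$.
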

\begin{proof}
First, we construct a map. For every affinoid algebra $A$ we have a map 
\[ \ZZ(A)\times\BGL(A) \To \Omega^\infty\K_{\geq 0}(A) \To \Omega^\infty\Kanu_{\geq 0}(A) \]
which is functorial in $A$, hence inducing a map $\ZZ\times\BGL\to\Omega^\infty\Kanu_{\geq 0}$ on the category of presheaves on affinoid spaces. Thus we obtain an induced map $\Lmot(\ZZ\times\BGL) \to \Omega^\infty\Kanu_{\geq 0}$ in $\RigH(\base)$. 

Using the fibre sequence \eqref{fibre-sequence-1-connective-cover} we can separately investigate the induced maps $\Lmot\BGL\to\Omega^\infty\Kanu_{\geq 1}$ on 1-connective covers and $\Lmot\ZZ\to\Kanu_0$ in degree zero. On 1-connective covers, we compute for every affinoid algebra $A$ as follows.\footnote{Note that $\BGL$ arises from sheafifying the composition of the presheaf $\Adic_\base^{\lft,\opp} \to \Spc,\,X\mapsto\BGL(\Ocal_X(X)),$ with the inclusion $\Spc\inj\Pro(\Spc)$.}
\begin{align*}
(\L_{\Arig}\BGL)(\Spa(A)) &\simeq \colim_{n\in\Delta\op}\, \BGL(A\skp{\Delta^n})\\
&\simeq \colim_{n\in\Delta\op}\, \Hom(\colim_\rho \Spa(A\skp{\Delta^n}_\rho),\BGL)\\
&\simeq \colim_{n\in\Delta\op}\, \lim_\rho \Hom(\Spa(A\skp{\Delta^n}_\rho),\BGL).\\
(\clubsuit) &\simeq \lim_{\rho\in\NN\op}\, \colim_{n\in\Delta\op}\, \Hom(\Spa(A\skp{\Delta^n}_\rho),\BGL)\\
&\simeq \lim_{\rho\in\NN\op}\, \BGL(A\skp{\Delta}_\rho) \\
&\simeq \Omega^\infty\tau_{\geq 1}\Kanu(A),
\end{align*}
where the equivalence $(\clubsuit)$ follows from \cite[Lem.~2.8]{kst-ii}, and the last equivalence is a consequence of \ref{spadesuit-condition-analytic} as explained above the statement of the theorem.
In degree zero we have that $\Kanu_0(A) \cong \Kcontu_0(A) \cong \K_0(A) \cong \ZZ$ for local rings $A$ which shows the first claim.

It remains to show that
$$ 
\Omega^\infty\gamma^\omega\Kanu_{\geq 0}(X) \simeq \Homline_{\Cond^\omega(\Spc)}(\Lmot X,\Lmot(\ZZ\times\BGL)).
$$
For this let us note that by construction, we have that $\Lmot$, as a localisation from\linebreak $\Fun((\Adicsm_\base)\op,\Cond^{\omega}(\Spc))\to \RigH(\base,\Cond^{\omega}(\Spc))$, is equivalently given by base\linebreak change of the motivic localisation functor $\PSh(\Adicsm_\base)\to \RigH(\base)$. In particular, it is compatible with the $\Cond^{\omega}(\Spc
)$-module structure. Thus, we have 
$$
\Homline_{\Cond^\omega(\Spc)}(\Lmot X,\Lmot(\ZZ\times\BGL))\simeq \Homline_{\Cond^\omega(\Spc)}(X,\Lmot(\ZZ\times\BGL)),
$$
where we view $X$ as a functor in $\Fun(\Adicsm_\base,\Cond^\omega(\Spc))$ via the composition of the Yoneda functor and the constant functor $\Spc\to \Cond^\omega(\Spc)$.
Our results above together with the enriched Yoneda lemma finish the proof  (cf.\@ Lemma \ref{lem-yoneda}).
\end{proof}

\begin{rem}[Étale case]
\label{rem:representability-etale-case}
    When $R$ is a nonarchimedean field $k$, the étale analogue of Theorem~\ref{thm:representability} holds and is unconditional. Indeed, the assumption \ref{spadesuit-condition-etale-k} always holds by Remark~\ref{rem:key-assumption} (3), and similar arguments as above work (see \cite[Thm.~B.3.3]{zhou-these} for details).
    Furthermore, since $\Kcontu(A) \simeq \Kanu(A)$ for all smooth affinoid algebras $A$ over $k$ satisfying \ref{dagger-condition} \cite[Thm.~1.3]{kst-i}, we obtain a representability result for the étale sheafification of $\Kcontu$.
\end{rem}

\begin{defi}
For $n\geq d\geq 1\in\ZZ$ denote by $\textup{Grass}_{d,n}$ be the Grassmannian $R$-scheme representing submodules $\Ecal\subseteq\Ocal^n$ such that the quotient sheaf $\Ocal^n/\Ecal$ is locally free of rank $n-d$ \cite[(8.4)]{GW1}. We write $\textup{Grass}_d \coloneqq \colim_n\textup{Grass}_{d,n}$ and $\textup{Grass} \coloneqq \colim_d\textup{Grass}_d$. Denote by $\textup{Grass}_{d,n}^\an$, $\textup{Grass}_d^\an$, and $\textup{Grass}^\an$ the respective analytifications.

Note that $\textup{Grass}_{d,n}^\an$ coincides with the representable functor $\textup{Grass}_{d,n}^\rig$ representing $d$-dimensional subspaces of $\kappa(x)^n$ on points $\Spa(\kappa(x))\to S$.
\end{defi}

\begin{prop}[Arndt, Sigloch {\cite[Prop.~5.34]{sigloch-thesis}}] \label{Prop:BGL=Grass}
For every $n\geq 1$, there is an equivalence $\BGL_n\simeq\textup{Grass}_n^\an$ in $\RigH(\base)$. Consequently, there is an equivalence $\BGL\simeq\textup{Grass}^\an$.
\end{prop}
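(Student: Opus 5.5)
The plan is to follow the classical argument of Morel--Voevodsky for $\BGL_n\simeq\textup{Grass}_n$ and transport it to the rigid setting.

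\emph{Step 1 (a Stiefel-type torsor).} For $N\geq n$ let $V_{n,N}\subseteq \AA^{nN}$ be the open subscheme of $n\times N$-matrices of full rank $n$, and $V_{n,N}^\an$ its analytification. The group $\GL_n$ acts freely on $V_{n,N}$, and taking row spans gives a $\GL_n$-torsor
\[ V_{n,N}^\an \To \textup{Grass}_{n,N}^\an . \]
This torsor is locally trivial for the analytic topology, because the usual Plücker chart cover of the Grassmannian trivialises it Zariski-locally and analytification preserves this. Consequently, in $\Sh_\Nis$, and hence in $\RigH(\base)$, we obtain an equivalence
\[ V_{n,N}^\an/\GL_n \simeq \textup{Grass}_{n,N}^\an , \]
where the left side is the quotient stack. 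The latter maps to $\BGL_n$ by classifying the torsor. Passing to the colimit over $N$ gives a map
\[ \textup{Grass}_n^\an\simeq \colim_N V_{n,N}^\an/\GL_n \To \BGL_n . \]

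\emph{Step 2 (contractibility of the Stiefel colimit).} The fibre of this map is $V_n^\an\coloneqq\colim_N V_{n,N}^\an$, and colimits are universal in the $\infty$-topos of Nisnevich sheaves. So it suffices to show that $V_n^\an$ is $\Arig$-contractible in $\RigH(\base)$. For this I would imitate the algebraic proof. One shows that the shift map $V_{n,N}\to V_{n,N+n}$, $M\mapsto(0\mid M)$, and the standard inclusion $M\mapsto (M\mid 0)$ are $\Arig$-homotopic via the straight-line homotopy
\[ \bigl((1-t)M \mid tM\bigr), \]
which stays inside the full-rank locus for all $t\in\Arig$. Similarly, $(0\mid M)$ is homotopic to the constant point $(0\mid I_n)$ via a path passing through $(tI_n\mid M)$-type matrices inside a larger $V$. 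These explicit polynomial homotopies are algebraic, so they analytify to $\Arig$-homotopies between the corresponding maps on $V_{n,N}^\an$. Hence the identity of the colimit is $\Arig$-homotopic to a constant map, and $\Lmot V_n^\an\simeq *$.

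\emph{Step 3 (conclusion).} Since $\GL_n$-torsors for the analytic topology give the same stack as Nisnevich-local torsors (by descent for vector bundles, as in Kedlaya--Liu), the fibre sequence $V_n^\an\to\textup{Grass}_n^\an\to\BGL_n$ yields $\BGL_n\simeq\textup{Grass}_n^\an$ after $\Lmot$. Finally, the equivalences for varying $n$ are compatible with the stabilisations $\BGL_n\to\BGL_{n+1}$ and $\textup{Grass}_n\to\textup{Grass}_{n+1}$, both given by adding a trivial line. Because $\Lmot$ commutes with filtered colimits, taking $\colim_n$ gives $\BGL\simeq\textup{Grass}^\an$.

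The main obstacle is Step 2. Here one must check that the homotopies really stay inside the full-rank locus over every affinoid base, and hence make sense in the analytic category. One must also check that $\Lmot$ of a filtered colimit of representables is computed as the colimit, since $\Lmot$ is a left adjoint.
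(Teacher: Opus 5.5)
The paper gives no proof of its own and only quotes Sigloch's thesis; your argument (Stiefel $\mathrm{GL}_n$-torsor over the Grassmannian trivialised on Pl\"ucker charts, plus $\Arig$-contractibility of $\colim_N V_{n,N}^\an$ via analytified polynomial homotopies) is the standard Morel--Voevodsky proof of this statement, so it is essentially the same approach and is correct. One slip in Step 2: $\bigl((1-t)M\mid tM\bigr)$ joins $(M\mid 0)$ and $(0\mid M)$ with zero blocks of width $N$. That is the shift by $N$, which is not compatible with the transition maps and so gives no self-homotopy of the colimit. You can fix this in either of two ways:
\begin{enumerate}
\item Use $(1-t)(M\mid 0)+t(0\mid M)$ with zero blocks of width $n$. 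Your ``one block is a unit multiple of $M$'' argument does not apply to these overlapping blocks, but full rank still holds at every point: for $c\neq 0$, let $w=cM$ have last nonzero entry $w_m$; the corresponding row has entry $t w_m$ in position $m+n$, and equals $(w\mid 0)$ when $t=0$. Then follow with $(tI_n\mid(1-t)M)$ down to the point $(I_n\mid 0)$.
\item Keep your homotopies and note that they make each transition map $V_{n,N}^\an\to V_{n,2N}^\an$ $\Arig$-null-homotopic, which already forces the sequential colimit to be contractible in $\RigH(\base)$.
\end{enumerate}
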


\begin{rem}
\label{rem-analyitification-monoidal}
    Let us consider the constant functor $\mathrm{const}\colon \Spc\to\Cond^{\omega}(\Spc)$. This is a map of presentable categories by Lemma~\ref{condensed-objects-presentable--lem}.
    As colimits in $\Cond^{\omega}(\Spc)$ can be computed pointwise, we see that $\mathrm{const}$ is colimit preserving. Moreover, $\Cond^{\omega}(\Spc)$ is the sheaf category associated to a Grothendieck topology. In particular, the localisation functor $\PSh(\ProFin^{\omega})\rightarrow \Cond^{\omega}(\Spc)$ is left exact. Therefore, $\mathrm{const}$ also preserves finite products and in particular is symmetric monoidal. Thus, the constant functor $\mathrm{const}$ is a map in $\CAlgPr$. The analytification functor \cite[\S 3.3]{DY} yields a symmetric monoidal colimit preserving functor\footnote{%
        We suspect that the functor $\RigH(B^{\an})\rightarrow \RigH(B^{\an},\Cond^{\omega}(\Spc))$ induced via base change with $\mathrm{const}$ is equivalent to the functor $\widehat{\mathrm{const}}_*$ constructed in \cite[Rem.~3.14]{DY}. However, we did not find a reference.
    } 
    $$
        \textup{H}(B,\Spc)\To[\textup{an}] \RigH(B^{\an})\To[\id\otimes\mathrm{const}] \RigH(B^{\an},\Cond^{\omega}(\Spc)).
    $$
\end{rem}

\begin{cor}
\label{cor-algebraic-analytic-K}
We assume \ref{spadesuit-condition-analytic}. Then the image of connective algebraic K-theory under the analytification functor $\alpha \colon \textup{H}(\Spec(R),\Spc)\to\RigH(\base,\Cond^\omega(\Spc))$ identifies with connective analytic K-theory.
\end{cor}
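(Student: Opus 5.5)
The plan is to compare both sides through the common object $\Lmot(\ZZ\times\BGL)$. On the algebraic side, Morel--Voevodsky \cite[Prop.~3.10]{MV1} give $\Omega^\infty\K_{\geq 0}\simeq\Lmot(\ZZ\times\BGL)$ in $\textup{H}(\Spec(R),\Spc)$. On the analytic side, Theorem~\ref{thm:representability} identifies $\Omega^\infty\Kanu_{\geq 0}$ with $\Lmot(\ZZ\times\BGL)$ in $\RigH(\base,\Pro^\omega(\Spc))$, and we push this forward along $\gamma^\omega$ to $\RigH(\base,\Cond^\omega(\Spc))$. It therefore suffices to check that the analytification functor $\alpha$ of Remark~\ref{rem-analyitification-monoidal} sends the algebraic $\Lmot(\ZZ\times\BGL)$ to the analytic $\Lmot(\ZZ\times\BGL)$, compatibly with the comparison maps out of $\ZZ\times\BGL$.

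\textbf{Step 1: reduce to objects.} The functor $\alpha$ is colimit preserving and symmetric monoidal, being a composite of the analytification of \cite[\S 3.3]{DY} with $\id\otimes\mathrm{const}$. It commutes with $\Lmot$ in the sense that $\alpha\circ\Lmot\simeq\Lmot\circ(\text{left Kan extension of }X\mapsto X^\an)$ on presheaves. In particular it preserves finite products, so $\alpha(\Lmot(\ZZ\times\BGL))\simeq\alpha(\Lmot\ZZ)\times\alpha(\Lmot\BGL)$.

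\textbf{Step 2: the $\ZZ$-factor.} $\ZZ$ is a coproduct of copies of the final object $\Spec(R)$. Since $\alpha$ preserves colimits and the final object, it goes to the constant sheaf $\ZZ$ on the analytic side.

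\textbf{Step 3: the $\BGL$-factor.} The plan is to go through Grassmannians, which are colimits of representables and hence easy to analytify. Algebraically, $\BGL\simeq\textup{Grass}$ in $\textup{H}(\Spec(R))$ by \cite{MV1}. Writing $\textup{Grass}=\colim_{d,n}\textup{Grass}_{d,n}$, we get $\alpha(\textup{Grass})\simeq\colim_{d,n}\textup{Grass}_{d,n}^\an=\textup{Grass}^\an$, using that $\alpha$ sends representables to representables. Proposition~\ref{Prop:BGL=Grass} then gives $\textup{Grass}^\an\simeq\BGL$ in $\RigH(\base)$, and applying $\mathrm{const}$ yields the statement in $\RigH(\base,\Cond^\omega(\Spc))$. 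Combining with Theorem~\ref{thm:representability} gives $\alpha(\Omega^\infty\K_{\geq 0})\simeq\Omega^\infty\gamma^\omega\Kanu_{\geq 0}$.

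\textbf{Step 4: compatibility.} The resulting equivalence should be the canonical one, induced by the map $\K(A_0\text{-alg})\to\Kanu$ obtained by restriction along analytification, rather than an abstract one. I expect this to be the main obstacle: the MV equivalence, Sigloch's Grassmannian equivalence and the map $\ZZ\times\BGL\to\Omega^\infty\Kanu_{\geq 0}$ of Theorem~\ref{thm:representability} have to be matched up, and in particular one must check that Proposition~\ref{Prop:BGL=Grass} is compatible with the tautological map $\BGL\to\Omega^\infty\Kanu_{\geq1}$. To arrange this, I would first build the natural transformation $\alpha(\Omega^\infty\K_{\geq 0})\to\Omega^\infty\Kanu_{\geq 0}$ by adjunction from $\K(A)\to\Kanu(A)$ for affinoids, then check that it is an equivalence. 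For that check I would restrict to the generator $\ZZ\times\BGL$, where both comparison maps come from the same inclusion $\GL_n(A)\subseteq\GL_n(A^\an)$; this would avoid tracking the Grassmannian models in detail.
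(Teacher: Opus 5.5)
Your proposal is correct and is essentially the paper's proof. Both sides are identified with $\Lmot(\ZZ\times\BGL)$ (via \cite{MV1} and Theorem~\ref{thm:representability}), $\alpha(\ZZ)\simeq\ZZ$ is immediate, and $\alpha(\BGL)\simeq\BGL$ follows by passing through $\textup{Grass}$ on both sides using \cite{MV1} and Proposition~\ref{Prop:BGL=Grass}. Your Step~4, matching the equivalence with the canonical comparison map, goes beyond what the paper does: it only produces an abstract identification of objects, so that step is an optional refinement rather than a necessary one.
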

\begin{proof}
In $\textup{H}(\Spec(R),\Spc)$, algebraic K-theory coincides with $\Lmot(\ZZ\times\BGL)$ \cite[Thm.~3.13]{MV1}. In $\RigH(\base,\Cond^\omega(\Spc))$, connective analytic K-theory coincides with\linebreak  $\Lmot(\ZZ\times\BGL)$ (Theorem~\ref{thm:representability}). The identification $\alpha(\ZZ)\simeq\ZZ$ is clear. The identification $\alpha(\BGL)\simeq\BGL$ follows since $\BGL\simeq\textup{Grass}$ on both sides by \cite[Prop.~3.7]{MV1} and Proposition~\ref{Prop:BGL=Grass}.
\end{proof}

\subsection{Analytic Bass delooping}
\label{sec:analytic-bass-construction}

Nonconnective algebraic K-theory is obtained from connective algebraic K-theory by Bass delooping within the category of spectra \cite[\S 6]{tt90}. The construction of the delooping uses the cover of $\PP^1$ by two copies of $\Arig$ whose intersection is $\Gm$. Such a delooping construction can also be done globally within the category of presheaves of spectra \cite[\S 2]{cisinski-descente}. For analytic K-theory of rigid spaces, an analytic analogue of the Bass construction is performed by Kerz-Saito-Tamme within the category of pro-spectra and using the cover of $\PP^1$ by two copies of $\Brig$ whose intersection we denote by $\HH_m$. Putting 1-connective analytic K-theory into this machine, one obtains, by definition, nonconnective analytic K-theory, see subsection~\ref{subsec:analytic-k-theory}.
In this subsection, we examine such an analytic Bass delooping within the category $\PSh(\Rig_\base,\Pro(\Sp^+))$ using the alternative cover of $\PP^1$ by two copies of $\Arig$ whose intersection is $\Gm$, the analytification of $\Spec(R[t,t^{-1}])$. It turns out that the $\Arig$-delooping is equivalent to the $\Brig$-delooping so that  analytic K-theory also satisfies the Bass Fundamental Theorem for $\Arig$ (Corollary~\ref{bass-fundamental-theorem-for-Kan--cor}). Our exposition follows closely the description of the $\Brig$-analytic Bass delooping by Kerz-Saito-Tamme \cite[\S 4.4]{kst-i}.

\vspace{6pt}\noindent\textbf{Notation.} We fix an embedding $\Arig \inj \PP^1$ and write $\PP^1\setminus\Arig=:\{\infty\}$ and $\AA^{-1}\coloneqq\PP^1\setminus\{O\}$ where $O\in\Arig$ denotes the origin. Thus $\Gm=\Arig\cap\AA^{-1}$ as subspaces of $\PP^1$. Analogously, we write $\Brig\coloneqq\Spa(R\skp{t},R^+\skp{t}) \inj \PP^1$ and $\BB^{-1}\coloneqq\Spa(R\skp{t^{-1}},R^+\skp{t^{-1}})$, so that $\Brig\cap\BB^{-1}=\Spa(R\skp{t,t^{-1}},R^+\skp{t,t^{-1}})=:\HH_\textup{m}$.
Finally, let $\Dcal$ be a stable presentable category; thus it carries an essentially unique structure of a module over the category of spectra \cite[4.8.2.18]{HA}.

\begin{defi} \label{bass-delooping--defi}
Let $E\in\PSh(\Adic_\base^\lft,\Dcal)$ be a presheaf. We define a presheaf $\Gamma_{\Arig}E$  via
\[ \Gamma_{\Arig}E(X) \coloneqq E(X\times\Arig) \sqcup_{E(X)} E(X\times\AA^{-1}). \]
The commutativity of the square
\carre{\Gm}{\Arig}{\AA^{-1}}{S}
yields a map $\Gamma_{\Arig}E(X) \to E(X\times \Gm)$ so that we can define a presheaf $\Lambda_{\Arig}E$ via
\[ \Lambda_{\Arig}E(X) \coloneqq \textup{fib}\bigl(\Gamma_{\Arig}E(X) \to E(X\times\Gm)\bigr). \]

Replacing $\Arig$, $\AA^{-1}$, and $\Gm$ with $\Brig$, $\BB^{-1}$, and $\HH_\textup{m}$, respectively, we get the presheaves $\Gamma_{\Brig}E$ and $\Lambda_{\Brig}E$ as defined in the affinoid setting by Kerz-Saito-Tamme \cite[\S 4.4]{kst-i}. 
\end{defi}

\begin{lem} \label{bass-constructions-agree--lem}
Given a presheaf $E\in\PSh(\Adic_\base^\lft,\Dcal)$, there are equivalences
\[ \Lambda_{\Arig}E(X) \simeq \Sigma\textup{fib}\bigl(E(X\times\PP^1) \to E(X)\bigr) \simeq \Lambda_{\Brig}E(X) \]
which are functorial in $X\in\Adic_\base^\lft$.
\end{lem}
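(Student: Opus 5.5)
Both equivalences follow from the same formal argument, carried out in the stable category $\Dcal$; I write it out for $\Arig$, and the $\Brig$ case is identical. The key point is that the middle term $\Sigma\textup{fib}(E(X\times\PP^1)\to E(X))$ depends on neither cover. The one input not purely formal is that $E(X\times\PP^1)$ must be computed by the respective cover. So I read the statement as applying to presheaves $E$ satisfying descent for the analytic topology, following Kerz--Saito--Tamme; for an arbitrary presheaf one would instead interpret $E(X\times\PP^1)$ as the pullback $E(X\times\Arig)\times_{E(X\times\Gm)}E(X\times\AA^{-1})$, and correspondingly for $\Brig$.

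\textbf{Step 1.} Descent for the cover $X\times\PP^1=(X\times\Arig)\cup(X\times\AA^{-1})$, whose intersection is $X\times\Gm$, gives a cartesian square. Hence
\[ E(X\times\PP^1)\simeq E(X\times\Arig)\times_{E(X\times\Gm)}E(X\times\AA^{-1}). \]

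\textbf{Step 2.} Let $P$ denote the pushout $\Gamma_{\Arig}E(X)=E(X\times\Arig)\sqcup_{E(X)}E(X\times\AA^{-1})$, with its canonical map $P\to E(X\times\Gm)$. I consider the total fibre of the square with corners $E(X)$, $E(X\times\Arig)$, $E(X\times\AA^{-1})$, $E(X\times\Gm)$. In a stable category this total fibre can be computed in two ways:
\begin{itemize}
\item[(a)] as $\textup{fib}\bigl(E(X)\to E(X\times\Arig)\times_{E(X\times\Gm)}E(X\times\AA^{-1})\bigr)=\textup{fib}(E(X)\to E(X\times\PP^1))$;
\item[(b)] as $\Sigma^{-1}\textup{fib}(P\to E(X\times\Gm))=\Sigma^{-1}\Lambda_{\Arig}E(X)$, because pushouts and pullbacks coincide.
\end{itemize}

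\textbf{Step 3.} The map $E(X)\to E(X\times\PP^1)$ is split by the pullback along a rational point of $\PP^1$, say $O$; compatibly, $O$ lies in $\Arig$. Using this splitting, I identify $\textup{fib}(E(X)\to E(X\times\PP^1))$ with $\Sigma^{-1}\textup{fib}(E(X\times\PP^1)\to E(X))$, where the second map is restriction along $O$. Combining with Step 2 yields
\[ \Lambda_{\Arig}E(X)\simeq\Sigma\,\textup{fib}\bigl(E(X\times\PP^1)\to E(X)\bigr). \]
Naturality in $X$ holds because every map involved is induced by functoriality of $E$ and by canonical constructions in $\Dcal$.

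\textbf{Main obstacle.} The delicate point is the bookkeeping of shifts and signs in Step 3. Concretely, one must check that the splitting through $E(X\times\Arig)$ is compatible with the square, so that the identification is canonical and does not depend on the chosen section. For $\Brig$ one repeats the argument with the cover $\PP^1=\Brig\cup\BB^{-1}$, intersection $\HH_{\textup{m}}$, and rational point $O\in\Brig$. Since the middle term is independent of the cover, the two equivalences compose to give the asserted identification $\Lambda_{\Arig}E\simeq\Lambda_{\Brig}E$.
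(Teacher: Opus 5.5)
\paragraph{Steps 1 and 2.} These are exactly the paper's argument. The paper maps the pushout square defining $\Gamma_{\Arig}E(X)$ to the square for the cover $X\times\PP^1=(X\times\Arig)\cup(X\times\AA^{-1})$. The fibre of this map of cartesian squares is a cartesian square with corners $\textup{fib}(E(X)\to E(X\times\PP^1))$, $0$, $0$, $\Lambda_{\Arig}E(X)$. Computing the total fibre in two ways, as you do, is the same computation.

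You are right that this needs $E$ to send the two covers of $X\times\PP^1$ to cartesian squares. The paper uses this tacitly when it speaks of a ``map of cartesian squares'', so the lemma is really about analytic sheaves, which covers its applications. Note that under your alternative reading for arbitrary presheaves, the two ``middle terms'' would be different pullbacks. The comparison with $\Lambda_{\Brig}E$ therefore genuinely needs descent.

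\paragraph{The error is in Step 3.} Step 2 gives $\Sigma^{-1}\Lambda_{\Arig}E(X)\simeq\textup{fib}(E(X)\to E(X\times\PP^1))$. Your splitting identification, which is correct, gives $\textup{fib}(E(X)\to E(X\times\PP^1))\simeq\Sigma^{-1}\textup{fib}(E(X\times\PP^1)\to E(X))$. Combining them yields $\Lambda_{\Arig}E(X)\simeq\textup{fib}(E(X\times\PP^1)\to E(X))$, \emph{without} a suspension. Your displayed conclusion is off by one shift and does not follow from your own steps.

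Nor could it hold. Consider the algebraic analogue of this purely formal argument, with $E=\K$ on a regular scheme $X$:
\begin{itemize}
\item one has $\Lambda\K(X)\simeq\K(X)$, which is what makes $\lambda$ an equivalence;
\item the projective bundle formula gives $\textup{fib}(\K(X\times\PP^1)\to\K(X))\simeq\K(X)$;
\item so your formula would give $\Sigma\K(X)\not\simeq\K(X)$.
\end{itemize}
The arrow in the printed statement is reversed. What the paper's proof actually establishes, via its $\Phi E(X)$, is
\[ \Lambda_{\Arig}E(X)\simeq\Sigma\,\textup{fib}\bigl(E(X)\to E(X\times\PP^1)\bigr)\simeq\textup{cofib}\bigl(E(X)\to E(X\times\PP^1)\bigr). \]

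\paragraph{How to fix it.} Stop after Step 2. That form has several advantages:
\begin{itemize}
\item it uses only the canonical map induced by the projection and needs no rational point, so your worry about dependence on the chosen section disappears;
\item it is visibly functorial in $X$;
\item it is independent of the cover, which gives the comparison with $\Lambda_{\Brig}E(X)$ exactly as you describe.
\end{itemize}
The splitting is only needed if one wants the unsuspended form $\textup{fib}(E(X\times\PP^1)\to E(X))$.
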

\begin{proof}
We have a map of cartesian squares
$$ \begin{tikzcd}
    E(X) \arrow[r] \arrow[dd] 
    & E(X\times\Arig) \arrow[dd]
    &&
    E(X\times\PP^1) \arrow[r] \arrow[dd] 
    & E(X\times\Arig) \arrow[dd]
    \\ && \overset{\alpha}{\longrightarrow} \\
    E(X\times\AA^{-1}) \arrow[r] 
    &\Gamma_{\Arig}E(X) 
    &&
    E(X\times\AA^{-1}) \arrow[r] 
    & E(X\times\Gm).
\end{tikzcd} $$
Denoting $\Phi E(X) \coloneqq \textup{fib}\bigl(E(X) \to E(X\times\PP^1)\bigr)$, the fibre of the map $\alpha$ is the cartesian square
\carre{\Phi E(X)}{\ast}{\ast}{\Lambda_{\Arig}E(X).}
This shows the first asserted equivalence. The same argument with $\Gamma_{\Brig}E$ and $\Lambda_{\Brig}E$ yields the second asserted equivalence.
\end{proof}

\begin{defi} \label{bott-element--defi}
Let $E\in\PSh(\Adic_\base^\lft,\Dcal)$ carrying a $\K$-module structure. We fix a map $t\colon\SS\to\Omega\K(\ZZ[t,t^{-1}])$ representing the class $t\in\K_1(\ZZ[t,t^{-1}])$. Then we get an induced map
\[ \lambda \colon E(X) \To[-\cup t] \Omega E(X\times\Gm) \To[\partial] \Lambda_{\Arig}E(X) \]
where the first map comes from the cup product on K-theory and the second map is from the shifted fibre sequence defining $\Lambda_{\Arig}E$ above.
\end{defi}

\begin{defi} \label{analytic-bass-construction--defi}
Let $E\in\PSh(\Adic_\base^\lft,\Dcal)$ carrying a $\K$-module structure. We write $\Lambda E \coloneqq \Lambda_{\Arig}E \simeq\Lambda_{\Brig}E$ and define the analytic \emph{analytic Bass construction of} $E$  as the colimit
\[ E^\textup{B} \coloneqq \colim\bigl( E \To[\lambda] \Lambda E \to[\lambda] \Lambda^2 E \To[\lambda] \ldots \bigr) \]
within the category $\PSh(\Adic_\base^\lft,\Dcal)$.
\end{defi}

As a formal consequence from our construction we get the following.

\begin{prop} \label{bass-fundamental-theorem-abstractly--prop}
Let $\Dcal\in\{\Sp,\Pro(\Sp),\Cond^\omega(\Sp)\}$.
Let $E\in\PSh(\Adic_\base^\lft,\Dcal)$ carrying a $\K$-module structure and assume that the map $\lambda \colon E\to \Lambda E$ is an equivalence. Then for every $n\in\ZZ$ and every $X\in\Rig_\base$ we have an exact sequence
\[ 0 \To E_n(X) \To E_n(X\times\Arig) \oplus E_n(X\times\AA^{-1}) \To[\pm] E_{n}(X\times\Gm) \To[\partial] E_{n-1}(X) \To 0 \]
in the category $\Dcal^\heartsuit$. Furthermore, the map $\partial$ is split by the map
\[
-\cup t \,\colon\, E_{n-1}(X) \To E_n(X\times\Gm) 
\]
induced by the cup product with $t$ from Definition~\ref{bott-element--defi}.
\end{prop}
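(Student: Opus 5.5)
The plan is to derive the sequence from the fibre sequence that defines $\Lambda_{\Arig}E$, using the hypothesis $E\simeq\Lambda E$ to identify the third term, and then to extract the short exact pieces and the splitting from the Bott element. By Definition~\ref{bass-delooping--defi} there is a fibre sequence
\[ \Lambda_{\Arig}E(X) \To \Gamma_{\Arig}E(X) \To E(X\times\Gm), \]
where $\Gamma_{\Arig}E(X)$ is the pushout of $E(X\times\Arig)\leftarrow E(X)\to E(X\times\AA^{-1})$. Since $\Dcal$ is stable, this pushout is also the cofibre of the map $E(X)\to E(X\times\Arig)\oplus E(X\times\AA^{-1})$, $e\mapsto(e,-e)$. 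Rotating the two sequences and substituting $E(X)\simeq\Lambda E(X)$ should give a single long exact sequence in $\Dcal^\heartsuit$, taken for the standard $t$-structures: on $\Pro(\Sp)$ the one of Lemma~\ref{lem-t-structure-pro}, on $\Cond^\omega(\Sp)$ the one constructed above, where homotopy objects are computed levelwise, respectively as sheafified presheaf homotopy groups. This long sequence reads
\[ \cdots\to E_n(X)\to E_n(X\times\Arig)\oplus E_n(X\times\AA^{-1})\to E_n(X\times\Gm)\To[\partial] \pi_{n-1}\Lambda E(X)\cong E_{n-1}(X)\to E_{n-1}(X\times\Arig)\oplus E_{n-1}(X\times\AA^{-1})\to\cdots. \]

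Concretely, I would first apply $\pi_*$ to the cofibre sequence $E(X)\to E(X\times\Arig)\oplus E(X\times\AA^{-1})\to\Gamma_{\Arig}E(X)$. Then I would use the fibre sequence for $\Lambda$ to identify $\Sigma\Lambda E(X)$ with $\cofib(\Gamma_{\Arig}E(X)\to E(X\times\Gm))$. Splicing these gives the displayed long exact sequence. The map $\partial$ is, up to sign, the boundary of the second sequence composed with $\pi_*\lambda^{-1}$.

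Exactness at the two ends, i.e.\ injectivity of the first map and surjectivity of $\partial$, should both follow from a single fact: $\partial\circ(-\cup t)=\id$ on $E_{n-1}(X)$. Granting this, $\partial$ is surjective and split. The next map, $E_{n-1}(X)\to E_{n-1}(X\times\Arig)\oplus E_{n-1}(X\times\AA^{-1})$, is then zero, because it is preceded by the surjection $\partial$. Hence the first map of the sequence in each degree is injective, and the long sequence breaks into the claimed four-term exact sequences.

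To prove $\partial\circ(-\cup t)=\id$, I would use Definition~\ref{bott-element--defi}. There $\lambda$ is defined as the composite $E(X)\To[-\cup t]\Omega E(X\times\Gm)\To[\partial]\Lambda_{\Arig}E(X)$, with $\partial$ the connecting map of the defining fibre sequence. On homotopy objects this composite is $\pi_{n-1}\lambda$, which is an isomorphism by hypothesis. After the identification $\pi_{n-1}\Lambda E\cong E_{n-1}$ via $\lambda$, the composite becomes the identity by construction, provided the boundary in the long exact sequence is normalised through $\lambda^{-1}$ as above. I expect the main obstacle to be bookkeeping: keeping the signs and the shifts $\Omega$ versus $\Sigma$ consistent, so that the splitting is literally $-\cup t$ and not its negative, and checking that homotopy objects behave well in each of the three choices of $\Dcal$ (the pointwise description of $\pi_n$ in $\Pro(\Sp)$, the sheafified one in $\Cond^\omega(\Sp)$). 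Since only the long exact sequence of a fibre sequence in a stable category with $t$-structure is used, this should be uniform across the three cases.
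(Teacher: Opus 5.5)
There is a genuine gap in how you assemble the pieces. Your displayed ``single long exact sequence'' is not the long exact sequence of any fibre sequence. Write $P\coloneqq E(X\times\Arig)\oplus E(X\times\AA^{-1})$. By the octahedral axiom, the fibre of the composite $P\to\Gamma_{\Arig}E(X)\to E(X\times\Gm)$ is an extension of $\Lambda_{\Arig}E(X)$ by $E(X)$, not $E(X)$ itself. The long exact sequences of $E(X)\to P\to\Gamma_{\Arig}E(X)$ and of $\Lambda_{\Arig}E(X)\to\Gamma_{\Arig}E(X)\to E(X\times\Gm)$ can only be spliced once you know that each breaks into short exact sequences.

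Under the hypotheses, your sequence is in fact not exact at $E_{n-1}(X)$. Surjectivity of $\partial$ would force $E_{n-1}(X)\to\pi_{n-1}P$ to be zero, as you say. But this map is split injective, and one sentence later you need it to be injective. Both hold only if $E_{n-1}(X)=0$, so the step ``zero, hence injective'' is contradictory.

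The missing idea is that injectivity of $E_n(X)\to\pi_nP$ does not come from $\partial\circ(-\cup t)=\pi_{n-1}\lambda$ at all; that identity only controls the $\Lambda$-sequence. Injectivity comes from the zero section of the projection $X\times\Arig\to X$, which makes $E_n(X)\to E_n(X\times\Arig)$ split injective. This is exactly what the paper uses. It kills the boundary maps of the $\Gamma$-sequence and gives $0\to E_n(X)\to\pi_nP\to\pi_n\Gamma_{\Arig}E(X)\to0$. Separately, your correct observation that $\pi_{n-1}\lambda=\partial\circ(-\cup t)$ is an isomorphism makes $\partial$ surjective. This gives $0\to\pi_n\Gamma_{\Arig}E(X)\to E_n(X\times\Gm)\to\pi_{n-1}\Lambda_{\Arig}E(X)\to0$. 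Splicing these two families of short exact sequences, and identifying $\pi_{n-1}\Lambda_{\Arig}E(X)\cong E_{n-1}(X)$ via $\lambda$, yields the four-term sequence together with the splitting by $-\cup t$.
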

\begin{proof}
We follow verbatimly the proof for the $\Brig$-Bass Fundamental Theorem \cite[Prop.~4.13]{kst-i}. By defintion of $\Gamma_{\Arig}$ and since the projection $X\times \Arig \to X$ admits a split, we have short exact sequences
\[ 0 \To E_n(X) \To E_n(X\times\Arig) \oplus E_n(X\times\AA^{-1}) \To \Gamma_{\Arig}E_n(X) \To 0 \]
and by defintion of $\Lambda_{\Arig}$ we have a long exact sequence
\[ \ldots \To \Lambda_{\Arig}E_n(X) \To \Gamma_{\Arig}E_n(X) \To E_n(X\times\Gm) \To[\partial] \Lambda_{\Arig}E_{n-1}(X) \To \ldots. \]
Since we have assumed the map $\lambda \colon E\to \Lambda_{\Arig}E$ to be an equivalence, the boundary morphisms $\partial$ are surjective so that we obtain short exact sequences
\[ 0 \To \Gamma_{\Arig}E_n(X) \To E_n(X\times\Gm) \To[\partial] \Lambda_{\Arig}E_{n-1}(X) \To 0. \]
Now the claim follows by splicing together these two short exact sequences.
\end{proof}

\begin{cor} \label{bass-fundamental-theorem-for-Kan--cor}
For every $n\in\ZZ$ and every $X\in\Adic_\base^\lft$ there is an exact sequence
\[ 0 \To \Kanu_n(X) \To \Kanu_n(X\times\Arig) \oplus \Kanu_n(X\times\AA^{-1}) \To[\pm] \Kanu_{n}(X\times\Gm) \To[\partial] \Kanu_{n-1}(X) \to 0 \]
of pro-abelian groups where the map $\partial$ has a split.
\end{cor}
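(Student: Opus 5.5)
The plan is to deduce Corollary~\ref{bass-fundamental-theorem-for-Kan--cor} from Proposition~\ref{bass-fundamental-theorem-abstractly--prop} with $\Dcal=\Pro(\Sp)$ (or $\Pro(\Sp^+)$) and $E=\Kanu$, extended to $\Adic_\base^\lft$ by analytic descent. The only hypothesis to verify is that $\lambda\colon\Kanu\to\Lambda\Kanu$ is an equivalence. Lemma~\ref{bass-constructions-agree--lem} gives $\Lambda_{\Arig}\Kanu\simeq\Lambda_{\Brig}\Kanu$. So it suffices to show that, under this identification, the $\Arig$-Bott map $\lambda$ corresponds to the $\Brig$-Bott map of Kerz--Saito--Tamme, and then to use that the $\Brig$-map is an equivalence for $\Kanu$.

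Step 1 (compatibility of Bott maps). The unit $t\in\K_1(\ZZ[t,t^{-1}])$ restricts to the unit $t\in\K_1(R\skp{t,t^{-1}})$ along $\HH_\textup{m}\inj\Gm$. The restriction maps along $\Brig\inj\Arig$, $\BB^{-1}\inj\AA^{-1}$ and $\HH_\textup{m}\inj\Gm$ give a map of the defining squares. This yields a map $\Lambda_{\Arig}E\to\Lambda_{\Brig}E$, and by naturality of the cup product it is compatible with the two maps $E\to\Omega E(-\times\Gm)\to\Lambda$ and $E\to\Omega E(-\times\HH_\textup{m})\to\Lambda$. I still need to check that this comparison map is the equivalence of Lemma~\ref{bass-constructions-agree--lem}. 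Both are obtained by comparing to $\Sigma\textup{fib}(E(X\times\PP^1)\to E(X))$ through compatible maps of squares, since both squares receive a map from the $\PP^1$-square. So the triangle commutes.

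Step 2 (the $\Brig$-map is an equivalence). By definition, $\Kanu=(\Kanu_{\geq1})^{\textup{B}}$ is the colimit of iterated $\Lambda_{\Brig}$ along $\lambda$. Hence $\lambda\colon\Kanu\to\Lambda_{\Brig}\Kanu$ is an equivalence, by the standard argument that $\Lambda_{\Brig}$ commutes with sequential colimits: it is built from finite limits and colimits in the stable category $\Pro(\Sp^+)$, and sequential colimits of presheaves are computed pointwise. This is \cite[\S6.3, Lem.~6.13]{kst-i} on affinoids. I then extend to all $X\in\Adic_\base^\lft$ by analytic descent, since both sides are analytic sheaves ($\Lambda$ of a sheaf is a sheaf, being a finite limit of sheaves $E(-\times Y)$).

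Step 3. Apply Proposition~\ref{bass-fundamental-theorem-abstractly--prop} to obtain the exact sequence in $\Pro(\Sp)^\heartsuit=\Pro(\Ab)$, where homotopy objects are the pointwise pro-homotopy groups by Lemma~\ref{lem-t-structure-pro}. The splitting of $\partial$ by $-\cup t$ is also part of that proposition. I expect the main obstacle to be Step 2's interchange of $\Lambda$ with the sequential colimit in $\Pro(\Sp^+)$. Finite limits commute with filtered colimits there by stability. For the sheaf extension, however, I must be careful that colimits of presheaves need not remain sheaves. If that causes trouble, I will instead argue affinoid-locally and use the uniform boundedness from Proposition~\ref{prop:weibel-vanising-Kcont} together with \cite[Lem.~2.8]{kst-ii}.
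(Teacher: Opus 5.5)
Your overall route is the paper's: apply Proposition~\ref{bass-fundamental-theorem-abstractly--prop} to $E=\Kanu$ once $\lambda\colon\Kanu\to\Lambda\Kanu$ is known to be an equivalence. Your Step~1 makes explicit something the paper leaves implicit when it writes $\Lambda E\coloneqq\Lambda_{\Arig}E\simeq\Lambda_{\Brig}E$, namely that the $\Gm$-Bott map corresponds to the $\HH_\textup{m}$-Bott map. That step is fine. Like Lemma~\ref{bass-constructions-agree--lem}, it uses that $\Kanu$ is an analytic sheaf, so that the $\PP^1$-squares are cartesian. The extension from affinoids by analytic descent is also fine. The problem is Step~2.

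\textbf{The gap in Step~2.} You claim $\lambda$ is an equivalence on $(\Kanu_{\geq 1})^{\textup{B}}$ ``because $\Lambda_{\Brig}$ commutes with sequential colimits''. This is not a valid general principle.
\begin{itemize}
\item Commuting with colimits only shows that $\lambda_{E^{\textup{B}}}$ is the colimit of the levelwise maps $\lambda_{\Lambda^kE}\colon\Lambda^kE\to\Lambda(\Lambda^kE)$.
\item In the target diagram the transition maps are $\Lambda(\lambda_{\Lambda^kE})$, not $\lambda_{\Lambda^{k+1}E}$. So this colimit map is the shift equivalence only if you can compare $\Lambda\lambda$ with $\lambda\Lambda$.
\item Abstract counterexample: on $\Sp$ take $\Lambda E=E\oplus E$ and $\lambda=(\id,0)$. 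Then $\Lambda$ is exact and commutes with all colimits. Yet $E^{\textup{B}}\simeq\bigoplus_{\NN}E$, and $\lambda_{E^{\textup{B}}}$ is the inclusion of the first summand of $E^{\textup{B}}\oplus E^{\textup{B}}$, which is not an equivalence for $E\neq 0$.
\end{itemize}

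\textbf{What is actually needed.} A genuinely K-theoretic input is required, and this is where the $\K$-module structure really enters. For example, one needs a fundamental theorem for $\Kanu_{\geq 1}$ in a range of degrees, propagated to the iterates $\Lambda^k\Kanu_{\geq1}$. Then the levelwise maps $\lambda_{\Lambda^kE}$ are isomorphisms on each $\pi_m$ for $k\gg 0$, and their colimit is an equivalence. That $\lambda\colon\Kanu\to\Lambda\Kanu$ is an equivalence is exactly \cite[Cor.~2.6]{kst-ii}, and citing it is the whole of the paper's proof.

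The interchange of $\Lambda$ with the sequential colimit, which you single out as the main obstacle, is the harmless part. Replacing your Step~2 argument by this citation closes the gap.
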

\begin{proof}
This follows from Proposition~\ref{bass-fundamental-theorem-abstractly--prop} since the map $\lambda \colon \Kan \to \Lambda\Kan$ is an equivalence \cite[Cor.~2.6]{kst-ii}.
\end{proof}

\begin{cor}
\label{cor-K-P1-spectrum}
  There exists a $\PP^1$-action on $\ZZ\times\BGL$ such that the induced $\PP^1$-spectrum
    $$
    \textup{KGL}^{\an}\coloneqq (\ZZ\times\BGL,\ZZ\times\BGL,\dots)\in \RigSH(\base)
    $$
    is a commutative algebra object in $\RigSH(\base)$. Furthermore, under assumption \ref{spadesuit-condition-analytic} the $\PP^1$-spectrum $\textup{KGL}^{\an}$ represents (non-connective) analytic $\K$-theory.
\end{cor}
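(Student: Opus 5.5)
The plan follows the algebraic template of Riou and Morel--Voevodsky, transposed to the analytic setting. There are three steps.

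\emph{First}, put a commutative monoid structure on $\ZZ\times\BGL$ in $\RigH(\base)$ together with a Bott map. Algebraically, $\ZZ\times\BGL$ is a commutative monoid in $\textup{H}(\Spec(R),\Spc)$ via direct sum and tensor product of bundles, that is, via the Grassmannian models $\textup{Grass}$. It carries a Bott element $\beta\colon \PP^1\wedge(\ZZ\times\BGL)\to\ZZ\times\BGL$ induced by the class $[\Ocal(1)]-1$. Together these yield the algebraic ring spectrum $\textup{KGL}$. I would transport all of this data along the symmetric monoidal colimit-preserving analytification functor of Remark~\ref{rem-analyitification-monoidal}. By Proposition~\ref{Prop:BGL=Grass} and the argument of Corollary~\ref{cor-algebraic-analytic-K}, the analytification of $\ZZ\times\BGL$ is $\ZZ\times\BGL$ in $\RigH(\base)$. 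Moreover, $(\PP^1)^\an$ is the analytic $\PP^1$.

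\emph{Second}, stabilise. Since analytification induces a symmetric monoidal functor $\textup{SH}(\Spec R)\to\RigSH(\base)$ compatible with $\Sigma^\infty_{\PP^1}$, we set $\textup{KGL}^\an$ to be the image of $\textup{KGL}$. This image is the $\PP^1$-spectrum $(\ZZ\times\BGL,\ZZ\times\BGL,\dots)$ with bonding maps the analytified Bott maps. Because the functor is symmetric monoidal, $\textup{KGL}^\an$ is a commutative algebra object in $\RigSH(\base)$. This settles the first assertion unconditionally.

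\emph{Third}, show representability under \ref{spadesuit-condition-analytic}. By Theorem~\ref{thm:representability}, the levels represent $\Omega^\infty\Kanu_{\geq 0}$, so $\Omega^\infty_{\PP^1}\Sigma^{n}_{\PP^1}\textup{KGL}^\an$ represents connective analytic K-theory in each level. Computing $[\Sigma^\infty X[-n],\textup{KGL}^\an]$ as a colimit over $\PP^1$-loops, it then suffices to show that the adjoint bonding map
\[ \Omega^\infty\Kanu_{\geq 0}(X) \To \Omega_{\PP^1}\Omega^\infty\Kanu_{\geq 0}(X) \]
identifies, on homotopy in degrees $\geq 0$, with the Bass map $\lambda$. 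Here $\Omega_{\PP^1}$ is computed as $\fib(E(X\times\PP^1)\to E(X))$. By Lemma~\ref{bass-constructions-agree--lem} this fibre is $\Omega\Lambda_{\Arig}E(X)$, so a $\PP^1$-loop amounts to one Bass step combined with an $S^1$-loop. By Corollary~\ref{bass-fundamental-theorem-for-Kan--cor}, $\lambda$ is an equivalence for $\Kanu$. Iterating this, the $n$-th level recovers $\Omega^\infty$ of the non-connective $\Kanu$ shifted by $n$, so $\textup{KGL}^\an$ represents $\Kanu$ in all degrees.

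The main obstacle is the compatibility in the third step: one must check that the analytified Bott element $[\Ocal(1)]-1$ acts via the cup product with $t\in\K_1(\ZZ[t,t^{-1}])$ of Definition~\ref{bott-element--defi}, up to sign. I would first verify this on algebraic K-theory, where it is the classical projective bundle and Bass compatibility. I would then push it through the natural map $\K\to\Kanu$ and the equivalence of Theorem~\ref{thm:representability}, using the $\K$-module structure of $\Kanu$.
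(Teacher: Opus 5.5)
Your route is the paper's. You take the classical $\textup{KGL}$ with its $\PP^1$-structure and commutative ring structure and transport it along the symmetric monoidal analytification functor, which gives the algebra structure on $\textup{KGL}^\an$ unconditionally. You then obtain representability from three inputs: Theorem~\ref{thm:representability} applied levelwise, Lemma~\ref{bass-constructions-agree--lem}, and the fact that the Bass map $\lambda$ is an equivalence for $\Kanu$ (the Kerz--Saito--Tamme input behind Corollary~\ref{bass-fundamental-theorem-for-Kan--cor}). You are more explicit than the paper about the one nontrivial compatibility, namely that the analytified Bott map is $\partial(-\cup t)$ up to sign. The paper only covers this by pointing to the Bass-construction of the $\KH$ ring spectrum.

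There is, however, an off-by-one error in your third step that makes it inconsistent as written. It is not true that $\fib\bigl(E(X\times\PP^1)\to E(X)\bigr)\simeq\Omega\Lambda_{\Arig}E(X)$: the displayed formula in Lemma~\ref{bass-constructions-agree--lem} has the arrow reversed.
\begin{itemize}
\item Its proof shows that the fibre of $\alpha$ is a cartesian square with corners $\fib(E(X)\to E(X\times\PP^1))$, $0$, $0$, $\Lambda_{\Arig}E(X)$. This gives $\Lambda_{\Arig}E(X)\simeq\Sigma\fib\bigl(E(X)\to E(X\times\PP^1)\bigr)$.
\item Pullback along the projection is split by a section, so this is $\fib\bigl(E(X\times\PP^1)\to E(X)\bigr)$ itself, with no extra loop. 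As a check, for algebraic $\K$ of a regular $X$ both sides are $\K(X)$, by Bass and the projective bundle formula.
\end{itemize}
With your version, three things go wrong:
\begin{itemize}
\item The adjoint bonding map would land in $\Omega\Lambda$ rather than being $\lambda$, contradicting the sentence just before it.
\item The constant levels $\ZZ\times\BGL$ could not form an $\Omega$-spectrum.
\item Your conclusion ``the $n$-th level is $\Omega^\infty$ of $\Kanu$ shifted by $n$'' contradicts $\textup{KGL}^\an=(\ZZ\times\BGL,\ZZ\times\BGL,\dots)$.
\end{itemize}

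The correct bookkeeping runs as follows.
\begin{itemize}
\item A $\PP^1$-loop is exactly one Bass step. So $\lambda$ being an equivalence, together with the splitting (which lets you pass to connective covers), gives $\ZZ\times\BGL\simeq\Omega_{\PP^1}(\ZZ\times\BGL)$.
\item The negative groups come from $\Gm$-loops instead, because $\PP^1\simeq S^1\wedge\Gm$. Corollary~\ref{bass-fundamental-theorem-for-Kan--cor} together with $\Arig$-invariance gives $\Omega_{\Gm}\Omega^\infty\Kanu_{\geq 0}(X)\simeq\Omega^\infty\tau_{\geq 0}\Sigma\Kanu(X)$.
\item $\PP^1$-periodicity then yields $[\Sigma^\infty X_+,\Sigma^n\textup{KGL}^\an]\cong[\Sigma^\infty X_+\wedge\Gm^{\wedge n},\textup{KGL}^\an]\cong\Kanu_{-n}(X)$.
\end{itemize}
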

\begin{proof}
    Classically, it is known that $\ZZ\times \BGL\in \textup{H}(\base)$, admits a $\PP^1$-action \cite[\S 6.2]{voevodsky-icm-1998}, \cite{PPR-KGL}, \cite[\S 3.2]{MotCoh}.
    This action yields a $\PP^1$-spectrum $\textup{KGL}\coloneqq(\ZZ\times \BGL,\ZZ\times \BGL,\dots)$ representing homotopy invariant K-theory. According to the discussion in \cite[\S 4.4]{DY}, we see that the analytification of $\textup{KGL}$ yields a $\PP^1$-spectrum $\textup{KGL}^{\an}$ inside\linebreak $\RigSH(\base,\Cond^{\omega}(\Sp^+))$. Furthermore, by Lemma \ref{bass-constructions-agree--lem} and the construction of homotopy invariant K-theory ring spectrum via the Bass-construction \cite{k-book}, we see that $\textup{KGL}^{\an}$ represents (non-connective) analytic K-theory under assumption \ref{spadesuit-condition-analytic} using Theorem \ref{thm:representability}.
\end{proof}

\begin{appendix}

\section{Localisation sequences of dualisable categories}
\label{sec:dualisable-categories}
This appendix contains preliminaries on localisation sequences, and will be used in Lemma~\ref{lem:CartesianDiagramNuc}. We wanted to apply \cite[Lem.~2.5]{andreychev-thesis}, but its statement is false: for example, the condition (ii) in loc.\@ cit.\@ is insensitive to replacing $G \colon \Dcal \to \Ecal$ by $(G,0) \colon \Dcal \to \Ecal \times \Ecal'$ for any $\Ecal'$ in $\PrLst$. Besides, the notion of localisation sequences depends on the ambient category (e.g.\@ $\PrLst$, $\PrLLst$, etc.), but this was often not spelled out clearly in \cite{andreychev-thesis}. We give a more detailed account of this below.

Recall that a \emph{localisation sequence} in $\PrL$ (resp. $\PrLL$, $\PrLst$, $\PrLLst$, $\Catdual$) is a cofibre sequence $\Acal \to \Bcal \to \Ccal$ with the functor $\Acal \to \Bcal$ being fully faithful.

It turns out that in $\Pr_\stable^{\L\L}$, a localisation sequence is also a fibre sequence according to the following criteria:

\begin{lem}
    \label{lem:fibrecofibre}
    Consider the following sequence in $\PrL$:
    \begin{equation}
        \label{eq:verdierSeq}
        \Acal \xrightarrow{F} \Bcal \xrightarrow{G} \Ccal,
    \end{equation}
    whose right adjoint is denoted by $\Ccal \xrightarrow{G^R} \Bcal \xrightarrow{F^R} \Acal$.
    Consider the following statements:
    \begin{enumerate}[label=\textup{(\roman*)}]
        \item The sequence (\ref{eq:verdierSeq}) is a cofibre sequence and $F$ is fully faithful.
        \item The sequence (\ref{eq:verdierSeq}) is a fibre sequence and $G^R$ is fully faithful.
        \item The sequence (\ref{eq:verdierSeq}) is a fibre-cofibre sequence.
        \item The natural sequence $F F^R \to \id_\Bcal \to G^R G$ gives rise to a fibre-cofibre sequence in $\Bcal$ when applied to any $b \in \Bcal$.
    \end{enumerate}
    We have the following implications:
    \begin{enumerate}[label=\textup{(\arabic*)}]
        \item The (iii) implies (i) and (ii).
        \item If $\Bcal$ is stable and $F^R$ preserves cofibre sequences, then (i) implies (ii), (iii) and (iv).
        \item If $\Bcal$ is stable and $G$ preserves fibre sequences, then (ii) implies (i), (iii) and (iv).
        \item If all the categories $\Acal$, $\Bcal$ and $\Ccal$ are stable, then the (i), (ii) and (iii) are equivalent to each other, and they imply (iv).
        \item Assume that (\ref{eq:verdierSeq}) is a cofibre sequence. If $F^R$ preserves colimits, then $G^R$ preserves colimits.
        \item Assume that $\Acal$ and $\Bcal$ are stable and that (i) holds. If $G^R$ preserves colimits, then $F^R$ preserves colimits.
    \end{enumerate}
\end{lem}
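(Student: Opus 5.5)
We prove the six implications separately, using only the adjunctions $F\dashv F^R$, $G\dashv G^R$ and the standard descriptions of limits and colimits in $\PrL$. Colimits in $\PrL$ are computed as limits of the right adjoints in $\widehat{\Cat}_\infty$. Hence (\ref{eq:verdierSeq}) is a cofibre sequence iff $G^R$ identifies $\Ccal$ with the full subcategory of $b\in\Bcal$ such that $F^R b\simeq 0$. Dually, it is a fibre sequence iff $F$ identifies $\Acal$ with the full subcategory of $b$ such that $Gb\simeq 0$, since limits in $\PrL$ are created in $\widehat{\Cat}_\infty$.

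For (1), assume (iii). From the cofibre description, $G^R$ is fully faithful onto $\ker F^R$. From the fibre description, $F$ is fully faithful onto $\ker G$. This gives (i) and (ii).

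For (2), assume $\Bcal$ stable, $F^R$ exact, and (i). For $b\in\Bcal$ form the cofibre $c$ of the counit $FF^Rb\to b$. Applying $F^R$ and using $F^RF\simeq\id$ (full faithfulness of $F$) gives $F^Rc\simeq 0$. So $c$ lies in $\ker F^R\simeq\Ccal$, i.e.\ $c\simeq G^RGc'$ for some $c'$. A standard adjunction argument then identifies $c\simeq G^RGb$, which gives (iv). From (iv) we read off $\ker G=\operatorname{im}F$, since $G^RGb=0$ forces $b\simeq FF^Rb$. Conversely $GF=0$, because $G^R$ lands in $\ker F^R$, so $\Map(GFa,c)=\Map(a,F^RG^Rc)=0$. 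Thus the sequence is a fibre sequence, giving (ii), and together with (i) this is (iii).

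Statement (3) is the dual argument. We use the unit $b\to G^RGb$ and take its fibre $f$. Then $G$ exact and $GG^R\simeq\id$ give $Gf=0$, so $f$ lies in $\ker G\simeq\Acal$. This again yields (iv), and hence $\ker F^R=\operatorname{im}G^R$.

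For (4), in the fully stable case every left adjoint between stable categories is exact, and so is every right adjoint. So the hypotheses of (2) and (3) hold automatically, and (1) closes the circle.

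For (5), $G^R$ is the inclusion of $\ker F^R$. If $F^R$ preserves colimits, this kernel is closed under colimits in $\Bcal$. Hence the inclusion $G^R$ preserves colimits.

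For (6), use the fibre-cofibre sequence $FF^Rb\to b\to G^RGb$ from (2) or (4); in (6) the exactness of $F^R$ holds since $\Acal,\Bcal$ are stable. This expresses $FF^R$ as $\fib(\id\to G^RG)$, which commutes with colimits because $G^R$ and $G$ do and fibres commute with colimits in stable $\Bcal$. Then $F^R\simeq F^RFF^R$ preserves colimits, since $F$ does and $F^R$ is the left inverse on its image. More precisely, $F$ is fully faithful and colimit preserving, so it reflects colimits, and $F(F^R\colim b_i)\simeq\colim FF^Rb_i\simeq F(\colim F^Rb_i)$.

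The main obstacle is the bookkeeping in (2) and (3). Specifically, one must check that the cofibre $c$ of the counit really is $G^RGb$ via the universal property, and not merely some object of the kernel.
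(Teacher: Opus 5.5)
Your proposal is correct and follows essentially the same route as the paper. For (2) you take the cofibre of the counit $FF^Rb\to b$, note it lies in $\ker F^R\simeq\Ccal$, identify it with $G^RGb$ and read off $\ker G$; (3) is the dual argument, (4) is formal, (5) uses closure of $\ker F^R$ under colimits, and (6) writes $FF^R$ as $\fib(\id\to G^RG)$ and uses full faithfulness of $F$. The identification step you flag in (2) is done in the paper by applying $G$ to the cofibre sequence and using $GF\simeq 0$ and $GG^R\simeq\id$, both of which you already have.
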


\begin{proof}
    (1)
    It is clear.

    (2)
    Given $b \in \Bcal$, consider the cofibre sequence
    \begin{equation}
        \label{eq:cofibTest}
        F F^R(b) \to b \to b'.
    \end{equation}
    Apply $F^R$ to (\ref{eq:cofibTest}), which preserves cofibre sequences, one obtains a cofibre sequence
    \[F^R F F^R(b) \to F^R(b) \to F^R(b').\]
    By full faithfulness of $F$, we have $\id_\Acal \To[\simeq] F^R F$, hence its cofibre $F^R(b') \simeq 0$. The cofibre sequence hypothesis says that
    \[\Ccal \xrightarrow[\simeq]{G^R} \ker(F^R),\]
    and in particular $G^R$ is fully faithful (hence $G G^R \To[\simeq] \id_\Ccal$) and $G F \simeq 0$.
    One sees that $b' \simeq G^R(c)$ for some $c \in \Ccal$.
    Apply now $G$ to (\ref{eq:cofibTest}), one obtains a cofibre sequence $0 \to G(b) \to G G^R(c)$, so that $G(b) \To[\simeq] c$ by full faithfulness of $G^R$.
    In particular, there is a (natural) cofibre sequence
    \begin{equation}
        \label{eq:fibcofibinB}
        F F^R(b) \to b \to G^R G(b),
    \end{equation}
    which is also a fibre sequence by stability of $\Bcal$.
    
    It remains to check that $\Acal \xrightarrow{F} \ker(G)$ is an equivalence.
    Since $F$ is fully faithful, it suffices to prove that for any $b \in \Bcal$ such that $G(b) \simeq 0$, there exists $a \in \Acal$ such that $b \simeq F(a)$. It is clear since the fibre sequence (\ref{eq:fibcofibinB}) becomes $F F^R(b) \To[\simeq] \fib(b \to 0) \simeq b$.

    (3)
    Similar proof as (2).

    (4)
    It follows formally from (2) and (3).

    (5)
    By assumption, $G^R$ exhibits the fibre of a continuous morphism, hence it is also continuous, since the inclusion $\PrL \subset \widehat{\Cat}_\infty$ creates limits \cite[Proposition~5.5.3.13]{HTT}.

    (6)
    The assumptions imply that (i)--(iv) hold.
    Assume that $G^R$ preserves colimits.
    Using the cofibre sequence from (iv):
    \[F F^R(b_i) \to b_i \to G^R G(b_i)\]
    in $\Bcal$, and the property that $F$, $G$ and $G^R$ all preserve colimits, one obtains a cofibre sequence
    \[F (\colim_i F^R(b_i)) \to \colim_i b_i \to G^R G(\colim_i b_i).\]
    This sequence maps naturally to another cofibre sequence:
    \[F F^R(\colim_i b_i) \to \colim_i b_i \to G^R G(\colim_i b_i).\]
    Since it is also a fibre sequence in $\Bcal$, one obtains
    \[F (\colim_i F^R(b_i)) \To[\simeq] F F^R(\colim_i b_i).\]
    Since $F$ is fully faithful, We conclude that $\colim_i F^R(b_i) \To[\simeq] F^R(\colim_i b_i)$.
\end{proof}

\begin{cor}[{\cite[Lemma~2.5]{andreychev-thesis}}]
    \label{cor:fibrecofibreinPrLL}
    In $\PrLst$ (resp. in $\PrLLst$, resp. in $\Catdual$), consider the following sequence
    \begin{equation}
        \label{eq:fibrecofibreinPrLL}
        \Acal \xrightarrow{F} \Bcal \xrightarrow{G} \Ccal.
    \end{equation}
    The following assertions are equivalent:
    \begin{enumerate}[label=\textup{(\roman*)}]
        \item The sequence (\ref{eq:fibrecofibreinPrLL}) is a cofibre sequence and $F$ is fully faithful.
        \item The sequence (\ref{eq:fibrecofibreinPrLL}) is a fibre sequence and $G^R$ is fully faithful.
        \item The sequence (\ref{eq:fibrecofibreinPrLL}) is a fibre-cofibre sequence.
    \end{enumerate}
    Moreover, in this case, the natural sequence $F F^R \to \id_\Bcal \to G^R G$ gives rise to a fibre-cofibre sequence in $\Bcal$ when applied to any $b \in \Bcal$.
\end{cor}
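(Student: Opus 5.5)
The plan is to deduce the corollary from Lemma~\ref{lem:fibrecofibre}, and to check separately in each ambient category ($\PrLst$, $\PrLLst$, $\Catdual$) that the relevant fibres and cofibres have the expected shape. The main point is that in all three settings every category involved is stable. Part (4) of Lemma~\ref{lem:fibrecofibre} then gives the equivalence of (i), (ii), (iii) together with the final fibre-cofibre sequence $FF^R\to\id_\Bcal\to G^RG$. This holds provided the notions of "fibre sequence" and "cofibre sequence" used in the lemma (computed in $\PrL$) agree with those in the smaller ambient category.

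\textbf{Case $\PrLst$.} The inclusions $\PrLst\subset\PrL\subset\widehat{\Cat}_\infty$ create limits, and $\PrLst\subset\PrL$ is closed under colimits, since colimits in $\PrL$ are computed as limits along right adjoints. So fibres and cofibres agree with those in $\PrL$, and Lemma~\ref{lem:fibrecofibre}(4) applies verbatim.

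\textbf{Case $\PrLLst$.} Here all functors, including $F^R$ and $G^R$ where applicable, preserve colimits. I would show that a cofibre sequence in $\PrLLst$ with $F$ fully faithful is a cofibre sequence in $\PrLst$, and conversely. For the converse, the key input is Lemma~\ref{lem:fibrecofibre}(5),(6): once (i)--(iv) hold in $\PrLst$, the functors $F^R$ and $G^R$ preserve colimits simultaneously. So the whole sequence lies in $\PrLLst$.

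It remains to compare universal properties. A strongly continuous test functor $\Bcal\to\Dcal$ killing $\Acal$ factors uniquely through $G$ in $\PrLst$. The factorisation is $H\simeq H G^R$ up to the identification $G G^R\simeq\id$, and it has a colimit-preserving right adjoint $G H^R$. Hence the factorisation lies in $\PrLLst$. Dually, for fibres one uses that a strongly continuous functor into $\Bcal$ landing in $\ker G$ factors through $F$, with right adjoint $F^R\circ(-)^R$.

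\textbf{Case $\Catdual$.} Dualisable categories and strongly continuous functors form a subcategory of $\PrLLst$. I would argue that $\ker G$ and $\mathrm{cofib}$ are again dualisable when the sequence is a localisation: they are retracts of $\Bcal$ via $F^R$, respectively $G^R$, both of which are strongly continuous. Retracts of dualisable categories in $\PrLLst$ are dualisable. So the (co)fibres computed in $\PrLLst$ lie in $\Catdual$ and still satisfy the universal property there (cf.\ Remark~\ref{rem:fibcofib}).

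The main obstacle is this last comparison. Showing that the cofibre computed in $\PrLLst$ is dualisable and is also the cofibre in $\Catdual$ requires Efimov's retract characterisation of dualisability. I would first try to cite it directly rather than reprove it.
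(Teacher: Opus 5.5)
This is essentially the paper's proof (Lemma~\ref{lem:fibrecofibre}(4) for $\PrLst$, parts (5)--(6) to pass to $\PrLLst$, dualisability of retracts for $\Catdual$), except that where the paper cites Efimov you verify by hand that $\PrLst$-(co)fibres with strongly continuous structure maps are also $\PrLLst$-(co)fibres; note that in the fibre case the factorisation $\bar K$ of a test functor $K\simeq F\bar K$ has right adjoint $K^R\circ F$, not the formula you wrote. Also, $F^R$ and $G^R$ are only colimit-preserving and in general not strongly continuous, so the input you need (as in the paper) is that a retract in $\PrLst$, not in $\PrLLst$, of a dualisable category is dualisable.
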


\begin{proof}
    According to Lemma~\ref{lem:fibrecofibre} (4), the equivalences hold in $\PrL$ for $\Acal, \Bcal, \Ccal$ that are stable.
    Since the inclusion $\PrLst \subset \PrL$ creates limits \cite[Proposition~4.8.2.18]{HA}, and that colimits in $\PrLst$ are computed as the stabilisation of corresponding colimits in $\PrL$, the equivalences hold true also in $\PrLst$.

    Let us then consider the case of $\PrLLst$. Since the functor $\PrLLst \to \PrLst$ commutes with colimits \cite[Proposition~1.7]{efimov-k-theory}, the sequence (\ref{eq:fibrecofibreinPrLL}) in $\PrLLst$ is a cofibre sequence (resp. a localisation sequence) in $\PrLLst$ if and only if it becomes so in $\PrLst$. Also, the fibre of the morphism $G \colon \Bcal \to \Ccal$ in $\PrLLst$ is the largest full stable subcategory of the fibre in $\PrLst$ such that its inclusion into $\Bcal$ is strongly continuous.
    Therefore, the equivalences hold true in $\PrLLst$ as per Lemma~\ref{lem:fibrecofibre} (5)--(6).

    Finally, the statement for $\Catdual$.
    Given a sequence (\ref{eq:fibrecofibreinPrLL}) with $\Bcal$ dualisable. If $F$ is fully faithful (resp. $G^R$ is fully faithful), then $\Acal$ (resp. $\Ccal$) is dualisable, being retract of $\Bcal$ in $\PrLst$ \cite[Proposition~1.16]{efimov-k-theory}.
    We know how to compute cofibre and fibre in $\Catdual$: recall that $\Catdual \subset \PrLLst$ is a full subcategory stable under colimits \cite[Proposition~1.65]{efimov-k-theory}, and that the kernel in $\Catdual$ of a strongly continuous functor $G \colon \Bcal \to \Ccal$ is the largest dualisable full subcategory of its kernel $\ker(G)$ in $\PrLst$ such that the inclusion map $\Acal \to \Bcal$ is strongly continuous \cite[Proposition~1.84]{efimov-k-theory}.
    It follows that the equivalences hold true in $\Catdual$ by Lemma~\ref{lem:fibrecofibre} (5)--(6).
\end{proof}

\begin{rem}
    \label{rem:fibcofib}
    The last paragraph of the proof shows that if (\ref{eq:fibrecofibreinPrLL}) satisfies one of the conditions (i)--(iii) in $\PrLLst$ and if $\Bcal$ is dualisable, then (\ref{eq:fibrecofibreinPrLL}) satisfies the conditions (i)--(iii) also in $\Catdual$ (by the argument of retract).
\end{rem}

\section{Enriched Yoneda}
\label{app.yoneda}

In this section, we want to recall the enriched Yoneda lemma in the case that is of interest for us, following Hinich \cite{Hinich}. We will give a short summary of the constructions.

In the following, we will fix a category $\Ccal$ and a symmetric monoidal presentable category $\Ecal$. Note that the work of Hinich works in greater generality but we restrict to this case.

\begin{notation}
    We denote by $Y\colon \Ccal\op\times \Ccal\rightarrow \Spc$ the functor $(x,y)\mapsto \Hom(x,y)$. We denote the composition of this functor with the unit $\Spc\rightarrow \Ecal$ by $\Ytilde$.
\end{notation}

In \cite[\S 3.1.3]{Hinich} Hinich defines a category of quivers $\textup{Quiv}_{\Ccal}^{\textup{BM}}(\Ecal)$ as an $\infty$-operad\footnote{%
    In loc.\@ cit.\@ Hinich uses a different notion of operads, based on strong approximations of $\infty$-operads. We will ignore this detail as this section stays valid, having \cite[\S 2.7]{Hinich} in mind.
} fibred over the $\infty$-operad of bimodules $\textup{BM}$ (the $\infty$-operad with colours $\lbrace a_{+},m,a_{-}\rbrace$). The colours of $\textup{Quiv}_{\Ccal}^{\textup{BM}}(\Ecal)$ can be described as
\begin{enumerate}
    \item[$\bullet$] $\textup{Quiv}_{\Ccal}(\Ecal)\coloneqq\textup{Quiv}_{\Ccal}(\Ecal)_{a_{+}} \simeq \Fun(\Ccal^{op}\times \Ccal,\Ecal)$,
    \item[$\bullet$]  $\textup{Quiv}_{\Ccal}(\Ecal)_{m} \simeq \Fun(\Ccal,\Ecal)$, and 
    \item[$\bullet$] $\textup{Quiv}_{\Ccal}(\Ecal)_{a_{-}}\simeq\Ecal$.
\end{enumerate}

Let us remark that since $\Ecal$ is symmetric monoidal, we have an equivalence categories 
$$
    \textup{Quiv}_{\Ccal}(\Ecal)\simeq \textup{Quiv}_{\Ccal\op}(\Ecal).
$$
In particular, the functor $\Ytilde$ defines an associative algebra object $\Ytilde\op$ inside $\textup{Quiv}_{\Ccal\op}(\Ecal)$ via the above equivalence in the sense of \cite{Hinich}. Thus, $\Ytilde\op$ acts on $\Fun(\Ccal\op,\Ecal)$ from the left. 
\begin{defi}[\protect{\cite[\S 6.2.2]{Hinich}}]
    We define the category of $\Ecal$-presheaves on $\Ccal$ as 
    $$
        P_{\Ecal}(\Ytilde) \coloneqq \textup{LMod}_{\Ytilde\op}(\Fun(\Ccal\op,\Ecal)).
    $$
\end{defi}
There is a lot to digest in this definition of $\Ecal$-presheaves. But in our context we can greatly simplify the definition. 

First note $\textup{Quiv}_{\Ccal\op}(\Ecal)\simeq \textup{Quiv}_{\Ccal}(\Ecal)^{\textup{rev}}$,
where the superscript $\textup{rev}$ denotes the reversed monoidal category in the sense of \cite[Rem. 4.1.1.7]{HA} (this follows from the construction \cite[\S 6.2.1]{Hinich}). Further, by \cite[\S 4.7.3]{Hinich}, we have that $\Ytilde$ is the unit object inside  $\textup{Quiv}_{\Ccal}(\Ecal)$ and thus $Y\op$ is the unit object in $\textup{Quiv}_{\Ccal}(\Ecal)^{\textup{rev}}$. In particular, we obtain
\begin{equation}\tag{$\clubsuit$}
    \textup{LMod}_{Y\op}(\Fun(\Ccal\op,\Ecal))\simeq \Fun(\Ccal\op,\Ecal)
\end{equation}
under the above identifications \cite[Prop. 4.2.4.9]{HA}.
By design of $\textup{Quiv}_{\Ccal\op}^{\textup{BM}}(\Ecal)$, we get an induced right $\Ecal^{\textup{rev}}$ action on $\Fun(\Ccal\op,\Ecal)$, which is the same as a left $\Ecal$-action. This action is induced by the pointwise monoidal structure.

The left $\Ecal$-module structure on $\Fun(\Ccal\op,\Ecal)$ induces an enrichment as the $\otimes$-product of $\Ecal$ commutes with arbitrary colimits, i.e. we obtain an internal mapping object\linebreak $\Homline_{\Ecal}(F,G)\in \Ecal$ for all $F,G\in \Fun(\Ccal\op,\Ecal)$. By \cite[\S 6.2.4]{Hinich}, we obtain a map $Y'\colon \Ccal\rightarrow \textup{LMod}_{Y\op}(\Fun(\Ccal\op,\Ecal))$ with a compatible system of maps 
$$
    \Ytilde(x,y) \otimes Y'(x)\rightarrow Y'(y)
$$
for all $x,y\in \Ccal$. Under the equivalence ($\clubsuit$), we have $Y'\simeq \Ytilde$ \cite[\S 4.7.4]{Hinich} (see also the proof of \cite[6.2.6 Cor.]{Hinich}). This allows us to formulate an enriched Yoneda lemma.

\begin{lem}[Enriched Yoneda \protect{\cite[\S 6.2.7]{Hinich}}]
\label{lem-yoneda}
    Let $F\in \Fun(\Ccal\op,\Ecal)$ be an $\Ecal$-presheaf on $\Ccal$. Then, we have an equivalence 
    $$
        F(x)\simeq \Homline_{\Ecal}(\Ytilde(x),F)
    $$
    for all $x\in \Ccal$. Moreover, the induced functor $\Ytilde\colon \Ccal\rightarrow \Fun(\Ccal\op,\Ecal)$ is fully faithful. 
\end{lem}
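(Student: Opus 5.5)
The plan is to avoid Hinich's full operadic machinery for the pointwise statement. Instead, I would reduce the claim to an ordinary adjunction together with the ordinary Yoneda lemma in $\Ecal$, and invoke the identification $(\clubsuit)$ and $Y'\simeq\Ytilde$ only to check that the resulting equivalence is the one produced by Hinich's construction.

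\emph{Step 1: describe the representable $\Ecal$-presheaf.} Since $\Ytilde$ is $Y$ composed with the unit $\Spc\to\Ecal$, and that unit functor $K\mapsto\colim_K 1_\Ecal$ preserves colimits, we have $\Ytilde(x)(y)\simeq\colim_{\Hom(y,x)}1_\Ecal$. Because the left $\Ecal$-action on $\Fun(\Ccal\op,\Ecal)$ is pointwise and $\otimes$ commutes with colimits in each variable, for every $e\in\Ecal$ we get
\[ (e\otimes\Ytilde(x))(y)\simeq\colim_{\Hom(y,x)}e . \]
This is exactly the pointwise formula for the left Kan extension $x_!e$ of $e$ along $x\colon\ast\to\Ccal\op$. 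Hence $e\otimes\Ytilde(x)\simeq x_!(e)$, naturally in $e$ and $x$.

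\emph{Step 2: compute the internal Hom on objects.} By definition of the enrichment, $\Homline_\Ecal(\Ytilde(x),F)$ is characterised by
\[ \Map_\Ecal\bigl(e,\Homline_\Ecal(\Ytilde(x),F)\bigr)\simeq\Map_{\Fun(\Ccal\op,\Ecal)}\bigl(e\otimes\Ytilde(x),F\bigr). \]
By Step 1 and the adjunction $x_!\dashv\mathrm{ev}_x$, the right-hand side is $\Map_\Ecal(e,F(x))$, naturally in $e$. The ordinary Yoneda lemma in $\Ecal$ then gives $F(x)\simeq\Homline_\Ecal(\Ytilde(x),F)$, and this equivalence is natural in $x$ and $F$.

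\emph{Step 3: full faithfulness.} Apply Step 2 to $F=\Ytilde(y)$. This gives
\[ \Homline_\Ecal(\Ytilde(x),\Ytilde(y))\simeq\Ytilde(y)(x)=\Ytilde(x,y), \]
so the hom-objects of $\Ccal$, transported to $\Ecal$ via the unit, are recovered. Applying $\Map_\Ecal(1_\Ecal,-)$ recovers the underlying mapping spaces, and we then need to check that the comparison map is the one induced by the functor $\Ytilde$.

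\emph{Main obstacle.} The hard part is this compatibility check: the equivalence from Steps 2--3 must agree with the map induced by Hinich's structure maps $\Ytilde(x,y)\otimes Y'(x)\to Y'(y)$, under $(\clubsuit)$ and $Y'\simeq\Ytilde$. For this I would use that $\Ytilde$ is the unit of $\textup{Quiv}_\Ccal(\Ecal)$, so its left modules are all presheaves and the action map is the canonical evaluation $\colim_{\Hom(y,x)}F(x)\to F(y)$. Its adjoint is precisely the counit of $x_!\dashv\mathrm{ev}_x$ used in Step 2, which identifies the two constructions. Where this identification is delicate, I would cite \cite[\S 6.2.7]{Hinich} directly.
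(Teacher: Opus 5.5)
Your argument is correct, but it takes a different route from the paper. The paper gives no independent proof. It identifies Hinich's category of $\Ecal$-presheaves $\textup{LMod}_{\Ytilde\op}(\Fun(\Ccal\op,\Ecal))$ with $\Fun(\Ccal\op,\Ecal)$ via $(\clubsuit)$, because $\Ytilde$ is the unit of $\textup{Quiv}_\Ccal(\Ecal)$. It then identifies Hinich's Yoneda map $Y'$ with $\Ytilde$ and quotes \cite[\S 6.2.7]{Hinich}.

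You compute directly instead:
\begin{itemize}
\item First, $e\otimes\Ytilde(x)\simeq x_!e$. This is cleanest via $\Ytilde(x)\simeq x_!(1_\Ecal)$, since postcomposing with the colimit-preserving functors $\Spc\to\Ecal$ and $e\otimes-$ commutes with left Kan extension.
\item Then you use the adjunction $x_!\dashv\mathrm{ev}_x$ and the ordinary Yoneda lemma in $\Ecal$.
\end{itemize}
This needs only that $\Fun(\Ccal\op,\Ecal)$ is a presentable $\Ecal$-module with pointwise action. It produces exactly the pointwise formula $F(x)\simeq\Homline_\Ecal(\Ytilde(x),F)$, which is all that is used later in Theorem~\ref{thm:representability}. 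The operadic machinery is then needed only to make $\Ytilde$ an $\Ecal$-enriched functor with all coherences; that is what the paper's citation buys.

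Your compatibility sketch is right. Under Step~2, the map $\Ytilde(x,y)\to\Homline_\Ecal(\Ytilde(x),\Ytilde(y))\simeq\Ytilde(x,y)$ is induced by $f\mapsto f\circ\mathrm{id}_x$, so it is the identity by unitality of composition.

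One correction to Step~3. The equivalence $\Homline_\Ecal(\Ytilde(x),\Ytilde(y))\simeq\Ytilde(x,y)$ is the correct, enriched meaning of full faithfulness. Applying $\Map_\Ecal(1_\Ecal,-)$ only gives
\[ \Map_{\Fun(\Ccal\op,\Ecal)}(\Ytilde(x),\Ytilde(y))\simeq\Map_\Ecal(1_\Ecal,\Ytilde(x,y)). \]
This agrees with $\Hom_\Ccal(x,y)$ only when the unit functor $\Spc\to\Ecal$ is fully faithful. That holds for $\Ecal=\Cond^\omega(\Spc)$, the case used in the paper. It fails for $\Ecal=\Sp$, where one gets $\Omega^\infty\Sigma^\infty_+\Hom_\Ccal(x,y)$. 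So in this generality you should not claim that the underlying functor $\Ccal\to\Fun(\Ccal\op,\Ecal)$ is fully faithful; the statement must be read in the enriched sense.
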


\end{appendix}




\bibliographystyle{alphaurl}
\bibliography{literature}

@misc{stacks-project,
	author = {The {Stacks Project Authors}},
	howpublished = {\url{https://stacks.math.columbia.edu}},
	shorthand = {Stacks},
	title = {\textit{Stacks Project}}}

@book{HTT,
	author = {Lurie, Jacob},
	doi = {10.1515/9781400830558},
	isbn = {978-0-691-14049-0; 0-691-14049-9},
	mrclass = {18-02 (18B25 18E35 18G30 18G55 55U40)},
	mrnumber = {2522659},
	mrreviewer = {Mark Hovey},
	pages = {xviii+925},
	publisher = {Princeton University Press, Princeton, NJ},
	series = {Annals of Mathematics Studies},
	title = {Higher topos theory},
	volume = {170},
	year = {2009}}

@misc{HA,
	author = {Lurie, Jacob},
	howpublished = {\url{https://www.math.ias.edu/~lurie/papers/HA.pdf}},
	title = {Higher algebra},
	year = {2017}}

@misc{SAG,
	author = {Lurie, Jacob},
	howpublished = {\url{https://www.math.ias.edu/~lurie/papers/SAG-rootfile.pdf}},
	title = {Spectral algebraic geometry},
	year = {2018}}

@article {MV1,
    AUTHOR = {Morel, Fabien and Voevodsky, Vladimir},
     TITLE = {{${\bf A}^1$}-homotopy theory of schemes},
   JOURNAL = {Inst. Hautes \'{E}tudes Sci. Publ. Math.},
  FJOURNAL = {Institut des Hautes \'{E}tudes Scientifiques. Publications
              Math\'{e}matiques},
    NUMBER = {90},
      YEAR = {1999},
     PAGES = {45--143 (2001)},
      ISSN = {0073-8301},
   MRCLASS = {14F35 (19E08)},
  MRNUMBER = {1813224},
MRREVIEWER = {Marc Levine},
       URL = {http://www.numdam.org/item?id=PMIHES_1999__90__45_0},
}

@article {AGV,
    AUTHOR = {Ayoub, Joseph and Gallauer, Martin and Vezzani, Alberto},
     TITLE = {The six-functor formalism for rigid analytic motives},
   JOURNAL = {Forum Math. Sigma},
  FJOURNAL = {Forum of Mathematics. Sigma},
    VOLUME = {10},
      YEAR = {2022},
     PAGES = {Paper No. e61, 182},
       DOI = {10.1017/fms.2022.55},
       URL = {https://doi.org/10.1017/fms.2022.55},
}

@article {dahlhausen-k-theory-semi-valuation,
    AUTHOR = {Dahlhausen, Christian},
     TITLE = {Regularity of semi-valuation rings and homotopy invariance of
              algebraic {K}-theory},
   JOURNAL = {C. R. Math. Acad. Sci. Paris},
  FJOURNAL = {Comptes Rendus Math\'ematique. Acad\'emie des Sciences. Paris},
    VOLUME = {363},
      YEAR = {2025},
     PAGES = {989--1001},
      ISSN = {1631-073X,1778-3569},
       DOI = {10.5802/crmath.786},
       URL = {https://doi.org/10.5802/crmath.786},
}

@misc{DY,
      title={Towards $\mathbb{A}^1$-homotopy theory of rigid analytic spaces}, 
      author={Christian Dahlhausen and Can Yaylali},
      year={2024},
      eprint={2407.09606v3},
      archivePrefix={arXiv},
}

@book{fuji-kato,
	AUTHOR = {Fujiwara, Kazuhiro and Kato, Fumiharo},
     TITLE = {Foundations of {R}igid {G}eometry {I}},
    SERIES = {Monographs in Mathematics},
    VOLUME = {7},
 PUBLISHER = {EMS},
      YEAR = {2018},
     PAGES = {xxxiv+829},
      ISBN = {2523-5192},
}

@misc{DrewMHM,
      title={Motivic Hodge modules}, 
      author={Brad Drew},
      year={2018},
      eprint={1801.10129v1},
      archivePrefix={arXiv},
      primaryClass={math.AG}
}

@misc{Morrow,
      title={A historical overview of pro cdh descent in algebraic $K$-theory and its relation to rigid analytic varieties}, 
      author={Matthew Morrow},
      year={2016},
      eprint={1612.00418v1},
      archivePrefix={arXiv},
      primaryClass={math.KT}
}

@book {weinstein-scholze,
    AUTHOR = {Scholze, Peter and Weinstein, Jared},
     TITLE = {Berkeley lectures on {$p$}-adic geometry},
    SERIES = {Annals of Mathematics Studies},
    VOLUME = {207},
 PUBLISHER = {Princeton University Press, Princeton, NJ},
      YEAR = {2020},
     PAGES = {x+250},
      ISBN = {978-0-691-20209-9; 978-0-691-20208-2; 978-0-691-20215-0},
}

@article {kelly-saito-tamme,
    AUTHOR = {Kelly, Shane and Saito, Shuji and Tamme, Georg},
     TITLE = {On pro-cdh descent on derived schemes},
   JOURNAL = {Geom. Topol.},
  FJOURNAL = {Geometry \& Topology},
    VOLUME = {30},
      YEAR = {2026},
    NUMBER = {1},
     PAGES = {337--372},
      ISSN = {1465-3060,1364-0380},
       DOI = {10.2140/gt.2026.30.337},
       URL = {https://doi.org/10.2140/gt.2026.30.337},
}

@inproceedings {kerz-icm2018,
    AUTHOR = {Kerz, Moritz},
     TITLE = {On negative algebraic {$K$}-groups},
 BOOKTITLE = {Proceedings of the {I}nternational {C}ongress of
              {M}athematicians---{R}io de {J}aneiro 2018. {V}ol. {II}.
              {I}nvited lectures},
     PAGES = {163--172},
 PUBLISHER = {World Sci. Publ., Hackensack, NJ},
      YEAR = {2018},
      ISBN = {978-981-3272-91-0; 978-981-3272-87-3},
}

@article {kst-bass-quillen,
    AUTHOR = {Kerz, Moritz and Saito, Shuji and Tamme, Georg},
     TITLE = {Towards a non-archimedean analytic analog of the {B}ass-{Q}uillen conjecture},
   JOURNAL = {Journal of the Institute of Mathematics of Jussieu},
 PUBLISHER = {Cambridge University Press},
      YEAR = {2019},
     PAGES = {1-16},
       DOI = {10.1017/S147474801900001X},
       URL = {https://doi.org/10.1017/S147474801900001X}
}

@article {kst-i,
    AUTHOR = {Kerz, Moritz and Saito, Shuji and Tamme, Georg},
     TITLE = {K-theory of non-archimedean rings. {I}},
   JOURNAL = {Doc. Math.},
  FJOURNAL = {Documenta Mathematica},
    VOLUME = {24},
      YEAR = {2019},
     PAGES = {1365–1411},
       DOI = {10.25537/dm.2019v24.1365-1411},
}

@article {kst-ii,
    AUTHOR = {Kerz, Moritz and Saito, Shuji and Tamme, Georg},
     TITLE = {{$K$}-theory of non-{A}rchimedean rings {II}},
   JOURNAL = {Nagoya Math. J.},
  FJOURNAL = {Nagoya Mathematical Journal},
    VOLUME = {251},
      YEAR = {2023},
     PAGES = {669--685},
      ISSN = {0027-7630,2152-6842},
       DOI = {10.1017/nmj.2023.4},
       URL = {https://doi.org/10.1017/nmj.2023.4},
}

@article {kerz-strunk-tamme,
    AUTHOR = {Kerz, Moritz and Strunk, Florian and Tamme, Georg},
     TITLE = {Algebraic {$K$}-theory and descent for blow-ups},
   JOURNAL = {Invent. Math.},
  FJOURNAL = {Inventiones Mathematicae},
    VOLUME = {211},
      YEAR = {2018},
    NUMBER = {2},
     PAGES = {523--577},
      ISSN = {0020-9910,1432-1297},
       DOI = {10.1007/s00222-017-0752-2},
       URL = {https://doi.org/10.1007/s00222-017-0752-2},
}

@misc{condensed,
      title={Lectures on {C}ondensed {M}athematics}, 
      author={Peter Scholze},
      year={2026},
      eprint={2605.03658v1},
      archivePrefix={arXiv},
      primaryClass={math.NT},
}

@misc{analytic,
      title={Lectures on {A}nalytic {G}eometry}, 
      author={Peter Scholze},
      year={2026},
      eprint={2605.03655v1},
      archivePrefix={arXiv},
      primaryClass={math.AG},
}

@article {isaksen-calculating,
    AUTHOR = {Isaksen, Daniel C.},
     TITLE = {Calculating limits and colimits in pro-categories},
   JOURNAL = {Fund. Math.},
  FJOURNAL = {Fundamenta Mathematicae},
    VOLUME = {175},
      YEAR = {2002},
    NUMBER = {2},
     PAGES = {175--194},
      ISSN = {0016-2736},
       DOI = {10.4064/fm175-2-7},
       URL = {https://doi.org/10.4064/fm175-2-7},
}

@article {emmanouil,
    AUTHOR = {Emmanouil, Ioannis},
     TITLE = {Mittag-{L}effler condition and the vanishing of
              {$\varprojlim^1$}},
   JOURNAL = {Topology},
  FJOURNAL = {Topology. An International Journal of Mathematics},
    VOLUME = {35},
      YEAR = {1996},
    NUMBER = {1},
     PAGES = {267--271},
      ISSN = {0040-9383},
       DOI = {10.1016/0040-9383(94)00056-5},
       URL = {https://doi.org/10.1016/0040-9383(94)00056-5},
}

@book {bosch,
    AUTHOR = {Bosch, Siegfried},
     TITLE = {Lectures on formal and rigid geometry},
    SERIES = {Lecture Notes in Mathematics},
    VOLUME = {2105},
 PUBLISHER = {Springer, Cham},
      YEAR = {2014},
     PAGES = {viii+254},
      ISBN = {978-3-319-04416-3; 978-3-319-04417-0},
}

@phdthesis{andreychev-thesis,
author = {Andreychev, Grigory},
title = {K-Theorie adischer Räume},
school = {Rheinische Friedrich-Wilhelms-Universität Bonn},
year = 2023,
month = sep,
howpublished = {\url{https://nbn-resolving.org/urn:nbn:de:hbz:5-72174}},
url = {https://hdl.handle.net/20.500.11811/11040}
}

@incollection {PPR-KGL,
    AUTHOR = {Panin, Ivan and Pimenov, Konstantin and R\"ondigs, Oliver},
     TITLE = {On {V}oevodsky's algebraic {$K$}-theory spectrum},
 BOOKTITLE = {Algebraic topology},
    SERIES = {Abel Symp.},
    VOLUME = {4},
     PAGES = {279--330},
 PUBLISHER = {Springer, Berlin},
      YEAR = {2009},
      ISBN = {978-3-642-01199-3},
       DOI = {10.1007/978-3-642-01200-6\_10},
}

@inproceedings {voevodsky-icm-1998,
    AUTHOR = {Voevodsky, Vladimir},
     TITLE = {{${\bf A}^1$}-homotopy theory},
 BOOKTITLE = {Proceedings of the {I}nternational {C}ongress of
              {M}athematicians, {V}ol. {I} ({B}erlin, 1998)},
   JOURNAL = {Doc. Math.},
  FJOURNAL = {Documenta Mathematica},
      YEAR = {1998},
     PAGES = {579--604},
      ISSN = {1431-0635,1431-0643},
}

@book {egr,
    AUTHOR = {Abbes, Ahmed},
     TITLE = {{\'E}l\'ements de g\'eom\'etrie rigide. {V}olume {I}},
    SERIES = {Progress in Mathematics},
    VOLUME = {286},
      NOTE = {Construction et \'etude g\'eom\'etrique des espaces rigides.},
 PUBLISHER = {Birkh\"auser/Springer Basel AG, Basel},
      YEAR = {2010},
     PAGES = {xvi+477},
      ISBN = {978-3-0348-0011-2},
}

@article {raynaud-gruson,
    AUTHOR = {Raynaud, Michel and Gruson, Laurent},
     TITLE = {Crit\`eres de platitude et de projectivit\'e. {T}echniques de
              ``platification'' d'un module},
   JOURNAL = {Invent. Math.},
  FJOURNAL = {Inventiones Mathematicae},
    VOLUME = {13},
      YEAR = {1971},
     PAGES = {1--89},
      ISSN = {0020-9910,1432-1297},
       DOI = {10.1007/BF01390094},
       URL = {https://doi.org/10.1007/BF01390094},
}

@incollection {raynaud,
    AUTHOR = {Raynaud, Michel},
     TITLE = {G\'eom\'etrie analytique rigide d'apr\`es {T}ate,
              {K}iehl,{$\cdots $}},
 BOOKTITLE = {Table {R}onde d'{A}nalyse {N}on {A}rchim\'edienne ({P}aris,
              1972)},
    SERIES = {Suppl\'ement au Bull. Soc. Math. France},
    VOLUME = {Tome 102},
     PAGES = {319--327},
 PUBLISHER = {Soc. Math. France, Paris},
      YEAR = {1974},
   MRCLASS = {32K10 (14G20)},
  MRNUMBER = {470254},
MRREVIEWER = {J.\ S.\ Joel},
       DOI = {10.24033/msmf.170},
       URL = {https://doi.org/10.24033/msmf.170},
}

@article {cisinski-descente,
    AUTHOR = {Cisinski, Denis-Charles},
     TITLE = {Descente par \'eclatements en {$K$}-th\'eorie invariante par
              homotopie},
   JOURNAL = {Ann. of Math. (2)},
  FJOURNAL = {Annals of Mathematics. Second Series},
    VOLUME = {177},
      YEAR = {2013},
    NUMBER = {2},
     PAGES = {425--448},
      ISSN = {0003-486X,1939-8980},
       DOI = {10.4007/annals.2013.177.2.2},
       URL = {https://doi.org/10.4007/annals.2013.177.2.2},
}

@incollection {huebner-adic,
    AUTHOR = {H{\"u}bner, Katharina},
     TITLE = {Adic spaces},
 BOOKTITLE = {Non-archimedean geometry and eigenvarieties},
    SERIES = {M\"unst. Lect. Math.},
     PAGES = {55--111},
 PUBLISHER = {EMS Press, Berlin},
      YEAR = {2024},
      ISBN = {978-3-98547-081-5; 978-3-98547-581-0},
       DOI = {10.4171/mlm/4/2},
}

@book {GW1,
    AUTHOR = {G\"ortz, Ulrich and Wedhorn, Torsten},
     TITLE = {Algebraic {G}eometry {I}},
    SERIES = {Advanced Lectures in Mathematics},
      NOTE = {Schemes with examples and exercises},
 PUBLISHER = {Vieweg + Teubner, Wiesbaden},
      YEAR = {2010},
     PAGES = {viii+615},
      ISBN = {978-3-8348-0676-5},
       DOI = {10.1007/978-3-8348-9722-0},
       
}

@article {weibel-analytic-iso,
    AUTHOR = {Weibel, Charles A.},
     TITLE = {{$K$}-theory and analytic isomorphisms},
   JOURNAL = {Invent. Math.},
  FJOURNAL = {Inventiones Mathematicae},
    VOLUME = {61},
      YEAR = {1980},
    NUMBER = {2},
     PAGES = {177--197},
      ISSN = {0020-9910,1432-1297},
       DOI = {10.1007/BF01390120},
       URL = {https://doi.org/10.1007/BF01390120},
}

@phdthesis{sigloch-thesis,
author = {Sigloch, Helene},
title = {Homotopy Theory for Rigid Analytic Varieties},
school = {Albert-Ludwigs-Universit¨at Freiburg im Breisgau},
year = 2016,
month = mar,
howpublished = {\url{https://freidok.uni-freiburg.de/data/11742}},
url = {https://freidok.uni-freiburg.de/data/11742}
}

@incollection {tt90,
    AUTHOR = {Thomason, Robert W. and Trobaugh, Thomas},
     TITLE = {Higher algebraic {$K$}-theory of schemes and of derived
              categories},
 BOOKTITLE = {The {G}rothendieck {F}estschrift, {V}ol.\ {III}},
    SERIES = {Progr. Math.},
    VOLUME = {88},
     PAGES = {247--435},
 PUBLISHER = {Birkh\"auser Boston, Boston, MA},
      YEAR = {1990},
       DOI = {10.1007/978-0-8176-4576-2_10},
}

@book {k-book,
    AUTHOR = {Weibel, Charles A.},
     TITLE = {The {$K$}-book},
    SERIES = {Graduate Studies in Mathematics},
    VOLUME = {145},
      NOTE = {An Introduction to Algebraic $K$-theory},
 PUBLISHER = {American Mathematical Society, Providence, RI},
      YEAR = {2013},
     PAGES = {xii+618},
      ISBN = {978-0-8218-9132-2},
}

@article {dahli-thesis-paper,
    AUTHOR = {Dahlhausen, Christian},
     TITLE = {Continuous {K}-theory and cohomology of rigid spaces},
   JOURNAL = {Manuscripta Math.},
  FJOURNAL = {Manuscripta Mathematica},
    VOLUME = {173},
      YEAR = {2024},
    NUMBER = {1-2},
     PAGES = {119--153},
      ISSN = {0025-2611,1432-1785},
       DOI = {10.1007/s00229-023-01470-x},
       URL = {https://doi.org/10.1007/s00229-023-01470-x},
}

@misc{Mann-Thesis,
      title={A $p$-Adic 6-Functor Formalism in Rigid-Analytic Geometry}, 
      author={Lucas Mann},
      year={2022},
      eprint={2206.02022v1},
      archivePrefix={arXiv},
      primaryClass={math.AG}
}

@article{Hinich,
title = {Yoneda lemma for enriched $\infty$-categories},
journal = {Advances in Mathematics},
volume = {367},
pages = {107129},
year = {2020},
issn = {0001-8708},
doi = {https://doi.org/10.1016/j.aim.2020.107129},
author = {Vladimir Hinich}
}

@book{MotCoh,
author = {Dundas, Bj{\o}rn and Levine, Marc and {\O}stv{\ae}r, Paul and R\"ondigs, Oliver and Voevodsky, Vladimir},
year = {2007},
month = {01},
pages = {},
title = {Motivic Homotopy Theory: Lectures at a Summer School in Nordfjordeid, Norway, August 2002},
isbn = {978-3-540-45895-1},
doi = {10.1007/978-3-540-45897-5}
}

@article {temkin-desing-char0,
    AUTHOR = {Temkin, Michael},
     TITLE = {Desingularization of quasi-excellent schemes in characteristic
              zero},
   JOURNAL = {Adv. Math.},
  FJOURNAL = {Advances in Mathematics},
    VOLUME = {219},
      YEAR = {2008},
    NUMBER = {2},
     PAGES = {488--522},
      ISSN = {0001-8708,1090-2082},
       DOI = {10.1016/j.aim.2008.05.006},
       URL = {https://doi.org/10.1016/j.aim.2008.05.006},
}

@misc{efimov-inverse-limits,
      title={Localizing invariants of inverse limits}, 
      author={Alexander I. Efimov},
      year={2025},
      eprint={2502.04123v2},
      archivePrefix={arXiv},
      primaryClass={math.KT}
}

@misc{efimov-k-theory,
      title={K-theory and localizing invariants of large categories}, 
      author={Alexander I. Efimov},
      year={2025},
      eprint={2405.12169v3},
      archivePrefix={arXiv},
      primaryClass={math.KT},
}

@phdthesis{zhou-these,
  TITLE = {{Sur les cohomologies {\'e}tale et syntomique pour les vari{\'e}t{\'e}s analytiques p-adiques arithm{\'e}tiques}},
  AUTHOR = {Zhou, Yicheng},
  URL = {https://theses.hal.science/tel-05238283},
  NUMBER = {2025SORUS136},
  SCHOOL = {{Sorbonne Universit{\'e}}},
  YEAR = {2025},
  MONTH = Jul,
  TYPE = {Theses},
  HAL_ID = {tel-05238283},
  HAL_VERSION = {v1},
}

@article{temkin2017altered,
  title={Altered local uniformization of {B}erkovich spaces},
  author={Temkin, Michael},
  journal={Israel Journal of Mathematics},
  volume={221},
  number={2},
  pages={585--603},
  year={2017},
  publisher={Springer}
}

@Book{matsumura1980commutativealgebra,
  Author = {Matsumura, Hideyuki},
  Title = {Commutative algebra. 2nd ed},
  FSeries = {Mathematics Lecture Note Series},
  Series = {Math. Lect. Note Ser.},
  Volume = {56},
  Year = {1980},
  Publisher = {The Benjamin/Cummings Publishing Company, Reading, MA},
  Language = {English},
  zbMATH = {3687501},
  Zbl = {0441.13001}
}

@article{kedlaya2017sheaves,
  title={{Sheaves, stacks, and shtukas}},
  author={Kedlaya, Kiran S.},
  journal={Perfectoid Spaces: Lectures from the 2017 Arizona Winter School},
  year={2017}
}

@article {huber1994generalization,
    AUTHOR = {Huber, R.},
     TITLE = {A generalization of formal schemes and rigid analytic varieties},
   JOURNAL = {Math. Z.},
  FJOURNAL = {Mathematische Zeitschrift},
    VOLUME = {217},
      YEAR = {1994},
    NUMBER = {4},
     PAGES = {513--551},
      ISSN = {0025-5874,1432-1823},
       DOI = {10.1007/BF02571959},
}

@Book{huber1996etale,
 Author = {Roland {Huber}},
 Title = {{\'Etale cohomology of rigid analytic varieties and adic spaces}},
 FJournal = {{Aspects of Mathematics}},
 Journal = {{Aspects Math.}},
 ISSN = {0179-2156},
 Volume = {E30},
 ISBN = {3-528-06794-2/hbk},
 Pages = {x + 450},
 Year = {1996},
 Publisher = {Wiesbaden: Vieweg},
 Language = {English},
 MSC2010 = {14F20 32P05},
 Zbl = {0868.14010}
}

@article{landtamme2018pullback,
   title={On the $K$-theory of pullbacks},
   volume={190},
   ISSN={0003-486X},
   DOI={10.4007/annals.2019.190.3.4},
   number={3},
   journal={Annals of Mathematics},
   publisher={Annals of Mathematics},
   author={Land, Markus and Tamme, Georg},
   year={2019},
   month=Nov
}

@article{yekutieli2021proregular,
 author = {Yekutieli, Amnon},
 title = {Weak proregularity, derived completion, adic flatness, and prisms},
 fjournal = {Journal of Algebra},
 journal = {J. Algebra},
 issn = {0021-8693},
 volume = {583},
 pages = {126--152},
 year = {2021},
 language = {English},
 doi = {10.1016/j.jalgebra.2021.04.033},
 zbMATH = {7363519},
 Zbl = {1476.14054}
}

@book {bousfieldkan1972lecture,
    AUTHOR = {Bousfield, Aldridge K. and Kan, Daniel M.},
     TITLE = {Homotopy limits, completions and localizations},
    SERIES = {Lecture Notes in Mathematics},
    VOLUME = {Vol. 304},
 PUBLISHER = {Springer-Verlag, Berlin-New York},
      YEAR = {1972},
     PAGES = {v+348},
}

@misc{andreychev2021descent,
      title={{Pseudocoherent and Perfect Complexes and Vector Bundles on Analytic Adic Spaces}}, 
      author={Grigory Andreychev},
      year={2021},
      eprint={2105.12591v1},
      archivePrefix={arXiv},
      primaryClass={math.AG}
}

@article{mondalreinecke2025postnikov,
   title={On Postnikov completeness for replete topoi},
   volume={27},
   ISSN={1532-0081},
   DOI={10.4310/hha.2025.v27.n1.a10},
   number={1},
   journal={Homology, Homotopy and Applications},
   publisher={International Press of Boston},
   author={Mondal, Shubhodip and Reinecke, Emanuel},
   year={2025},
   pages={179–196} }

@misc{camargo2026notesolid,
      title={Notes on Solid Geometry}, 
      author={Juan Esteban Rodríguez Camargo},
      year={2026},
      eprint={2603.03012v1},
      archivePrefix={arXiv},
      primaryClass={math.AG},
}

@article{huber1993continuousvaluation,
  title={Continuous valuations},
  author={Huber, Roland},
  journal={Mathematische Zeitschrift},
  volume={212},
  number={1},
  pages={455--477},
  year={1993},
  publisher={Springer},
  doi={10.1007/BF02571668}
}

@article{buzzardverberkmoes2018stablyuniformsheafy,
  title={Stably uniform affinoids are sheafy},
  author={Buzzard, Kevin and Verberkmoes, Alain},
  journal={Journal f{\"u}r die reine und angewandte Mathematik},
  volume={2018},
  number={740},
  pages={25--39},
  year={2018},
  publisher={De Gruyter},
  doi = {10.1515/crelle-2015-0089},
}

\bigskip
  \footnotesize

  Christian Dahlhausen:\\  \textsc{Institut für Mathematik, Universität Heidelberg, Im Neuenheimer Feld 205, 69120 Heidelberg, Germany}\par\nopagebreak
  \textit{E-mail address:}  \texttt{cdahlhausen@mathi.uni-heidelberg.de}

  \medskip

  Can Yaylali:\\ \textsc{Fakultät für Mathematik, Universität Duisburg-Essen, Thea-Leymann-Str. 9, 45127 Essen, Germany}\par\nopagebreak
  \textit{E-mail address:} \texttt{can.yaylali@uni-due.de}

  \medskip

  Yicheng Zhou:\\ \textsc{Laboratoire de Mathématique d'Orsay, Université de Paris-Saclay, Bâtiment 307, France}\par\nopagebreak
  \textit{E-mail address:} \texttt{yicheng.zhou@universite-paris-saclay.fr}


\end{document}